\newlist{condenum}{enumerate}{1} 
\setlist[condenum]{label=\bfseries Condition \arabic*.,  ref=\arabic*, wide}
\numberwithin{equation}{section}
\theoremstyle{plain}
\def\ps@pprintTitle{%
 \let\@oddhead\@empty
 \let\@evenhead\@empty
 \def\@oddfoot{}%
 \let\@evenfoot\@oddfoot}
\newcommand{\R}{R}
\newcommand{\LL}{\mathcal{L}}
\newcommand{\Rm}{\textnormal{Rm}}
\newcommand{\scalar}{R}
\newcommand{\Ric}{\textnormal{Ric}}
\newcommand{\pt}{\partial_t}
\newcommand{\M}{\mathcal{M}}
\newcommand{\ot}{\overline{t}}
\newcommand{\ox}{\overline{x}}
\numberwithin{equation}{section}
\newtheorem{theorem}{Theorem}[section]
\newtheorem{lem}[theorem]{Lemma}
\newtheorem{remark}[theorem]{Remark}
\newtheorem{prop}[theorem]{Proposition}
\newtheorem{cor}[theorem]{Corollary} 
\theoremstyle{definition}
\newtheorem{defn}[theorem]{Definition}
\newtheorem*{theorem*}{Theorem}
\xpatchcmd{\tableofcontents}{\contentsname \@mkboth}{\small\contentsname \@mkboth}{}{}
\xpatchcmd{\listoffigures}{\chapter *{\listfigurename }}{\chapter *{\small\listfigurename }}{}{}
\begin{document}

\begin{abstract}
We extend the concept of singular Ricci flow by Kleiner and Lott from 3d compact manifolds to 3d complete manifolds with possibly unbounded curvature.
As an application of the generalized singular Ricci flow, we show that for any 3d complete Riemannian manifold with non-negative Ricci curvature, there exists a smooth Ricci flow starting from it. This partially confirms a conjecture by Topping.
\end{abstract}

\title[Ricci flows via singular Ricci flows]{Producing 3d Ricci flows with non-negative Ricci curvature via singular Ricci flows}

\author[Yi Lai]{Yi Lai}
\email{yilai@berkeley.math.edu}
\address[]{Department of Mathematics, University of California, Berkeley, CA 94720, USA}

\maketitle


\setcounter{tocdepth}{1}
%

\begin{section}{Introduction and main results}\label{s: intro}
The Ricci flow was introduced by Hamilton in \cite{hamilton1}.
Since its singularities can occur along proper subsets of the manifold, in order to continue the flow, Hamilton introduced Ricci flow with surgery in \cite{hamilton2}.
Based on the earlier work of Hamilton, Perelman constructed Ricci flow with surgery on any compact 3 dimensional Riemannian manifold, and hence proved the Geometrization and Poincar\'e Conjectures in \cite{Pel1}\cite{Pel2}\cite{Pel3}. 
After this, the Ricci flow with surgery was also constructed for complete non-compact manifolds with bounded geometry by Bessi\`ere, Besson, and Maillot in \cite{Besson}.

Perelman's Ricci flow with surgery is a sequence of ordinary compact Ricci flows such that the final time-slice of each flow is isometric, modulo surgery, to the initial time-slice of the next one. 
The surgery process is regulated by several parameters, one of them being the surgery scale $\delta>0$.
Perelman showed that $\delta>0$ can be chosen arbitrarily small, and as such he conjectured that the Ricci flow with surgery should converge to a canonical Ricci flow through singularities.

Recently, such a canonical flow, named the singular Ricci flow, was constructed by Kleiner and Lott in \cite{KL1}, and shown to be unique by Bamler and Kleiner in \cite{BK}. 
The singular Ricci flow is a 0-complete Ricci flow spacetime starting from a compact manifold, which satisfies the Hamilton-Ivey pinching, and the canonical neighborhood assumption at scales less than a time-dependent constant.

In this paper, we introduce a new weak solution of Ricci flow that we call a generalized singular Ricci flow, which allows the initial manifold to be a complete manifold with possibly unbounded curvature. We have the following existence theorem.
\begin{theorem} \label{t: construction1}
For any 3d complete Riemannian manifold $(M,g)$, there is a generalized singular Ricci flow starting from $(M,g)$.
\end{theorem}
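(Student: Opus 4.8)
The plan is to realize $(M,g)$ as a limit of closed $3$-manifolds, apply the construction of Kleiner and Lott \cite{KL1} to each of them, and pass to a limit of the resulting singular Ricci flows; the limit should be the desired generalized singular Ricci flow. Concretely, I would first fix an exhaustion $U_1\subset\subset U_2\subset\subset\cdots$ of $M$ by precompact open sets with smooth boundary and $\bigcup_i U_i=M$. For each $i$ I would modify $g$ on $M\setminus U_i$ to obtain a \emph{closed} Riemannian $3$-manifold $(N_i,g_i)$ with $g_i|_{U_i}=g|_{U_i}$; this is a standard cap-off (make the metric a metric product near $\partial U_{i+1}$ and glue in round hemispherical caps, as in the surgery literature, or simply double $\overline{U_{i+1}}$). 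Since $(N_i,g_i)$ is closed, \cite{KL1} furnishes a singular Ricci flow spacetime $\mathcal{M}_i$ starting from $(N_i,g_i)$, defined for all $t\ge 0$, $0$-complete, and satisfying Hamilton--Ivey pinching together with the canonical neighborhood assumption at scales below a time-dependent constant.

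Next I would establish uniform local regularity. Fix $j$ and a basepoint $p\in U_1$. On $\overline{U_{j+1}}$ the metric $g$ is smooth and independent of $i$, hence has a two-sided curvature bound and a positive injectivity radius lower bound there depending only on $j$; in particular, small balls centered in $\overline{U_j}$ have isoperimetric ratio uniformly close to the Euclidean one. I would then invoke Perelman's pseudolocality theorem (valid for singular Ricci flows, since they are limits of Ricci flows with surgery) to obtain, for all $i>j+1$, a uniform $\tau_j>0$ and $C_j<\infty$ with $|{\Rm}|\le C_j$ on an unscathed region over $\overline{U_j}$ for $t\in[0,\tau_j]$, and then Shi's local estimates to upgrade this to uniform $C^\infty$ bounds. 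Thus, over any fixed $\overline{U_j}$ and for a uniform short time, the $\mathcal{M}_i$ restrict to honest smooth Ricci flows of uniformly bounded geometry.

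With these estimates in hand, I would use Step 2 on the smooth part, the canonical neighborhood structure away from it, and the compactness theory for Ricci flow spacetimes to extract (after passing to a subsequence, via a diagonal argument over $j\to\infty$) a pointed limit $\mathcal{M}_\infty$ of the $(\mathcal{M}_i,p)$. Over $\bigcup_j\overline{U_j}=M$ and for small time this limit is a smooth Ricci flow whose $t\to 0$ time-slice is isometric to $(M,g)$. It then remains to verify that $\mathcal{M}_\infty$ satisfies the definition of a generalized singular Ricci flow: the Hamilton--Ivey pinching and the (scale-dependent) canonical neighborhood assumption pass to the limit from the $\mathcal{M}_i$, and the appropriate $0$-completeness of $\mathcal{M}_\infty$ follows by tracing curves of bounded scalar curvature and using the $0$-completeness of each $\mathcal{M}_i$ together with the near-$t=0$ control from Step 2.

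The hard part will be Steps 2--3. Because $g$ may have curvature unbounded near infinity, singularities of $\mathcal{M}_i$ can form arbitrarily close to $\partial U_i$ at arbitrarily small times, so one must rule out curvature ``rushing in'' from the ends in finite time and must guarantee that the regularity interval $\tau_j$ and the bound $C_j$ are genuinely independent of $i$. Pseudolocality localizes the estimate, but reconciling it with the canonical-neighborhood structure near the singular set --- so that $\mathcal{M}_\infty$ is a bona fide Ricci flow spacetime with the correct completeness rather than an object with a spurious boundary --- is the delicate point, and is presumably exactly where the paper's notion of generalized singular Ricci flow and the accompanying compactness statements do the real work.
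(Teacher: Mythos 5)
Your overall strategy---cap off an exhaustion to get closed manifolds, run the Kleiner--Lott singular Ricci flows, and pass to a pointed limit---is exactly the skeleton of the paper's proof (Theorem \ref{t: Z_2}, Lemma \ref{t: construction}, Theorem \ref{t: max}). However, two of the steps you defer to contain the actual content of the paper, and as stated they have genuine gaps. First, you invoke ``Perelman's pseudolocality theorem (valid for singular Ricci flows, since they are limits of Ricci flows with surgery).'' This is not known a priori and is not inherited from the approximating flows with surgery: Perelman's argument requires a conjugate heat kernel based at a point of the flow together with the identities $\int_{\M_t} u\,d_tx=1$ and $v\le 0$, which are proved by integration by parts and fail to make sense on a spacetime with incomplete time-slices unless one controls the heat kernel near the high-curvature set. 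The paper spends Sections \ref{s: heat kernel}--\ref{s: pseudolocality} constructing this heat kernel on a singular Ricci flow and proving the polynomial decay $uR^m\le C_m$ (Theorem \ref{e: uR^m less than C_m}), precisely so that the boundary terms in these integrations by parts vanish; only then does pseudolocality (Theorem \ref{t: pseudolocality}) follow. Likewise, the uniform distance-dependent canonical neighborhood and non-collapsing estimates at positive times that your compactness step needs are not automatic---they are Proposition \ref{p: noncollapsing}, whose proof requires redeveloping Perelman's $\mathcal{L}$-geometry on singular spacetimes (Lemmas \ref{l: L length_0}--\ref{l: L length}).

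Second, a single pointed limit $(\M_i,p)\to\M_\infty$ cannot be the generalized singular Ricci flow. What that limit produces is only a \emph{semi}-generalized singular Ricci flow on $[0,t_0)$, where $t_0$ is the first time the worldline of $p$ acquires unbounded curvature (Lemma \ref{t: construction}); beyond $t_0$, and in components that separate from $p$, the pointed limit sees nothing, so the resulting object is not forward $0$-complete. The paper repairs this by re-basing at a countable dense set of points of the partial limit, extracting a semi-generalized flow from each, gluing them along an equivalence relation defined by the approximating diffeomorphisms, and iterating ($\M^1\subset\M^2\subset\cdots$ in Theorem \ref{t: max}); the isometry Lemma \ref{r: uniqueness} is what makes the gluing consistent. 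Your proposal has no analogue of this step. A smaller omission: Kleiner--Lott existence requires compact \emph{orientable} initial data, so for non-orientable $M$ one must work on the orientation double cover and use the Bamler--Kleiner uniqueness to propagate the deck transformation to the spacetime, as in Theorem \ref{t: Z_2}.
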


The generalized singular Ricci flow has many properties similar to those of a singular Ricci flow.
In particular, it satisfies the canonical neighborhood assumption in a distance-dependent way.
Its precise definition of a generalized singular Ricci flow will be given in Definition \ref{d: generalized SRF}.

The existence is obtained from a compactness result for singular Ricci flows, which states that a sequence of singular Ricci flows converges to a generalized singular Ricci flow starting from a complete manifold $(M,g)$, if the sequence of their initial manifolds converges to $(M,g)$:

\begin{theorem}\label{t: convergence}
Let $\M_i$ be a sequence of singular Ricci flows starting from compact manifolds $M_i$, $x_i\in M_i$. Suppose $(M_i,x_i)$ converges smoothly to a 3d complete manifold $(M,x_0)$ as $i\rightarrow\infty$.
Then by passing to a subsequence, $(\M_i,x_i)$ converges smoothly to a generalized singular Ricci flow starting from $M$.
\end{theorem}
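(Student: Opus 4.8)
The plan is to realize the limit as an increasing union of pointed smooth limits of bounded subregions of the flows $\M_i$, using the canonical neighborhood structure of singular Ricci flows for local geometric control and Cheeger--Gromov--Hamilton compactness on each subregion. The first and principal step is a priori estimates. Fix parameters $D, T, r_0 > 0$ and let $\Omega_i = \Omega_i(D, T, r_0) \subset \M_i$ consist, roughly, of the points $p$ with $\mathfrak{t}(p) \le T$ and $R(p) < r_0$ that can be joined to a time-$0$ point in $B_{g_i}(x_i, D)$ by a spacetime path of length at most $D$ lying in $\{\mathfrak{t} \le T,\ R < r_0\}$. On $\Omega_i$ the scalar curvature is bounded by $r_0$, so the Hamilton--Ivey pinching bounds $|\Rm|$ in terms of $r_0$; the canonical neighborhood assumption together with interior Shi-type estimates on a slightly larger region (e.g.\ $\{R < 2 r_0\}$) bounds all covariant derivatives of the curvature; and Perelman's no-local-collapsing bounds the injectivity radius from below. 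The crux is that all of these bounds must be made \emph{uniform in $i$}, even though $\M_i$ is a flow over a compact manifold $M_i$ whose global geometry degenerates as $i \to \infty$. This is exactly where the hypothesis $(M_i, x_i) \to (M, x_0)$ is used: it renders the initial data uniformly controlled on $B_{g_i}(x_i, D)$ for $i$ large, and -- via pseudolocality and a localized version of Perelman's no-local-collapsing argument -- forces the injectivity radius lower bound and the canonical-neighborhood scale along $\Omega_i$ to be bounded below by a constant depending only on $(M, g)$, $D$, $T$, $r_0$, and not on $i$. Proving this distance-dependent, $i$-uniform noncollapsing and canonical-neighborhood scale is the main obstacle.

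Granting the uniform estimates, the second step is essentially standard. Each $\Omega_i(D, T, r_0)$, viewed as a pointed Ricci flow spacetime with basepoint $x_i$, has uniformly bounded curvature, uniformly bounded curvature derivatives, and a uniform injectivity radius lower bound; applying Cheeger--Gromov--Hamilton compactness to the time-slices and combining over the compact time interval $[0, T]$ -- with the given smooth convergence of the initial slices handling the time-$0$ boundary -- a subsequence converges smoothly to a pointed Ricci flow spacetime $\Omega_\infty(D, T, r_0)$ whose initial slice is an open neighborhood of $x_0$ in $(M, g)$. Since the regions $\Omega_i$ are nested in $D$, $T$, and $r_0$, a diagonal subsequence yields smooth convergence $(\M_i, x_i) \to \M_\infty := \bigcup_{D, T, r_0} \Omega_\infty(D, T, r_0)$, a Ricci flow spacetime with a complete initial time-slice isometric to $(M, g)$ marked at $x_0$.

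It then remains to verify that $\M_\infty$ satisfies every condition of Definition \ref{d: generalized SRF}. The local conditions -- the Ricci flow equation away from the initial slice, the Hamilton--Ivey pinching, the $\kappa$-noncollapsing, and the distance-dependent canonical neighborhood assumption -- pass to the smooth limit, since they hold on every $\Omega_i$ with constants that are uniform on the relevant region and smooth convergence preserves them; here one uses that the canonical-neighborhood scale obtained in the a priori step is lower semicontinuous in the distance parameter, so that $\M_\infty$ inherits a genuine distance-dependent threshold. The remaining point is the weak completeness of $\M_\infty$, namely the distance-dependent analogue of the $0$-completeness of a singular Ricci flow: a finite-length curve in $\M_\infty$ along which $R$ and the distance to $x_0$ both remain bounded is a smooth limit of such curves in the $\M_i$, each of which extends by the $0$-completeness of $\M_i$, and the extensions subconverge; one checks that the only boundary of $\M_\infty$ besides the initial slice lies at $R = \infty$. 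This establishes the theorem.
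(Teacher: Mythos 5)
Your outline tracks the paper's strategy --- uniform distance-dependent estimates, a local compactness argument, then verification of the axioms --- but the proposal leaves the actual mathematical content of the first step unproved, and that step is the substance of the theorem. You write that the $i$-uniform noncollapsing and canonical-neighborhood scale ``is the main obstacle'' and propose to obtain it via pseudolocality and a localized no-local-collapsing argument, but neither of these is available off the shelf for singular Ricci flows whose initial slices degenerate as $i\to\infty$. Perelman's pseudolocality and his local noncollapsing/canonical-neighborhood theorems are proved for smooth compact flows; transplanting them to a spacetime with singularities requires (i) showing that minimizing sequences of admissible curves and $\mathcal{L}$-geodesics avoid the high-curvature set (the paper's Lemmas \ref{l: L length_0}--\ref{l: L length}, feeding into Proposition \ref{p: noncollapsing}), and (ii) for pseudolocality, constructing a conjugate heat kernel on the singular spacetime and proving the conservation law $\int_{\M_t}u\,d_tx=1$ and the Harnack inequality $v\le 0$ --- which in turn require the polynomial decay $uR^m\le C_m$ near the singular set, since the usual integration-by-parts argument has no a priori control of the contribution from the high-curvature region. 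This occupies Sections \ref{s: CNA and NC}--\ref{s: pseudolocality} of the paper and is not a routine localization; without it your Step 1 is an assertion, not a proof.

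There are also two gaps in Steps 2--3. The regions $\Omega_i$ are incomplete, so Hamilton's compactness theorem does not apply directly; the paper instead takes a Gromov--Hausdorff limit of the bounded-curvature, bounded-distance sets (using the uniform total boundedness of Lemma \ref{l: volume comparison} and Lemma \ref{l: N}) and upgrades to smooth convergence on the regular part, invoking pseudolocality once more to obtain \emph{forward} parabolic neighborhoods at interior points. And verifying weak backward $0$-completeness and the chain condition \eqref{p': steps} of Definition \ref{d: generalized SRF} for the limit is not just a matter of ``extensions subconverge'': one must know that the backward limit of a trajectory lands inside the spacetime one has constructed, which is the component-stability statement (Proposition \ref{l: component stability}), itself proved using the canonical neighborhood structure to show that minimizing geodesics between surviving points survive backward. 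The paper's actual assembly of the global object (Theorem \ref{t: max}) proceeds by iteratively restarting semi-generalized singular Ricci flows at a dense set of points and gluing via an equivalence relation, rather than by a single exhaustion, precisely to capture points reachable only after the basepoint's curvature has blown up.
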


Theorem \ref{t: convergence} can be compared to the convergence of a sequence of singular Ricci flows, when the sequence of their initial time-slices converges to a compact manifold, see \cite[Prop 5.39]{KL1}.
In that result, the initial time-slices have uniformly bounded curvature and injectivity radius.
So the local geometry in each singular Ricci flow is uniformly controlled by its scalar curvature, which guarantees their convergence to a singular Ricci flow.
However, in our case, the initial time-slices may not have uniformly bounded geometry.
Instead, we will show that the scalar curvature controls the local geometry in a uniform distance-dependent way, which ensures the convergence in Theorem \ref{t: convergence}.

Before stating our next main result, we recall some results of the existence theory of Ricci flow with non-compact initial conditions. Much less is known about it compared to the compact case.
In \cite{Shi1987}, Shi showed that if $(M,g)$ is an n-dimensional complete Riemannian manifold with bounded curvature, then there exists a complete Ricci flow with bounded curvature for a short time. 
Since then, many efforts have been made to relax the bounded-curvature assumption, in order to obtain a Ricci flow starting from a complete non-compact manifold.

In \cite{CW}, Cebazas-Rivas and Wilking proved that a smooth complete Ricci flow exists on a complete n-dimensional manifold with non-negative complex sectional curvature, which in dimension 3 is the same as non-negative sectional curvature. 
Recently, Simon and Topping \cite{mollification} showed that a complete Ricci flow exists on a complete 3d Riemannian manifold, if its
Ricci curvature has a negative lower bound and the volume is globally non-collapsed (i.e. there is a uniform positive lower bound on the volume of every unit ball).
In \cite{BCRW}, Bamler, Cebazas-Rivas and Wilking proved that the same thing holds in dimension $n$, assuming a certain curvature is bounded below, and the volume is non-collapsed.
In \cite{Lai}, by a combination of methods in \cite{mollification} and \cite{BCRW}, the author generalized both works. 

The volume non-collapsing assumption is necessary in \cite{mollification}\cite{BCRW}\cite{Lai}, where the curvature is allowed to be negative somewhere.
For example, see \cite[Example 2.4]{Top}, for any arbitrarily small $\epsilon>0$ we can construct a complete 3-manifold with $\Ric\ge-\epsilon$, by connecting countably many three-spheres by necks that become longer and thinner.
So the necks would pinch in times converging to zero, and hence a Ricci flow cannot exist for any short time.

This leads to an open question: whether a smooth complete Ricci flow exists for a 3 dimensional complete manifold with non-negative Ricci curvature, see e.g. \cite[Conjecture 7.1]{Top}. 
Our next main result gives a partial affirmative answer to this question:

\begin{theorem}\label{t: existence}
Let $(M,g)$ be a 3d complete Riemannian manifold with $\Ric\ge 0$. There exist $T>0$ and a smooth Ricci flow $(M,g(t))$ on $[0,T)$, with $g(0)=g$ and $\Ric(g(t))\ge0$. 
Moreover, if $T<\infty$, then $\limsup_{t\nearrow T}|\Rm|(x,t)=\infty$ for all $x\in M$.
\end{theorem}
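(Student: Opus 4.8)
The plan is to extract the Ricci flow from a generalized singular Ricci flow. By Theorem~\ref{t: construction1} there is a generalized singular Ricci flow $\M$ starting from $(M,g)$. Let $T\in(0,\infty]$ be the supremum of those $\tau>0$ for which the restriction of $\M$ to $[0,\tau)$ is (diffeomorphic to) a smooth \emph{complete} Ricci flow $(M,g(t))_{t\in[0,\tau)}$ with $g(0)=g$; the assertion that this set is nonempty is precisely claim (ii) below. Setting $g(t):=\M_t$ for $t<T$ gives the candidate flow, so it remains to prove: (i) $\Ric(g(t))\ge0$ for all $t<T$; (ii) $T>0$; and (iii) if $T<\infty$ then $\limsup_{t\nearrow T}|\Rm|(x,t)=\infty$ for every $x\in M$. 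For (i), on the smooth part $[0,T)$ the flow is complete, so $\Ric\ge0$ is preserved by Hamilton's tensor maximum principle; in the complete, possibly unbounded-curvature setting one runs the maximum principle using the distance-dependent curvature control built into $\M$ (alternatively, realize $\M$ via Theorem~\ref{t: convergence} as a limit of singular Ricci flows whose initial data satisfy $\Ric\ge-\epsilon_i\to0$, for which $\Ric\ge-\epsilon_i$ is preserved for a definite time together with Hamilton--Ivey pinching, and pass to the limit). Note that in dimension $3$ the pinching $\Ric\ge0$ forces $|\Ric|\le R$, hence $|\Rm|\le cR$, so on the smooth part of $\M$ the full curvature is comparable to the scalar curvature.

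The heart of the matter is (ii). By the distance-dependent canonical neighborhood assumption, within any fixed ball $B_g(x_0,D)$ the curvature of $(M,g)$ is bounded, so by Shi's local derivative estimates the flow is smooth there for a definite amount of time; hence if $T=0$ there must be a sequence of singular points $(p_k,t_k)\in\M$ with $t_k\searrow0$ and $d_g(x_0,p_k)\to\infty$. Near $(p_k,t_k)$ the flow has a canonical neighborhood, which near the earliest singular times is (after rescaling) close to a round shrinking space form, a round cylinder $S^2\times\RR$, or a $\kappa$-solution cap; tracing it backward to time $0$ (the flow is smooth backward near $p_k$ once $k$ is large, and the $S^2$ cross-section has only gotten fatter) shows that $(M,g)$ contains, arbitrarily far from $x_0$, $\epsilon$-necks (or $\epsilon$-caps) whose radius tends to $0$. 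This is exactly where $\Ric\ge0$ enters and is decisive: a $3$-dimensional complete manifold with $\Ric\ge0$ admits no such arbitrarily thin necks or caps near spatial infinity. I expect this to be the main obstacle, and I would establish it from the structure theory of $3$-manifolds with $\Ric\ge0$: a sufficiently long thin neck produces (in a blow-up) a line, forcing by Cheeger--Gromoll an isometric splitting of the form $\RR\times S^2$, which has bounded curvature; and more fundamentally, "opening up" from a thin cross-sectional $2$-sphere to a region of much larger scale is incompatible with $\Ric\ge0$ — this is precisely the obstruction that forces the relaxation to $\Ric\ge-\epsilon$ in Topping's example \cite{Top}. The resulting contradiction gives $T>0$, and then $g(t)=\M_t$ for $t\in[0,T)$ is the desired smooth Ricci flow with $\Ric(g(t))\ge0$.

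Finally, for (iii), suppose $T<\infty$ and, for contradiction, that $\limsup_{t\nearrow T}|\Rm|(x_0,t)<\infty$ for some $x_0$, equivalently that $R(x_0,t)$ stays bounded as $t\nearrow T$. By the canonical neighborhood assumption this propagates: a tiny canonical neighborhood at bounded distance from $x_0$ would, following the neck/cap structure, force large curvature nearer $x_0$, so $R$ is bounded on $B_{g(t)}(x_0,D)\times[0,T)$ for each $D$. Since $\Ric\ge0$ makes distances nonincreasing, $B_{g(t)}(x_0,D)\supseteq B_{g(0)}(x_0,D)$, and hence $R$ — equivalently $|\Rm|$ — is bounded on every compact subset of $M$, uniformly in $t<T$. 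If this bound were also uniform in space, $|\Rm|$ would be uniformly bounded on $M\times[0,T)$, so the flow would extend smoothly past $T$, contradicting the maximality of $T$; if it were not uniform in space, there would be $y_k$ with $d_{g(0)}(x_0,y_k)\to\infty$ and $t_k\nearrow T$ with $|\Rm|(y_k,t_k)\to\infty$, producing tiny $\epsilon$-necks or $\epsilon$-caps in the $\Ric\ge0$ slices $(M,g(t_k))$ arbitrarily far from $x_0$ — again contradicting the geometric fact used in (ii). Either way we reach a contradiction, so $\limsup_{t\nearrow T}|\Rm|(x,t)=\infty$ for every $x\in M$, completing the proof.
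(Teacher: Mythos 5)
Your outline for (i) matches the paper's Lemma \ref{l: Ricci} (a localized maximum principle on the generalized singular Ricci flow), and your overall strategy of extracting the flow from a generalized singular Ricci flow is the paper's. But the central step --- where $\Ric\ge0$ must actually be used to rule out singularity formation --- is handled by an assertion that is both unproved and not the statement the argument needs. You claim that a complete $3$-manifold with $\Ric\ge0$ ``admits no arbitrarily thin necks or caps near spatial infinity,'' justified by saying a long thin neck produces a line in a blow-up and hence a Cheeger--Gromoll splitting. This does not work: an $\epsilon$-neck has length only $2\epsilon^{-1}$ times its scale, so no blow-up of it contains a line; and even if a limit split as $\RR\times S^2$, that limit is rescaled, so its bounded curvature says nothing about the original manifold. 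The appeal to ``the obstruction in Topping's example'' is not an argument. Moreover, to even place a thin neck in $(M,g)$ at $p_k$ you would need the canonical neighborhood assumption to apply at $(p_k,0)$ at scale $\rho(p_k,0)\lesssim\sqrt{t_k}$, but the canonical neighborhood scale in Definition \ref{d: generalized SRF} degenerates with $d_g(x_0,p_k)\to\infty$ and there is no relation between $t_k$ and $d_k$ guaranteeing this; at $t=0$ the time-slice is arbitrary initial data and need not look like a $\kappa$-solution at all.

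The paper's mechanism is different and is what you are missing: the failure mode to exclude is curvature blow-up \emph{at finite distance within a positive-time slice} (incompleteness of $\M_t$), not necks escaping to spatial infinity at time $0$. Given $\Ric\ge0$ on $\M$ (Lemma \ref{l: Ricci}), if $\sup_{B_t(x_0(t),A)}R=\infty$ then Lemmas \ref{l: abundance of neck points} and \ref{l: close to k-solution} produce a minimizing geodesic along which the curvature blows up at finite length, covered near its end by final time-slices of \emph{strong} $\delta$-necks. Lemma \ref{l: s_0} then blows up at the resulting cone-like point: the two-sided bound $C^{-1}d(p,x)\le R^{-1/2}(x)\le\eta\,d(p,x)$ comes from Bishop--Gromov volume comparison on \emph{both} sides of the accumulating necks (this is exactly where finiteness of $s_0$ is used), the rigidity of volume comparison forces the limit to be a non-flat metric cone, and the contradiction is that an open piece of a non-flat cone cannot be the final time-slice of a Ricci flow with nonnegative curvature --- which uses the backward-in-time structure of the strong necks. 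Once the slices are complete, $\Ric\ge0$ makes $P(x_0,A,t_0)\subset\bigcup_t B_t(x_0(t),A)$ relatively compact, so every point of $M$ survives until $t_0$, giving $T>0$; and (iii) follows from forward $0$-completeness, not from the backwards propagation you invoke (a tiny neck at distance $D$ from $x_0$ does not force large curvature nearer $x_0$; curvature in a $\delta$-tube can increase monotonically away from $x_0$). As written, both (ii) and (iii) rest on the unestablished spatial-infinity claim, so the proof has a genuine gap.
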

We remark that the completeness of this flow is not guaranteed in this paper. Instead, we show that it can be embedded in a smooth Ricci flow spacetime with complete time-slices. Also, it is possible for the maximal existence time to be finite, such as the standard solution and the cylindrical solutions.

A common strategy to produce a smooth Ricci flow with a complete non-compact initial condition is by a limiting argument: first construct a sequence of local Ricci flows starting from larger and larger balls in $M$, and then try to get a uniform lower bound on the existence times, as well as an upper bound on the curvature norms. 
Then by Hamilton's compactness theorem for Ricci flow, we obtain a smooth limit Ricci flow starting from $M$. 
This argument typically works when there is a non-collapsing assumption \cite{mollification,BCRW,Lai}, or the curvature condition is relatively strong \cite{CW}.

However, it seems hard to apply the limiting argument to prove Theorem \ref{t: existence}, for $\Ric\ge 0$ is a relatively weak curvature assumption, and there is no uniform lower bound on the volume of all unit balls on certain manifolds, as shown by examples in \cite{collapsed_balls}.
In this paper, we produce a smooth Ricci flow by showing that a generalized singular Ricci flow starting from a complete manifold with $\Ric\ge0$ is actually smooth. The existence of the generalized singular Ricci flow is guaranteed by Theorem \ref{t: construction1}.

The paper is organized as follows. In Section \ref{s: preliminary}, we review some basic concepts in Perelman's Ricci flow with surgery and the singular Ricci flow. 
Section \ref{s: Preparatory results} is for some technical lemmas. 
In Section \ref{s: CNA and NC}, we generalize Perelman's no local collapsing and canonical neighborhood theorem to singular Ricci flows.
It provides a distance-dependent lower bound on the non-collapsing scale and canonical neighborhood scale,
assuming the geometry is bounded in a parabolic neighborhood of the base point.

In Section \ref{s: heat kernel}, we define a heat kernel $H$ for a singular Ricci flow $\M$. For any point $(x_0,t_0)\in\M$, $H(x_0,t_0;\cdot,\cdot)$ is a positive solution to the conjugate heat equation on $\M$, which is a $\delta$-function around $(x_0,t_0)$.
Moreover, we show that the heat kernel decays polynomially fast to zero as the curvature blows up. 
This implies that the overall amount of heat is a constant, i.e. the integral of $H(x_0,t_0;\cdot,t)$ at all times $t$ prior to $t_0$ is equal to one. Note that for the ordinary heat kernel of a compact smooth Ricci flow, the constancy of the integral is easily shown by a computation using integration by part. 
Moreover, the polynomial decay near the high curvature region also implies that Perelman's Harnack inequality holds for singular Ricci flow.
With these properties of the heat kernel, we are able to generalize Perelman's pseudolocality theorem to singular Ricci flow in Section \ref{s: pseudolocality}. 

In Section \ref{s: construction of non-compact SRF}, we define the generalized singular Ricci flow, and prove Theorem \ref{t: construction1} and \ref{t: convergence}. 
The proofs depend on a compactness theorem, which states that assuming there is a uniform distance-dependent canonical neighborhood assumption in a sequence of pointed singular Ricci flows, then a subsequence converges smoothly to a semi-generalized singular Ricci flow, which satisfies most properties of the generalized singular Ricci flow. 
The compactness theorem can be proved by first taking a Gromov-Hausdorff limit, and showing that the convergence is smooth on the subset of points which are limits of points with bounded curvature. This induces a semi-generalized singular Ricci flow.
To prove Theorem \ref{t: convergence}, by applying the compactness theorem, we get a semi-generalized singular Ricci flow in which the base point $x_0$ survives until its curvature goes unbounded.
Then a generalized singular Ricci flow is obtained
by varying the base points and gluing up all the corresponding semi-generalized singular Ricci flows.



In Section \ref{s: it is actually smooth} we prove Theorem \ref{t: existence}. First, by a maximum principle argument, we show in Lemma \ref{l: Ricci} that the generalized singular Ricci flow $\M$ preserves the non-negativity of Ricci curvature. 
Suppose the curvature blows up in a ball of finite radius. Then by the canonical neighborhood assumption, we can show that the curvature blow-up is due to the asymptotic formation of a cone-like
point.
Doing a further rescaling at this cone-like point, we obtain a Ricci flow solution whose final time-slice is a part of a non-flat metric cone, which is impossible.
So $\M$ is in fact a non-singular Ricci flow spacetime with complete time-slices.
Restricting the spacetime on $M$, we obtain a smooth Ricci flow.

I thank my advisor Richard Bamler for helpful discussions and many comments. I also thank John Lott and Guoqing Wu for comments, and Paula Burkhardt and Angxiu Ni for correcting my English.
\end{section}

\begin{section}{Preliminary}\label{s: preliminary}
In this section, we collect some notions and concepts of Perelman's Ricci flow with surgery \cite{KL} and singular Ricci flow \cite{KL1}, \cite{BK2} that will be frequently used later. 

\begin{subsection}{Ricci flow spacetime}

\begin{defn}[Ricci flow spacetime]
A Ricci flow spacetime is a tuple $(\M,\mathfrak{t},\partial_{\mathfrak{t}},g)$ (sometimes abbreviate as $\M$ or $(\M,g(t))$) with the following properties:
\begin{enumerate}
    \item $\M$ is a smooth $4$-manifold with (smooth) boundary $\partial\M$.
    \item $\mathfrak{t}: \M\rightarrow[0,T)$, where $T$ can be infinity, is a smooth function without critical points. For any $t\ge 0$ we denote by $\M_t:=\mathfrak{t}^{-1}(t)\subset \M$ the time-t-slice of $\M$.
    \item $\partial\M=\M_0$, i.e. the boundary of $\M$ is equal to the initial time-slice. 
    \item $\partial_{\mathfrak{t}}$ is a smooth vector field (the time vector field), which satisfies $\partial_{\mathfrak{t}}\mathfrak{t}\equiv 1$.
    \item $g$ is a smooth inner product on the spacial subbundle ker$(d\mathfrak{t})\subset T\M$. For any $t\ge 0$ we denote by $g(t)$ the restriction of $g$ to the time-t-slice $\M_t$, which is a Riemannian metric.
    \item $g$ satisfies the Ricci flow equation: $\LL_{\partial_t}g=-2\Ric(g(t))$. 
\end{enumerate}
We call the Riemannian metric $G:=dt^2+g$ the spacetime metric.

\end{defn}

\begin{defn}[Points in a Ricci flow spacetime]
Let $(\M,\mathfrak{t},\partial_{\mathfrak{t}},g)$ be a Ricci flow spacetime and $x\in\M$ be a point. Set $t:=\mathfrak{t}(x)$. We sometimes write $x$ as $(x,t)$ to indicate its time, when there is no ambiguity. Consider the maximal trajectory $\gamma_x: I\rightarrow \M$, $I\subset[0,\infty)$ of the time-vector field $\partial_{\mathfrak{t}}$ such that $\gamma_x(t)=x$. Note that $\mathfrak{t}(\gamma_x(t'))=t'$ for all $t'\in I$. For any $t'\in I$ we say that $x$ survives until time $t'$ and we write \begin{equation}
    x(t'):=\gamma_x(t').
\end{equation}
Similarly, for a subset $X\subset \M_t$, we say that $X$ survives until time $t'$ if this is true for every $x\in X$, and we write $X(t')=\{x(t'): x\in X\}$.
\end{defn}

\begin{defn}[Distance and metric balls]
Let $(\M,\mathfrak{t},\partial_{\mathfrak{t}},g)$ be a Ricci flow spacetime. For any two points $x,y \in \M_t$ we denote by $d_t(x,y)$, or simply $d(x,y)$ the distance between $x,y$ within $(\M_t,g(t))$.

For any $x\in\M_t$ and $r\ge 0$ we denote by $B_t(x,r)\subset \M_t$ the $r$-ball around $x$ with respect to the Riemannian metric $g(t)$.

\end{defn}

\begin{defn}[Parabolic neighborhood]
Let $(\M,\mathfrak{t},\partial_{\mathfrak{t}},g)$ be a Ricci flow spacetime. For any $y\in \M$ let $I_y\subset[0,\infty)$ be the set of all times until which $y$ survives. Let $x\in\M$ and $a\ge 0, b\in\mathbb{R}$. Set $t=\mathfrak{t}(x)$. Then we define the \textit{parabolic neighborhood} $P(x,a,b)\subset\M$ to be:
\begin{equation}
    P(x,a,b):=\bigcup_{y\in B_t(x,a)}\bigcup_{t'\in[t,t+b]\cap I_y} y(t').
\end{equation}
If $b<0$, we replace $[t,t+b]$ by $[t+b,t]$.
We call $P(x,a,b)$ \textit{unscathed} if $B(x,a)$ is relatively compact in $\M_t$ and if $B(x,a)$ survives until $t+b$. 
\end{defn}

\begin{defn}[Admissible curve and accessibility]
Let $\M$ be a Ricci flow spacetime, we say $\gamma:[c,d]\rightarrow\M$ is an admissible curve if $\gamma(t)\in \M_t$ for all $t\in[c,d]$. 
We say a point $x\in\M$ with $\mathfrak{t}(x)<\mathfrak{t}(x_0)$ is accessible from $x_0$ if there is an admissible curve running from $(x,t)$ to $(x_0,t_0)$.

Let $x_0\in\M_t$, $t>0$. We denote by $\M(x_0)$ the subset consisting of all points in $\M$ that are accessible to $x_0$.
\end{defn}

\begin{defn}[Hamilton-Ivey pinching]
Let $M$ be a 3 dimensional Riemannian manifold and $\varphi>0$. We say that the curvature at $x\in M$ is $\varphi$-positive if there is an $X>0$ with $\Rm(x)\ge-X$ such that
\begin{equation}\label{e: Hamilton-Ivey for manifold}
    R(x)\ge-\frac{3}{\varphi^{-1}}\quad \textnormal{and} \quad R(x)\ge X(\log X+\log(\varphi^{-1})-3).
\end{equation}

Let $(M,g(t)), t\in[0,T]$ be a 3 dimensional compact Ricci flow and $\varphi\in\mathbb{R}_+\cup\infty$. We say that the curvature at $(x,t)\in M\times[0,T]$ is $\varphi$-positive if there is an $X>0$ with $\Rm(x,t)\ge-X$ such that
\begin{equation}\label{e: Hamilton-Ivey}
    R(x,t)\ge-\frac{3}{\varphi^{-1}+t}\quad \textnormal{and} \quad R(x,t)\ge X(\log X+\log(\varphi^{-1}+t)-3).
\end{equation}
\end{defn}
The Hamilton-Ivey pinching theorem \cite[Appendix B]{KL} says that if the curvature is $\varphi$-positive at time $0$, then the curvature is $\varphi$-positive at all positive times.
Moreover, the same conclusion also holds for singular Ricci flow \cite[Theorem 1.3]{KL1}.

\begin{defn}[$\kappa$-non-collapsed]
Let $(M,g)$ be a 3 dimensional Riemannian manifold, $x\in M$ and $\kappa,r_0>0$. We say $M$ is $\kappa$-non-collapsed at $x$ at scales less than $r_0$, if $r^{-3}vol(B_g(x,r))\ge\kappa>0$, for all $0<r\le r_0$ such that $|\Rm|\le r^{-2}$ holds on $B_g(x,r)$.
\end{defn}

\begin{defn}[Normalized manifold]
Let $(M,g)$ be a $3$-dimensional compact orientable connected Riemannian manifold that
\begin{enumerate}
    \item is not a higher spherical space form,
    \item has scalar curvature $R<1$ everywhere,
    \item is 1-non-collapsed at scales less than $1$ and
    \item satisfies the 1-positive curvature condition at time $0$.
\end{enumerate}
Then we say $(M,g)$ has normalized geometry. For a Ricci flow spacetime, we say it has normalized initial condition if it starts from a manifold $(M,g)$ with normalized geometry.
\end{defn}

\begin{defn}[Curvature scale]\label{d: curvature scale}
Let $(M,g)$ be a 3 dimensional Riemannian manifold and $x\in M$ a point. Let the curvature scale at $x$ be
\begin{equation}
    \rho(x)=R_+^{-1/2},
\end{equation}
where $R_+=\max\{R,0\}$, and we use the convention $0^{-1/2}=\infty$.
\end{defn}

\begin{defn}(0-complete)\label{d: complete}
We say a Ricci flow spacetime $\M$ is $0$-complete if for any smooth curve $\gamma:[0,s_0)\rightarrow \M$  that satisfies $\inf_{[0,s_0)} \rho(\gamma(s))>0$ and one of the following
\begin{enumerate}
\item $\gamma([0,s_0))$ is contained in a time-slice $\M_t$, and has finite length with respect to the horizontal metric in $\M_t$, or
\item $\gamma$ is the integral curve of $-\partial_t$, or $\partial_t$.
\end{enumerate}
Then $\lim_{s\rightarrow s_0}\gamma(s)$ exists.

Also, we say a spacetime is backward (resp. forward) 0-complete if in case (2), $\gamma$ is only the integral curve of $-\partial_t$ (resp. $\partial_t$).

We say a manifold is 0-complete if it satisfies condition (1).

\end{defn}


\end{subsection}

\begin{subsection}{Singular Ricci flow}

\begin{defn}[$\kappa$-solution]
An ancient Ricci flow $(M,g(t)_{t\in(\infty,0]})$ on a 3 dimensional manifold $M$ is called a $\kappa$-solution if it satisfies the following:
\begin{enumerate}
    \item $(M,g(t))$ is complete for all $t\in(-\infty,0]$,
    \item $|\Rm|$ is bounded on $M\times(-\infty,0]$,
    \item $\textnormal{sec}\ge 0$ on $M\times(-\infty,0]$,
    \item $(M,g(t))$ is $\kappa$-non-collapsed at all scales for all $t\in(-\infty,0]$.
\end{enumerate}

\end{defn}

\begin{defn}[Geometric closeness]
We say that a pointed Riemannian manifold $(M,g,x)$ is $\epsilon$-close to another pointed Riemannian manifold $(\overline{M},\overline{g},\ox)$ at scale $\lambda>0$ if there is a diffeomorphism onto its image
\begin{equation}
    \psi: B^{\overline{M}}(\ox,\epsilon^{-1})\rightarrow M
\end{equation}
such that $\psi(\ox)=x$ and
\begin{equation}
    \|\lambda^{-2}\psi^{*}g-\overline{g}\|_{C^{[\epsilon^{-1}]}(B^{\overline{M}}(\ox,\epsilon^{-1}))}<\epsilon.
\end{equation}
Here the $C^{[\epsilon^{-1}]}$-norm of a tensor $h$ is defined to be the sum of the $C^0$-norms of the tensors $h,\nabla^{\overline{g}}h,\nabla^{\overline{g},2}h,...,\nabla^{\overline{g},[\epsilon^{-1}]}h$ with respect to the metric $\overline{g}$.

Similarly, we say a pointed Ricci flow $(M,g(t),(x,0))$ is $\epsilon$-close to a pointed Ricci flow $(\overline{M},\overline{g}(t),(\ox,0))$ on $[a,b]$ ($a\le0\le b$) at scale $\lambda>0$ if $g(t)$ is defined on $[\lambda^2a,\lambda^2b]$, and
there is a diffeomorphism onto its image
\begin{equation}
    \psi: B^{\overline{M}}_{\overline{g}(0)}(\ox,\epsilon^{-1})\rightarrow M
\end{equation}
such that $\psi(\ox)=x$ and
\begin{equation}
    \|\lambda^{-2}\psi^{*}g(\lambda^2t)-\overline{g}(t)\|_{C^{[\epsilon^{-1}]}(B_{\overline{g}(0)}^{\overline{M}}(\ox,\epsilon^{-1}))}<\epsilon
\end{equation}
for all $t\in[a,b]$, where the norm is measured with respect to the metric $\overline{g}(t)$. 
In particular, when $a=-\epsilon^{-1}$ and $b=0$, we simply say $(M,g(t),(x,0))$ is $\epsilon$-close to $(\overline{M},\overline{g}(t),(\ox,0))$.
\end{defn}

\begin{defn}[$\delta$-neck and strong $\delta$-neck]
Let $(M,g)$ be a 3 dimensional Riemannian manifold and $\delta>0$. Suppose $U\subset M$ is an open subset, $x\in U$. We say $U$ is a $\delta$-neck centered at $x$, if $(U,g)$ is $\delta$-close to the standard cylindrical metric on $(-\delta^{-1},\delta^{-1})\times S^2$ at scale $\rho(x)$. 

Let $(M,g(t))$ be a Ricci flow. Suppose $U\subset M$ is an open subset and $x$ is a point in $U$. We say that $U$ is a strong $\delta$-neck on $[-c,0]$ centered at $x$ for some $c>0$, if $(U,g(t),x)$ is $\delta$-close to the standard cylindrical flow on the time interval $[-c,0]$ at scale $\rho(x)$. We simply call it a strong $\delta$-neck when $c=-\delta^{-1}$.
\end{defn}

\begin{defn}[Canonical neighborhood assumption]
Let $(M,g)$ be a 3 dimensional Riemannian manifold and $\epsilon>0$. We say that $(M,g)$ satisfies the $\epsilon$-canonical neighborhood assumption at some point $x\in M$ if there is a $\kappa$-solution $(\overline{M},\overline{g}(t)_{t\in(-\infty,0]})$ and a point $\ox\in\overline{M}$ such that $\rho(\ox,0)=1$ and $(M,g,x)$ is $\epsilon$-close to $(\overline{M},\overline{g}(0),\ox)$ at scale $\rho(x)>0$.

We say that $(M,g)$ satisfies the $\epsilon$-canonical neighborhood assumption at scales $(r_1,r_2)$, for some $r_2>r_1>0$, if $M$ satisfies the $\epsilon$-canonical neighborhood assumption at every point $x\in M$ with $r_1<\rho(x)<r_2$.

\end{defn}

\begin{lem}\label{l: derivative}(Gradient estimate, see e.g. \cite[Lemma 8.1]{BK})
There exist $\overline{\epsilon}$, and $\eta>0$ such that for all $\epsilon\le\overline{\epsilon}$ the following holds: If $\M$ is a Ricci flow spacetime satisfying the $\epsilon$-canonical neighborhood assumption at some point $x\in\M$, then
\begin{equation}\label{e: derivative}
    |\nabla\rho|(x)\le \eta,\quad
    |\partial_t\rho^2(x)|\le \eta.
\end{equation}


\end{lem}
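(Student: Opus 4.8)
The plan is to reduce both estimates to the corresponding derivative bounds on $\kappa$-solutions and then transport them through the geometric closeness supplied by the $\epsilon$-canonical neighborhood assumption. As a first step I would record the model estimate: there is a universal constant $\eta_0>0$ such that every $\kappa$-solution $(\overline M,\overline g(t))$ satisfies, at every point,
\begin{equation*}
  |\nabla R|\le \eta_0 R^{3/2},\qquad |\Delta R|+|\Ric|^2\le \eta_0 R^2 .
\end{equation*}
The first inequality is Perelman's gradient estimate for $\kappa$-solutions. For the second, $\sec\ge 0$ in dimension $3$ gives $0\le\Ric\le R\,g$, hence $|\Ric|^2\le R^2$; combining this with Perelman's estimate $|\partial_t R|\le \eta_0 R^2$ and the evolution equation $\partial_t R=\Delta R+2|\Ric|^2$ yields $|\Delta R|\le |\partial_t R|+2|\Ric|^2\le(\eta_0+2)R^2$, and enlarging $\eta_0$ absorbs the constants. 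All of this rests on Perelman's compactness theorem for $\kappa$-solutions, which shows that the space of pointed $\kappa$-solutions normalized by $R(\overline x,0)=1$ is compact in the pointed smooth topology (the round shrinking space forms being checked directly); in particular such solutions are non-flat, so $R>0$ everywhere and $\rho=R^{-1/2}$, $\rho^2=R^{-1}$ are smooth positive functions.

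Next I would transport these bounds to $x$. Let $(\overline M,\overline g(t)_{t\in(-\infty,0]})$ and $\overline x$ be the $\kappa$-solution and point witnessing the assumption at $x$, so $\rho(\overline x,0)=1$, i.e. $R(\overline x,0)=1$, and there is a diffeomorphism onto its image $\psi:B^{\overline M}_{\overline g(0)}(\overline x,\epsilon^{-1})\to M$ with $\psi(\overline x)=x$ and $\|h-\overline g(0)\|_{C^{[\epsilon^{-1}]}}<\epsilon$ on $B^{\overline M}_{\overline g(0)}(\overline x,\epsilon^{-1})$, where $h:=\rho(x)^{-2}\psi^*g$; in particular $0<\lambda:=\rho(x)<\infty$. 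I would fix $\overline\epsilon>0$ small enough that $[\overline\epsilon^{-1}]\ge 4$ and that, for $\epsilon\le\overline\epsilon$, the $C^4$-closeness of $h$ to $\overline g(0)$ forces, at the point $\overline x$,
\begin{equation*}
  \tfrac12\le R_h\le 2,\qquad |\nabla_h R_h|\le 2\eta_0,\qquad |\Delta_h R_h|+|\Ric_h|^2\le 2\eta_0 .
\end{equation*}
This is possible because $R$, $\nabla R$, $\Delta R$ and $|\Ric|^2$ are universal expressions in the inverse metric and the metric derivatives up to order $4$, depending continuously in the $C^4$-topology, and for the limit metric $\overline g(0)$ they take the values $1$, a vector of norm $\le\eta_0$, and scalars of total size $\le\eta_0$ at $\overline x$. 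Since $g=\lambda^2 h$ under $\psi$, and $R$, $\nabla R$, $\Delta R$, $|\Ric|^2$ scale by $\lambda^{-2}$, $\lambda^{-3}$, $\lambda^{-4}$, $\lambda^{-4}$ respectively, this yields at $x$:
\begin{equation*}
  R(x)\ge\tfrac12\lambda^{-2},\qquad |\nabla R|(x)\le 2\eta_0\lambda^{-3},\qquad |\Delta R|(x)+|\Ric|^2(x)\le 2\eta_0\lambda^{-4}.
\end{equation*}

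Finally I would differentiate at $x$, where $\rho=R^{-1/2}$ is smooth:
\begin{equation*}
  |\nabla\rho|(x)=\tfrac12 R(x)^{-3/2}|\nabla R|(x)\le\tfrac12(2\lambda^{2})^{3/2}\cdot 2\eta_0\lambda^{-3}=2^{3/2}\eta_0,
\end{equation*}
and, using $\partial_t R=\Delta R+2|\Ric|^2$ (whose right-hand side at $x$ involves only the geometry of the time slice through $x$, and is therefore already controlled above),
\begin{equation*}
  |\partial_t\rho^2|(x)=R(x)^{-2}|\partial_t R|(x)\le (2\lambda^{2})^{2}\cdot\big(|\Delta R|(x)+2|\Ric|^2(x)\big)\le 4\lambda^4\cdot 4\eta_0\lambda^{-4}=16\eta_0 .
\end{equation*}
Taking $\eta:=16\eta_0$ establishes both bounds. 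The only substantive ingredient is the uniform derivative control on $\kappa$-solutions in the first step, which rests on Perelman's compactness theorem; everything afterwards is the definition of geometric closeness together with parabolic scaling, and $\overline\epsilon$ is chosen precisely to absorb the continuity moduli of the finitely many curvature expressions involved.
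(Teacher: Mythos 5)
Your argument is correct and is essentially the standard proof of this lemma, which the paper does not reprove but simply cites from Bamler--Kleiner: derivative estimates on $\kappa$-solutions (via compactness of the normalized family) transported to $x$ through the $\epsilon$-closeness and parabolic scaling. You correctly handled the one genuinely delicate point, namely that the canonical neighborhood assumption here is only a time-slice condition, so the bound on $|\partial_t\rho^2|$ must be extracted from spatial data alone via $\partial_t R=\Delta R+2|\Ric|^2$.
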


Hereafter, we always assume $\epsilon>0$ to be smaller than the $\overline{\epsilon}$ from the above lemma whenever we talk about the $\epsilon$-canonical neighborhood assumption.

\begin{lem}\label{l: either neck or cap}(\cite[Lemma 8.2]{BK})
For every $\delta>0$ there are constants $C_0(\delta),\epsilon_{can}(\delta)>0$ such that if
    $\epsilon\le\epsilon_{can}(\delta)$,
then the following holds.

Let $(M,g)$ be a Riemannian manifold that satisfies the $\epsilon$-canonical neighborhood assumption at some point $x\in M$. 
Then $x$ is contained in a compact, connected domain $V\subset M$ such that $\textnormal{diam}(V)\le C_0\rho(x)$ and $\rho(y_1)\le C_0\rho(y_2)$ for all $y_1,y_2\in V$, and one of the following hold:
\begin{enumerate}
    \item $V$ is a $\delta$-neck at scale $\rho(x)$ and $x$ is its center.
    \item $V$ is a closed manifold without boundary. 
    \item Either $V$ is a $3$-disk or is diffeomorphic to a twisted interval bundle over $\mathbb{R}P^2$ and $\partial V$ is a central 2-sphere of a $\delta$-neck. We call $V$ a $\delta$-cap and $x$ its center. Moreover, for any $y_1,y_2\in\partial V$, we have $d(y_1,x)+d(y_2,x)\ge d(y_1,y_2)+100\rho(x)$.
\end{enumerate}
\end{lem}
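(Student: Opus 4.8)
The plan is to reduce everything to the structure theory of $3$-dimensional $\kappa$-solutions and then transport the conclusion through the almost-isometry furnished by the canonical neighborhood assumption. By definition of the $\epsilon$-canonical neighborhood assumption there are a $\kappa$-solution $(\overline{M},\overline{g}(t)_{t\in(-\infty,0]})$, a point $\ox\in\overline{M}$ with $\rho(\ox,0)=1$, and a diffeomorphism onto its image $\psi: B^{\overline{M}}(\ox,\epsilon^{-1})\to M$ with $\psi(\ox)=x$ and $\|\rho(x)^{-2}\psi^*g-\overline{g}(0)\|_{C^{[\epsilon^{-1}]}}<\epsilon$. So it is enough to prove the three alternatives for $(\overline{M},\overline{g}(0),\ox)$ with a slightly sharper neck/cap parameter $\delta'=\delta'(\delta)$, a slightly smaller constant $C_0'=C_0'(\delta)$ in place of $C_0$, and $200\,\rho(\ox,0)$ in place of $100\,\rho(x)$ in the cap inequality; since $\psi$ is $C^{[\epsilon^{-1}]}$-close to a homothety of factor $\rho(x)$ on the huge ball $B^{\overline{M}}(\ox,\epsilon^{-1})$, choosing $\epsilon=\epsilon_{can}(\delta)$ small enough depending on $\delta'$ turns a $\delta'$-neck into a $\delta$-neck, a $\delta'$-cap into a $\delta$-cap, at most doubles the diameter and curvature-ratio bounds (take $C_0=2C_0'$), and perturbs the cap inequality by at most $\epsilon$, leaving $100\,\rho(x)$ on the right.

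Next I would invoke the classification of $3$-dimensional $\kappa$-solutions (Perelman; see also \cite{KL}): after normalizing so that $\rho(\ox,0)=1$, the pointed time-$0$ slice $(\overline{M},\overline{g}(0),\ox)$ is, up to the choice of basepoint, one of a compact quotient of the round $S^3$, the round cylinder $S^2\times\mathbb{R}$ or its $\mathbb{Z}_2$-quotient, a noncompact one-ended $\kappa$-solution on $\mathbb{R}^3$ or on the twisted line bundle over $\mathbb{R}P^2$ (asymptotic at infinity to a round cylinder), or a compact $\kappa$-solution diffeomorphic to $S^3$ or $\mathbb{R}P^3$ consisting of two such cap-ends joined by a cylindrical tube. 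For each of these, Perelman's canonical-neighborhood dichotomy at $\ox$ gives: if $\ox$ lies in the cylindrical part, it is the center of a $\delta'$-neck at scale $\rho(\ox,0)$, which is alternative (1); if the whole manifold is compact with uniformly bounded normalized diameter and uniformly pinched normalized curvature, we are in alternative (2) with $V=\overline{M}$; and if $\ox$ lies in one of the noncompact cap-ends, then the closure of that end up to a central $2$-sphere of a neck is a compact connected domain $V$ that is a $3$-disk or a twisted $I$-bundle over $\mathbb{R}P^2$ with $\partial V$ the central $2$-sphere of a $\delta'$-neck, which is alternative (3). The uniform bounds $\textnormal{diam}(V)\le C_0'\rho(\ox,0)$ and $\rho(y_1)\le C_0'\rho(y_2)$ follow from Perelman's compactness of the space of pointed normalized $\kappa$-solutions, which yields a single $C_0'=C_0'(\delta)$ valid for all of them.

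For the depth inequality in the cap case I would argue as follows: in a $\kappa$-solution cap-end the tip $\ox$ lies at a definite normalized distance from the central $2$-sphere $\partial V$ of the bounding $\delta'$-neck, and the end is "radial," in the sense that any minimizing geodesic from a point of $\partial V$ to $\ox$ must cross essentially the whole neck before entering the core. Hence, concatenating near-minimizers $y_1\rightsquigarrow\ox$ and $\ox\rightsquigarrow y_2$ produces a path whose length exceeds $d(y_1,y_2)$ — which is realized by a path staying near $\partial V$ — by at least a fixed multiple of $\rho(\ox,0)$; by the compactness of $\kappa$-solutions this multiple has a uniform positive lower bound, and by shrinking $\delta'$ (lengthening the neck) it can be made larger than $200$. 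Transporting through $\psi$ then gives $d(y_1,x)+d(y_2,x)\ge d(y_1,y_2)+100\,\rho(x)$.

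The main obstacle is the analytic input underpinning all of this: the structure theory of $3$-dimensional $\kappa$-solutions and, in particular, the compactness of the space of pointed $\kappa$-solutions normalized at scale $1$ in the smooth pointed topology — this is exactly what makes $C_0'(\delta)$, the diameter bound for the compact ones, and the uniform "tip depth" exist at all, and its proof rests on the no-local-collapsing property and the Hamilton--Ivey pinching recalled above, together with a dimension-reduction argument. Granting that deep fact, the remainder is bookkeeping: fix $\delta'$, then $\epsilon_{can}(\delta)$, then $C_0$, and check that each geometric conclusion survives a $C^{[\epsilon^{-1}]}$-small perturbation.
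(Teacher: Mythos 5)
The paper gives no proof of this lemma at all: it is quoted verbatim from \cite[Lemma 8.2]{BK}, so the only "approach" in the paper is the citation. Your sketch --- pull the statement back to the model $\kappa$-solution through the closeness map $\psi$, invoke Perelman's compactness and structure theory for $3$-dimensional $\kappa$-solutions to obtain the neck/cap/closed trichotomy with uniform constants $C_0'(\delta)$ and a uniform tip depth, and then transport everything forward while absorbing the $C^{[\epsilon^{-1}]}$-small perturbation --- is precisely the standard argument by which the cited result is proved, and the order of quantifiers you fix at the end (first $\delta'$, then $\epsilon_{can}(\delta)$, then $C_0$) is the right one.
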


\begin{defn}[$\delta$-tube and capped $\delta$-tube]
A \textit{$\delta$-tube} $T$ in a Riemannian 3-manifold $M$ is a submanifold diffeomorphic to $S^2\times\mathbb{R}$ which is a union of $\delta$-necks with the central spheres that are isotopic to the 2-spheres of the product structure.

A capped $\delta$-tube is a connected submanifold that is the union of a $\delta$-cap and a $\delta$-tube where the intersection of them is diffeomorphic to $S^2\times\mathbb{R}$ and contains an end of the $\delta$-tube and an end of the $\delta$-cap.
\end{defn}

\begin{lem}(High curvature regions are covered by tubes and capped-tubes)\label{l: central sphere decomposition}
For some sufficiently small $\delta>0$, there exist 
$\epsilon_{can}(\delta), C_0(\delta), \lambda(\delta),\Lambda(\delta)>0$ such that the following holds:

Let $(M,g,x_0)$ be a 3 dimensional Riemannian manifold which is $0$-complete, $x_0\in M$, $\rho(x_0)\ge C_0$. Suppose the $\epsilon_{can}$-canonical neighborhood assumption holds at scales $(0,1)$ on $B_g(x_0,d)$ for some $d\ge 2$.
Let $r_0\in(0,1)$, then there exists a collection $S$ of disjoint $\delta$-tubes and capped $\delta$-tubes in $B_g(x_0,d)$ such that
\begin{enumerate}
    \item For all $x\in B_g(x_0,d-1)$ with $\rho(x)\le \lambda r_0$, $x$ is contained in $\bigcup_{V\in S}V$.
    \item $\rho\le \Lambda r_0$ on $\bigcup_{V\in S}V$.
    \item The boundary components of all $V\in S$ are central spheres of some $\delta$-neck with scale equal to $r_0$.
    \item Suppose for some $C>0$, $vol(B_g(x_0,d))\le C$. Then there is $N(C,r_0)>0$ such that the number of elements in $S$ is less than $N$.
\end{enumerate}
\end{lem}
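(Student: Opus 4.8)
The plan is to build the collection $S$ by first applying Lemma \ref{l: central sphere decomposition}'s hypotheses to cover the low-curvature-scale region by canonical neighborhoods (necks and caps), then organizing these into maximal tubes and capped tubes, and finally cutting along carefully chosen $\delta$-necks of scale exactly $r_0$ to get disjointness and the boundary condition (3). Concretely, I would first invoke Lemma \ref{l: either neck or cap} with the given $\delta$: every point $x\in B_g(x_0,d)$ with $\rho(x)$ small (say $\rho(x)<\frac{1}{2}$, which by $\rho(x_0)\ge C_0$ and the gradient estimate Lemma \ref{l: derivative} forces $x$ to lie at definite distance from $x_0$) has a canonical neighborhood $V_x$ that is a $\delta$-neck, a closed component, or a $\delta$-cap. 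The closed-component case cannot occur here because $x_0$ itself has large $\rho$ and $B_g(x_0,d)$ is connected, so the component containing $x$ would contain $x_0$, contradicting that $V_x$ has bounded diameter $C_0\rho(x)$ while $\rho(x_0)\ge C_0$ forces a point of large scale outside $V_x$. So near every low-scale point we see only necks and caps.

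Next I would run a chaining/continuity argument along the gradient flow of $\rho$: starting from a low-scale point $x$ with $\rho(x)\le\lambda r_0$, follow a path of decreasing $\rho$; by the gradient estimate $|\nabla\rho|\le\eta$, one cannot jump across the scale $r_0$ too quickly, so between scale $\lambda r_0$ and scale $r_0$ there is a definite ``collar'' of necks. Using the standard neck-structure lemmas (the overlap of two $\delta$-necks is again neck-like, so a connected union of $\delta$-necks with compatible central spheres is a $\delta$-tube), I would assemble the union of all canonical neighborhoods meeting $\{\rho\le\lambda r_0\}\cap B_g(x_0,d-1)$ into connected pieces; each piece is either a $\delta$-tube, a capped $\delta$-tube, or a closed manifold — and the last is excluded as above. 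To enforce (2), I choose $\lambda=\lambda(\delta)$ small enough and $\Lambda=\Lambda(\delta)$ large enough (depending on $C_0,\eta$) that, starting from scale $\le\lambda r_0$ and using $|\nabla\rho|\le\eta$ together with $C_0$-bounded scale ratios on each canonical neighborhood, the scale cannot exceed $\Lambda r_0$ before one exits $B_g(x_0,d-1)$ through a sphere of scale exactly $r_0$; such a sphere exists by the intermediate value theorem applied to $\rho$ along the tube, and it is the central sphere of a $\delta$-neck because at that scale the canonical neighborhood is a $\delta$-neck (we are in the range where closed/cap alternatives are ruled out by the large-scale point $x_0$ sitting far away). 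Cutting each maximal piece along one such sphere at each end gives (3); discarding redundant overlapping pieces and keeping a maximal disjoint subfamily gives disjointness while retaining (1).

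For (4), I would use a packing/volume argument: each $V\in S$ contains a $\delta$-neck of scale $r_0$, hence contains a ball of radius $\sim r_0$ on which $|\Rm|\lesssim r_0^{-2}$, so $\mathrm{vol}(V)\gtrsim c(\delta)\,r_0^{3}$; since the $V$'s are disjoint and contained in $B_g(x_0,d)$ of volume $\le C$, the number of them is at most $N=N(C,r_0):= C/(c(\delta)r_0^3)$, up to adjusting constants for the capped pieces (a $\delta$-cap also has volume bounded below by $c(\delta)r_0^3$ once its boundary neck has scale $\sim r_0$, using the $C_0$-bounded scale ratio within the cap).

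The main obstacle I expect is the global topological/geometric bookkeeping in the cutting step: ensuring that after cutting along scale-$r_0$ necks and then extracting a \emph{disjoint} subfamily one still covers \emph{all} of $\{\rho\le\lambda r_0\}\cap B_g(x_0,d-1)$, with every boundary sphere genuinely the central sphere of a $\delta$-neck of scale exactly $r_0$ (not merely scale comparable to $r_0$). This requires a careful choice of which neck to cut along — one must move outward along the tube until $\rho$ first hits $r_0$, verify that at that moment the ambient canonical neighborhood is a neck rather than a cap or closed piece (which is where $0$-completeness and the hypothesis $\rho(x_0)\ge C_0$, $d\ge 2$ are used to propagate ``largeness'' of scale correctly), and check the isotopy condition on central spheres so the union is honestly a $\delta$-tube. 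The rest — the volume lower bounds, the scale-ratio control, the exclusion of closed components — is routine given Lemmas \ref{l: derivative}, \ref{l: either neck or cap} and $0$-completeness.
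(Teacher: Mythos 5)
Your proposal is correct and follows essentially the same route as the paper: the paper's proof likewise invokes Lemma \ref{l: either neck or cap} to see that every low-scale point is a neck or cap center (the closed case being excluded by $\rho(x_0)\ge C_0$), cites \cite[Proposition 19.21]{MT} for the fact that a connected union of such points is a $\delta$-tube or capped $\delta$-tube, and refers the cutting/counting bookkeeping to an adaptation of \cite[Lemma 9.4]{BK2}. Your sketch simply fills in the details (the scale-$r_0$ cutting via the gradient estimate and the volume-packing bound for (4)) that the paper delegates to those references.
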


\begin{proof}[Proof of Lemma \ref{l: central sphere decomposition}]
Let $x\in B_g(x_0,d)$ with $\rho(x)< 1$. Then by $\rho(x_0)\ge C_0$ and Lemma \ref{l: either neck or cap}, $x$ must be the center of a $\delta$-neck or a $\delta$-cap.
Note also that by \cite[Proposition 19.21]{MT}, a connected subset consisting of points which are centers of a $\delta$-neck or a $\delta$-cap is a $\delta$-tube or a capped $\delta$-tube.
Now the proof is an easy adaptation of \cite[Lemma 9.4]{BK2}.
\end{proof}

\begin{defn}[Singular Ricci flow]\label{d: SRF}
If $\epsilon>0$ and $r(t):[0,\infty)\rightarrow (0,\infty)$ is a non-increasing function. Then we say a Ricci flow space time $\M$ is an $(r,\epsilon)$-singular Ricci flow if the following holds:
\begin{enumerate}
    \item $\M_0$ is a compact orientable manifold;
    \item $\M$ is $0$-complete;
    \item $\M_{[0,t]}$ satisfies the $\epsilon$-canonical neighborhood assumption at scales $(0,r(t))$.
\end{enumerate}
We call $t$ a singular time if $\M_t$ is not compact.
\end{defn}

It was shown in \cite[Theorem 1.3]{KL1} that for any 3 dimensional compact Riemannian manifold $(M,g)$, there exists an $(r,\epsilon)$-singular Ricci flow with $\M_0$ isometric to $(M,g)$.

\end{subsection}

\begin{subsection}{Distance distortion estimates}\hfill\\
In this subsection, we review some standard distance distortion estimates under different curvature conditions. which are originally due to Hamilton and Perelman.

\begin{lem}\label{l: distance laplacian}(see e.g. \cite[Theorem 18.7]{RFTandA3})
Let $(M,g(t))_{t\in[0,T]}$ be a Ricci flow of dimension $n$. Let $K,r_0>0$.
\begin{enumerate}
    \item Let $x_0\in M$ and $t_0\in(0,T)$. Suppose that $\Ric\le(n-1)K$ on $B_{t_0}(x_0,r_0)$. Then the distance function $d(x,t)=d_t(x,x_0)$ satisfies
\begin{equation}
    (\pt-\Delta)|_{t=t_0}d\ge -(n-1)(\frac{2}{3}Kr_0+r_0^{-1}).
\end{equation}  
   \item Let $t_0\in[0,T)$ and $x_0,x_1\in M$. Suppose 
\begin{equation}
    \Ric(x,t_0)\le(n-1)K,
\end{equation}
for all $x\in B_{t_0}(x_0,r_0)\cup B_{t_0}(x_1,r_0)$.
Then 
\begin{equation}
    \pt|_{t=t_0}d_t(x_0,x_1)\ge -2(n-1)(Kr_0+r_0^{-1}).
\end{equation}
\end{enumerate}
\end{lem}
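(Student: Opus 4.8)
## Proof Plan for Lemma \ref{l: distance laplacian}

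The plan is to reduce both statements to the classical Laplacian comparison theorem combined with the variational characterization of the derivative of distance along a Ricci flow. I would treat part (1) first, then deduce part (2) from it by integrating the differential inequality across a fixed minimizing geodesic.

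\textbf{Part (1).} Fix $t_0$ and write $d(x) = d_{t_0}(x,x_0)$. Away from $x_0$ and the $t_0$-cut locus, $d$ is smooth, and the Laplacian comparison theorem under the Ricci lower bound $\Ric \ge \ldots$ is the wrong direction --- here we instead have $\Ric \le (n-1)K$ on the ball, so I would use the \emph{second variation / index form} argument to bound $\Delta_{g(t_0)} d$ from \emph{below}. Concretely, along a unit-speed minimizing geodesic $\gamma$ from $x_0$ to $x$ of length $r = d(x) \le r_0$, picking parallel orthonormal fields $\{e_i\}_{i=1}^{n-1}$ orthogonal to $\gamma'$ and using the test variation fields $\phi(s) e_i$ with a suitable cutoff $\phi$ supported near $x$ (the standard ``hat'' function that is $1$ at $x$ and tapers over a length-$\min(r_0,r)$ segment), the index form inequality gives
\begin{equation}
 \Delta_{g(t_0)} d(x) \ge -\int_0^r \phi^2 \,\Ric(\gamma',\gamma')\,ds - (n-1)\int_0^r (\phi')^2 \,ds \ge -(n-1)\Big(\tfrac{2}{3} K r_0 + r_0^{-1}\Big),
\end{equation}
where the constants $\tfrac23$ and $1$ come from optimizing the cutoff. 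Then, since $(M,g(t))$ is a Ricci flow, the time derivative of the distance function along a fixed point satisfies $\partial_t d(x,t) = -\int_\gamma \Ric(\gamma',\gamma')\,ds$ computed along a $g(t_0)$-minimizing geodesic (the first variation formula, valid in the barrier/support sense even at cut locus points). Since $\Ric \le (n-1)K$ only on the ball $B_{t_0}(x_0,r_0)$, I would split the geodesic into the portion inside the ball and the portion outside, bounding the outside contribution using the ``surgery at distance $r_0$'' trick of Perelman: replace $\gamma$ past radius $r_0$ by considering only the first $r_0$ of the geodesic, which lands entirely in the ball. Combining $\partial_t d \ge -\int \Ric$ with the Laplacian bound yields $(\partial_t - \Delta)d \ge -(n-1)(\tfrac23 K r_0 + r_0^{-1})$ in the barrier sense, which is the assertion.

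\textbf{Part (2).} Here $x_0, x_1$ are both fixed points and we want a lower bound on $\partial_t|_{t_0} d_t(x_0,x_1)$. The first variation formula gives $\partial_t|_{t_0} d_t(x_0,x_1) = -\int_\gamma \Ric_{g(t_0)}(\gamma',\gamma')\,ds$ along a $g(t_0)$-minimizing geodesic $\gamma$ from $x_0$ to $x_1$ (interpreting via $\liminf$ of forward difference quotients if $\gamma$ is non-unique). Now I would again use Perelman's localized trick: only the two end-segments of $\gamma$, of length $r_0$ each near $x_0$ and near $x_1$, lie in the region where $\Ric \le (n-1)K$ is assumed, while along the middle segment I have no pointwise bound but I \emph{do} have a lower bound $\Ric \ge -(n-1)\kappa$ from the Laplacian comparison applied near each endpoint --- more precisely, the standard argument shows $\int_\gamma \Ric(\gamma',\gamma')\,ds \le 2(n-1)(K r_0 + r_0^{-1})$ by using the index form on each of the two end-segments with a cutoff and discarding the middle (the middle contributes non-negatively after the cutoff is chosen to vanish there, by the second variation inequality since $\gamma$ is minimizing). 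This gives $\partial_t|_{t_0} d_t(x_0,x_1) \ge -2(n-1)(K r_0 + r_0^{-1})$.

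\textbf{Main obstacle.} The genuinely delicate point is handling the lack of smoothness of $d$ at the cut locus and making the differential inequalities rigorous in the barrier (or support-function) sense --- one must produce smooth upper barriers for $d$ at cut points, verify the first-variation formula for the time derivative holds as a liminf of difference quotients there, and check that the ``discard the middle segment'' step is legitimate, i.e. that restricting to end-segments with a cutoff vanishing in the middle really does bound the full integral of $\Ric$ along $\gamma$ from above (this uses minimality of $\gamma$ together with the second variation inequality, and is exactly where the factor of $2$ in part (2) versus part (1) originates). Since this is a standard result (cited as \cite[Theorem 18.7]{RFTandA3}), I would reference the detailed barrier arguments there rather than reproduce them, and focus the written proof on the index-form computation that produces the explicit constants.
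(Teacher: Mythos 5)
The paper does not prove this lemma; it quotes it from the reference given in the statement (it is Perelman's distance--Laplacian lemma), so the only question is whether your sketch of the standard argument is sound. You have correctly identified the two ingredients --- the first variation formula $\partial_t d=-\int_\gamma \Ric(\gamma',\gamma')\,ds$ and an index-form estimate for $\Delta d$ with a cutoff concentrated where the Ricci upper bound holds --- but your key inequality points the wrong way. Since $(\pt-\Delta)d=\partial_t d-\Delta d$, a lower bound on this quantity requires an \emph{upper} bound on $\Delta d$, not the lower bound you announce and display. The correct statement is
\begin{equation*}
\Delta d(x)\;\le\;\sum_{i=1}^{n-1} I(\phi e_i,\phi e_i)\;=\;\int_0^{d}\Bigl((n-1)(\phi')^2-\phi^2\,\Ric(\gamma',\gamma')\Bigr)\,ds,
\end{equation*}
with $\phi(s)=\min(s/r_0,1)$, i.e.\ $\phi$ vanishes at $x_0$ and ramps up to $1$ over $[0,r_0]$. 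Your cutoff instead tapers near $x$, and you restrict to $d(x)\le r_0$; both are the wrong ends, since the hypothesis controls $\Ric$ only on the ball about $x_0$ and the estimate is needed precisely for $x$ outside that ball. Your displayed inequality also carries the wrong sign on the $(\phi')^2$ term, and as written, combining a lower bound on $\Delta d$ with $\partial_t d\ge-\int\Ric$ yields nothing for the difference.

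The second, related gap is that you treat the portion of $\gamma$ outside the ball as something to be ``discarded.'' It cannot be discarded from $\partial_t d=-\int_0^{d}\Ric(\gamma',\gamma')\,ds$, over which there is no pointwise control; the point of the argument is that this uncontrolled integral \emph{cancels} against the $-\int\phi^2\Ric$ term of the Laplacian upper bound:
\begin{equation*}
(\pt-\Delta)d\;\ge\;-\int_0^{d}(1-\phi^2)\,\Ric(\gamma',\gamma')\,ds-(n-1)\int_0^{d}(\phi')^2\,ds\;\ge\;-(n-1)\Bigl(\tfrac{2}{3}Kr_0+r_0^{-1}\Bigr),
\end{equation*}
because $1-\phi^2$ is supported in $B_{t_0}(x_0,r_0)$ where $\Ric\le(n-1)K$ and $\int_0^{r_0}(1-s^2/r_0^2)\,ds=\tfrac{2}{3}r_0$. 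The same correction applies to part (2): the cutoff there must equal $1$ on the middle segment (not vanish there, as you say), so that $1-\phi^2$ is supported only on the two end-segments where the Ricci bound is assumed; minimality of $\gamma$ gives $\int\phi^2\Ric\le(n-1)\int(\phi')^2=2(n-1)/r_0$, and adding the end-segment error terms gives the stated constant. Your identification of the cut-locus/barrier issue as the remaining technical point is correct, but the central mechanism of the proof needs to be set up with the inequalities in the right direction before that issue even arises.
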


\begin{lem}\label{l: expanding lemma}(see e.g. \cite[Lemma 2.1, 2.2]{mollification})
Let $(M,g(t))_{t\in[0,T]}$ be a Ricci flow of dimension $n$, $x_0\in M$.
Let $K,A>0$.
\begin{enumerate}
    \item Suppose $\Ric_{g(t)}\ge -K$ on $ B_t(x_0,A)\subset\subset M$ for all $t\in[0,T]$. Then the following holds for all $t\in[0,T]$:
\begin{equation}
    B_0(x_0,Re^{-KT})\subset B_t(x_0,Re^{-K(T-t)}).
\end{equation}
    \item Suppose $\Ric_{g(t)}\le K$ on $ B_t(x_0,R)\subset\subset M$ for all $t\in[0,T]$. Then the following holds for all $t\in[0,T]$:
\begin{equation}
    B_t(x_0,Re^{-Kt})\subset B_0(x_0,R).
\end{equation}
\end{enumerate}

\end{lem}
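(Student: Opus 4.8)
The plan is to reduce both inclusions to an estimate for the $g(t)$-length of a single well-chosen curve — a minimizing geodesic taken at one fixed time — obtained from the first variation of arclength together with Grönwall's inequality and a continuity (bootstrap) argument; the two curvature hypotheses enter through the identity $\partial_t g=-2\Ric$, which shows that the lower bound $\Ric\ge -K$ controls how fast lengths can grow under the flow and the upper bound $\Ric\le K$ how fast they can shrink. Precisely, for a fixed curve $\sigma$ that is $g(s)$-unit-speed at the instant of differentiation, has $g(s)$-length $L(s)$, and lies at time $s$ in a region where $\Ric_{g(s)}$ is bounded on the relevant side, one has
\begin{equation*}
\frac{d}{ds}L(s)=-\int_\sigma \Ric_{g(s)}(\sigma',\sigma'),
\end{equation*}
so $\Ric_{g(s)}\ge -K$ along $\sigma$ gives $\frac{d}{ds}L(s)\le K L(s)$ and $\Ric_{g(s)}\le K$ along $\sigma$ gives $\frac{d}{ds}L(s)\ge -K L(s)$; Grönwall then yields $L(t)\le e^{K(t-s)}L(s)$, respectively $L(t)\ge e^{-K(t-s)}L(s)$, for as long as $\sigma$ stays inside the region carrying that bound.

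First I would prove (1). Fix $x\in B_0(x_0,Re^{-KT})$ and a $g(0)$-minimizing geodesic $\sigma$ from $x_0$ to $x$, so $L(0)=d_0(x_0,x)<Re^{-KT}$. Let $I$ be the set of $t\in[0,T]$ with $L(s)<Re^{-K(T-s)}$ for all $s\in[0,t]$; it contains $0$ and is relatively open by continuity of $s\mapsto L(s)$. For such $s$ every point of $\sigma$ lies at $g(s)$-distance at most $L(s)<Re^{-K(T-s)}\le A$ from $x_0$, hence in $B_s(x_0,A)$, so $\Ric_{g(s)}\ge -K$ holds along $\sigma$ and Grönwall gives $L(t)\le e^{Kt}L(0)<Re^{-KT}e^{Kt}=Re^{-K(T-t)}$. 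Hence $I$ is also relatively closed, so $I=[0,T]$, and $d_t(x_0,x)\le L(t)<Re^{-K(T-t)}$ for every $t$, i.e. $x\in B_t(x_0,Re^{-K(T-t)})$.

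For (2) I would run the same argument backward in time. Fix $t\in[0,T]$ and $x$ with $d_t(x_0,x)<Re^{-Kt}$, take a $g(t)$-minimizing geodesic $\sigma$ from $x_0$ to $x$, so $L(t)<Re^{-Kt}$, and let $J$ be the set of $s\in[0,t]$ with $L(r)<R$ for all $r\in[s,t]$; it is nonempty (as $t\in J$) and relatively open, on it $\sigma\subset B_r(x_0,R)$ so $\Ric_{g(r)}\le K$ holds along $\sigma$, and Grönwall gives $L(r)\le e^{K(t-r)}L(t)<Re^{-Kr}\le R$, whence $J$ is relatively closed and contains $0$. Thus $d_0(x_0,x)\le L(0)\le e^{Kt}L(t)<R$, i.e. $x\in B_0(x_0,R)$. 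The step I expect to require the most care is precisely this bootstrap: since the curvature bound is assumed only on a ball that moves with $t$, one must confirm the chosen geodesic never leaves that ball before invoking the bound; a minor technical point is that $t\mapsto d_t(x_0,x)$ is only locally Lipschitz, so the derivative estimates above should be read as one-sided barrier inequalities obtained by holding the geodesic fixed, which suffices for Grönwall. Otherwise the computation is routine, and one could alternatively deduce it from Lemma~\ref{l: distance laplacian}(2).
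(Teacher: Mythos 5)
Your proof is correct. Note that the paper itself does not prove this lemma at all: it is quoted directly from Simon--Topping (the cited Lemmas 2.1 and 2.2 of \cite{mollification}), and your argument --- first variation of the $g(s)$-length of a fixed minimizing geodesic, Gr\"onwall, and an open--closed bootstrap to keep the geodesic inside the ball carrying the curvature bound --- is essentially the standard proof given there. One small remark: in part (1) the printed hypothesis bounds $\Ric$ on $B_t(x_0,A)$ while the conclusion involves a radius $R$; your step ``$L(s)<Re^{-K(T-s)}\le A$'' silently assumes $R\le A$, which is clearly the intended reading (in the source the two radii coincide), so this is a defect of the statement as transcribed rather than of your argument. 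Your closing observation that one must work with a fixed curve, since $t\mapsto d_t(x_0,x)$ is only locally Lipschitz, is exactly the right technical point.
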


\end{subsection}

\end{section}

\begin{section}{Preparatory results}\label{s: Preparatory results}
In this section we prove some technical lemmas that will be used later.
First, in Lemma \ref{l: volume comparison}-\ref{l: abundance of neck points}, we study manifolds that are $0$-complete and satisfy a canonical neighborhood assumption at scales depending on the distance to a base point $x_0$. We show for such manifolds that any metric ball of a fixed radius centered at $x_0$ is uniformly totally bounded (Lemma \ref{l: volume comparison}), and for any point $x\in M$, there exists a minimizing geodesic from $x_0$ to $x$ (Lemma \ref{l: existence of minimizing geodesic}). 

Second, we show in Lemma \ref{l: close to k-solution} that for a Ricci flow spacetime with some appropriate canonical neighborhood assumption, the closeness of a time-slice to a $\kappa$-solution implies the closeness in a parabolic region. As a consequence, Lemma \ref{l: blow-up converges to a k-solution} shows that a blow-up sequence in a singular Ricci flow converges to a $\kappa$-solution defined from $-\infty$ to its maximal existence time.

\begin{lem}(Metric balls are uniformly totally bounded)\label{l: volume comparison}
Let $(M,g,x_0)$ be a 3 dimensional connected Riemannian manifold, $x_0\in M$. 
Suppose $M$ is 0-complete and $1$-positive. 
Suppose for any $A,\epsilon_{can}>0$, there are  $r(A,\epsilon_{can}),\kappa(A)>0$ such that the $\epsilon_{can}$-canonical neighborhood assumption and the $\kappa(A)$-non-collapsing assumption hold at scales less than $r(A,\epsilon_{can})$ on $B_g(x_0,A)$.

Then for any given $A,\epsilon>0$, there exist $V(A),N(A,\epsilon)>0$ such that $vol(B_g(x_0,A))\le V(A)$, and
the number of elements in any $\epsilon$-separating subset of $B_g(x_0,A)$ is bounded above by $N(A,\epsilon)$.
\end{lem}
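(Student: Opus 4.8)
The plan is to control $B:=B_g(x_0,A)$ by cutting it, relative to a threshold curvature scale $r_0>0$, into a \emph{bounded-curvature part} $B^{\ge}:=\{x\in B:\rho(x)\ge r_0\}$ and a \emph{high-curvature part} $B^{<}:=\{x\in B:\rho(x)<r_0\}$, and to control each separately. First I would fix $\delta>0$ small enough for Lemma~\ref{l: either neck or cap}, with its constant $C_0=C_0(\delta)$, set $\epsilon_{can}=\epsilon_{can}(\delta)$, feed $A':=A+C_0+1$ and $\epsilon_{can}$ into the hypothesis to obtain $r:=r(A',\epsilon_{can})$ and $\kappa:=\kappa(A')$, and then choose $r_0<\tfrac14 r$; the enlargement to $A'$ ensures that every canonical neighborhood met inside $B$ still lies where the hypotheses apply. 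Note that Lemma~\ref{l: central sphere decomposition} is morally what is wanted, but it cannot be invoked directly: it assumes $\rho(x_0)\ge C_0$, and its counting statement \emph{takes} a volume bound as input rather than producing one.

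On $B^{\ge}$ I would use $1$-positivity to upgrade the scalar bound to a full curvature bound: the Hamilton--Ivey inequality \eqref{e: Hamilton-Ivey for manifold} forces $\Rm\ge -X$ with $X(\log X-3)\le R\le r_0^{-2}$, hence $X\le X_0(r_0)$, and combined with $R\ge -3$ this gives $|\Rm|\le C(r_0)$ on $B^{\ge}$; the gradient estimate $|\nabla\rho|\le\eta$ of Lemma~\ref{l: derivative} propagates this to a ball $B_g(y,c(r_0))$ around every $y\in B^{\ge}$ (where $\rho\ge r$ one has $|\Rm|\le C(r)$ outright). Thus at scale $r_0$, $B^{\ge}$ is a bounded-curvature, $\kappa$-non-collapsed region, so Bishop--Gromov (via the Ricci lower bound) bounds the volume of each $c(r_0)$-ball centered in $B^{\ge}$ from above, and $\kappa$-non-collapsing bounds it from below; summing over a maximal $c(r_0)$-net controls $\mathrm{vol}(B^{\ge})$ and the net's cardinality — \emph{once that net is known to be finite}, which is the same difficulty discussed below.

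On $B^{<}$ the $\epsilon_{can}$-canonical neighborhood assumption holds at every point, so (assuming $M$ is not closed, in which case the statement is immediate) Lemma~\ref{l: either neck or cap} together with \cite[Prop.\ 19.21]{MT} shows every component of $B^{<}$ is a $\delta$-tube or a capped $\delta$-tube, with scales of consecutive necks comparable up to $C_0$. Since a $\delta$-neck at scale $\rho$ is modeled on the round cylinder $S^2(\rho)\times\mathbb R$ — which has bounded volume ratio and $\Ric\ge 0$ — each such neck carries volume $\lesssim r_0^{2}\cdot(\text{length})$, and $0$-completeness together with smoothness of $g$ forbids a tube from accumulating at an interior point of $M$ while its scale collapses: each tube either terminates in a $\delta$-cap or runs off with $\rho\to 0$ and finite accumulated volume. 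I would then organize the accounting by induction on $A$ over integers: granting $\mathrm{vol}(B_g(x_0,k))\le V(k)$, non-collapsing bounds the number of tubes, caps, and bounded-curvature chambers crossing the sphere of radius $k$, each of which contributes a controlled amount to the annulus $B_g(x_0,k+1)\setminus B_g(x_0,k)$; this produces $V(k+1)$, hence $V(A)$, and simultaneously the finiteness of all the nets used above.

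Finally, once $\mathrm{vol}(B_g(x_0,A+1))\le V(A+1)$ is in hand, the separated-set bound is routine: it suffices to treat $\epsilon\le r_0$ (for $\epsilon>r_0$ an $\epsilon$-separated set is $r_0$-separated), and then an $\epsilon$-separated $S\subset B$ yields disjoint balls $B_g\!\big(y,\tfrac{c}{2}\min\{\epsilon,\rho(y)\}\big)\subset B_g(x_0,A+1)$ each of volume $\ge v(A,\epsilon)>0$ by non-collapsing at that sub-scale (the curvature there being bounded by its inverse square, again via $1$-positivity and the gradient estimate), so $|S|\le V(A+1)/v(A,\epsilon)=:N(A,\epsilon)$. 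The main obstacle is exactly the volume/finiteness bootstrap for $B^{<}$: a priori it could be one very long thin $\delta$-tube, and although the cylindrical model has $\Ric\ge 0$, a $\delta$-neck at scale $\rho$ only satisfies $\Ric\ge -\Psi(\delta)\rho^{-2}g$, so a naive Bishop--Gromov estimate across a long tube degrades with the number of necks. The point to establish carefully is that the model being a shrinking round cylinder forces the volume element along the tube to grow essentially linearly rather than exponentially in the necks traversed, and that $0$-completeness prevents a tube from coiling up inside a bounded ball while its scale goes to zero — so that the volume accumulated inside $B$ is governed by the ambient distance the tube travels (hence by $A$) and by the threshold $r_0$.
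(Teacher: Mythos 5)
Your proposal has the right overall architecture (split by curvature scale, tube structure in the high-curvature region, induction on the radius, non-collapsing to count points), but it leaves the two steps that carry essentially all of the content of the lemma unproven, and one of them is set up in a way that cannot work. The separated-set bound cannot be deduced from the volume bound plus non-collapsing in the way you propose: for a point $y$ of the $\epsilon$-separated set with $\rho(y)\ll\epsilon$, the $\kappa$-non-collapsing hypothesis only applies at radii $r$ for which $|\Rm|\le r^{-2}$ holds on $B_g(y,r)$, i.e.\ at radii comparable to $\rho(y)$, so your disjoint balls $B_g\bigl(y,\tfrac c2\min\{\epsilon,\rho(y)\}\bigr)$ have volume bounded below only by $\kappa\bigl(\tfrac c2\rho(y)\bigr)^3$, which tends to $0$ as $\rho(y)\to 0$. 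Since $\inf_{B_g(x_0,A)}\rho$ may well be $0$ (that is the whole point of the lemma), there is no uniform $v(A,\epsilon)>0$, and a packing argument says nothing about how many $\epsilon$-separated points can sit at locations of very small scale. This is exactly the case the paper treats by a different, one-dimensional argument: after reducing (using the volume bound, non-collapsing, and Lemma~\ref{l: central sphere decomposition}) to the situation where all points of scale $<(2C_0)^{-1}\epsilon$ lie in a single $\delta$-tube $V$, it orders them by the intrinsic distance $d_V(y,\cdot)$ to a fixed boundary point, shows each is a neck center and that $d_V(y,x_{j+1})\ge d_V(y,x_j)+\epsilon/2$, and concludes $m\le 4(k+1)r_0\epsilon^{-1}+1$ from the diameter of $V$. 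That monotonicity-along-the-tube step is the missing ingredient.

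For the volume bound, you explicitly flag but do not close the possibility that a single tube of scale comparable to $r_0$ has enormous intrinsic length while remaining inside $B_g(x_0,A)$; the sentence beginning ``The point to establish carefully is that\ldots'' is precisely the assertion that needs proof, and neither the cylindrical model nor $0$-completeness by itself supplies it (your treatment of $B^{\ge}$ likewise defers to ``once that net is known to be finite''). The paper never bounds the volume of an entire tube. Instead it interleaves the two conclusions in a single induction over radii in increments of $r_0$: the separated-set bound $\mathcal{B}(k)$ at radius $(k-1)r_0$ gives a cover of $B_g(x_0,(k+1)r_0)$ by at most $N_k$ balls of radius $3r_0$, each of which has volume at most a universal constant $C_1$ (being either a controlled-curvature ball handled by Bishop--Gromov, or contained in a bounded-diameter piece of a $\delta$-tube), which yields $\mathcal{A}(k+1)$; then $\mathcal{A}(k+1)$ together with the tube-ordering argument yields $\mathcal{B}(k+1)$. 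Your induction instead requires the unproven claim that ``each tube contributes a controlled amount to the annulus''; to salvage your route you would need a genuine bound on the intrinsic diameter of the portion of a $\delta$-tube meeting $B_g(x_0,A)$ in terms of $A$ and $r_0$, which is the same quantity the paper's count ultimately rests on via $d_V(x_m,y)\le\mathrm{diam}(V)$.
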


\begin{proof}
Fix some small $\delta>0$ and let $\epsilon_{can}(\delta),C_0(\delta)>0$ be from Lemma \ref{l: either neck or cap}. Let $\eta>0$ be from Lemma \ref{l: derivative}.
Let $r_0=\frac{1}{7\eta}\cdot\min\{ r(A+1,\epsilon_{can}),\frac{1}{6}\}$. 

We claim that there exists a universal constant $C_1>0$ such that $vol(B_g(x,3r_0))\le C_1$ for all $x\in B_g(x_0,A)$.
In fact, suppose first that $\rho(x)<4\eta r_0$, then by Lemma \ref{l: derivative} we get $\rho<7\eta r_0$ on $B_g(x,3r_0)$.
By the assumption of $r_0$, we see that $B_g(x,3r_0)$ is contained in some $\delta$-tube or capped $\delta$-tube with diameter less than $1$, which has volume less than $C_1$.
So the claim holds.
Next, suppose $\rho(x)\ge4\eta r_0$, then by Lemma \ref{l: derivative} we get $\rho\ge\eta r_0$ on $B_g(x,3r_0)$. So the claim follows from the Bishop-Gromov volume comparison.

Now suppose by induction that for some $k\in\mathbb{N}$ with $kr_0\le A$, there exist $C_k(A)>0$ and $N_k(A,\epsilon)\in\mathbb{N}_+$ such that the following holds:
\begin{enumerate}
    \item[$\mathcal{A}$(k):] $vol(B_g(x_0,kr_0))\le C_k$;
    \item[$\mathcal{B}$(k):] For any $\epsilon\le r_0$, an $\epsilon$-separating subset in $B_g(x_0,(k-1)r_0)$ has at most $N_k$ elements.
\end{enumerate}
To show $\mathcal{A}(k+1)$, consider a maximal $r_0$-separating subset $\{y_j\}$ in $B_g(x_0,(k-1)r_0)$. 
Then $B_g(x_0,(k-1)r_0)$ is covered by the union of all $B_g(y_j,r_0)$, and by the triangle inequality we get
\begin{equation}\label{e: inclusion_2}
    B_g(x_0,(k+1)r_0)\subset \bigcup_j B_g(y_j,3r_0).
\end{equation}
So $\mathcal{A}$(k+1) follows from the above claim and $\mathcal{B}$(k). It remains to establish $\mathcal{B}(k+1)$.



Let $\{x_j\}_{j=1}^{m}$ be an $\epsilon$-separating set in $B_g(x_0,kr_0)$. 
First, by the non-collapsing assumption and $\mathcal{A}(k+1)$ we see that the number of $x_j$ with $\rho(x_j)\ge(2C_0)^{-1}\epsilon$ is bounded above in terms of $A,\epsilon$.
So we may assume $\rho(x_j)< (2C_0)^{-1}\epsilon$ for all $j$.
Then by $\mathcal{A}(k+1)$ and Lemma \ref{l: central sphere decomposition}, we may further assume that all $x_j$ are contained in a single $\delta$-tube or capped $\delta$-tube $V\subset B_g(x_0,(k+1)r_0)$.

Pick a point $y\in\partial V$, we can arrange the order of $\{x_j\}_{j=1}^m$ in a way such that $d_V(y,x_{j+1})\ge d_V(y,x_{j})$ for each $j\le m-1$. Here $d_V$ denotes the length metric in $V$ induced by $g$.
We claim that each $x_j$, $j\le m-1$, is the center of a $\delta$-neck. 
Otherwise, $x_j$ is the center of a $\delta$-cap $\mathcal{C}\subset V$.
By Lemma \ref{l: either neck or cap} we have diam($\mathcal{C})\le C_0\rho(x_j)\le \epsilon/2$. 
Since $d_V(x_j,x_{j+1})\ge\epsilon$, we get $x_{j+1}\in V-\mathcal{C}$, and $x_{j+1}$ is the center of a $\delta$-neck.
Connecting $y$ with $x_j$ by a minimizing geodesic, then it must intersect the central sphere at $x_{j+1}$, which has diameter less than $10(2C_0)^{-1}\epsilon<\epsilon/2$. So it is easy to see $d_V(y,x_{j+1})< d_V(y,x_j)$, a contradiction.

So by the triangle inequality we get
\begin{equation}\label{e: pi}
    \begin{split}
        d_V(y,x_{j+1})&
        \ge d_V(y,x_{j})+\epsilon-2\cdot10\cdot(2C_0)^{-1}\epsilon\ge d_V(y,x_{j})+\epsilon/2,
    \end{split}
\end{equation}
for all $j\le m-1$. 
In particular, this implies
\begin{equation}
    (m-1)\epsilon/2\le d_V(x_{m},y)\le \textnormal{diam}(V)\le 2(k+1)r_0,
\end{equation}
and hence $m\le 4(k+1)r_0\epsilon^{-1}+1$. This established $\mathcal{B}(k+1)$.
\end{proof}

\begin{lem}\label{l: existence of minimizing geodesic}
Under the same assumptions as Lemma \ref{l: volume comparison}.
Then for any $x\in M$, there exists a minimizing geodesic connecting $x$ to $x_0$.
\end{lem}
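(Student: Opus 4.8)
This is a Hopf--Rinow type statement, and the only obstruction to its classical proof is that $(M,g)$ need not be complete; moreover, by $0$-completeness, incompleteness can only be felt along curves of finite length on which $\rho\to 0$, that is, in the high-curvature region. Fix $x\in M$ and set $D:=d_g(x_0,x)$, which is finite since $M$ is connected. The plan is to take a sequence of curves from $x_0$ to $x$ whose lengths decrease to $D$, confine this sequence to a \emph{fixed compact} subset $K\subset M$, pass to a uniform limit (after constant-speed reparametrization) by Arzel\`a--Ascoli, and conclude: by lower semicontinuity of length under uniform convergence the limit curve still has length $D$, hence is a minimizing geodesic from $x_0$ to $x$.

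The compact set $K$ is built from two ingredients. First, for any $A,\sigma>0$ the set $S_{A,\sigma}:=\{y\in M: d_g(x_0,y)\le A,\ \rho(y)\ge\sigma\}$ is compact: Lemma \ref{l: volume comparison} makes it totally bounded, and $0$-completeness makes it complete, since a Cauchy sequence in it can be joined by a chain of short near-minimizing curves which --- using $|\nabla\rho|\le\eta$ on $\{\rho<r(A+1,\epsilon_{can})\}$ from Lemma \ref{l: derivative} to keep $\rho\ge\sigma/2$ along them --- concatenate to a finite-length curve with $\inf\rho>0$, and therefore converges in $M$. Second, applying Lemma \ref{l: central sphere decomposition} on $B_g(x_0,D+2)$ with a small parameter $r_0$ (as in the proof of Lemma \ref{l: volume comparison}), the region $\{\rho<\lambda r_0\}\cap B_g(x_0,D+1)$ is contained in finitely many --- finiteness by conclusion (4) and the volume bound of Lemma \ref{l: volume comparison} --- disjoint $\delta$-tubes and capped $\delta$-tubes, each a chain of $\delta$-necks with boundary spheres at scale $r_0$. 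Since a $\delta$-neck is close to a round cylinder, a shortest curve crosses each such piece monotonically in the cylindrical direction; hence the incompleteness of $M$ sits only at the open ends of these tubes (diffeomorphic to $S^2\times[0,\infty)$, with a single bounding sphere $\Sigma$ of scale $\sim r_0$ and $\rho\to0$ towards a point missing from $M$). A curve from $x_0$ to $x$ that enters such a dead end $W$ to depth $s$ must re-exit through $\Sigma$ (unless $x\in W$), so its length is at least $d_g(x_0,\Sigma)+2s+d_g(\Sigma,x)\ge D-\textnormal{diam}(\Sigma)+2s$; thus a curve of length $\le D+\epsilon$ enters $W$ only to depth $\tfrac12(\epsilon+\textnormal{diam}(\Sigma))=O(r_0)$ in the limit, and since each $\delta$-neck is long (length comparable to $\delta^{-1}$ times its scale), that portion of $W$ lies in a fixed compact subtube bounded away from the end. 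The same bookkeeping, with the depth of $x$ added, handles the case $x\in W$. One then takes $K=S_{D+1,\sigma_0}$ together with these finitely many compact subtubes, for a suitable $\sigma_0>0$.

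I expect the confinement to be the crux. Two points need care: making rigorous the claim that a shortest curve traverses a $\delta$-tube monotonically along the cylinder factor (so that it cannot oscillate deeper and deeper into a dead end), and handling the case where $x$ itself lies deep inside a high-curvature tube --- so that the minimizing curve genuinely descends the tube --- while still ensuring that the enlarged set $K$ (now containing the compact subtube joining $\Sigma$ to $x$) remains compact.
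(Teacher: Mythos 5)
Your proposal is correct and follows essentially the same route as the paper: take curves of length $\to d_g(x_0,x)$, use the finite tube/capped-tube decomposition of the high-curvature region to show a near-minimizing curve cannot descend far into a dead end (the paper phrases this as the curve having to cross a fixed $\delta$-neck twice, which is the same length-wasting argument as your depth estimate), confine the sequence to a compact set, and pass to a limit by Arzel\`a--Ascoli. The only cosmetic difference is that the paper finishes by weak $W^{1,2}$ lower semicontinuity of the energy rather than lower semicontinuity of length, which changes nothing essential.
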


\begin{proof}
By Lemma \ref{l: volume comparison} we have $vol(B_g(x_0,A))$ is bounded above, so by Lemma \ref{l: derivative} it is easy to see that $|\Rm|$ is a proper function restricted on $\overline{B_g(x_0,A)}$ for all $A>0$.
Suppose $d=d_g(x_0,x)>0$. Fix a sufficiently small number $\delta>0$, and let $\epsilon_{can}=\epsilon_{can}(\delta),C_0=C_0(\delta)$ be from Lemma \ref{l: either neck or cap}. 
Let $r_0=\min\{C_0^{-1}\rho(x_0),C_0^{-1}\rho(x),r(d+2,\epsilon_{can}),1\}$.
Then by Lemma \ref{l: central sphere decomposition} and Lemma \ref{l: volume comparison}, all points in $B_g(x_0,d+1)$ with $\rho\le r_0$ are contained in the union of a finite collection $S$ of disjoint $\delta$-tubes and capped $\delta$-tubes in $B_g(x_0,d+2)$.

For each $i\in \mathbb{N}$, let $\gamma_i:[0,d]\rightarrow M$ be a smooth curve joining $x_0$ and $x$ with constant speed, such that the length of $\gamma_i$ satisfies $L(\gamma_i)<d+\frac{1}{i}$. So $\gamma_i\subset B_g(x_0,d+1)$.
We claim that the curvature on $\gamma_i$ is uniformly bounded for all $i$.
Suppose not, we may assume there is a sequence of points $x_i\in\gamma_i$ such that $\rho(x_i)\rightarrow 0$ as $i\rightarrow\infty$.
By the finiteness of $S$ we may also assume there is some $\mathcal{T}\in S$ that contains all $x_i$.
Then $\mathcal{T}$ must be a $\delta$-tube with curvature blowing up in one end.
Taking a point $y\in\mathcal{T}$ such that $\rho(y)<\frac{1}{2}\min\{\rho(x_0),\rho(x)\}$.
Then for all large $i$, $\gamma_i$ passes through the $\delta$-neck centered at $y$ at least twice, which contradicts the almost minimality of $\gamma_i$.
So the claim holds.
Therefore, by the properness of $|\Rm|$, there is a compact set $K\subset M$ such that $\gamma_i\subset K$ for all $i$.

Since $|\gamma'_i|=\frac{L(\gamma_i)}{d}\rightarrow 1$, it follows that $\gamma_i$'s are uniformly Lipschiz-continuous on $[0,d]$. Since $\gamma_i$'s are equicontinuous and map into a compact set of $M$, the Arzela-Ascoli Lemma applies. So by passing to a subsequence, we may assume that $\gamma_i$ uniformly converges to some continuous curve $\gamma_{\infty}:[0,d]\rightarrow M$.
Since $\int_{0}^d |\gamma'_i|^2\,dt=\frac{L(\gamma_i)^2}{d}\le 4d$, we can apply weak compactness to the sequence $\{\gamma_i\}$. By passing to a subsequence, we may assume $\gamma_i$ weakly converges to $\gamma_{\infty}$ in $W^{1,2}$. 

Let $E(\gamma)=\int_0^d |\gamma'(t)|^2\,dt$ be the energy function on all $W^{1,2}$-path connecting $x_0$ and $x$.
Then by Cauchy-Schwarz inequality we have
\begin{equation}
    E(\gamma)=\int_0^d|\gamma'(t)|^2\,dt\ge\frac{(\int_0^d|\gamma'|)^2}{d}=\frac{L(\gamma)^2}{d}\ge d.
\end{equation}
By the weak semi-continuity of the $E$-energy, it follows that the $W^{1,2}$-path $\gamma_{\infty}$ has energy $E(\gamma_{\infty})\le d$. 
So $\gamma_{\infty}$ minimizes the energy $E$ in $W^{1,2}$. 
Therefore, $\gamma_{\infty}$ is a smooth solution to the geodesic equation, and hence it is a minimizing geodesic.

\end{proof}

The next lemma says that if a ball is scathed, then we can find a minimizing geodesic in the ball along which the curvature blows up, and it is covered by $\delta$-necks.

\begin{lem}(Minimal geodesic covered by $\delta$-necks)\label{l: abundance of neck points}
Under the same assumptions as Lemma \ref{l: volume comparison}. Let $\delta>0$. 
Suppose $\inf_{B_g(x_0,A)}\rho=0$ for some $A>0$. 
Then there exists a minimizing geodesic $\gamma:[0,1)\rightarrow B_g(x_0,A)$ such that $R(\gamma(s))\rightarrow\infty$ as $s\rightarrow1$, and $\gamma(s)$ is the center of a $\delta$-neck for all $s$ close to $1$.
\end{lem}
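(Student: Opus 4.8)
The plan is to produce the geodesic as a limit of minimizing geodesics from $x_0$ to a sequence of points $z_i \in B_g(x_0,A)$ with $R(z_i) \to \infty$, and then to argue that along the limit geodesic the curvature blows up only at the "open" endpoint, with a neck structure there. First I would pick $z_i \in B_g(x_0,A)$ with $\rho(z_i) \to 0$, which exists by the hypothesis $\inf_{B_g(x_0,A)} \rho = 0$, and choose $d_i = d_g(x_0,z_i) \le A$. By Lemma \ref{l: existence of minimizing geodesic}, let $\sigma_i:[0,d_i]\to B_g(x_0,A)$ be a unit-speed minimizing geodesic from $x_0$ to $z_i$; after passing to a subsequence assume $d_i \to \bar d \le A$. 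Fix a small $\delta>0$, let $\epsilon_{can}(\delta), C_0(\delta)$ be from Lemma \ref{l: either neck or cap}, and set $r_0 = \min\{C_0^{-1}\rho(x_0),\, r(A+2,\epsilon_{can}),\, 1\}$; then by Lemma \ref{l: central sphere decomposition} and Lemma \ref{l: volume comparison}, all points in $B_g(x_0,A+1)$ with $\rho \le r_0$ lie in a finite collection $S$ of disjoint $\delta$-tubes and capped $\delta$-tubes inside $B_g(x_0,A+2)$.

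The key structural point is that each $\sigma_i$ can enter each element $V \in S$ only in a controlled way. Since $\sigma_i$ is minimizing, it cannot cross any central $2$-sphere of a $\delta$-neck more than twice (as in the argument at the end of Lemma \ref{l: existence of minimizing geodesic}), and since $\sigma_i$ has both endpoints accounted for — $x_0$ outside all neck regions and $z_i$ deep inside one of them — I would argue that $\sigma_i$ enters the particular element $V_i \in S$ containing $z_i$ exactly once, monotonically in the tube parameter, and never re-emerges. Passing to a further subsequence, assume all $z_i$ lie in one fixed $V_\infty \in S$, which must then be a $\delta$-tube (a capped $\delta$-tube has bounded $\rho$ on its cap while $\rho(z_i)\to 0$, so the blow-up must be at the open, tubular end) with $\rho \to 0$ along one end. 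Using the gradient estimate (Lemma \ref{l: derivative}, $|\nabla \rho| \le \eta$) together with $|\partial_t \rho^2|\le\eta$ and properness of $|\Rm|$ on $\overline{B_g(x_0,A)}$ (established in Lemma \ref{l: existence of minimizing geodesic}), the portions of $\sigma_i$ outside $V_\infty$, and the portions inside $V_\infty$ up to the last time $\rho = r_0/2$, stay in a fixed compact set; so by Arzela–Ascoli the $\sigma_i|_{[0,d_i]}$ subconverge locally uniformly to a minimizing geodesic $\gamma:[0,\bar d)\to B_g(x_0,A)$, where I reparametrize to $[0,1)$ and, if $\bar d$ were actually attained by $\gamma$, note that $\gamma$ would have to continue into $V_\infty$ toward the blow-up end.

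It then remains to check $R(\gamma(s)) \to \infty$ and the neck property. For the blow-up: the terminal segments of $\sigma_i$ march monotonically down the tube $V_\infty$ toward the end where $\rho \to 0$; since $d_g(\gamma(s), z_i) \to 0$ is impossible (as $z_i$ escapes every compact set in the tubular coordinate), one sees that $\gamma$ must run all the way down $V_\infty$ to its singular end, so $\rho(\gamma(s)) \to 0$, i.e. $R(\gamma(s)) \to \infty$, as $s \to 1$. For the neck structure: once $s$ is close enough to $1$, $\rho(\gamma(s)) < r_0$, so $\gamma(s)$ lies in $V_\infty$; and by Lemma \ref{l: either neck or cap}, any point of $V_\infty$ with small enough $\rho$ that is not within $C_0\rho$ of the boundary sphere is the center of a $\delta$-neck — since $\gamma(s)$ is deep in the singular end of $V_\infty$, far (in intrinsic distance, measured in units of its own tiny $\rho$) from $\partial V_\infty$, it is a $\delta$-neck center. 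The main obstacle I expect is the bookkeeping in the previous paragraph: making rigorous that a minimizing geodesic from outside the neck regions to a point deep inside a fixed $\delta$-tube enters that tube exactly once and proceeds monotonically, so that the limit geodesic is well-defined on $[0,1)$ and genuinely exhausts the singular end; this uses only the two-crossings bound for central spheres plus the fact that $\sigma_i$ is globally minimizing, but it needs to be stated carefully to rule out the geodesic wandering back and forth among several tubes.
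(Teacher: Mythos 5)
Your proposal is correct and follows essentially the same route as the paper's (much terser) proof: take minimizing geodesics from $x_0$ to a sequence of blow-up points, locate those points in a single $\delta$-tube blowing up at one end via Lemma \ref{l: central sphere decomposition} and Lemma \ref{l: volume comparison}, use minimality (each geodesic crosses any $\delta$-neck of the tube at most once) to get subconvergence to a limit geodesic $\gamma:[0,1)\to M$, and conclude the neck property near $s=1$ from minimality and Lemma \ref{l: either neck or cap}. The extra bookkeeping you flag (monotone entry into the tube, $\rho(\gamma(s))\to 0$ via the gradient estimate applied along the tails of the $\sigma_i$) is exactly what the paper leaves implicit.
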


\begin{proof}
Let $x_i\in B_g(x_0,A)$ be a sequence of points such that $R(x_i)\rightarrow\infty$ as $i\rightarrow\infty$. 
Since by Lemma \ref{l: volume comparison} the ball $B_g(x_0,A)$ is totally bounded, we may assume by passing to a subsequence that $\{x_i\}$ is Cauchy.
So there is a $\delta$-tube $\mathcal{T}$, which blows up at one end, that contains $x_i$ for all large $i$. 
By Lemma \ref{l: existence of minimizing geodesic}, there exists $\gamma_i:[0,1]\rightarrow M$, which is the minimizing geodesic connecting $x_0$ and $x_i$. 
Noting that $\gamma_i$ passes through any $\delta$-neck in $\mathcal{T}$ at most once, after passing to a subsequence, $\gamma_i$ converges to a minimizing geodesic $\gamma:[0,1)\rightarrow M$. Moreover, by the minimality of $\gamma$, $\gamma(s)$ is the center of a $\delta$-neck for $s$ close enough to $1$.

\end{proof}

The following lemma shows that for a Ricci flow spacetime satisfying some appropriate canonical neighborhood assumption, suppose a time-slice is close enough to that of a $\kappa$-solution, then a parabolic region of a certain size is close to that in the $\kappa$-solution.

\begin{lem}(Time-slice closeness implies spacetime closeness)\label{l: close to k-solution}
Let $(M_{\infty},g_{\infty}(t),x_{\infty})$ be a $\kappa$-solution, and $a,b>0$ be constants such that $g_{\infty}(t)$ is defined for all $t\in[-a,b]$. Let $\delta>0$. Then there exists $\epsilon>0$ such that the following holds:

Let $\M$ be a Ricci flow spacetime, $x_0\in\M$, $t_0:=\mathfrak{t}(x_0)$. Suppose $(\M_{t_0},x_0)$ is $\epsilon$-close to $(M_{\infty},g_{\infty}(0),x_{\infty})$. Suppose also that
$B_{t_0}(x_0,\epsilon^{-1})$ survives on $[t_0-a,t_0+b]$, and the $\epsilon$-canonical neighborhood assumption holds at every point in $\bigcup_{t\in[t_0-a,t_0+b]}(B_{t_0}(x_0,\epsilon^{-1}))(t)$. 

Then $(\M,x_0)$ is $\delta$-close to $(M_{\infty},g_{\infty}(t),x_{\infty})$ on the time interval $[-a,b]$.
\end{lem}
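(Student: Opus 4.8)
The plan is to argue by contradiction, using the fact that $\kappa$-solutions are, up to parabolic rescaling, compact in the smooth Cheeger--Gromov--Hamilton sense on the relevant time interval, together with the gradient estimate of Lemma~\ref{l: derivative} to propagate control in time. Suppose the statement fails. Then there is a sequence $\epsilon_i\to 0$ and Ricci flow spacetimes $\M^i$ with marked points $x_0^i$, $t_0^i=\mathfrak t(x_0^i)$, such that $(\M^i_{t_0^i},x_0^i)$ is $\epsilon_i$-close to $(M_\infty,g_\infty(0),x_\infty)$, the ball $B_{t_0^i}(x_0^i,\epsilon_i^{-1})$ survives on $[t_0^i-a,t_0^i+b]$ and the $\epsilon_i$-canonical neighborhood assumption holds on the evolution of this ball, yet $(\M^i,x_0^i)$ is \emph{not} $\delta$-close to $(M_\infty,g_\infty(t),x_\infty)$ on $[-a,b]$. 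After translating time so $t_0^i=0$, I will pull back a large ball around $x_0^i$ via the $\epsilon_i$-closeness diffeomorphism and view it as a pointed Ricci flow on a shrinking-but-exhausting domain.

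The first key step is to get uniform local curvature bounds on the spacetime, not just on the initial slice. The $\epsilon_i$-closeness gives, on any fixed ball $B^{M_\infty}(x_\infty,\rho)\subset B^{M_\infty}(x_\infty,\epsilon_i^{-1})$, that $|\Rm|$ (equivalently $\rho(\cdot)$, the curvature scale) at time $0$ is within $\epsilon_i$ of its value in the $\kappa$-solution, hence bounded above and below on $B^{M_\infty}(x_\infty,\rho)$ uniformly in $i$. By the canonical neighborhood assumption on the whole parabolic region together with the gradient estimate $|\partial_t\rho^2|\le\eta$ from Lemma~\ref{l: derivative}, the curvature scale cannot collapse or blow up instantaneously: for each fixed $\rho$ there is $\tau(\rho)>0$ and $C(\rho)$ so that $|\Rm|\le C(\rho)$ on $\bigcup_{|t|\le\tau(\rho)}(B^{M_\infty}(x_\infty,\rho))(t)$, where here I use that near any point with controlled $\rho$ the flow is, by the canonical neighborhood assumption, $\epsilon_i$-close at scale $\rho$ to a $\kappa$-solution, and then apply the global curvature bound and Shi-type derivative estimates within that $\kappa$-solution model. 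Iterating this short-time propagation a bounded number of times (the number depending only on $a,b$ and the curvature bounds coming from the fixed $\kappa$-solution $M_\infty$), I obtain: for every $\rho>0$ there are $C(\rho)$, and uniform bounds on all derivatives of curvature, on $\bigcup_{t\in[-a,b]}(B^{M_\infty}(x_\infty,\rho))(t)$ for all large $i$. The noncollapsedness needed to apply Hamilton's compactness theorem comes for free: the $\epsilon_i$-closeness at time $0$ transfers the $\kappa$-noncollapsedness of $M_\infty$ to $\M^i_0$ near $x_0^i$, and by the curvature bounds just obtained this persists on $[-a,b]$.

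With uniform bounds on curvature and all its derivatives on every fixed-size parabolic neighborhood, together with uniform noncollapsing, Hamilton's compactness theorem for Ricci flows applies: a subsequence of the pointed flows $(\M^i,g^i(t),x_0^i)$, restricted to these exhausting parabolic regions, converges in $C^\infty_{loc}$ on $[-a,b]$ to a complete pointed Ricci flow $(\M_\infty',g_\infty'(t),x_\infty')$. At time $0$, the $C^\infty_{loc}$ limit must coincide with $(M_\infty,g_\infty(0),x_\infty)$ because the $\epsilon_i$-closeness forces $C^{[\epsilon_i^{-1}]}$-closeness on $B^{M_\infty}(x_\infty,\epsilon_i^{-1})$ with $\epsilon_i\to0$ and $\epsilon_i^{-1}\to\infty$. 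By uniqueness of smooth complete bounded-curvature Ricci flow forward and backward in time from a given initial slice (each time-slice of $M_\infty$ has bounded curvature, being a $\kappa$-solution), we conclude $g_\infty'(t)\equiv g_\infty(t)$ on $[-a,b]$ under an isometric identification fixing $x_\infty$. But then for all large $i$ the flow $(\M^i,x_0^i)$ is $\delta$-close to $(M_\infty,g_\infty(t),x_\infty)$ on $[-a,b]$ — concretely, the convergence diffeomorphisms restricted to $B^{M_\infty}_{g_\infty(0)}(x_\infty,\delta^{-1})$ witness the required $C^{[\delta^{-1}]}$ estimate for $i$ large — contradicting the choice of the sequence.

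The main obstacle is the second step: upgrading time-$0$ curvature control to spacetime curvature control on the full interval $[-a,b]$ using only the canonical neighborhood assumption and the weak gradient estimate $|\partial_t\rho^2|\le\eta$. One has to be careful that the short propagation time $\tau(\rho)$ does not shrink to zero as one iterates, and that at intermediate times the points still lie in regions where the canonical neighborhood assumption gives a usable model; this is where the boundedness of the fixed $\kappa$-solution $M_\infty$ on the compact interval $[-a,b]$ is essential, since it caps the curvature scales from below along the reference flow and hence, via $\epsilon_i$-closeness propagated step by step, along $\M^i$ as well. Once the uniform spacetime bounds are in hand, the compactness-and-uniqueness argument is standard.
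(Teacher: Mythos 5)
Your proposal is correct and follows essentially the same route as the paper: a contradiction argument in which the gradient estimate of Lemma \ref{l: derivative} and the canonical neighborhood assumption propagate the time-$0$ curvature control in time, a compactness argument extracts a limit flow, and the rigidity/uniqueness of complete bounded-curvature flows identifies the limit with $(M_{\infty},g_{\infty}(t),x_{\infty})$. The only difference is organizational — the paper runs the continuity step as a maximal-interval argument (defining $a^{*},b^{*}$ and pushing past the endpoints with the gradient estimate) rather than your finite iteration, but both hinge on the same point you correctly flag, namely that identifying the limit with the reference $\kappa$-solution on each sub-interval is what keeps the propagation time from degenerating.
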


\begin{proof}
Suppose the assertion does not hold, then there is a sequence of spacetimes $\M_i$ and points $x_{0i}\in\M_i$, and a sequence of $\epsilon_i>0$ with $\lim_{i\rightarrow\infty}\epsilon_i=0$, such that the assumptions are satisfied for each $i$, but the conclusion fails. 

Let $-a^*,b^*$ be the infimum and supremum of $s_1,s_2\in[-a,b]$, respectively, such that there exists $C$ (may depend on $s_1,s_2$) such that for all $d>0$, $|\Rm|\le C$ in $\bigcup_{t\in[t_{0i}-s_1,t_{0i}+s_2]}(B_{t_{0i}}(x_{0i},d))(t)$.
Then by the gradient estimate we have $a^*,b^*>0$. 
So by passing to a subsequence, we may assume $(\M_{i},x_{0i})$ converges to a smooth complete Ricci flow $(\widehat{M},\widehat{g}(t),\widehat{x})$, $t\in(-a^*,b^*)$, which has bounded curvature in any compact subinterval in $(-a^*,b^*)$.

Note that $(\widehat{M},\widehat{g}(0),\widehat{x})$ is isometric to $(M_{\infty},g_{\infty}(0),x_{\infty})$, and each time-slice of $(\widehat{M},\widehat{g}(t))$ is a time-slice of a $\kappa$-solution, it is easy to see that the flow $(\widehat{M},\widehat{g}(t),\widehat{x})$ is isometric to $(M_{\infty},g_{\infty}(t),x_{\infty})$ for all $t\in(-a^*,b^*)$.

So $(\widehat{M},\widehat{g}(t),\widehat{x})$ extends to a complete Ricci flow with bounded curvature on $[-a^*,b^*]$. Applying the gradient estimates on $(B_{t_{0i}}(x_{0i},\epsilon^{-1}))(t)$ for $t=t_{0i}-a^*$ and $t_{0i}+b^*$, and seeing the assumptions of $a^*$ and $b^*$, we get $a^*=a$ and $b^*=b$. From this it follows that $(\M_{i},x_{0i})$ is $\delta$-close to $(M_{\infty},g_{\infty}(t),x_{\infty})$ on $[-a,b]$, which is a contradiction.

\end{proof}

\begin{lem}(Blow-up sequence converges to a $\kappa$-solution)\label{l: blow-up converges to a k-solution}
Let $\M$ be a singular Ricci flow with normalized initial condition. Let $x_i\in M$ be a sequence of points such that $\sup_i\mathfrak{t}(x_i)<\infty$, and $\sup_i|\Rm|(x_i)=\infty$. Then there exists a subsequence $\{x_{i_k}\}$, such that $(\M,x_{i_k})$ converges to a $\kappa$-solution on its maximal existence time interval.
\end{lem}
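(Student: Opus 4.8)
The plan is to set up a blow-up argument at the points $x_i$ and extract a $\kappa$-solution as the limit, using the canonical neighborhood structure at small scales to control the geometry. First I would rescale the spacetime $\M$ around each $x_i$ by the factor $|\Rm|(x_i)\to\infty$ and shift time so that $x_i$ sits at time $0$; call the rescaled flows $\M_i$ with basepoints still denoted $x_i$, so that $|\Rm|(x_i)=1$ in the new flow. Since $\M$ is a singular Ricci flow, it is $0$-complete and, crucially, satisfies the $\epsilon$-canonical neighborhood assumption at scales $(0,r(t))$ with $r(t)$ a fixed non-increasing function; under the rescaling by $|\Rm|(x_i)\to\infty$, the threshold scale $r(\mathfrak t(x_i))$ gets multiplied by $|\Rm|(x_i)^{1/2}\to\infty$, so in the limit the canonical neighborhood assumption holds at \emph{all} scales below any fixed bound, and in particular every point of bounded curvature in the limit will be the center of a $\kappa$-solution neighborhood.

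The core of the argument is a bounded-curvature-at-bounded-distance estimate: I claim that for the rescaled flows $\M_i$, on any fixed parabolic neighborhood $P(x_i, D, -\tau)$ the curvature is uniformly bounded (independent of $i$, depending on $D,\tau$). This is exactly the kind of statement controlled by Perelman's structure theory --- one argues by contradiction using a point-picking argument: if curvature were unbounded at bounded distance, pick a point where the rescaled curvature is maximal on a slightly smaller ball, rescale again, and use Lemma \ref{l: either neck or cap} together with the Hamilton-Ivey pinching (which passes to singular Ricci flows by \cite[Theorem 1.3]{KL1}) and the $\kappa$-non-collapsing to derive a smooth limit which is a complete ancient flow with non-negative sectional curvature; this limit would then have to be a $\kappa$-solution, and one derives a contradiction with the maximality/distance setup in the standard way (the neck structure forces the curvature to be essentially constant along the neck, contradicting the rapid blow-up). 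With such a bound in place, together with the $\kappa$-non-collapsedness inherited from the no-local-collapsing property of singular Ricci flows and the fact that $0$-completeness of $\M$ ensures the rescaled time-slices remain complete away from the high-curvature region (which has been pushed to infinity by the rescaling), Hamilton's compactness theorem applies.

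So, after passing to a subsequence, $(\M_i, x_i)$ converges smoothly in the pointed Cheeger-Gromov sense to a pointed Ricci flow $(M_\infty, g_\infty(t), x_\infty)$ defined on a time interval $(-a,0]$ (extended to the maximal such interval), which is complete, has bounded curvature on compact time-subintervals, non-negative sectional curvature (from Hamilton-Ivey pinching after rescaling, $\sec \ge -o(1)$ forces $\sec\ge 0$ in the limit), is $\kappa$-non-collapsed at all scales, and has $|\Rm|(x_\infty,0)=1$. It remains to check two things: that $(M_\infty,g_\infty)$ actually has \emph{globally} bounded curvature on all of $(-a,0]$ --- this is where the canonical neighborhood assumption at all scales in the limit is used, since each time-slice of the limit is locally modeled on $\kappa$-solution time-slices, which have bounded curvature --- and that $a=\infty$, i.e. the limit is genuinely ancient. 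The ancientness comes from the same point-picking/curvature bound argument run backward in time: the only way the limit could fail to extend past some $-a^*$ is a curvature blow-up, but the canonical neighborhood structure plus \cite[Lemma 8.1]{BK} gradient estimates bound the time-derivative of $\rho^2$, so the flow extends; iterating gives a flow on $(-\infty,0]$, hence a $\kappa$-solution. I would phrase the "maximal existence time interval" in the statement precisely: the limit is a $\kappa$-solution on $(-\infty, t_{\max}]$ where $t_{\max}$ is determined by the times $\mathfrak t(x_i)$ (after the normalization, this is just $(-\infty,0]$, but if one does not shift time, one gets a flow whose final time corresponds to the limit of the surviving interval).

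\textbf{Main obstacle.} The hard part will be the bounded-curvature-at-bounded-distance estimate for the rescaled singular Ricci flows. Unlike the smooth compact case, one must ensure the point-picking argument does not wander into a region where the flow is genuinely singular, and one must verify that the limiting flow obtained from the point-picking sub-argument is complete and non-collapsed so that it is forced to be a $\kappa$-solution --- this relies essentially on the $0$-completeness of $\M$ (so the rescaled flows are $0$-complete with the $0$ threshold scaling away) and on Perelman's no-local-collapsing theorem as extended to singular Ricci flows (which the paper establishes in Section \ref{s: CNA and NC}). Once the curvature bound is in hand, the rest is a routine application of Hamilton compactness and the closure properties of $\kappa$-solutions (e.g. via Lemma \ref{l: either neck or cap} and the standard fact that a complete ancient $\kappa$-non-collapsed flow with bounded non-negative curvature built from such pieces is a $\kappa$-solution).
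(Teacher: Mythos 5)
Your route is genuinely different from the paper's. The paper does not rescale and re-run Perelman's point-picking machinery at all: it observes that since $\sup_i\mathfrak t(x_i)<\infty$ and $|\Rm|(x_i)\to\infty$ (hence $R(x_i)\to\infty$ by Hamilton--Ivey pinching and $\rho(x_i)\to 0$), the points $x_i$ eventually lie below the canonical neighborhood scale $r(\mathfrak t(x_i))\ge r(T)>0$ that is \emph{built into the definition} of a singular Ricci flow. The $\epsilon$-canonical neighborhood assumption plus Perelman's compactness of the space of normalized $\kappa$-solutions then gives, after passing to a subsequence, convergence of the rescaled time-slices $(\M_{\mathfrak t(x_i)},x_i)$ to the time-$0$-slice of a $\kappa$-solution; Lemma \ref{l: close to k-solution} (time-slice closeness implies spacetime closeness) is then the entire remaining content, upgrading this to convergence of the spacetimes on the maximal existence interval. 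Your bounded-curvature-at-bounded-distance point-picking argument is therefore redundant work: in this setting that estimate comes for free from the canonical neighborhood assumption together with the uniform bounded-curvature-at-bounded-distance property of the compact family of $\kappa$-solutions, and your sketch of the contradiction ("the neck structure forces the curvature to be essentially constant along the neck") is too vague to check as written.

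The genuine gap is in your ancientness step. You claim that the gradient bound $|\partial_t\rho^2|\le\eta$ of Lemma \ref{l: derivative}, iterated backward in time, extends the limit to $(-\infty,0]$. It does not: starting from $\rho^2=1$ at time $0$, the bound only gives $\rho^2(-\tau)\ge 1-\eta\tau$, i.e.\ a definite backward existence time of order $\eta^{-1}$ with controlled curvature; iterating from the (possibly larger) curvature value at each stage, the backward times you gain form a geometrically decreasing series summing to roughly $\rho^2(0)/\eta<\infty$. So the iteration alone cannot rule out curvature blow-up at some finite negative time. What actually forces ancientness is either (i) the monotonicity $\partial_t R\ge 0$ inherited in the limit from the $\kappa$-solution models (a consequence of Hamilton's Harnack inequality on $\kappa$-solutions), which bounds $R(\cdot,t)$ for all $t\le 0$ by its value at time $0$ and hence lets the flow be continued backward indefinitely, or (ii) the paper's device of identifying each time-slice of the limit with a time-slice of an actual $\kappa$-solution (as in the proof of Lemma \ref{l: close to k-solution}), so that ancientness is part of the conclusion rather than something to be iterated toward. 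You need to supply one of these; as written, your argument only produces an ancient-up-to-time-$-\rho^2(0)/\eta$ flow.
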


\begin{proof}
By \cite[Theorem 1.3]{KL1}, the canonical neighborhood assumption for singular Ricci flows, and the gradient estimates, the conditions in Lemma \ref{l: close to k-solution} are satisfied. Assume $(\M_{\mathfrak{t}(x_i)},x_i)$ converges to the time-0-slice of a $\kappa$-solution $(M_{\infty},g_{\infty}(t),x_{\infty})$. Then by Lemma \ref{l: close to k-solution}, after passing to a subsequence, $(\M,x_{i_k})$ converges to a $\kappa$-solution on its maximal existence time interval.
\end{proof}

\end{section}

\begin{section}{Canonical neighborhood theorem}
\label{s: CNA and NC}
The main results in this section are a canonical neighborhood theorem (Proposition \ref{p: noncollapsing}) for singular Ricci flows, and a bounded curvature at bounded distance theorem by reduced volume (Theorem \ref{t: bdd curvature at bdd distance}).

Recall that Perelman proved a canonical neighborhood theorem for compact smooth Ricci flows \cite[Theorem 26.2]{KL}, which says that assuming the Ricci flow has normalized initial condition, then for any $T>0$ there exists $r(T)\ge0$ such that the canonical neighborhood assumption holds in $\M_{t\le T}$ at scales less than $r(T)$.

He also proved a local version of the theorem \cite[Proposition 85.1]{KL}, in which he assumed the curvature is bounded in a backward parabolic neighborhood of a point $x_0$, $\mathfrak{t}(x_0)=t_0$, and showed that for all $A>0$ there exists $r(A)>0$, such that the canonical neighborhood assumption holds in $B_{t_0}(x_0,A)$ at scales less than $r(A)$.


The following proposition extends the local theorem to singular Ricci flows. 

\begin{prop}(Canonical neighborhood theorem)\label{p: noncollapsing}
For any $A>0$, $\epsilon>0$, there are constants $\kappa(A), r(A,\epsilon),\overline{r}(A),K(A)>0$, such that the following holds: Let $\M$ be a singular Ricci flow, $x_0\in\M$, $\mathfrak{t}(x_0)=t_0>0$. Suppose for some $r_0>0$ with $2r_0^2<t_0$ the following holds:
\begin{enumerate}
\item $\M$ is unscathed on a parabolic neighborhood $P(x_0,r_0,-r_0^2)$.
\item $|\Rm|\le r_0^{-2}$ on $P(x_0,r_0,-r_0^2)$.
\item $vol(B_{t_0}(x_0,r_0))\ge A^{-1}r_0^3$.
\end{enumerate}
Then \begin{enumerate}
\item[(a)]The solution is $\kappa$-non-collapsed at scales less than $r_0$ in $B_{t_0}(x_0,Ar_0)$.
\item[(b)]The $\epsilon$-canonical neighborhood assumption holds in $B_{t_0}(x_0,Ar_0)$ at scales less than $rr_0$.
\item[(c)]If $r_0\le\overline{r}\sqrt{t_0}$ then $|\Rm|\le Kr_0^{-2}$ in $B_{t_0}(x_0,Ar_0)$.
\end{enumerate}
\end{prop}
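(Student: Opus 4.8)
The plan is to bootstrap from Perelman's local canonical neighborhood theorem for smooth compact Ricci flows \cite[Proposition 85.1]{KL} by a blow-up/contradiction argument, using that a singular Ricci flow looks locally like a smooth flow wherever the curvature stays bounded, and patching in the $\kappa$-solution structure via Lemma \ref{l: close to k-solution} and Lemma \ref{l: blow-up converges to a k-solution} where it does not. Normalizing so that $r_0=1$, I would argue all three conclusions (a)--(c) simultaneously by contradiction: suppose there are singular Ricci flows $\M_i$ with base points $x_{0i}$, $t_{0i}=\mathfrak{t}(x_{0i})$, satisfying Conditions 1--3 for $r_0=1$, but for which no fixed choice of $\kappa, r, K$ works. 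The three statements are handled in a natural order, and each feeds the next.

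\textbf{Step 1: non-collapsing (a).} The hypotheses $|\Rm|\le 1$ on $P(x_0,1,-1)$ and $\mathrm{vol}(B_{t_0}(x_0,1))\ge A^{-1}$ give, by Perelman's no-local-collapsing theorem applied on the backward parabolic region together with the Hamilton--Ivey pinching and $\kappa$-solution structure of high-curvature regions, a $\kappa(A)>0$ so that the flow is $\kappa$-non-collapsed at scales $\le 1$ throughout $B_{t_0}(x_0,A)$. In the singular-Ricci-flow setting one runs Perelman's reduced-length/reduced-volume estimate based at $(x_0,t_0)$; the point is that $\mathcal{L}$-geodesics either stay in bounded-curvature regions, where the usual estimates apply verbatim, or enter high-curvature regions covered by $\delta$-necks and $\delta$-caps (Lemma \ref{l: central sphere decomposition}), where $0$-completeness prevents them from escaping and the canonical-neighborhood geometry controls them. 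This is essentially the content of the ``bounded curvature at bounded distance by reduced volume'' theorem advertised as Theorem \ref{t: bdd curvature at bdd distance}, which I would invoke.

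\textbf{Step 2: canonical neighborhoods (b).} With $\kappa(A)$ in hand, suppose (b) fails: there are points $y_i\in B_{t_{0i}}(x_{0i},A)$ with $\rho(y_i)\to 0$ (after rescaling; i.e. curvature scale $\to 0$ relative to $r_0=1$) at which the $\epsilon$-canonical neighborhood assumption fails. Rescale $\M_i$ by $\rho(y_i)^{-2}$ and recenter at $y_i$. By Step 1 the rescaled flows are uniformly non-collapsed near $y_i$; by Hamilton--Ivey pinching (valid for singular Ricci flows, \cite[Theorem 1.3]{KL1}) the rescaled curvatures are asymptotically nonnegative; and by a point-picking argument à la Perelman one arranges bounded curvature on larger and larger backward parabolic neighborhoods. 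Hamilton--Cheeger--Gromov compactness then produces a limit ancient flow which, by the non-collapsing, pinching, and bounded-curvature properties, is a $\kappa$-solution — contradicting the failure of the canonical neighborhood assumption at $y_i$. The only subtlety beyond the compact case is ensuring the backward parabolic neighborhoods on which curvature is bounded do not run into the singular set: here one uses $0$-completeness together with Lemma \ref{l: central sphere decomposition} to see that, below the scale $r(t)$, high-curvature regions are $\delta$-tubes/capped tubes and the flow cannot become incomplete within a bounded-curvature region.

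\textbf{Step 3: curvature bound (c).} This is the hardest of the three and the main obstacle. Once (a) and (b) hold, suppose (c) fails: after rescaling to $r_0=1$, there are $x_{0i}$ with $r_0\le \overline{r}_i\sqrt{t_{0i}}$, $\overline{r}_i\to 0$, and points $z_i\in B_{t_{0i}}(x_{0i},A)$ with $|\Rm|(z_i)\to\infty$. One wants to rule this out by a parabolic-rescaling argument: rescale at the ``first'' point where curvature reaches a large multiple of the ambient scale and base there. Using (b), neighborhoods of such points are modeled on $\kappa$-solutions, and Lemma \ref{l: close to k-solution}–Lemma \ref{l: blow-up converges to a k-solution} upgrade time-slice closeness to spacetime closeness, so a blow-up limit is a $\kappa$-solution on $(-\infty,\omega)$ for its maximal forward time $\omega$. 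The contradiction comes from tracing this back to the base point $(x_0,t_0)$: because $r_0\le\overline{r}\sqrt{t_0}$ with $\overline{r}$ tiny, the bounded-curvature region $P(x_0,1,-1)$ together with the distance-to-$z_i$ bound $A$ forces, via the distance-distortion estimates (Lemma \ref{l: distance laplacian}, Lemma \ref{l: expanding lemma}) and the $\kappa$-solution asymptotics, a contradiction with the normalization $t_0 \gtrsim \overline{r}^{-2}$ — i.e. the would-be singularity cannot form in the allotted backward time without violating the geometry of $P(x_0,1,-1)$. Making this last step quantitative — tracking how the $\kappa$-solution neighborhood of $z_i$ propagates back to $x_0$ and extracting the right dependence of $\overline{r}(A)$ and $K(A)$ — is where the real work lies, and it is the analogue of the curvature bound in Perelman's \cite[Proposition 85.1]{KL}, now in the singular setting where one must additionally certify that all the relevant parabolic regions remain unscathed.
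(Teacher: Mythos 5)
Your overall architecture matches the paper's: part (b) by contradiction, point-picking, and blow-up to a $\kappa$-solution via Lemma \ref{l: blow-up converges to a k-solution}, following \cite[Prop 85.1(b)]{KL}; part (c) deduced from (b) as in \cite[Prop 85.1(c)]{KL} (the paper treats this as routine, not as ``the main obstacle''). The problem is Step 1. You propose to establish the non-collapsing (a) by invoking Theorem \ref{t: bdd curvature at bdd distance}, but in the paper that theorem is itself an immediate consequence of Lemma \ref{c: curvature blow up} and Proposition \ref{p: noncollapsing}(c) --- so this is circular. You cannot use the bounded-curvature-at-bounded-distance statement as an input to the proposition it is derived from.

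The genuine new ingredient needed for (a), which your proposal does not supply, is Lemma \ref{l: L length} (resting on Lemma \ref{l: L length_0}): any admissible curve ending at $(x_0,t_0)$ that passes through a point of sufficiently small curvature scale has $\LL_+$-length greater than any prescribed $\Lambda$. Combined with the properness of scalar curvature from \cite[Theorem 1.3]{KL1}, this forces a minimizing sequence of admissible curves between two points to stay in a region of bounded curvature and hence converge to a smooth minimizing $\LL$-geodesic; only then does Perelman's reduced-volume argument from \cite[Prop 85.1(a)]{KL} run verbatim. Your substitute mechanism --- that $0$-completeness ``prevents $\mathcal{L}$-geodesics from escaping'' high-curvature regions and that the canonical-neighborhood geometry ``controls them'' --- does not address the actual danger, which is that a minimizing sequence of admissible curves could sink into the singular set and fail to converge to a smooth $\LL$-geodesic at all. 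What rules this out is the quantitative $\LL_+$-length lower bound, not completeness. If you replace the circular appeal to Theorem \ref{t: bdd curvature at bdd distance} with Lemma \ref{l: L length} and the convergence of minimizing sequences, your Step 1 becomes the paper's argument.
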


As an application of Proposition \ref{p: noncollapsing}, as well as other results by Perelman, we can generalize his bounded curvature at bounded distance theorem by using the reduced volume:
\begin{theorem}(Bounded curvature at bounded distance by reduced volume)\label{t: bdd curvature at bdd distance}
For any $A,\kappa>0$, there exist $\varphi(A,\kappa),K(A,\kappa)>0$ such that the following holds:
Let $\M$ be a singular Ricci flow, $x_0\in\M$, $t_0:=\mathfrak{t}(x_0)$.
Suppose the reduced volume $\tilde{V}_{x_0}(1)\ge\kappa$, and $\M_{t\ge t_0-1}$ is $\varphi$-positive. 

If $|\Rm|(x_0)\le 1$, then $|\Rm|\le K$ in $B_{t_0}(x_0,A)$.
\end{theorem}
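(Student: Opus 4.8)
The plan is to follow the structure of Perelman's original bounded-curvature-at-bounded-distance argument (\cite[Theorem 70.2, Corollary 71.2]{KL} and \cite[Proposition 85.1]{KL}), but organized so that the only input used is the reduced volume lower bound $\tilde V_{x_0}(1)\ge\kappa$ rather than a no-local-collapsing statement at a scale. First I would argue by contradiction and a point-picking procedure: if the statement fails, there is a sequence of singular Ricci flows $\M_i$, base points $x_{0i}$ with $|\Rm|(x_{0i})\le 1$, $\tilde V_{x_{0i}}(1)\ge\kappa$, and $\varphi_i$-positive on $\M_{i,t\ge t_{0i}-1}$ with $\varphi_i\to 0$ (or uniformly small and fixed — the Hamilton--Ivey pinching makes this robust), together with points $y_i\in B_{t_{0i}}(x_{0i},A)$ with $Q_i:=|\Rm|(y_i)\to\infty$. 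Applying Perelman's point-selection lemma (the version adapted to spacetimes, using that every point with small curvature scale has a canonical neighborhood by Definition \ref{d: SRF}, and that the flow is $0$-complete), I replace $y_i$ by a point $z_i$ at which $|\Rm|(z_i)=:Q_i'\to\infty$, still at bounded $g(t_{0i})$-distance from $x_{0i}$, such that $|\Rm|\le 4Q_i'$ on an unscathed backward parabolic neighborhood $P(z_i,(Q_i')^{-1/2}\eta^{-1},-(Q_i')^{-1}\eta^{-2})$ of a definite parabolic size — the $0$-completeness and the canonical neighborhood structure (Lemma \ref{l: either neck or cap}, Lemma \ref{l: central sphere decomposition}) guarantee such a neighborhood is genuinely unscathed since necks and caps at small scale cannot "leak out" of the spacetime.

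Next I would rescale by $Q_i'$ and extract a limit. The key geometric claim, exactly as in Perelman, is that along a minimizing $g(t_{0i})$-geodesic $\sigma_i$ from $x_{0i}$ to $z_i$ the rescaled curvature cannot stay large on a definite fraction of the geodesic: otherwise the geodesic would have to pass repeatedly through long thin necks (Lemma \ref{l: abundance of neck points} supplies the neck structure and the length control), forcing the $g(t_{0i})$-length of $\sigma_i$ to exceed $A$ — contradiction. Hence there is a "good" point $w_i$ on $\sigma_i$ with $|\Rm|(w_i)$ of order between $1$ and $Q_i'$, bounded curvature on a backward parabolic ball of controlled size at scale $\rho(w_i)$, and $w_i$ at bounded distance from $z_i$ at controlled scale. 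At $w_i$ one can invoke Proposition \ref{p: noncollapsing}: its hypotheses (1)--(3) hold because near $w_i$ we have a backward parabolic region of bounded curvature, and the volume lower bound in (3) comes from transporting the reduced-volume bound $\tilde V_{x_{0i}}(1)\ge\kappa$ to a non-collapsing statement at $w_i$ via Perelman's monotonicity of reduced volume along $\mathcal L$-geodesics (this is where $\varphi$-positivity is used: Hamilton--Ivey pinching keeps the ancient-solution analysis valid and lets the reduced-volume comparison run in the singular setting). Part (a) of Proposition \ref{p: noncollapsing} then gives $\kappa'$-non-collapsing at $w_i$, and part (c) gives an \emph{a priori} curvature bound propagating from $w_i$ back towards $z_i$, ultimately bounding $Q_i'/Q_i'=1$ below in the rescaled picture — i.e. the rescaled flows converge to a nonflat ancient $\kappa'$-solution whose existence contradicts the assumption that curvatures were unbounded at bounded distance, by the compactness of $\kappa$-solutions (the limit would have bounded curvature on its time-zero slice, contradicting $|\Rm|(z_i)=Q_i'$, the maximum, blowing up relative to the base).

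Concretely the steps are: (i) contradiction setup and point-picking; (ii) show the rescaled limit exists on a backward parabolic region and is a piece of a $\kappa$-solution, using $0$-completeness to rule out scathing; (iii) the geodesic-through-necks argument producing an intermediate good point $w_i$; (iv) convert the reduced volume bound at $x_{0i}$ into a non-collapsing/volume bound at $w_i$ via $\mathcal L$-geometry and Hamilton--Ivey pinching; (v) apply Proposition \ref{p: noncollapsing}(a),(c) at $w_i$ to get a uniform curvature bound between $w_i$ and $z_i$, contradicting $Q_i'\to\infty$.

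The main obstacle I expect is step (iv): making Perelman's reduced-volume monotonicity and the resulting no-local-collapsing argument rigorous on a \emph{singular} Ricci flow spacetime. One must ensure that $\mathcal L$-geodesics from $x_0$ do not run into the singular set, that the reduced volume integral is well defined and monotone despite possible topology change, and that the $\varphi$-positivity hypothesis is exactly what is needed to keep the relevant Harnack-type and pinching estimates in force near the high-curvature region. This is presumably handled by the heat kernel decay estimates and the generalized Harnack inequality announced in Section \ref{s: heat kernel}, so the argument here should cite those; the remaining steps (i)--(iii), (v) are essentially localizations of Perelman's compact arguments combined with the structural lemmas already established (Lemmas \ref{l: either neck or cap}, \ref{l: central sphere decomposition}, \ref{l: abundance of neck points}) and Proposition \ref{p: noncollapsing}.
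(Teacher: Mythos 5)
Your plan essentially re-derives Perelman's bounded-curvature-at-bounded-distance argument from scratch, but it omits the one step the paper's proof actually turns on, and that step cannot be skipped. The hypothesis gives only a \emph{pointwise} bound $|\Rm|(x_0)\le 1$, whereas every tool you invoke --- Proposition \ref{p: noncollapsing} (any of its parts), and the $\mathcal{L}$-geodesic ``transport'' of the reduced-volume bound to a non-collapsing statement at an intermediate point $w_i$ --- requires an \emph{unscathed backward parabolic neighborhood} $P(x_0,r,-r^2)$ of definite size on which $|\Rm|\le r^{-2}$. Without such a region the $\mathcal{L}$-geodesics based at $w_i$ have nowhere to land with controlled reduced length, so your step (iv) has no content, and the hypotheses (1)--(3) of Proposition \ref{p: noncollapsing} cannot be verified anywhere. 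The paper supplies exactly this missing step as Lemma \ref{c: curvature blow up}: the reduced volume comparison gives $\kappa'$-non-collapsing at $x_0$ at scales $\le 1$, and a blow-up/contradiction argument (using \cite[Theorem 54.2]{KL}, Proposition \ref{p: noncollapsing}(c), the pinching hypothesis and the strong maximum principle) upgrades $|\Rm|(x_0)\le 1$ to $|\Rm|\le r^{-2}$ on $P(x_0,r,-r^2)$ for some $r(\kappa)>0$. Once that is in hand, the theorem is a direct application of Proposition \ref{p: noncollapsing}(c) based at $x_0$ itself; no point-picking, geodesic-through-necks argument, or limit extraction is needed.

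A secondary problem is your endgame (step (v)): basing the argument at $w_i$ via its canonical neighborhood only controls the geometry at the scale $\rho(w_i)$, so Proposition \ref{p: noncollapsing}(c) applied there bounds curvature only in a ball of radius $A\rho(w_i)$, which shrinks to zero as $|\Rm|(w_i)\to\infty$ and cannot propagate over a definite distance toward $z_i$; and ``bounding $Q_i'/Q_i'=1$ below'' is vacuous rather than a contradiction. I recommend replacing the whole scheme by the paper's two-step deduction.
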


In the theorem, $\tilde{V}_{x_0}(1)$ is the reduced volume at $\tau=1$, for the base point $x_0$. It is defined by
\begin{equation}
    \tilde{V}_{x_0}(\tau)=\int_{\M_{t_0-\tau}}\tau^{-\frac{3}{2}}e^{-\ell_{x_0}(x)}\,d_{t_0-\tau}x.
\end{equation}


To prove Proposition \ref{p: noncollapsing} and Theorem \ref{t: bdd curvature at bdd distance}, we first prove some results in $\LL$-geometry for singular Ricci flows.

\begin{defn}
The $\LL_+$-length of an admissible curve $\gamma:[t_0-\tau,t_0]\rightarrow\M$ is
\begin{equation}
    \LL_+(\gamma)=\int_{t_0-\tau}^{t_0}\sqrt{t_0-t}(R_+(\gamma(t))+|\gamma'(t)|^2)\,dt.
\end{equation}
\end{defn}

\begin{lem}\label{l: L length_0}
Given constants $T,E$ and $\Lambda>0$, there exists $r=r(\Lambda, T,E)$ such that the following holds.
Let $\M$ be a singular Ricci flow with normalized initial condition. Suppose $\gamma:[a,b]\rightarrow\M$ is a smooth admissible curve with $b\le T$. Suppose $b-a\ge E$, and $\rho(\gamma(a))\le r$. 

Then $\int_{a}^{b} R_+(\gamma(t))+|\gamma'(t)|^2\,dt>\Lambda$.
\end{lem}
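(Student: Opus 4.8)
The plan is to fix the scale and argue by contradiction: assume
$\int_a^b\big(R_+(\gamma(t))+|\gamma'(t)|^2\big)\,dt\le\Lambda$ and show that this forces $\rho(\gamma(a))$ to be bounded below by a constant $r$ depending only on $\Lambda,T,E$ (together with the universal gradient constant $\eta$ of Lemma~\ref{l: derivative} and a universal canonical-neighborhood scale $r_T:=r(T)$, which for normalized initial data depends on $T$ alone). Two easy reductions come first. Since $|\gamma'|^2\ge0$ we have $\int_a^b R_+(\gamma)\,dt\le\Lambda$, and since $b-a\ge E$ there is a dichotomy: either $R_+(\gamma(t))\ge 2\Lambda/E$ on all of $[a,b]$, in which case $\int_a^b R_+(\gamma)\,dt\ge 2\Lambda>\Lambda$ and we are done, or there is a time $t^*\in(a,b]$ with $\rho(\gamma(t^*))>\rho_0:=\sqrt{E/(2\Lambda)}$. (That $t^*\ne a$ is automatic once $r<\rho_0$.) Thus along $\gamma|_{[a,t^*]}$ the curvature scale climbs from the supposedly tiny value $\rho_a:=\rho(\gamma(a))$ up to at least $\min(\rho_0,r_T)$.

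The core is a uniform lower bound on the "cost" of crossing one dyadic range of scales. Put $s_k:=2^k\rho_a$ and let $K$ be the largest integer with $2^{K}\rho_a<\min(\rho_0,r_T)$, so $K\ge\log_2\!\big(\min(\rho_0,r_T)/\rho_a\big)-1$. For each $k\le K-1$ let $b_k$ be the first time in $[a,t^*]$ with $\rho(\gamma)=s_{k+1}$ and let $a_k$ be the last time in $[a,b_k]$ with $\rho(\gamma)=s_k$; the intermediate value theorem shows these exist, that the intervals $[a_k,b_k]\subset[a,t^*]$ are pairwise non-overlapping up to endpoints, and that $s_k\le\rho(\gamma(t))\le 2s_k<r_T$ on $[a_k,b_k]$, so there $R_+(\gamma)=\rho(\gamma)^{-2}>0$ and the gradient estimate $|\nabla\rho|\le\eta$, $|\partial_{\mathfrak t}\rho^2|\le\eta$ is available. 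I claim $\int_{a_k}^{b_k}(R_++|\gamma'|^2)\,dt\ge c_0$ for a universal $c_0=c_0(\eta)>0$. If $b_k-a_k\ge\tfrac{1}{2\eta}s_k^2$ this is immediate from $R_+(\gamma)\ge(2s_k)^{-2}$. Otherwise, writing $\gamma'=\partial_{\mathfrak t}+v$ and integrating
$\tfrac{d}{dt}\rho^2(\gamma(t))=\partial_{\mathfrak t}\rho^2+\langle\nabla\rho^2,v\rangle\le\eta+4\eta s_k|v|$
over $[a_k,b_k]$ gives $3s_k^2\le\eta(b_k-a_k)+4\eta s_k\int_{a_k}^{b_k}|v|$; since $\eta(b_k-a_k)<\tfrac12 s_k^2$ this forces $\int_{a_k}^{b_k}|v|\ge\tfrac{5s_k}{8\eta}$, whence Cauchy--Schwarz together with $b_k-a_k<\tfrac{1}{2\eta}s_k^2$ gives $\int_{a_k}^{b_k}|v|^2\ge\tfrac{25}{32\eta}$. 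Summing the claim over the $K$ disjoint intervals yields $\Lambda\ge K c_0$, hence $\log_2\!\big(\min(\rho_0,r_T)/\rho_a\big)\le c_0^{-1}\Lambda+1$, i.e. $\rho_a\ge c_1\min\!\big(\sqrt{E/\Lambda},\,r_T\big)\,2^{-c_2\Lambda}$ for universal $c_1,c_2$. Choosing $r$ slightly smaller than this lower bound contradicts $\rho(\gamma(a))\le r$, and produces the required $r=r(\Lambda,T,E)$.

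The step I expect to need the most care is the bookkeeping of the dyadic intervals $[a_k,b_k]$: one must check from the first-hitting/last-hitting definitions plus the intermediate value theorem that they are genuinely essentially disjoint subintervals of $[a,t^*]$ on which $\rho(\gamma)$ stays strictly below $r_T$ (so that the gradient estimate of Lemma~\ref{l: derivative} applies throughout), and in the fast-crossing case one must insert the bound $b_k-a_k<\tfrac{1}{2\eta}s_k^2$ at the right place to trade a lower bound on the length $\int|v|$ for a lower bound on the energy $\int|v|^2$. Everything else — the choice of $t^*$, and the final counting — is routine; note in particular that the hypothesis of normalized initial condition enters only through the existence of a canonical-neighborhood scale $r_T$ depending on $T$ alone, which is exactly what makes $r$ depend only on $\Lambda,T,E$.
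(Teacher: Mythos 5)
Your argument is correct, but it is genuinely different from the one in the paper. The paper proves this lemma by contradiction and compactness: it rescales a putative sequence of counterexamples by $\rho^{-2}(\gamma_k(a_k))$, extracts (via Lemma~\ref{l: blow-up converges to a k-solution}) a limiting $\kappa$-solution, and then splits into cases according to the classification of non-compact $\kappa$-solutions --- a Bryant soliton (where $\int R$ or $\int|\gamma'|^2$ is forced to be large by the soliton's curvature decay and distance distortion) versus finite extinction time (where $\int R$ blows up like $\int \frac{C}{T_\infty-t}\,dt$). Your proof instead runs a direct, quantitative dyadic argument: the first dichotomy (using $b-a\ge E$) produces a time $t^*$ where $\rho>\rho_0=\sqrt{E/(2\Lambda)}$, and then each doubling of the curvature scale from $s_k$ to $2s_k$ on an essentially disjoint subinterval costs a universal amount $c_0(\eta)$ of the integrand --- either through $\int R_+$ if the crossing is slow, or through $\int|v|^2$ via the gradient estimates of Lemma~\ref{l: derivative} and Cauchy--Schwarz if it is fast --- so that at most $8\eta\Lambda$ doublings are affordable, forcing $\rho(\gamma(a))\ge c_1\min(\sqrt{E/\Lambda},r(T))2^{-c_2\eta\Lambda}$. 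I checked the bookkeeping you flagged: the first-hitting/last-hitting definitions do give $s_k\le\rho\le 2s_k$ on each $[a_k,b_k]$, the intervals overlap only at endpoints since $a_{k+1}\ge b_k$, and restricting to scales below $r(T)$ keeps the canonical neighborhood assumption (hence the gradient estimate) valid throughout. Your route is more elementary --- it avoids the compactness machinery and the classification of $\kappa$-solutions entirely, and it yields an explicit value of $r$ --- and it is close in spirit to Perelman's estimates for barely admissible curves in Ricci flow with surgery; the paper's softer argument buys nothing extra here beyond reusing Lemma~\ref{l: blow-up converges to a k-solution}, so your proof is a legitimate (arguably cleaner) substitute.
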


\begin{proof}
Suppose for some $\Lambda,T,E>0$, the conclusion does not hold, then we can find a sequence of singular Ricci flows $(\M_k,g_k(t))$ and a sequence of smooth curves $\gamma_k:[a_k,b_k]\rightarrow\M_k$ satisfying the assumptions in the theorem.
In particular, we have $\rho(\gamma_k(a_k))\le r_k$, and $r_k\rightarrow0$ as $k\rightarrow\infty$, but 
\begin{equation}\label{e: Lambda}
    \int_{a_k}^{b_k} (R_+(\gamma_k(t))+|\gamma_k'(t)|^2)\,dt\le \Lambda.
\end{equation}

We rescale each $(\M_k,g_k(t),\gamma_k(a_k))$ by $\rho_k^{-2}:=\rho^{-2}(\gamma_k(a_k))$ and then shift time $a_k$ to $0$, to obtain a sequence of Ricci flow spacetimes $(\tilde{\M}_k,\tilde{g}_k(t),\gamma_k(a_k))$ defined on $[0,\infty)$. 
Then $\gamma_k:[0,(b_k-a_k)\rho_k^{-2}]\rightarrow\tilde{\M_k}$ is an admissible curve and $(b_k-a_k)\rho_k^{-2}\rightarrow\infty$ as $k\rightarrow\infty$.
Note that \eqref{e: Lambda} is invariant under rescaling. 
Applying Lemma \ref{l: blow-up converges to a k-solution} and passing to a subsequence we may assume that $(\tilde{\M}_k,\tilde{g}_k(t),\gamma_k(a_k))$ converges to a $\kappa$-solution $(M_{\infty},g_{\infty}(t),x_{\infty})$ on $[0,\infty)$. So by \cite{classification}, $(M_{\infty},g_{\infty}(t))$ is either a Bryant soliton \cite{classification}, or has finite extinction time.

Suppose first that $(M_{\infty},g_{\infty}(t))$ is a Bryant soliton. Let $\M_{\infty}$ denote the Ricci flow spacetime associated to it.
Fix a large $A>0$ and let $\mathcal{P}_A:=\bigcup_{t\in[0,A^{3/2}]}B_t( x_{\infty},A)$. 
Then for large $k$, there exists a time-preserving diffeomorphism $\phi_k:U_k\rightarrow V_k$, with $\phi_k(x_{\infty})=\gamma_k(a_k)$, where $U_k$ and $V_k$ are open subsets of $\M_{\infty}$ and $\tilde{\M_k}$ respectively, such that given any compact subset $K\subset\M_{\infty}$ and $\delta>0$, we have $K\subset U_k$ for all large $k$, and $\|\phi_k^*G_k-G_{\infty}\|_{C^{[\delta^{-1}]}(K,G_{\infty})}$, where $G_k$ and $G_{\infty}$ are spacetime metrics of $\tilde{\M_k}$ and $\M_{\infty}$ respectively.
So we have $\mathcal{P}_A\subset U_k$ for all large $k$.
Let $\widehat{\gamma}_{k}\subset U_k$ be the image of $\gamma_k|_{V_k}$ under $\phi_k^{-1}$.

Since $(b_k-a_k)\rho_k^{-2}\rightarrow\infty$ as $k\rightarrow\infty$, we see that $\gamma_k$ must exit $\phi_k(\mathcal{P}_A)$ at some time $T_k\in[0,A^{3/2}]$, and accordingly $\widehat{\gamma}_k$ exits $\mathcal{P}_A$ at the time $\widehat{T}_{k}\in[0,A^{3/2}]$.
Then for sufficiently large $k$, we have
\begin{equation}\label{e: one half}
\begin{split}
    \int_{0}^{T_k} |\gamma_k'(t)|^2dt&\ge\frac{1}{2}\int_{0}^{\widehat{T}_{k}}|\widehat{\gamma}_{k}'(t)|^2dt,\\
    \int_{0}^{T_k} R(\gamma_k(t))dt&\ge\frac{1}{2}\int_{0}^{\widehat{T}_{k}} R(\widehat{\gamma_k}(t))dt.
\end{split}
\end{equation}

Suppose $T_{k}<A^{3/2}$. Since $\Ric\ge 0$, we have $\pt g_{\infty}(t)\le 0$ for all $t\ge 0$.
So $|\widehat{\gamma}_{k}'(t)|_{g_{\infty}(t)}\ge |\widehat{\gamma}_{k}'(t)|_{g_{\infty}(T_k)}$ for all $t\in[0,T_k]$, and hence
\begin{equation}\label{e: far away}
    \begin{split}
        \int_{0}^{T_k}|\widehat{\gamma}_{k}'(t)|^2\,dt\ge\int_{0}^{T_k}|\widehat{\gamma}_{k}'(t)|_{T_k}^2\,dt
        \ge \frac{d^2_{T_k}(\widehat{\gamma}_{k}(0),\widehat{\gamma}_{k}(T_k))}{T_k}\ge A^{1/2}.
    \end{split}
\end{equation}
Otherwise, we have $T_{k}=A^{3/2}$, and $\widehat{\gamma}_{k}(t)\in B_t((\widehat{\gamma}_{k}(0))(t),A)$ for all $t\in[0,A^{3/2}]$. 
Since $(M_{\infty},g_{\infty}(t))$ is a Bryant soliton, we have $R(\widehat{\gamma}_{k}(t))\ge\frac{C}{A}$, where $C$ is a constant depending only on the curvature of the tip.
So we have
\begin{equation}
    \int_{0}^{A^{3/2}}R(\widehat{\gamma}_{k}(t))\,dt\ge\int_{0}^{A^{3/2}}\frac{C}{A}\,dt=CA^{1/2}.
\end{equation}
In both cases, taking $A$ sufficiently large, it follows by \eqref{e: one half} that
\begin{equation}
    \int_{0}^{T_k}(R(\gamma_{k}(t))+|\gamma_{k}'|^2)\,dt>\Lambda,
\end{equation}
a contradiction to \eqref{e: Lambda}. 

Now suppose $(M_{\infty},g_{\infty}(t))$ has a finite extinction time $T_{\infty}<\infty$. 
Let $\theta\in(0,T_{\infty})$ and consider $\mathcal{P}_A:=\bigcup_{t=0}^{\theta}B_t( x_{\infty},A)$.
Let $T_k$ and $\widehat{T}_k$ be defined as above.
Then if $\widehat{T}_k<\theta$, then \eqref{e: far away} holds.
Otherwise, since the scalar curvature blows up at the rate of $(T_{\infty}-t)^{-1}$ when the time goes up to $T_{\infty}$, we have
\begin{equation}
    \int_{0}^{\theta}R(\widehat{\gamma}_k(t))dt\ge \int_{0}^{\theta} \frac{C}{T_{\infty}-t}dt=-C\log(T-\theta). 
\end{equation}
By taking $\theta$ sufficiently close to $T_{\infty}$ and $A$ sufficiently large, we get a contradiction. 

\end{proof}

The following lemma says that an admissible curve that contains a point of large curvature must have large $\LL_+$-length.
\begin{lem}\label{l: L length}
For all $\Lambda<\infty, \overline{r},T>0$, there is a constant $\delta=\delta(\Lambda,\overline{r},T)$ with the following property:

Let $\M$ be a singular Ricci flow with normalized initial condition, and $x_0\in\M_{t_0}$ with $0<t_0\le T$. Suppose $r_0\ge \overline{r}$, and $P(x_0,r_0,-r_0^2)$ is unscathed, and $|\Rm|\le r_0^{-2}$ on $P(x_0,r_0,-r_0^2)$.
Suppose also $\gamma:[t_1,t_0]\rightarrow\M$ is an admissible curve ending at $(x_0,t_0)$, and there exists $t\in[t_1,t_0]$ such that $\rho(\gamma(t))<\delta$.

Then $\LL_+(\gamma)>\Lambda$.
\end{lem}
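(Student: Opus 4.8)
The plan is to bound $\LL_+(\gamma)$ from below by isolating the contribution of $\gamma$ near the bad time $t$, and to split into two regimes according to whether $t$ is bounded away from $t_0$ or not. Fix a universal constant $c>0$ (dimension $3$) for which $|\Rm|(y)\le\mu$ implies $\rho(y)\ge c\mu^{-1/2}$; since $x_0\in P(x_0,r_0,-r_0^2)$ we have $|\Rm|(x_0)\le r_0^{-2}\le\overline{r}^{-2}$, hence $\rho(x_0)\ge c\overline{r}$. Let $\eta>0$ be from Lemma~\ref{l: derivative}, let $r(\cdot)$ denote the canonical neighborhood scale (universal for normalized initial data), and set $\rho_2:=\frac12\min\{r(T),c\overline{r}\}$, a constant depending only on $\overline{r},T$ with $\rho(x_0)\ge 2\rho_2$. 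Then put $\sigma_0:=(\rho_2^2/(18\eta^2\Lambda))^2$, $E:=\sigma_0/2$, $\Lambda':=2\Lambda\sqrt{2/\sigma_0}$, let $r_*:=r(\Lambda',T,E)$ be the constant of Lemma~\ref{l: L length_0}, and finally set $\delta:=\min\{\rho_2/2,r_*\}$, which depends only on $\Lambda,\overline{r},T$. Let $s\in[t_1,t_0]$ satisfy $\rho(\gamma(s))<\delta$; since $\delta<2\rho_2\le\rho(x_0)$, we have $s<t_0$.

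\emph{Regime 1: $t_0-s\ge\sigma_0$.} Then $\gamma|_{[s,s+E]}$ is defined (as $s+E\le s+\sigma_0\le t_0\le T$), is admissible of time-length $E$, and $\rho(\gamma(s))<\delta\le r_*$, so Lemma~\ref{l: L length_0} gives $\int_s^{s+E}(R_+(\gamma(t))+|\gamma'(t)|^2)\,dt>\Lambda'$. On $[s,s+E]$ one has $t_0-t\ge t_0-s-E\ge\sigma_0/2$, whence $\LL_+(\gamma)\ge\sqrt{\sigma_0/2}\,\Lambda'=2\Lambda$.

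\emph{Regime 2: $t_0-s<\sigma_0$.} Now the weight $\sqrt{t_0-\cdot}$ degenerates near $s$, so Lemma~\ref{l: L length_0} is of no use; instead we exploit that $\gamma$ must climb from curvature scale below $\delta$ up to the fixed scale $\rho_2$. Since $\rho\circ\gamma$ is continuous with $\rho(\gamma(s))<\rho_2<\rho(x_0)$, let $s^*\in(s,t_0)$ be the first time with $\rho(\gamma(s^*))=\rho_2$; on $[s,s^*]$ we have $0<\rho(\gamma(t))\le\rho_2<r(T)\le r(t)$, so the canonical neighborhood assumption holds along $\gamma|_{[s,s^*]}$, and Lemma~\ref{l: derivative} yields $|\nabla\rho|\le\eta$ and $|\partial_t\rho|\le\frac{\eta}{2}R_+^{1/2}$ there. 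Integrating $\frac{d}{dt}\rho(\gamma(t))=(\partial_t\rho)(\gamma(t))+\langle\nabla\rho,\gamma'(t)\rangle$ over $[s,s^*]$ and using $\rho_2-\delta\ge\rho_2/2$ gives
\begin{equation}
\frac{\rho_2}{2}\ \le\ \int_s^{s^*}\left(\frac{\eta}{2}R_+(\gamma(t))^{1/2}+\eta\,|\gamma'(t)|\right)\,dt.
\end{equation}
Applying Cauchy--Schwarz with weight $(t_0-t)^{1/2}$, together with $\int_s^{s^*}(t_0-t)^{-1/2}\,dt\le 2(t_0-s)^{1/2}$ and $\int_s^{s^*}(t_0-t)^{1/2}(R_+(\gamma(t))+|\gamma'(t)|^2)\,dt\le\LL_+(\gamma)$, this becomes $\rho_2/2\le\frac{3\eta}{\sqrt2}(t_0-s)^{1/4}\LL_+(\gamma)^{1/2}$, i.e. $\LL_+(\gamma)\ge\rho_2^2/(18\eta^2(t_0-s)^{1/2})$, which exceeds $\Lambda$ since $t_0-s<\sigma_0$.

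The main obstacle is Regime~2: exactly when the high-curvature point lies very close in time to $t_0$, the long-time mechanism of Lemma~\ref{l: L length_0} is unavailable because the parabolic weight degenerates, and the largeness of $\LL_+$ must instead be extracted from geometry, namely from the gradient estimate, which prevents $\rho\circ\gamma$ from rising from $\approx\delta$ up to the fixed scale $\rho_2$ (which it must, since $\rho(x_0)\gtrsim\overline{r}$) without a definite amount of weighted $\LL_+$-cost, this being quantified by the Cauchy--Schwarz step. The remaining points to verify are routine: that $\rho(x_0)\ge c\overline{r}$ (elementary in dimension $3$); that the canonical neighborhood scale $r(\cdot)$ is universal among singular Ricci flows with normalized initial data, so $\rho_2$ is an honest constant; that $\rho\circ\gamma$ is Lipschitz on the compact interval $[s,s^*]$, on which $R$ stays bounded below by a positive constant, so the fundamental theorem of calculus applies; and that $s^*<t_0$.
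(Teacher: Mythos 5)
Your proof is correct, and while it shares the overall skeleton of the paper's argument (a dichotomy plus Lemma \ref{l: L length_0} on one branch and a Cauchy--Schwarz estimate against the degenerate weight $\sqrt{t_0-t}$ on the other), the critical branch is handled by a genuinely different mechanism. The paper splits on the time $\tilde t$ at which $\gamma$ exits $P(x_0,r_0,-r_0^2)$: if the exit is late, the curve must traverse the spatial width $\approx r_0$ of the unscathed parabolic ball in a short time, so the energy term $\int\sqrt{t_0-s}\,|\gamma'|^2$ alone is forced to be large via Cauchy--Schwarz; if the exit is early, the bad time lies below $t_0-\Delta t$, the weight is bounded below there, and Lemma \ref{l: L length_0} applies. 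You instead split on the bad time $s$ itself, and in the regime $t_0-s<\sigma_0$ you replace the ``spatial width of $P$'' input by the gradient estimate of Lemma \ref{l: derivative}: since $\rho\circ\gamma$ must climb from below $\delta$ to the fixed scale $\rho_2$, and $|\tfrac{d}{dt}\rho(\gamma(t))|\le \tfrac{\eta}{2}R_+^{1/2}+\eta|\gamma'|$ wherever the canonical neighborhood assumption holds, Cauchy--Schwarz converts the definite rise of $\rho$ into a lower bound on $\LL_+$ using both the curvature and energy terms. What each buys: your Regime-2 argument uses the hypothesis on $P(x_0,r_0,-r_0^2)$ only through the single-point bound $\rho(x_0)\gtrsim\overline r$, so it is more robust with respect to weakening the parabolic-neighborhood hypothesis, but it leans on the (universal, for normalized initial data) canonical neighborhood scale to run the gradient estimate along $\gamma|_{[s,s^*]}$; the paper's Case 1 needs the full unscathed neighborhood for the distance-distortion factor but requires no canonical neighborhood input at all in that case. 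Your bookkeeping (the choice of $\sigma_0$, $E$, $\Lambda'$, the verification that $s+E\le t_0$ and that the weight is at least $\sqrt{\sigma_0/2}$ on $[s,s+E]$, and the existence of the first time $s^*<t_0$ with $\rho(\gamma(s^*))=\rho_2$) all checks out.
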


\begin{proof}

First, let $\Delta t=\frac{1}{4}10^{-4}\overline{r}^4\Lambda^{-2}$. 
By taking $\delta<\overline{r}$, we see that $\gamma$ must exit $P(x_0,r_0,-r_0^2)$ at some time $\tilde{t}$. First, suppose $\tilde{t}>t_0-\Delta t$. Then by the Schwarz inequality we get
\begin{equation}
\begin{split}
    \int_{\tilde{t}}^{t_0} \sqrt{t_0-s}|\gamma'(s)|^2\,ds
    &\ge\left(\int_{\tilde{t}}^{t_0}|\gamma'(s)|\,ds\right)^2\left(\int_{\tilde{t}}^{t_0}(t_0-s)^{-1/2}\,ds\right)^{-1}
    \\
    &\ge\frac{1}{2} 10^{-2}r_0^2(\Delta t)^{-1/2}>\Lambda,
\end{split}
\end{equation}
where the factor $10^{-2}$ comes from the distance distortion on $P(x_0,r_0,-r_0^2)$.

So now we may assume that $\gamma$ exits $P(x_0,r_0,-r_0^2)$ at a time $\tilde{t}\le t_0-\Delta t$. 
Then the conclusion follows immediately from Lemma \ref{l: L length_0}.

\end{proof}

\begin{proof}
Note by Lemma \ref{l: L length} and the properness of scalar curvature from \cite[Theorem 1.3]{KL1}, a minimizing sequence of admissible curves between any two points converges to a smooth minimizing $\LL$-geodesic.
Now the rest of proof for part (a) is the same as \cite[Proposition 85.1(a)]{KL}. 

For part (b), suppose that for some $A>0$ the claim is not true. Then there is a sequence of singular Ricci flows $\M_k$ which provide a counterexample.
In particular, there exists $x_k\in B_{t_{0k}}(x_{0k},Ar_{0k})$ with $\rho(x_k)\le r_kr_{0k}$ but at which the $\epsilon$-canonical neighborhood assumption does not hold, where $r_k\rightarrow0$ as $k\rightarrow\infty$.

Omitting the subscripts for a moment, by Lemma \ref{l: blow-up converges to a k-solution} we can apply a point-picking and find points $\ox\in B_{\ot}(x_0,2Ar_0)$, $\ot\in[t_0-r_0^2/2,t_0]$ with $\overline{\rho}:=\rho(\ox,\ot)\le rr_0$ satisfying the following: the $\epsilon$-canonical neighborhood assumption does not hold at $(\ox,\ot)$, but it holds at all points in $\overline{P}$ with $\rho\le \overline{\rho}/2$, where
\begin{equation}
    \overline{P}=\{x\in\M_t:d_t(x_0(t),x)\le d_{\ot}(x_0(\ot),\ox)+r^{-1}\overline{\rho},t\in[\ot-\frac{1}{4}r^{-2}\overline{\rho}^{2},\ot]\}.
\end{equation}
The rest of proof is the same as \cite[Prop 85.1(b)]{KL}, in which we can extract a convergent subsequence of $(\overline{P}_k,g_k(t),\ox_k)$ that converges to a $\kappa$-solution.
Then this contradicts the assumption of $\ox_k$ for large $k$, and thus proves part (b).

Part (c) follows from (b) in the same way as \cite[Prop 85.1(c)]{KL}.

\end{proof}

The following lemma is a corollary of Proposition \ref{p: noncollapsing}(c). Given the curvature and reduced volume bound at a single point, it provides curvature bound in a backward parabolic neighborhood of this point.

\begin{lem}\label{c: curvature blow up}
For any $\kappa>0$, there exists $r(\kappa)>0$ such that the following holds:
Let $\M$ be a singular Ricci flow, $x_0\in\M$, $\mathfrak{t}(x_0)=t_0$. 
Suppose the reduced volume $\tilde{V}_{x_0}(1)\ge\kappa$, and $\M_{t\ge t_0-1}$ is $1$-positive.

If $|\Rm|(x_0)\le 1$, then $|\Rm|(x)\le r^{-2}$ for all $x\in P(x_0,r,-r^2)$.
\end{lem}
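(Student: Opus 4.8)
The plan is to argue by contradiction, extract a blow-up limit, and derive a contradiction via Proposition \ref{p: noncollapsing}(c) together with the lower bound on the reduced volume. Suppose the statement fails. Then for some $\kappa>0$ there is a sequence of singular Ricci flows $\M_k$, points $x_{0k}\in\M_k$ with $t_{0k}:=\mathfrak{t}(x_{0k})$, and radii $r_k\searrow 0$, such that $\tilde V_{x_{0k}}(1)\ge\kappa$, $\M_{k,\,t\ge t_{0k}-1}$ is $1$-positive, $|\Rm|(x_{0k})\le 1$, but there exists a point $y_k\in P(x_{0k},r_k,-r_k^2)$ with $|\Rm|(y_k)>r_k^{-2}$. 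The idea is to use a point-picking argument (as in the standard local-curvature-estimate arguments of \cite{KL}): among all points in a slightly larger backward parabolic neighborhood whose curvature is large, choose one of (nearly) maximal curvature, rescale so that this maximal curvature becomes $1$, and then the rescaled flows have controlled curvature on a definite backward parabolic neighborhood of the chosen base point.

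First I would set up the point-picking: since $|\Rm|(x_{0k})\le 1$ but $|\Rm|$ exceeds $r_k^{-2}$ somewhere in $P(x_{0k},r_k,-r_k^2)$, and using the gradient estimate (Lemma \ref{l: derivative}) for the canonical-neighborhood regime together with $0$-completeness of the singular Ricci flow, one can find a point $z_k$ with $Q_k:=|\Rm|(z_k)\to\infty$ such that on the rescaled-by-$Q_k$ flow, $|\Rm|$ is bounded by a universal constant on a parabolic neighborhood $P(z_k, c, -c^2)$ for some fixed $c>0$ (in rescaled coordinates), and moreover $z_k$ lies within bounded rescaled distance of $x_{0k}(t)$ for appropriate times. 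Then I would apply Proposition \ref{p: noncollapsing}(c): the hypotheses of that proposition — unscathedness on $P(z_k,c,-c^2)$ and the curvature bound there, plus a volume lower bound at $z_k$ — must be verified. The volume lower bound comes from $\kappa$-noncollapsing, which in turn follows from the reduced-volume bound $\tilde V_{x_{0k}}(1)\ge\kappa$ via Perelman's monotonicity and the standard implication ``lower bound on reduced volume $\Rightarrow$ $\kappa$-noncollapsing'' (this is exactly the content packaged into Proposition \ref{p: noncollapsing}, whose hypothesis (3) is a volume lower bound, and whose conclusion (a) is noncollapsing; alternatively one invokes Theorem \ref{t: bdd curvature at bdd distance}). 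Applying part (c) of Proposition \ref{p: noncollapsing} at $z_k$ (with $r_0$ the rescaled unit, $t_{0k}$ bounded below away from the blow-up by the hypothesis that things are controlled on $[t_{0k}-1,t_{0k}]$) then gives $|\Rm|\le K$ at $x_{0k}(t)$ in rescaled coordinates on a ball of rescaled radius $A$; but on the other hand $|\Rm|(x_{0k})\le 1$ unrescaled means $|\Rm|(x_{0k}) = Q_k^{-1}\to 0$ in rescaled coordinates, which is compatible — so the contradiction must instead be extracted at $z_k$ itself: the chosen point has rescaled curvature exactly $1$, but the curvature bound propagated from the controlled region $P(x_{0k},r_k,-r_k^2)$ forces it to be small, contradicting $Q_k\to\infty$.

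The cleanest route is probably to pass to the limit directly: by Lemma \ref{l: blow-up converges to a k-solution}, the rescaled pointed flows $(\M_k, z_k)$ subconverge to a $\kappa$-solution $(M_\infty, g_\infty(t), z_\infty)$ on its maximal backward interval, with $|\Rm|(z_\infty)=1$. Meanwhile the hypothesis $|\Rm|(x_{0k})\le 1$ and $1$-positivity (hence Hamilton-Ivey pinching) on $[t_{0k}-1,t_{0k}]$, combined with Proposition \ref{p: noncollapsing}(c) applied with base point $x_{0k}$ at scale $r_0=1$ (unrescaled), gives a uniform bound $|\Rm|\le K(A)$ on $B_{t_{0k}}(x_{0k},A)$ for every fixed $A$ — but this is the \emph{unrescaled} statement, so it says $|\Rm|\le K$ on a unit-size unrescaled ball, while $y_k\in P(x_{0k},r_k,-r_k^2)\subset B_{t_{0k}}(x_{0k},A)$ (since $r_k\to 0$) has $|\Rm|(y_k)>r_k^{-2}\to\infty$, a direct contradiction. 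So in fact the contradiction is immediate once one checks that the hypotheses of Proposition \ref{p: noncollapsing} with $r_0=1$ are met: namely (i) unscathedness of $P(x_{0k},1,-1)$ — which needs $|\Rm|(x_{0k})\le1$ to be upgraded to a bound on a full parabolic neighborhood, the one delicate point — and (ii) the volume bound from $\kappa$-noncollapsing, obtained from the reduced-volume hypothesis.

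The main obstacle I anticipate is precisely establishing item (i): upgrading the pointwise curvature bound $|\Rm|(x_{0k})\le1$ to a curvature bound on an unscathed backward parabolic neighborhood $P(x_{0k},r,-r^2)$ — which is exactly what the lemma is \emph{claiming}, so the argument cannot be circular. The resolution is the point-picking / iterated-rescaling scheme: one does not assume control on a neighborhood a priori; instead, if the curvature failed to be bounded on every such small neighborhood, the point-picking produces points $z_k$ of maximal rescaled curvature $1$ near $x_{0k}$ with \emph{automatically} controlled curvature on a definite backward parabolic neighborhood (by maximality), and then Lemma \ref{l: blow-up converges to a k-solution} forces convergence to a $\kappa$-solution; the $\kappa$-noncollapsing needed to run that lemma is supplied by the reduced-volume lower bound $\tilde V_{x_{0k}}(1)\ge\kappa$. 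One then has to rule out the limit $\kappa$-solution having $|\Rm|(z_\infty)=1$ while simultaneously the distance from $z_\infty$ to the (rescaled) $x_{0k}$-point stays bounded and $|\Rm|$ there tends to $0$ — this is a standard ``bounded curvature at bounded distance'' contradiction, which is exactly Theorem \ref{t: bdd curvature at bdd distance}. So modulo carefully bookkeeping the rescalings and invoking Theorem \ref{t: bdd curvature at bdd distance} and Proposition \ref{p: noncollapsing}(c), the proof is routine; the real work is the point-picking setup, and I would model it verbatim on \cite[Proposition 85.1(c)]{KL} and the proof of Theorem \ref{t: bdd curvature at bdd distance}.
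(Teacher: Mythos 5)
Your diagnosis of the danger (that applying Proposition \ref{p: noncollapsing} at $x_0$ requires a curvature bound on a backward parabolic neighborhood, which is what the lemma asserts) is correct, but your proposed resolution does not close the gap, for two reasons. First, you invoke Theorem \ref{t: bdd curvature at bdd distance} to rule out the limit configuration; in this paper that theorem is \emph{deduced from} Lemma \ref{c: curvature blow up} (the paper states it is ``an immediate consequence of combining Lemma \ref{c: curvature blow up} and Proposition \ref{p: noncollapsing}(c)''), so using it here is circular. Second, even setting that aside, your final contradiction requires the rescaled distance from the point-picked point $z_k$ to $x_{0k}$ to stay bounded, so that the limit $\kappa$-solution contains a point where the curvature tends to $0$. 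Point-picking only keeps $z_k$ within distance roughly $r_k+AQ_k^{-1/2}$ of $x_{0k}$, and after rescaling by $Q_k$ this becomes $Q_k^{1/2}r_k+A$, which need not be bounded (one only knows $Q_k^{1/2}r_k>1$). So the ``curvature $0$ at bounded distance'' contradiction is not available, and recovering it is exactly a bounded-curvature-at-bounded-distance statement --- again circular. A further symptom that something is missing: your argument never uses the $1$-positivity hypothesis, which is essential in the actual proof.

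The paper's proof normalizes differently. It first reduces to bounding $|\Rm|$ on the time-slice ball $B_{t_0}(x_0,r)$ (the backward extension then comes from \cite[Theorem 54.2]{KL}). For the time-slice bound it sets $\overline{\rho}(x)=\sup\{r:|\Rm|\le r^{-2}\ \textnormal{on}\ B(x,r)\}$, assumes $\overline{\rho}(x_{0i})\to0$ along a counterexample sequence, and rescales \emph{centered at $x_{0i}$} by $\overline{\rho}(x_{0i})^{-2}$, so that after rescaling $\overline{\rho}(x_{0i})=1$; the reduced-volume hypothesis gives $\kappa'$-non-collapsing at $x_{0i}$, hence $vol(B_0(x_{0i},1))\ge\kappa'$ with $|\Rm|\le1$ there, and \cite[Theorem 54.2]{KL} plus Proposition \ref{p: noncollapsing}(c) produce a smooth (possibly incomplete) limit flow containing $B_0(x_\infty,2)$. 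The blow-up factor tends to infinity, so $|\Rm|(x_\infty,0)=0$, while $\overline{\rho}(x_\infty,0)=1$ forces $|\Rm|(y,0)=1$ at some nearby $y$; the $1$-positive pinching makes $\Rm\ge0$ on the limit, and the strong maximum principle then gives the contradiction. This normalization keeps the base point at the center by construction, which is precisely what your point-picking scheme fails to guarantee.
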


\begin{proof}
We will first show that $|\Rm|(x)\le r^{-2}$ for all $x\in B_{t_0}(x_0,r)$. Then the assertion in the theorem follows immediately from this by \cite[Theorem 54.2]{KL}, which gives upper bound on curvature at earlier smaller balls, assuming lower bounds for the volume and curvature in a ball.

For any $x\in\M$, put $\overline{\rho}(x)=\sup\{r>0: |\Rm|(y)\le r^{-2} \textnormal{ for all } y\in B_{\mathfrak{t}(x)}(x,r)\}$.
Suppose the assertion is not true. Then there is a sequence of singular Ricci flows $\M_i$ and points $x_{0i}\in\M_i$, $\mathfrak{t}(x_{0i})=t_{0i}$, that satisfy the assumptions, but $\lim_{i\rightarrow\infty}\overline{\rho}(x_{0i})=0$.

By the reduced volume comparison theorem (see e.g. \cite[Lem 78.11]{KL}), $\tilde{V}_{x_0}(1)\ge\kappa$ implies that there exists $\kappa'(\kappa)>0$ such that $\M_i$ is $\kappa'$-non-collapsed at $x_{0i}$ at scales less than $1$.
Rescale and do a time-shifting to the flows $(\M_{i},x_{0i})$ to get a sequence of new flows, which are still denoted by $(\M_{i},x_{0i})$, such that in the new flows we have $\overline{\rho}(x_{0i})=1$ and $\mathfrak{t}(x_{0i})=0$. 
So $vol(B_0(x_{0i},1))\ge\kappa'>0$ for all $i$. 

By \cite[Theorem 54.2]{KL}, we can find $C(\kappa)>0$ such that $|\Rm|\le C$ in $P(x_{0i},C^{-1/2},-C^{-1})$.
So by applying Proposition \ref{p: noncollapsing}(c) and some distance distortion estimates, we can find a smooth Ricci flow $(U,g_{\infty}(t),x_{\infty})$ (possibly incomplete), $t\in[-c,0]$, for some $c>0$, such that there are diffeomorphisms $\phi_i:U\rightarrow\M_i$ such that $\phi_i(x_{\infty})=x_{0i}$ and $\lim_{i\rightarrow\infty}\|\phi_i^*g_i(t)-g_{\infty}(t)\|_{C^k(U)}=0$ for all $k\in\mathbb{N}$ and $t\in[-c,0]$, and $B_0(x_{\infty},2)$ is relatively compact in $U$.

By the 1-positive pinching assumption, we see that $\Rm(x,t)\ge 0$ for all $x\in U$ and $t\in[-c,0]$. Also, we have $|\Rm|(x_{\infty},0)=0$, $\overline{\rho}(x_{\infty},0)=1$, and hence $|\Rm|(y,0)=1$ for some $y\in U$. However, this contradicts with the strong maximum principle, see e.g. \cite[Theorem 4.18]{MT}.
\end{proof}

Theorem \ref{t: bdd curvature at bdd distance} is a immediate consequence of combining Lemma \ref{c: curvature blow up} and Proposition \ref{p: noncollapsing}(c).

\end{section}

\begin{section}{heat kernel for singular Ricci flow}\label{s: heat kernel}
Let $\M$ be a singular Ricci flow and $x_0\in\M_{t_0}$, $0<t_0<T$. Our first main result in this section is Theorem \ref{t: heat kernel}, in which
we find a positive solution $u$ to the conjugate heat equation $(-\pt-\Delta+R)u=0$, which is a $\delta$-function at $x_0$.
Moreover, $u$ is positive at all points that are accessible to $x_0$, and vanishes elsewhere.
We also show that outside a backward parabolic neighborhood $P(x_0,r_0,-r_0^2)$ around $x_0$, we have $u\le C_0$.
We say such a function $u$ is the heat kernel of $\M$ starting from $x_0$.

The next main result is Theorem \ref{e: uR^m less than C_m}, which improves the bound $u\le C_0$ to the following for any integer $m\ge 1$:
\begin{equation}\label{u}
    u R^m\le C_m,
\end{equation}
where $C_m>0$ depends on $m$.
This indicates that the heat kernel $u$ is sufficiently small at points where the curvature is sufficiently large.
With this estimate we are able to show that the heat kernel in Theorem \ref{t: heat kernel} has many good properties as those of the ordinary heat kernel for compact smooth Ricci flows.

To compare, let $u(\cdot,\cdot)$ be the heat kernel starting from $(x_0,t_0)$ for a smooth compact Ricci flow $(M,g(t))$, $x_0\in M$. Then a direct computation by integration by parts shows
\begin{equation}
    \pt\int u(x,t)\,d_tx=\int (\pt u-Ru)\,d_tx=\int -\Delta u\,d_tx=0.
\end{equation}
So we get $\int u\,d_tx=1$ for all $t< t_0$.
For the singular Ricci flow and the generalized heat kernel, we show in Corollary \ref{l: integral equal to 1} that $\int u\,d_tx=1$ also holds for the heat kernel in Theorem \ref{t: heat kernel}.
Also, by a similar argument we show in Corollary \ref{l: v less than or equal to 0} that Perelman's differential Harnack inequality $v\le 0$ holds.

\begin{subsection}{Construction of the heat kernel}\hfill\\
\begin{theorem}\label{t: heat kernel}
 Let $\M$ be a singular Ricci flow with normalized initial condition, and $x_0\in\M$ with $t_0:=\mathfrak{t}(x_0)>0$. Then there exists a function $u:\M_{t<t_0}\rightarrow\mathbb{R}$ which is a smooth solution to the conjugate heat equation, i.e. $(-\pt-\Delta+R)u=0$, and it satisfies the following properties
\begin{enumerate}
\item\label{a: vanishing} $u$ is a positive smooth function on $\M(x_0)$, and $u$ vanishes on $\M-\M(x_0)$. Recall $\M(x_0)$ is the subset of all points that are accessible to $x_0$.
\item\label{a: delta} $u$ is a delta-function at $x_0$, in the sense that $\lim_{t\nearrow t_0}\int_{\M_t} u(x)h(x)d_tx=h(x_0)$ for any smooth function $h$ on $\M$ that has a compact support.
\item\label{a: C_0} Suppose for some $r_0,T>0$ we have $|\Rm|\le r_0^{-2}$ on $\mathcal{P}_0:=P(x_0,r_0,-r_0^2)$, and $t_0<T$. Then there exists $C_0(r_0,T)>0$ such that $u\le C_0$ on $\M_{t<t_0}-\mathcal{P}_0$.
\end{enumerate}
\end{theorem}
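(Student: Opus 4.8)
The plan is to construct $u$ as a limit of heat kernels on an exhaustion of $\M$ by smooth compact pieces. Since $\M_{t < t_0}$ is a (possibly incomplete, possibly non-compact) Ricci flow spacetime, I would first exhaust the spacetime region $\M(x_0) \cap \M_{[t_1,t_0]}$ (for each $t_1 < t_0$) by an increasing sequence of compact domains $\Omega_j$ with smooth boundary, chosen so that $\Omega_j$ contains the parabolic neighborhood $\mathcal P_0$ and so that $\partial \Omega_j$ escapes into the high-curvature region. On each $\Omega_j$ I would solve the conjugate heat equation $(-\partial_t - \Delta + R)u_j = 0$ backward from the $\delta$-function at $x_0$ with, say, Dirichlet boundary conditions on $\partial\Omega_j$; this is a standard parabolic existence problem on a compact domain, and the maximum principle gives $u_j > 0$ in the interior and $u_j = 0$ on $\partial \Omega_j$. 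Extending $u_j$ by zero outside $\Omega_j$, the sequence is monotone increasing in $j$ (again by the maximum principle, comparing the solutions on nested domains), so $u := \lim_j u_j$ exists pointwise.

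The key input that makes this limit nondegenerate and finite is a uniform a priori bound on $u_j$ away from $x_0$. Here I would exploit property \eqref{a: C_0}'s mechanism directly: on $\mathcal P_0 = P(x_0,r_0,-r_0^2)$ the curvature is bounded, so by local parabolic estimates (for instance, bounding the heat kernel against the Euclidean-type kernel on a controlled-geometry region, as in the standard on-diagonal/off-diagonal estimates) one gets $u_j(x,t) \le C_0(r_0,T)$ for $(x,t) \in \partial \mathcal P_0 \cap \M_{t<t_0}$, uniformly in $j$. Then on the complement $\Omega_j \setminus \mathcal P_0$, the function $u_j$ solves the conjugate heat equation with no interior source (the $\delta$-mass sits at $x_0 \in \mathcal P_0$), equals $0$ on $\partial\Omega_j$, and is $\le C_0$ on the part of the boundary it shares with $\mathcal P_0$; since the constant $C_0$ is a supersolution of $(-\partial_t - \Delta + R)(\cdot) = 0$ precisely when $R \ge 0$ — which by the normalized initial condition and Hamilton–Ivey pinching holds once we have a lower curvature bound, but in general $R$ can be slightly negative — I would instead use $C_0 e^{C(T-t)}$ as a supersolution, absorbing the negative part of $R$ (which is bounded below in terms of $T$ by $\varphi$-positivity). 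The maximum principle then yields $u_j \le C_0 e^{C(T-t)}$ on $\Omega_j \setminus \mathcal P_0$, hence $u \le C_0'$ on $\M_{t<t_0} \setminus \mathcal P_0$, which is \eqref{a: C_0} (after renaming the constant); and local interior parabolic (Schauder) estimates upgrade the pointwise limit to smooth convergence on compact subsets of $\M(x_0)_{t<t_0}$, so $u$ is a genuine smooth solution of the conjugate heat equation there. Property \eqref{a: vanishing} follows: positivity on $\M(x_0)$ from the strong maximum principle applied to each $u_j$, and vanishing on $\M \setminus \M(x_0)$ because every $\Omega_j$ is contained in $\M(x_0)$ so $u_j \equiv 0$ outside it; for points in $\M_{t<t_0} \setminus \M(x_0)$ one notes no admissible curve reaches $x_0$, so they never enter any $\Omega_j$. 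Property \eqref{a: delta} is immediate from the construction of each $u_j$ as a $\delta$-function at $x_0$ together with the uniform bound near $x_0$, which lets one pass to the limit in the defining integral.

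The main obstacle I anticipate is the escape-to-infinity issue for the boundary terms: a priori $\partial\Omega_j$ need not lie in a region of controlled geometry, and without the uniform bound on $u_j$ along $\partial\mathcal P_0$ one cannot conclude that the monotone limit $u$ does not ``leak'' mass out to the singular set or fail to be a $\delta$-function (e.g. $u$ could a priori be identically the zero-mass limit, or conversely blow up). The resolution is to make the argument of the previous paragraph quantitative: one must show $\sup_{\partial\mathcal P_0 \cap \M_t} u_j$ is controlled independently of $j$, which in turn needs a barrier/supersolution valid on all of $\Omega_j \setminus \mathcal P_0$ — and here the point is that $C_0 e^{C(T-t)}$ works on \emph{any} such domain regardless of its geometry, since it only uses the lower bound $R \ge -C$ coming from $\varphi$-positivity on $\M_{[t_0-1,t_0]}$ (or more simply the normalized initial condition propagated by Hamilton–Ivey). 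A secondary technical point is verifying that the Dirichlet solutions $u_j$ really are monotone in $j$: this requires that $u_{j+1} \ge u_j$ on $\partial\Omega_j$, which holds because $u_j = 0$ there and $u_{j+1} \ge 0$, so the comparison principle on $\Omega_j$ closes the induction. The later estimate $uR^m \le C_m$ (Theorem \ref{e: uR^m less than C_m}) is not needed for this statement and will be proved separately using the canonical-neighborhood structure and the bound \eqref{a: C_0} as a starting point.
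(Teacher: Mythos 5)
Your construction is genuinely different from the paper's: the paper does not exhaust the singular Ricci flow by compact spacetime domains, but instead approximates $\M$ by Ricci flows with surgery $\M_i$ (surgery scale $\delta_i\to 0$), builds $u_i$ on each $\M_i$ by propagating backward with the ordinary compact heat kernel and truncating to zero on the non-surviving set at each surgery time, and then passes to the limit $u_i\to u$. The advantage of the surgery route is that the inequality $\int_{\M_{i,t}}u_i\,d_tx\le 1$ is immediate by construction (each truncation only loses mass), and this $L^1$ bound is the engine of the whole proof. Your Dirichlet-exhaustion route is in principle workable and arguably more intrinsic, but it must reprove that $L^1$ bound for the approximants $u_j$, and it must make sense of the Dirichlet problem on spacetime domains $\Omega_j$ whose boundaries are not of product type: worldlines in a singular Ricci flow can terminate, so $\partial\Omega_j$ will in general contain ``temporal'' pieces on which the backward problem needs data, and the integration by parts controlling $\pt\int_{\Omega_j(t)}u_j\,d_tx$ acquires extra boundary terms there.

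The genuine gap is in your key step, the uniform bound $u_j\le C_0$ on $\partial\mathcal{P}_0$. You assert it follows from ``local parabolic estimates / on-diagonal estimates'' in the controlled-geometry region $\mathcal{P}_0$, but $u_j$ restricted to $\mathcal{P}_0$ is not determined by the $\delta$-source alone: it also carries the (unknown) influx through the lateral boundary of $\mathcal{P}_0$, which is exactly the quantity you are trying to bound, so the argument as stated is circular. No purely local estimate can close this; what is needed is the global mass bound $\int_{\Omega_j(t)}u_j\,d_tx\le 1$ for all $t<t_0$, after which the parabolic mean value inequality (applied in bounded-geometry parabolic balls contained in $\mathcal{P}_0$ and touching its parabolic boundary) converts the $L^1$ bound into the $C^0$ bound $u_j\le C_1(r_0)$ there; this is precisely how the paper obtains \eqref{a: C_0}. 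The mass bound is available in your setting --- with Dirichlet data the outward normal derivative of $u_j\ge 0$ on $\partial\Omega_j$ is nonpositive, so the flux term has a favorable sign --- but it must be stated and verified on your actual domains, including their temporal boundary pieces. It is also what makes your ``immediate'' passage to the limit in property \eqref{a: delta} legitimate: without $\int u_j\le 1$ and the cutoff computation $\pt\int u_j\phi\,d_tx=\int(-u_j\Delta\phi)\,d_tx$ one cannot rule out that the limit fails to concentrate unit mass at $x_0$. Once the $L^1$ bound is in place, the rest of your argument (supersolution $C_0e^{C(T-t)}$ using $R\ge-6$ from the normalized initial condition, monotonicity, interior Schauder estimates, strong maximum principle) goes through.
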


For simplicity, we use the following variant of Perelman's Ricci flow with surgery, in which the surgeries are done slightly before singular times (the times where curvature blows up), instead of exactly at them. 
This Ricci flow with surgery can be obtained with little modification to that of Perelman's. 
It is also constructed for complete manifold with bounded geometry in \cite{Besson}.

\begin{defn}[Ricci flow with surgery]\label{d: RFS}
A Ricci flow with surgery is given by
\begin{enumerate}
    \item A collection of Ricci flows $\{(M_k\times[t_k^-,t_k^+],g_k(\cdot))\}_{1\le k\le N}$, where $N\le\infty$, $M_k$ is a compact (possibly empty) manifold, $t_k^+=t^-_{k+1}$ for all $1\le k\le N$. 
    \item A collection of isometric embeddings $\{\psi_k: Y^+_k\rightarrow Y_{k+1}^-\}_{1\le k\le N}$ where $Y_k^+\subset M_k$ and $Y_{k+1}^-\subset M_{k+1}$, $1\le k<N$, are compact 3 dimensional submanifold with boundary. The $Y_k^{\pm}$'s are the subsets which survive the transition from one flow to the next, and the $\psi_k$'s give the identification between them.
\end{enumerate}

\end{defn}

We call each final time $t_k^{+}$ a surgery time.
Let $X_k^+$ and $X_{k+1}^-$ denote the interior of $Y_k^+$ and $Y_{k+1}^-$ respectively for $1\le k<N$. We can associate a Ricci flow spacetime $\M$ to the Ricci flow with surgery by taking $\M$ to be the disjoint union of 
\begin{equation}
    (M_k\times[t_k^-,t_k^+))\cup(X_k^+\times\{t_k^+\})
\end{equation}
for $1\le k\le N$, and removing the following subset  
\begin{equation}
   (M_{k+1}-X_{k+1}^-)\times\{t_{k+1}^-\}
\end{equation}
for all $1\le k<N$ (making identifications using the $\psi_k$'s as gluing maps).

\begin{proof}[Proof of Theorem \ref{t: heat kernel}]
Let $\M_i$ be a sequence Ricci flow with surgery spacetimes starting from $(M,g)$, with the surgery scale $\delta_i\rightarrow 0$ as $i\rightarrow\infty$. 
Then $\M_i$ have the local control required for the application of the spacetime compactness theorem in \cite[Theorem 2.20]{KL1}, and hence $\M_i$ converges to the singular Ricci flow $\M$ as $i\rightarrow\infty$. 
Assume $x_{0i}\in\M_i$ converges to $x_0$ when $i\rightarrow\infty$.
We shall construct smooth non-negative solutions to the conjugate heat equation on each $\M_i$ starting from $x_{0i}$, and then take a limit of them to obtain the desired heat kernel on $\M$. 

Let $\M_i$ be fixed below, and assume the compact Ricci flows that form $\M_i$ are $\{(M_k\times[t_k^-,t_k^+],g_k(\cdot))\}_{1\le k\le N}$, where $t_N^+=t_{0i}=\mathfrak{t}(x_{0i})$.
We shall define $u_i$ on each of these Ricci flows and then restrict it to $\M_i$ to get a smooth function.

First, on $M_N\times[t_N^-,t_N^+)$, let $u_i$ be the ordinary heat kernel of $(M_N,g_N(t))$ which starts from $(x_{0i},t_{0i})$.
Note that $u_i$ vanishes at $(x,t)$ if $x$ are not in the same component with $x_{0i}$.
Then suppose by induction that $u_i$ has been defined on $M_j\times[t_j^-,t_j^+)$ for all $j=k, ...,N$ such that the following holds:
\begin{enumerate}
    \item $u_i\ge 0$ is a smooth solution to the conjugate heat equation on $\M_{i, \,t\ge t_k^-}$.
    \item $u_i$ vanishes at points that are not accessible to $x_{0i}\in\M_i$.
    \item $\int_{M_{j}}u_i(x,t)d_tx\le 1$ for all $t\in[t_j^-,t_j^+)$, and $j=k,...,N$.
\end{enumerate}

Then we define $u_i$ on $M_{k-1}\times \{t_{k-1}^+\}$ by letting $u_i(x,t_{k-1}^+)=u_i(x,t_k^-)$ if $x\in X_{k-1}^+$, and $u_i(x,t_{k-1}^+)=0$ if $x\in M_{k-1}-X_{k-1}^+$.
Then for any $(x,t)\in M_{k-1}\times[t_{k-1}^-,t_{k-1}^+)$,
set 
\begin{equation}\label{e: def}
 u_{i}(x,t)=\int_{X_{k-1}^+} u_i(y,t_{k-1}^+)H(y,t_{k-1}^+;x,t)\,d_{t_{k-1}^+}y,    
\end{equation}
where $H(\cdot,\cdot;\cdot,\cdot)$ is the ordinary heat kernel for the smooth compact Ricci flow $(M_{k-1},g_{k-1}(t))$, $t\in [t_{k-1}^-,t_{k-1}^+]$.
So $u_{i}$ is a smooth solution to the conjugate heat equation on $M_{k-1}\times[t_{k-1}^-,t_{k-1}^+)\cup X_{k-1}^{+}\times\{t_{k-1}^{+}\}$, and hence it is smooth on $\M_{i,\,t\ge t_{k-1}^-}$. 
Moreover, by \eqref{e: def} we get for all $t\in[t_{k-1}^-,t_{k-1}^+)$ that
\begin{equation}
    \begin{split}
        \int_{M_{k-1}} u_i(x,t)d_tx&\le \int_{M_{k-1}}\int_{X_{k-1}^+} u_i(y,t_{k-1}^+)H(y,t_{k-1}^+;x,t)\,d_{t_{k-1}^+}y\, d_tx\\
        &= \int_{X_{k-1}^+} u_i(y,t_{k-1}^+)\,d_{t_{k-1}^+}y\le 1.
    \end{split}
\end{equation}
It is clear that assumptions (1) and (2) also hold for $j=k-1$.
So by induction, we obtain a smooth non-negative solution $u_i$ on $\M_i$ which satisfies $\int_{\M_i(t)}u_i(x,t)d_tx\le 1$, and $u_i\equiv0$ on $\M_i-\M_i(x_{0i})$.

Next, suppose $r_0,T$ are the constants in assertion \eqref{a: C_0}. Then since $(\M_i,x_{0i})$ converges to $(\M,x_0)$, we have $|\Rm|\le 2r_0^{-2}$ on $\mathcal{P}_{0i}:=P(x_{0i},r_0,-r_0^2)$ for all large $i$.
Since $\M_i$ has the normalized initial condition, the scalar curvature satisfies $R\ge -6$ anywhere on $\M_i$, see \cite[Lemma 79.11]{KL}. So $\tilde{u}_i:=u_i e^{-6(t_0-t)}$ satisfies 
\begin{equation}
    (-\pt-\Delta)\tilde{u}_i\le0.
\end{equation}
Let $\Gamma$ be the parabolic boundary of $\mathcal{P}_{0i}$, i.e.
\begin{equation}
    \Gamma=\left(\bigcup_{t\in[t_{0i}-r_0^2,t_{0i})}\partial B_{t_{0i}}(x_{0i},r_0)(t)\right) \cup B_{t_{0i}}(x_{0i},r_0)(t_{0i}-r_0^2).
\end{equation}
Then since $\int u_i \,d_tx\le 1$, we can apply the parabolic mean value inequality (see e.g. \cite[Theorem 25.2]{RFTandA3}) at any points in $\Gamma$, and hence find a constant $C_1(r_0)>0$ such that $\tilde{u}_i\le C_1$ on $\Gamma$. 

Suppose by induction that $\tilde{u}_i\le C_1$ on $\M_{i,[t_k^-,t_0)}-\mathcal{P}_{0i}$ for some $k\le N$.
Without loss of generality, we may assume that $t_{0i}-r_0^2$ is a surgery time.
Then if $M_{k-1}\times[t_{k-1}^-,t_{k-1}^+)$ does not intersect $\mathcal{P}_{0i}$, by the inductive assumption we can apply the maximum principle and get $\tilde{u}_i\le C_1$ on $\M_{i,[t_{k-1}^-,t_{k-1}^+)}$.
Otherwise, apply the maximum principle on $\bigcup_{t\in[t_{k-1}^-,t_{k-1}^+)}(M_k(t)\setminus B_{t_{0i}}(x_{0i},r_0)(t))$, whose boundary is contained in $\Gamma$. Then by inductive assumption and $\tilde{u}_i\le C_1$ on $\Gamma$, we get $\tilde{u}_i\le C_1$ on $\M_{i,[t_{k-1}^-,t_{k-1}^+)}$.
So by induction, $\tilde{u}_i\le C_1$ holds on $\M_{i,t<t_{0i}}-\mathcal{P}_{0i}$.
Let $C_0=C_1e^{6T}$, then $u_i\le C_0$ on $\M_{i,t<t_{0i}}-\mathcal{P}_{0i}$.

Then applying the interior H\"{o}lder estimate, we can bound the derivatives of $u_i$ in terms of its $C^0$-norm and the curvature norm nearby. So by passing to a subsequence, we may assume that $u_i$ converges smoothly to a  non-negative smooth solution $u$ to the conjugate heat equation on $\M$. It follows immediately that 
\begin{equation}\label{e: u less than C_0}
    u(x)\le C_0
\end{equation}
for all $x\in \M-\mathcal{P}_{0}$, which proves assertion \eqref{a: C_0} in the theorem.


Now we establish the properties claimed in Theorem \ref{t: heat kernel}. 
First, we show that $u$ is a $\delta$-function at $x_0$.
For large $i$, let $\phi_i$ be a cut-off function whose support is contained in $B_{t_{0i}}(x_{0i},r_0)$, $\phi_i\equiv 1$ on $B_{t_{0i}}(x_{0i},\frac{r_0}{2})$, and $|\nabla\phi_i|$ and $|\Delta\phi_i|$ are bounded above in terms of $r_0$. 
Here the derivatives and norms are considered with respect to $g(t_{0i})$.
By the choice of $r_0$, there exists a universal constant $c>0$ such that $\frac{1}{2}g(t)\le g(t_{0i})\le 2g(t)$ for all $t\in[t_{0i}-cr_0^2,t_{0i}]$ on $B_{t_{0i}}(x_{0i},r_0)$.
So for all $t\in[t_{0i}-cr_0^2,t_{0i})$, a direct computation using integration by parts shows that there is a constant $C=C(r_0)>0$ such that
\begin{equation}
\begin{split}
    \pt\int_{B_{t_{0i}}(x_{0i},r_0)} u_i\phi_i\,d_tx
    =\int_{B_{t_{0i}}(x_{0i},r_0)} -u_i\Delta_{g(t)}\phi_i\,d_tx
    \le |\Delta_{g(t)}\phi_i|
    \le C,
\end{split}
\end{equation}
where we also used $\int_{\M_{i,t}} u_i(x,t)\,d_tx\le 1$. 
Integrating this we get for all $t\in[t_{0i}-cr_0^2,t_{0i})$ that $\int_{B_{t_{0i}}(x_{0i},r_0)} u_i(x,t)\,d_tx\ge 1-C(t_{0i}-t)$.
Passing to the limit this implies
\begin{equation}\label{e: delta function_1}
    \int_{(B_{t_{0}}(x_{0},r_0))(t)} u(x)\,d_tx\ge 1-C(t_0-t).
\end{equation}
Letting $t\nearrow t_0$, we get
\begin{equation}\label{e: delta function}
    \lim_{t\nearrow t_0}\int_{\M_t} u(x)\,d_tx=1.
\end{equation}

By a same argument we can show that
$\lim_{t\nearrow t_0}\int_{\M_t} u(x)h(x) \,d_tx=h(x_0)$,
for all smooth function $h$ that has compact support. So $u$ is a $\delta$-function at $x_0$, which verifies assertion \eqref{a: delta} in Theorem \ref{t: heat kernel}.

Now we verify assertion \eqref{a: vanishing} in Theorem \ref{t: heat kernel}. First, the positivity of $u$ on $\M(x_0)$ is an easy consequence of the Harnack inequality for parabolic equations. 
So it remains to show that $u$ vanishes on $\M-\M(x_0)$.

To show this, let $y\in\M-\M(x_0)$, $\mathfrak{t}(y)=t\in[0,t_0)$.
Since by \cite[Theorem 6.3]{KL2} the non-singular times, at which the time-slices have bounded curvature, are dense, we may assume without loss of generality that $t$ is a non-singular time.
Otherwise, we can find a sequence of non-singular times $s_k>t$ that converges to $t$ as $k\rightarrow\infty$, and $y(s_k)\in\M-\M(x_0)$. 
So $\M_t$ has finitely many connected components which are closed manifolds.
Since $y\in\M-\M(x_0)$, it is easy to see that $x_0(t)$ and $y$ are in different connected components in $\M_t$.

Then by \cite[Theorem 1.3]{KL1}, there is a sequence of time-preserving diffeomorphisms $\{\phi_i:U_i\rightarrow\M_i\}$, where $U_i$ are open subsets of $\M$ such that given any $\overline{t},\overline{R}>0$, if $i$ is sufficiently large then 
\begin{equation}
    U_i\supset\{x\in\M: \mathfrak{t}(x)\le\overline{t}\;\textnormal{and}\;R(x)\le\overline{R}\},
\end{equation}
and $\{\phi_i^*g_i\}$ converges smoothly on compact subsets of $\M$ to $g$.
So $\M_t$ is contained in $U_i$ for all large $i$ and $\phi_i(\M_t)$ is a finite union of closed manifolds.
In particular, $\phi_i(x_0(t))$ and $\phi_i(y)$ are in different connected components in $\phi_i(\M_t)$, which implies
$\phi_i(y)\in\M_i-\M(x_{0i})$.
So $u_i$ vanishes at $\phi_i(y)$.
Letting $i\rightarrow\infty$ we get $u(y)=0$.
\end{proof}

\end{subsection}

\begin{subsection}{Further properties of the heat kernel}\hfill\\
In this subsection we investigate more properties of the heat kernel in Theorem \ref{t: heat kernel}.
The first main result is Theorem \ref{l: semilocal}, which is
a semi-local maximum principle for the heat kernel.
Then in  
Theorem \ref{e: uR^m less than C_m} we derive from Theorem \ref{l: semilocal} a polynomial decay estimate of the heat kernel.
Corollary \ref{l: integral equal to 1} and \ref{l: v less than or equal to 0} are applications of Theorem \ref{e: uR^m less than C_m}.

The main ingredient in the proof of the semi-local maximum principle is the following vanishing theorem of the Bryant soliton:
\begin{prop}\label{t: vanishing}
(Vanishing theorem on Bryant soliton) Let $(M,g(t)),t\in\mathbb{R}$ be a Bryant soliton with tip $x_0\in M$, $R(x_0,0)=1$, and $u(x,t):M\times[0,\infty)\rightarrow\mathbb{R}$ be a smooth non-negative solution to the conjugate heat equation. Suppose there are constants $C>0$ and $m\in\mathbb{N}_+$ such that $u(x,t)R^m(x,t)\le C$ for all $x\in M$ and $t\in[0,\infty)$. 

Then $u\equiv0$.
\end{prop}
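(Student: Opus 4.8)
The plan is to exploit the special structure of the Bryant soliton—its warped-product geometry, rotational symmetry, and the precise asymptotics $R(x,t)\sim c\,d(x_0,x)^{-1}$ as $d(x_0,x)\to\infty$—to force $u$ to vanish. First I would reduce to the case where $u$ is rotationally symmetric: averaging $u(\cdot,t)$ over the isometry group of the (static, up to diffeomorphism) Bryant soliton preserves the conjugate heat equation, non-negativity, and the bound $uR^m\le C$; and if the symmetrized solution is identically zero then, since $u\ge0$, the original $u$ vanishes as well. So assume $u=u(r,t)$ where $r=d(x_0,\cdot)$ is the radial coordinate. In these coordinates the conjugate heat equation becomes a one-dimensional parabolic equation on $[0,\infty)\times[0,\infty)$ with a first-order drift coming from the warping function and the potential $R$; the hypothesis gives $u\le C R^{-m}\le C'(1+r)^{m}$ growth at spatial infinity, but more importantly $R\to0$ at infinity means $u$ may only grow polynomially there while the geometry is asymptotically cylindrical-over-a-ray.

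The core idea is a Liouville-type / uniqueness argument for the backward-in-time problem. Run the conjugate heat equation forward from $t=0$: the bound $u(x,t)R^m(x,t)\le C$ is uniform in $t$, so the spatial "mass" $\int_M u(\cdot,t)\,dg(t)$, if finite, is non-increasing (by the conjugate heat equation and $R\ge0$ on the Bryant soliton), and in fact I would show it is actually controlled and tends to a limit. The decisive step is to show that no nontrivial non-negative solution of the conjugate heat equation on the Bryant soliton can satisfy $uR^m\le C$ for all forward times: intuitively, the conjugate heat flow (which is the adjoint of the heat flow) tends to push mass toward the tip under the Ricci-flow-weighted measure, but the pinching $uR^m\le C$ caps $u$ near the tip where $R=1$ (so $u\le C$ there) while allowing at most polynomial growth at infinity where $R\to0$. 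Passing to the limit $t\to\infty$ and using that the Bryant soliton is a steady soliton—so $(M,g(t))$ is isometric to $(M,g(0))$ for all $t$—one obtains, after rescaling, either a positive ancient solution of the heat equation with the same polynomial pinching on a fixed manifold, or a contradiction with a growth/integrability estimate. I would make this precise via a weighted energy estimate: multiply the equation by $u$ times an exponential cutoff $e^{-\alpha\phi(r)}$ adapted to the soliton potential $f$ (for which $\nabla f$ and the measure $e^{-f}dg$ are explicit), integrate by parts, and use the pinching bound to absorb the error terms, deducing $\int u^2 e^{-f}\,dg$ is finite and monotone; then a second differentiation or a Poincaré-type inequality on the weighted space forces this quantity to zero.

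The step I expect to be the main obstacle is controlling the behavior of $u$ at spatial infinity rigorously enough to justify all the integrations by parts and to rule out a nonzero solution that "lives at infinity." The Bryant soliton is asymptotically a paraboloid / shrinking-neck end where $R\approx c/r$, so $uR^m\le C$ only gives $u\le C r^{m}/c^{m}$, which is not integrable against $dg$ (the volume grows like $r^{2}$ cross the radial direction)—hence one genuinely needs the weighted measure $e^{-f}dg$ with $f$ growing linearly in $r$, and one must check the weighted integrals converge and the boundary terms at $r\to\infty$ vanish. I would handle this by first establishing, via the parabolic maximum principle / Li–Yau-type gradient estimates on the (non-collapsed, bounded-geometry) Bryant soliton, a sharpened pointwise decay $u(x,t)\le C(t)e^{-c\,r}$ after any positive time—because the conjugate heat kernel on an end with $R$ bounded below by a positive constant away from the tip decays exponentially—thereby converting the polynomial pinching into genuine exponential spatial decay, after which the weighted energy argument closes cleanly and yields $u\equiv0$.
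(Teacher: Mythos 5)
Your overall strategy (symmetrize, then run a weighted Liouville argument) is not the paper's route, and as written it has two genuine gaps. The first is the ``sharpened pointwise decay'' $u(x,t)\le C(t)e^{-c\,r}$ on which your whole closing argument relies. The justification you give --- that the conjugate heat kernel decays exponentially ``on an end with $R$ bounded below by a positive constant away from the tip'' --- misdescribes the Bryant soliton: away from the tip one has $R(x)\sim c/d(x_0,x)\to 0$, so there is no positive lower bound on $R$ at spatial infinity and hence no zeroth-order damping there. Moreover, representing $u(\cdot,s)$ as the integral of $u(\cdot,t)$ against the (Gaussian-bounded) heat kernel only propagates the polynomial bound $u\le C\,r^{m}$; convolving a polynomially growing function with a Gaussian kernel yields polynomial growth again, not exponential decay. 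Since the conclusion of the proposition is precisely $u\equiv 0$, you cannot assume any decay beyond what $uR^m\le C$ gives without proving it. The second gap is the endgame: even if $\int u^2e^{-f}\,dg$ were finite and monotone, a finite monotone quantity merely converges; ``a second differentiation or a Poincar\'e-type inequality forces this quantity to zero'' names no coercivity mechanism and is not an argument.

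For comparison, the paper's proof uses a different and quite specific mechanism: write $u(x,0)=\int H(y,t;x,0)\,u(y,t)\,d_ty$ and split the integral into $\{d_t(y,x_0)\le 1\}$ and $\{d_t(y,x_0)>1\}$. Near the tip $R\ge C^{-1}$, so the hypothesis gives $u\le C$ there, and the Gaussian upper bound for the heat kernel makes that piece tend to $0$ uniformly as $t\to\infty$. Far from the tip one uses that $\Ric\approx 1$ near the tip forces $d_s(x_0,y)\ge C^{-1}(t-s)$, so any point within bounded distance of the tip at a late time $t$ was at distance $\gtrsim t$ from $x$ at time $0$; the Gaussian factor $\exp\left(-d_0^2(x,y)/(Ct)\right)\le e^{-t/C}$ (obtained via Perelman's reduced-length lower bound and the Hein--Naber multiplication inequality) then beats the polynomial weight $d^m$ allowed by the hypothesis. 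Letting $t\to\infty$ gives $u(x,0)=0$. This linear backward-in-time drift of the tip is the geometric input missing from your proposal; any energy or maximum-principle variant would need to exploit it.
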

 
\begin{proof}
Let $C$ denote all the constants that depend only on the constants $C$ and $m$ in the assumption.
Without loss of generality, it suffices to show $u(x,0)=0$ for all $x\in M$. 

Let $H(y,t;z,s)$, $t>s$ be the heat kernel of $(M,g(t))$. Then the following holds for all $t>0$ 
\begin{equation}
    \begin{split}
        u(x,0)&=\int H(y,t;x,0)u(y,t)\,d_ty\\
              &=\int_{d_t(y,x_0)\le 1} H(y,t;x,0)u(y,t)\,d_ty +\int_{d_t(y,x_0)> 1}H(y,t;x,0)u(y,t)\,d_ty\\
              &=\mathcal{I}(t)+\mathcal{J}(t).
    \end{split}
\end{equation}

We shall show that $\mathcal{I}(t)$ and $\mathcal{J}(t)$ converge to $0$ as $t\rightarrow\infty$.
First, to estimate $\mathcal{I}(t)$ we note that $d_t(y,x_0)\le 1$ implies $R(y,t)> C^{-1}$.
So it follows from the assumption  
$uR^m\le C$ and the Bishop-Gromov volume comparison that
\begin{equation}
    \begin{split}
        \mathcal{I}(t)&\le C\int_{d_t(y,x_0)\le 1}H(y,t;x,0)\,d_ty
        \le C \sup_{d_t(y,x_0)\le 1} H(y,t;x,0).
    \end{split}
\end{equation}
By the Gaussian bound for heat kernel of a Ricci flow with bounded curvature \cite[Corollary 26.26]{RFTandA3}, we have $\lim _{t\rightarrow\infty} H(y,t;x,0)=0$ uniformly for all $y\in M$.
So $\mathcal{I}(t)\rightarrow 0$, as $t\rightarrow\infty$.

Next we estimate $\mathcal{J}(t)$. Since $d_t(y,x_0)\ge 1$, it follows that $C^{-1}d_t(y,x_0)^{-1}\le R(y,t)\le Cd_{t}(y,x_0)^{-1}$, which together with $u R^m\le C$ implies  
\begin{equation}\label{e: J}
    \begin{split}
        \mathcal{J}(t)&\le C \int_{d_t(y,x_0)> 1}H(y,t;x,0)d^m_t(y,x_0)\,d_ty.
    \end{split}
\end{equation}
We claim that the following estimate holds for all $(y,t)$ such that $t\ge C$ and $d_t(y,x_0)\ge 1$:
\begin{equation}\label{heat kernel bound}
    H(y,t;x,0)\le C\,\textnormal{exp}\left(-\frac{d_0^2(x,y)}{Ct}\right).
\end{equation}

Assume for a moment that the claim is true, and we use it to prove the proposition. 
For any $(y,t)$ such that $t\ge C$ and $d_t(y,x_0)\ge 1$, we have $d_s(y,x_0)\ge 1$ for all $s\in[0,t]$ because of $\Ric\ge0$.
Since $R\ge C^{-1}$ on $B_s(x_0,1)$ for any $s\in\mathbb{R}$, by a distance distortion estimate we get $d_s(x_0,y)\ge C^{-1}(t-s)$ for all $s\in[0,t]$.
So when $t\ge C$ we have
\begin{equation}\label{e: x}
    \begin{split}
        d_0(x,y)&\ge d_0(x_0,y)-d_0(x,x_0)\ge d_0(x_0,y)(1-\frac{d_0(x,x_0)}{C^{-1}t})\ge\frac{1}{2}d_0(x_0,y),
    \end{split}
\end{equation}
substituting which into \eqref{heat kernel bound} we get
\begin{equation}
    H(y,t;x,0)\le C\,\textnormal{exp}\left(-\frac{d_0^2(x_0,y)}{Ct}\right).
\end{equation}
Putting this into \eqref{e: J} and using the Bishop-Gromov volume comparison theorem we get
\begin{equation}
    \begin{split}
        \mathcal{J}(t)&\le\int_{d_0(y,x_0)>C^{-1}t}C\,\textnormal{exp}\left(-\frac{d_0^2(x_0,y)}{Ct}\right)d^m_0(x_0,y)\;\;d_0y
        \le Ce^{-\frac{t}{C}}.
    \end{split}
\end{equation}
Hence $\mathcal{J}(t)\rightarrow 0$ as $t\rightarrow\infty$. 
Therefore, by letting $t\rightarrow\infty$, we obtain $u(x,0)=0$.

Now we establish \eqref{heat kernel bound} to finish the proof.
Fix a pair $(y,t)$ such that $t\ge C$ and $d_t(y,x_0)\ge 1$. 
The value of $C$ will be determined later.
For any $s\in[0,1]$ and $z\in B_s(y,1)$, let $\ell(z,s)$ be the reduced length from $(z,s)$ to $(y,t)$, and let $\gamma:[s,t]\rightarrow M$ be a curve such that $\gamma|_{[2,t]}\equiv y$ and $\gamma|_{[s,2]}$ is a minimal geodesic connecting $y$ and $z$ with respect to $g(0)$. 

For $\tau\in[0,t-s]$, we have
$R(y,t-\tau)\le C d_{t-\tau}^{-1}(y,x_0)$ and $d_{t-\tau}(y,x_0)\ge C^{-1}\tau$, and hence $R(y,t-\tau)\le\frac{C}{\tau}$.
For $\tau\in[t-2,t-s]$,
we have $R(\gamma(t-\tau),t-\tau)\le C$ and $|\gamma'|(t-\tau)\le C$.
Putting these together we can estimate the $\LL$-length of $\gamma$:
\begin{equation}
    \begin{split}
        \mathcal{L}(\gamma)&=\int_{0}^{t-2}\sqrt{\tau}R(y,\tau)\,d\tau+
        \int_{t-2}^{t-s}\sqrt{\tau}(R(\gamma(t-\tau),t-\tau)+|\gamma'|^2)\,d\tau\\
        &\le \int_{0}^{t-2}\sqrt{\tau}\frac{C}{\tau}\,d\tau+\int_{t-2}^{t-s}C\sqrt{\tau}\,d\tau\le C\sqrt{t},
    \end{split}
\end{equation}
and hence
\begin{equation}
    \ell(z,s)=\frac{\mathcal{L}(z,s)}{2\sqrt{t-s}}\le \frac{\mathcal{L}(\gamma)}{2\sqrt{t-s}}\le C.
\end{equation}
Recall the heat kernel lower bound by Perelman in
\cite[Corollary 9.5]{Pel1}
we get:
\begin{equation}\label{reduced length}
    H(y,t;z,s)\ge \frac{1}{4\pi(t-s)^{3/2}}e^{-\ell(z,s)}\ge\frac{C}{t^{\frac{3}{2}}},
\end{equation}
for all $s\in[0,1]$ and $z\in B_s(y,1)$. So by the Bishop-Gromov volume comparison we get
\begin{equation}\label{low}
    \int_{B_s(y,1)}H(y,t;z,s)\,d_sz\ge \frac{C}{t^{3/2}}\ge\frac{C}{d_0(x,y)^{3/2}},
\end{equation}
for all $s\in [0,1]$.

Note by the multiplication inequality for heat kernel in \cite{HN} we have 
\begin{equation}\label{e: hein-naber}
     \left(\int_{B_s(y,1)}H(y,t;z,s)\,d_sz\right)\left(\int_{B_s(x,1)}H(y,t;z,s)\,d_sz\right)\le C\, \textnormal{exp}\left(-\frac{(d_s(x,y)-2)^2}{4C(t-s)}\right).
\end{equation}
So by substituting \eqref{low} into \eqref{e: hein-naber} we get
\begin{equation}
    \left(\int_{B_s(x,1)}H(y,t;z,s)\,d_sz\right)\le C\,d_0(x,y)^\frac{3}{2}\, \textnormal{exp}\left(-\frac{d_0(x,y)^2}{4C(t-s)}\right)\le
    C\, \textnormal{exp}\left(-\frac{d_0(x,y)^2}{4C(t-s)}\right),
\end{equation}
where we also used $d_s(x,y)-2\ge C^{-1}d_0(x,y)-2\ge C^{-1}d_0(x,y)$ for $t$ very large. 
Integrating this for all $s\in[0,1]$, and then applying the parabolic mean value inequality (see e.g. \cite{RFTandA3}) to $H(y,t;\cdot,\cdot)$ at $(x,0)$, we obtain 
\begin{equation}
    H(y,t;x,0)\le C\, \textnormal{exp}\left(-\frac{d^2_0(x,y)}{4Ct}\right),
\end{equation}
which confirms claim \eqref{heat kernel bound} and hence completes the proof.
\end{proof}

\begin{prop}{(A semi-local Maximum Principle)}\label{l: semilocal}
Given $r_0,T>0$, $r_0^2<T$ and $m\in\mathbb{N}$, there exist $\epsilon=\epsilon(r_0,T,m)>0$ and $r=r(r_0,T,m)>0$ such that for any $t_0\in(r_0^2,T)$ the following holds: 

Let $\M$ be a singular Ricci flow with normalized initial condition, $x_0\in \M$, $\mathfrak{t}(x_0)=t_0>0$.
Suppose $|\Rm|\le r_0^{-2}$ on $\mathcal{P}_0:=P(x_0,r_0,-r_0^2)$.
Let $u$ be the heat kernel in Theorem \ref{t: heat kernel} which starts from $x_0$.
Then for any $x\in \M_{t<t_0}-\mathcal{P}_0$ with $R(x)>r^{-2}$, there exists $y\in\M_{t<t_0}-\mathcal{P}_0$ with $\mathfrak{t}(y)\ge \mathfrak{t}(x)$ such that
\begin{equation}
  \begin{cases}
  u(y)\ge(1+\epsilon)u(x)\quad and\\
  (uR^m)(y)\ge (1+\epsilon)(uR^m)(x).
  \end{cases}
\end{equation}
\end{prop}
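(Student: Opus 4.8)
The plan is a blow-up argument by contradiction, with the vanishing theorem on the Bryant soliton (Proposition~\ref{t: vanishing}) as the key input. Suppose the assertion fails for some $r_0,T,m$. Then there are normalized singular Ricci flows $\M_k$, points $x_{0k}\in\M_k$ with $\mathfrak t(x_{0k})=t_{0k}\in(r_0^2,T)$ and $|\Rm|\le r_0^{-2}$ on $\mathcal P_{0k}:=P(x_{0k},r_0,-r_0^2)$, the heat kernels $u_k$ from Theorem~\ref{t: heat kernel} starting from $x_{0k}$, and points $x_k\in\M_{k,t<t_{0k}}-\mathcal P_{0k}$ with $R(x_k)>r_k^{-2}$, where $r_k,\epsilon_k\to0$, such that for every $y\in\M_{k,t<t_{0k}}-\mathcal P_{0k}$ with $\mathfrak t(y)\ge\mathfrak t(x_k)$ one has $u_k(y)<(1+\epsilon_k)u_k(x_k)$ or $(u_kR^m)(y)<(1+\epsilon_k)(u_kR^m)(x_k)$. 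We may assume $u_k(x_k)>0$ (otherwise $y=x_k$ already works). Setting $\bar u_k:=u_k/u_k(x_k)$ and combining the two alternatives, the hypothesis becomes: for every such $y$,
\[
\bar u_k(y)\,R(y)^m<(1+\epsilon_k)\max\{R(y)^m,\,R(x_k)^m\}.
\]

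Next I would rescale $\M_k$ at $x_k$ by $Q_k:=R(x_k)\to\infty$ and shift time so $\mathfrak t(x_k)=0$; the normalization is scale-invariant, so $\bar u_k$ still solves the conjugate heat equation and $\bar u_k(x_k)=1$. By Lemma~\ref{l: blow-up converges to a k-solution} (whose proof, as in Lemma~\ref{l: L length_0}, applies to a varying sequence of flows), after a subsequence the pointed rescaled flows converge smoothly to a $\kappa$-solution $(M_\infty,g_\infty(\bar t),x_\infty)$ on its maximal existence interval $(-\infty,T_+)$, $T_+\in(0,\infty]$; by the classification of three-dimensional $\kappa$-solutions \cite{classification} this limit is a Bryant soliton or has finite extinction time. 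Since $\rho=R_+^{-1/2}$ is $\eta$-Lipschitz (Lemma~\ref{l: derivative}) while $\rho(x_k)<r_k\to0$ and $\rho\ge r_0$ on $\mathcal P_{0k}$, in the rescaled metrics $\mathcal P_{0k}$ recedes to infinity; hence the displayed bound applies to every point of a fixed rescaled region with $\bar t\ge0$, and since $R_\infty$ is bounded on a $\kappa$-solution (and $R(x_k)$ rescales to $1$), it passes to the limit as $\bar u_\infty R_\infty^m\le C$ on $\{\bar t\ge0\}$, for a constant $C$ depending only on $m$ and $\sup R_\infty$. Combining this bound with interior parabolic estimates and $\bar u_k(x_k)=1$, I extract (after a further subsequence) a smooth limit $\bar u_\infty\ge0$ on $M_\infty\times[0,T_+)$ solving the conjugate heat equation, with $\bar u_\infty R_\infty^m\le C$ and $\bar u_\infty(x_\infty,0)=1$.

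To finish, I would argue in each case of the dichotomy. If $M_\infty$ is a Bryant soliton, rescale once more so its scalar curvature is $1$ at the tip at time $0$ (possible since it is steady); then $\bar u_\infty$ is a non-negative conjugate-heat solution on $M_\infty\times[0,\infty)$ with $\bar u_\infty R_\infty^m\le C$, so Proposition~\ref{t: vanishing} gives $\bar u_\infty\equiv0$, contradicting $\bar u_\infty(x_\infty,0)=1$. If instead $M_\infty$ has finite extinction time $T_+<\infty$ (a round $S^3/\Gamma$, the round shrinking cylinder or its $\mathbb Z_2$-quotient, or the compact oval), then $R_\infty\to\infty$ uniformly as $\bar t\nearrow T_+$; since $R_\infty\ge0$, the maximum principle (using the bounded geometry of a $\kappa$-solution and $\bar u_\infty\le C R_\infty^{-m}$) gives that $\sup_{M_\infty}\bar u_\infty(\cdot,\bar t)$ is nondecreasing in $\bar t$, whence $1=\bar u_\infty(x_\infty,0)\le\sup_{M_\infty}\bar u_\infty(\cdot,\bar t)\le C\bigl(\inf_{M_\infty}R_\infty(\cdot,\bar t)\bigr)^{-m}\to0$ as $\bar t\nearrow T_+$, again a contradiction.

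The main obstacle I anticipate is the bookkeeping in the blow-up step: one must ensure $\bar u_\infty$ is defined on a long enough time interval for each case to bite — on all of $[0,\infty)$ in the Bryant case, and up to $T_+$ in the finite-extinction case — which requires controlling the rescaled length $Q_k(t_{0k}-\mathfrak t(x_k))$ of the available forward interval and the rescaled position of the pole $(x_{0k},t_{0k})$ of $u_k$. When $\mathfrak t(x_k)<t_{0k}-r_0^2$ this interval is automatically long; in the remaining case one must either run a preliminary point-picking inside $\M_{k,t<t_{0k}}-\mathcal P_{0k}$ so that the rescaled pole escapes to infinity and the forward interval becomes long, or work at a rescaled time before extinction. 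The uniform bound $u_k\le C_0$ away from $\mathcal P_{0k}$ (Theorem~\ref{t: heat kernel}) together with the canonical neighborhood structure of singular Ricci flows is what makes such a point-picking available.
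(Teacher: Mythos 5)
Your overall strategy (blow up at $x_k$, pass to a $\kappa$-solution, transfer the dichotomy to a bound $\bar u_\infty R_\infty^m\le C$, and invoke Proposition \ref{t: vanishing} in the Bryant case) is the same as the paper's, and the Bryant branch matches the paper's argument. But there are two genuine gaps in the remaining cases. First, you never exclude the possibility that $M_\infty$ is \emph{compact}. The paper does this before invoking the classification: since $u_k(x_k)>0$, the point $x_k$ is accessible from $x_{0k}$, so by component stability $x_k$ and $x_{0k}(t_k)$ lie in the same component of $\M_{k,t_k}$; if that whole component were $\delta$-close to a compact $\kappa$-solution at scale $R^{-1/2}(x_k)$, then $\inf R\gtrsim r_k^{-2}$ on it, and the maximum principle for scalar curvature forward in time would force $R(x_{0k})\gtrsim r_k^{-2}$, contradicting $|\Rm|\le r_0^{-2}$ at $x_{0k}$. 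Your proposal has no substitute for this step; folding the compact solutions into the ``finite extinction time'' branch does not work, because that branch has its own problem, described next.

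Second, your finite-extinction argument needs $\bar u_\infty$ to be defined on essentially all of $M_\infty\times[0,T_+)$ so that $\sup\bar u_\infty(\cdot,\bar t)\le C\bigl(\inf R_\infty(\cdot,\bar t)\bigr)^{-m}\to0$ can bite. As you yourself note, there is no a priori lower bound on the rescaled forward interval $Q_k(t_{0k}-\mathfrak t(x_k))$ (the point $x_k$ may sit in a neck just before the pole time $t_{0k}$, or the neck may pinch), and $u_k$ is only defined for $t<t_{0k}$; neither of your two suggested fixes is carried out, so this case is not closed. The paper avoids the issue entirely: in the cylindrical case it observes that $R_\infty\ge1$ turns both branches of the dichotomy into $\bar u_\infty\le1$ on whatever forward neighborhood of $(x_\infty,0)$ the limit exists, and then applies the maximum principle \emph{at the single point} $(x_\infty,0)$, where $\bar u_\infty=1$ is attained: there $\Delta\bar u_\infty\le0$ and $\partial_t\bar u_\infty\le0$, so $(-\partial_t-\Delta)\bar u_\infty\ge0$, contradicting $(-\partial_t-\Delta)\bar u_\infty=-R_\infty\bar u_\infty<0$. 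This local argument requires only an arbitrarily short forward time interval. You should replace your global extinction-time argument with this local one (after first ruling out compact limits as above); as written, the proposal does not constitute a complete proof.
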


\begin{proof}
Suppose the conclusion is not true, then there are sequences $\{\epsilon_k\}$ and $\{r_k\}$ both converging to zero, and a sequence of Ricci flow spacetime $(\M_k,g_k(t))$, along with the heat kernels $u_k$ starting from $x_{0k}\in\M_k$, $t_{0k}=\mathfrak{t}(x_{0k})$, which contradict the lemma at points $x_{k}\in \M_{k,t<t_{0k}}-\mathcal{P}_{0k}$, $\mathfrak{t}(x_k)=t_k$. 
This means $\rho(x_k)<r_k$, and for any $y\in \M_{k,t<t_{0k}}-\mathcal{P}_{0k}$ with $\mathfrak{t}(y)\ge t_k$, we have either
\begin{equation}\label{e: 1}
    u_k(y)<(1+\epsilon_k)u_k(x_k),
\end{equation}
or 
\begin{equation}\label{e: 2}
    (u_kR^m)(y)<(1+\epsilon_k)(u_kR^m)(x_k).
\end{equation}
This implies $u(x_k)>0$, and by item \eqref{a: vanishing} in Theorem \ref{t: heat kernel} we get $x_k\in\M_k(x_{0k})$.

Let $\rho_0=\min\{\frac{1}{2}r_0,r(\epsilon_{can},T+1)\}$, where $r(\cdot,\cdot)$ is the canonical neighborhood scale function for singular Ricci flow with normalized initial condition in Definition \ref{d: SRF}, and $\epsilon_{can}$ is sufficiently small.
Let $P_k=P(x_k,\frac{\rho_0}{4\eta},\frac{\rho_0^2}{4\eta})\subset\M_k$, where $\eta>0$ is from Lemma \ref{l: derivative}.
Then by Lemma \ref{l: derivative} we have $\rho<r_0$ on $P_k$ for all large $k$. Seeing that $\rho\ge r_0$ on $\mathcal{P}_{0k}$, it implies that $P_k\subset\M_{k,t<t_{0k}}-\mathcal{P}_{0k}$.
Now rescaling the spacetimes in $P_k$ by $R(x_k)$ and shifting the times $t_k$ to $0$, we get a sequence of Ricci flow spacetimes $(\tilde{P}_k,\tilde{g}_k(s)_{s\ge0},(x_k,0))$, where $\tilde{g}_k(s)$ denotes the horizontal Riemannian metric.
Since $r_k\rightarrow 0$ as $k\rightarrow\infty$, by Lemma \ref{l: blow-up converges to a k-solution} we may assume by passing to a subsequence that $(\tilde{P}_k,\tilde{g}_k(s)_{s\ge0},(x_k,0))$ converges to a $\kappa$-solution $(M_{\infty},g_{\infty}(s)_{s\ge0},(x_{\infty},0))$.

Let $\tilde{u}_k(y)=\frac{u_k(y)}{u_k(x_k)}$ for all $y\in \tilde{P}_k$.
Then for all $y\in \tilde{P}_k$ we have either
\begin{equation}\label{1}
    \tilde{u}_k(y)<1+\epsilon_k,
\end{equation}
or
\begin{equation}\label{2}
    (\tilde{u}_kR^m)(y)<1+\epsilon_k.
\end{equation}
Then since $R>0$ on $(M_{\infty},g_{\infty}(s),x_{\infty})$, we deduce that $\tilde{u}_k$ has locally bounded $C^0$-norm.
By H\"{o}lder estimate this implies that the $C^k$-norm of $\tilde{u}_k$ is locally bounded bound for any $k\in\mathbb{N}$. So by passing to a subsequence we may assume that $\tilde{u}_k$ converges smoothly to a smooth non-negative solution $\tilde{u}$ to the conjugate heat equation of the flow $(M_{\infty},g_{\infty}(s)_{s\ge0},x_{\infty})$, and $\tilde{u}(x_{\infty},0)=1$. 
Since $\epsilon_k\rightarrow0$ as $k\rightarrow\infty$, we have that one of the following holds for all $y\in M_{\infty}$ and $s\ge 0$, 
\begin{equation}\label{e: 1'}
    \tilde{u}(y,s)\le 1,
\end{equation}
or
\begin{equation}\label{e: 2'}
    (\tilde{u}R^m)(y,s)\le 1.
\end{equation}

We claim that $(M_{\infty},g_{\infty}(s))$ is either a cylindrical solution (the standard solution on $S^2\times\mathbb{R}$, or its quotient by the map that is a reflection on $\mathbb{R}$ and an antipodal map on $S^2$), or the Bryant soliton. Using the classification result of non-compact $\kappa$-solutions \cite{classification}, it suffices to show that $M_{\infty}$ is not compact. 

Suppose this is not true.
On the one hand, by the compactness of $M_{\infty}$, for large $k$ there exists a diffeomorphism $\phi_k:M_{\infty}\rightarrow U_k$ such that $\phi_k(x_{\infty})=x_k$, where $U_k=\phi_k(M_{\infty})$ is a connected component in $\M_{k,\,t_k}$ and $x_k\in U_k$.
Also, for any given $\delta>0$, the following holds:
\begin{equation}\label{e: closeness}
    \|r^{-2}_k\phi_k^*(g_k(t_k))-g_{\infty}(0)\|_{C^{[\delta^{-1}]}(M_{\infty},g_{\infty}(0))}\le\delta.
\end{equation}

On the other hand, since $u_k(x_{k})>0$ and by item \ref{a: vanishing} in Theorem \ref{t: heat kernel} we see that $x_k\in\M_k(x_{0k})$.
By the component stability theorem, \cite[Proposition 5.32]{KL1}, for any $t<t_{0k}$, the time-t-slice of $\M_k(x_{0k})$ is the connected component of $\M_{k,t}$ that contains $x_{0k}(t)$.
So we deduce that $U_k$ is equal to $\M_k(x_{0k})(t_k)$, which is the time-$t_k$-slice of $\M_k(x_{0k})$.

Since $\inf_{M_{\infty}}R(\cdot,0)\ge c$ for some $c>0$, by \eqref{e: closeness} we get
\begin{equation}\label{e: R}
    \inf_{\M_k(x_{0k})(t_k)} R \ge \frac{1}{2}cr_k^{-2},
\end{equation}
for all large $k$.
Then by the maximum principle for scalar curvature we get
\begin{equation}
    R(x_{0k})\ge\inf_{\M_k(x_{0k})(t\ge t_k) }R \ge \frac{1}{2}cr_k^{-2}.
\end{equation}
For sufficiently large $k$, this contradicts the assumption $R(x_{0k})\le r_0^{-2}$. So $M_{\infty}$ must be non-compact.

So first we suppose $(M_{\infty},g_{\infty}(s)),s\in[0,\infty)$ is the Bryant soliton.  
Since the curvature is uniformly bounded everywhere, if $\tilde{u}(y,s)\le 1$ for some $(y,s)\in M_{\infty}\times[0,\infty)$, then 
\begin{equation}\label{e: 2''}
    \tilde{u}(y,s)R^m(y,s)\le C,
\end{equation}
where $C$ depends only on the curvature at the tip of $(\M_{\infty},g_{\infty}(0))$.
Combining with $\eqref{e: 2'}$, we see that \eqref{e: 2''} holds at all $(y,s)\in M_{\infty}\times[0,\infty)$.
By the vanishing theorem, Proposition \ref{t: vanishing}, we get $\tilde{u}(x_{\infty},0)=0$, contradiction.

Next, suppose  $(M_{\infty},g_{\infty}(s))$, is a cylindrical solution with $R(x_{\infty},0)=1$.
Then the flow exists on $[0,\frac{3}{2})$,
and $R(y,s)\ge 1$ for all $y\in M_{\infty}$ and $s\in[0,\frac{3}{2})$.
So \eqref{e: 2'} implies \eqref{e: 1'}, and hence $\tilde{u}(y,s)\le 1$ for all $(y,s)\in M_{\infty}\times[0,\frac{3}{2})$. Noting $\tilde{u}(x_{\infty},0)=1$, we can apply the maximum principle at $(x_{\infty},0)$ and get 
\begin{equation}
    (-\pt-\Delta)\tilde{u}\ge0,\quad\textnormal{at}\quad(x_{\infty},0).
\end{equation}
This is impossible seeing that $(-\pt-\Delta+R)\tilde{u}=0$ and $\tilde{u}(x_{\infty},0)R(x_{\infty},0)>0$.
\end{proof}

\begin{theorem}
\label{e: uR^m less than C_m}
Under the same assumption as in Proposition \ref{l: semilocal}, 
there exists $C_m=C(r_0,T,m)>0$ such that the following holds for all $x\in\M_{t<t_0}-\mathcal{P}_0$:
\begin{equation}
    u(x)R^m(x)\le C_m.
\end{equation}
\end{theorem}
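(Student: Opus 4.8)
The plan is to bootstrap the semi-local maximum principle (Proposition \ref{l: semilocal}) against the uniform $C^0$-bound $u\le C_0$ on $\M_{t<t_0}-\mathcal{P}_0$ coming from assertion \eqref{a: C_0} of Theorem \ref{t: heat kernel}, via a simple iteration. Let $\epsilon=\epsilon(r_0,T,m)>0$ and $r=r(r_0,T,m)>0$ be the constants furnished by Proposition \ref{l: semilocal}, and let $C_0=C_0(r_0,T)>0$ be the constant from Theorem \ref{t: heat kernel}. I claim one may take $C_m:=C_0 r^{-2m}$. The point is that if $uR^m$ ever exceeds $C_m$, then since $u\le C_0$ the curvature must exceed $r^{-2}$ at that point, so Proposition \ref{l: semilocal} applies; and the point it produces has $uR^m$ strictly larger again, so the process repeats indefinitely, forcing $u$ to blow up — contradicting $u\le C_0$.

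\textbf{Steps.} First, at any $x\in\M_{t<t_0}-\mathcal{P}_0$ with $R(x)\le r^{-2}$ one has trivially $u(x)R^m(x)\le C_0 r^{-2m}=C_m$, so it suffices to bound $uR^m$ at points where $R>r^{-2}$. Suppose, for contradiction, that there is a point $x^{(0)}\in\M_{t<t_0}-\mathcal{P}_0$ with $(uR^m)(x^{(0)})>C_m$. Then $u(x^{(0)})>0$, and dividing by $u(x^{(0)})\le C_0$ gives $R^m(x^{(0)})>r^{-2m}$, i.e. $R(x^{(0)})>r^{-2}$, so Proposition \ref{l: semilocal} applies at $x^{(0)}$. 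Inductively, given $x^{(k)}\in\M_{t<t_0}-\mathcal{P}_0$ with $(uR^m)(x^{(k)})>C_m$ (hence $R(x^{(k)})>r^{-2}$ by the same division), Proposition \ref{l: semilocal} yields $x^{(k+1)}\in\M_{t<t_0}-\mathcal{P}_0$ with $\mathfrak{t}(x^{(k+1)})\ge\mathfrak{t}(x^{(k)})$ and
\[
u(x^{(k+1)})\ge(1+\epsilon)u(x^{(k)}),\qquad (uR^m)(x^{(k+1)})\ge(1+\epsilon)(uR^m)(x^{(k)}).
\]
In particular $(uR^m)(x^{(k+1)})>(1+\epsilon)C_m>C_m$, so the inductive hypothesis is preserved and the iteration never terminates. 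This produces a sequence $\{x^{(k)}\}_{k\ge0}$ in $\M_{t<t_0}-\mathcal{P}_0$ with $u(x^{(k)})\ge(1+\epsilon)^k u(x^{(0)})$ and $u(x^{(0)})>0$; letting $k\to\infty$ contradicts the bound $u\le C_0$ from Theorem \ref{t: heat kernel}. Hence no such $x^{(0)}$ exists, and $uR^m\le C_m$ on all of $\M_{t<t_0}-\mathcal{P}_0$.

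\textbf{Main obstacle.} There is no serious obstacle once Proposition \ref{l: semilocal} and Theorem \ref{t: heat kernel}\,\eqref{a: C_0} are in hand: the argument is essentially a pigeonhole/iteration. The only point requiring (minor) care is verifying that the iteration stays in the region where Proposition \ref{l: semilocal} can be reapplied — namely that each new point $x^{(k+1)}$ still lies in $\M_{t<t_0}-\mathcal{P}_0$ (immediate from the conclusion of Proposition \ref{l: semilocal}) and still satisfies $R(x^{(k+1)})>r^{-2}$ (immediate from combining $(uR^m)(x^{(k+1)})>C_m$ with $u\le C_0$). No blow-up analysis or compactness is needed at this stage; all the hard work was already absorbed into Proposition \ref{l: semilocal}.
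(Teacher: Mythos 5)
Your proposal is correct and follows essentially the same route as the paper's proof: the same choice $C_m=C_0r^{-2m}$, the same observation that $uR^m>C_m$ together with $u\le C_0$ forces $R>r^{-2}$ so that Proposition \ref{l: semilocal} can be reapplied, and the same contradiction with the uniform bound $u\le C_0$ via the geometric growth $u(x^{(k)})\ge(1+\epsilon)^k u(x^{(0)})$. No gaps.
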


\begin{proof}
Let $C_m=C_0 r^{-2m}$, where $C_0=C_0(r_0,T)$ is from item \eqref{a: C_0} in Theorem \ref{t: heat kernel}, and $r=r(r_0,T,m)>0$ is from Theorem \ref{l: semilocal}.
Then it's clear that $uR^m\le C_m$ for all the points in $\M_{t<t_0}-\mathcal{P}_0$ that satisfy $R\le r^{-2}$.
We shall show that $uR^m\le C_m$ holds everywhere on $\M_{t<t_0}-\mathcal{P}_0$.
Suppose by contradiction that this is not true. 
Then there exists $x_1\in\M_{t<t_0}-\mathcal{P}_0$ such that $uR^m(x_1)>C_m$ and $R(x_1)>r^{-2}$. 

Suppose by induction that there are $\{x_k\}\subset\M_{t<t_0}-\mathcal{P}_0$, $t_k=\mathfrak{t}(x_k)$, $k=1,2,...,N$, such that $t_k\ge t_{k-1}$, and the following holds for all $k$:
\begin{equation}\label{two}
    \begin{cases}
    uR^m(x_k)\ge (1+\epsilon)uR^m(x_{k-1}),\quad\textnormal{and}\\
    u(x_k)\ge (1+\epsilon)u(x_{k-1}),
    \end{cases}
\end{equation}
where $\epsilon=\epsilon(r_0,T,m)>0$ is from Theorem \ref{l: semilocal}.
Since $uR^m(x_N)\ge uR^m(x_1)> C_m$, it follows from the definition of $C_m$ that $R(x_N)>r^{-2}$. This allows us to apply Proposition \ref{l: semilocal} and get a point $x_{N+1}\in \M_{t<t_0}-\mathcal{P}_0$, $\mathfrak{t}(x_{N+1})=t_{N+1}\ge t_N$ which together with $x_N$ satisfies \eqref{two}. 
So by induction we get an infinite sequence $\{x_k\}_{k=1}^{\infty}$ which satisfies \eqref{two}. 
Then we can deduce from the second inequality in \eqref{two} that $u(x_k)\rightarrow\infty$ as $k\rightarrow\infty$, which contradicts item \eqref{a: C_0} in Theorem \ref{t: heat kernel}.
\end{proof}

\begin{cor}\label{l: integral equal to 1}
Under the same assumption as in Proposition \ref{l: semilocal}. Then
\begin{equation}\label{e: integral equal to 1}
    \int_{\M_t} u\,d_tx=1,
\end{equation}
for all $t\in[0,t_0)$.
\end{cor}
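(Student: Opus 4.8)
The plan is to make rigorous the formal identity
\(\frac{d}{dt}\int_{\M_t}u\,d_tx=-\int_{\M_t}\Delta u\,d_tx=0\)
by a cutoff argument; the whole point is that the polynomial decay estimate of Theorem \ref{e: uR^m less than C_m} forces every cutoff error term to vanish, so that no heat leaks into the high-curvature (``singular'') region. Fix $t_1\in[0,t_0)$. For small $\sigma>0$ introduce a cutoff $\phi_\sigma:\M\to[0,1]$ that is a smooth function of the curvature scale $\rho$: $\phi_\sigma\equiv 0$ on $\{\rho\le\sigma\}$, $\phi_\sigma\equiv 1$ on $\{\rho\ge 2\sigma\}$ (in particular $\phi_\sigma\equiv 1$ where $R\le 0$, since there $\rho=+\infty$), with $|\phi_\sigma'|\le C\sigma^{-1}$ and $|\phi_\sigma''|\le C\sigma^{-2}$ as a function of $\rho$. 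The transition region $\{\sigma\le\rho\le 2\sigma\}$ lies in $\{R\ge \frac14\sigma^{-2}\}$, hence (for $\sigma$ small depending on $t_0$) in the part of $\M$ where the $\epsilon$-canonical neighborhood assumption holds and is disjoint from $\mathcal{P}_0$. Using Lemma \ref{l: derivative} ($|\nabla\rho|\le\eta$, $|\partial_t\rho^2|\le\eta$) together with the bounded geometry of $\delta$-necks and $\delta$-caps (giving $|\nabla^2\rho|\le C\rho^{-1}$ there), one gets $|\partial_t\phi_\sigma|+|\Delta\phi_\sigma|\le C\sigma^{-2}$ on the transition region and $=0$ elsewhere.

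\textbf{The computation.} Since $\{R\le\sigma^{-2}\}\cap\M_t\cap\{u>0\}$ is precompact in $\M_t$ (it lies in a $0$-complete region of bounded curvature, and by the structure of the time-slices of a singular Ricci flow accessible to $x_0$ it has no ends), the function $u\,\phi_\sigma(\cdot,t)$ has compact support in $\M_t$, and its support stays inside the smooth bounded-curvature part of the spacetime $\M$ for every $t$, including singular times. A routine integration by parts therefore gives, for all $t\in[0,t_0)$,
\[
\frac{d}{dt}\int_{\M_t}u\,\phi_\sigma\,d_tx=\int_{\M_t}u\bigl(\partial_t\phi_\sigma-\Delta\phi_\sigma\bigr)\,d_tx .
\]
Integrating from $t_1$ to $t$ and using the derivative bound together with Theorem \ref{e: uR^m less than C_m} for $m=2$ (on the transition region $R\ge\frac14\sigma^{-2}$, so $u\le C_2 R^{-2}\le 16\,C_2\,\sigma^4$),
\[
\Bigl|\int_{\M_t}u\,\phi_\sigma\,d_tx-\int_{\M_{t_1}}u\,\phi_\sigma\,d_{t_1}x\Bigr|\le C\sigma^{-2}\cdot C_2\sigma^4\cdot|t-t_1|\cdot\sup_s vol(\M_s)=O(\sigma^2),
\]
where the supremum over $s$ between $t_1$ and $t$ is finite since time-slices of a singular Ricci flow from a compact manifold have volume bounded on compact time intervals. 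Letting $\sigma\to 0$, the right side tends to $0$, while on the left $\phi_\sigma\uparrow 1$ pointwise, so by monotone convergence (recall $\int_{\M_t}u\,d_tx\le 1<\infty$ from the proof of Theorem \ref{t: heat kernel}) the left side tends to $\int_{\M_t}u\,d_tx-\int_{\M_{t_1}}u\,d_{t_1}x$. Hence this difference vanishes for all $t,t_1\in[0,t_0)$; that is, $t\mapsto\int_{\M_t}u\,d_tx$ is constant on $[0,t_0)$, and letting $t\nearrow t_0$ and invoking \eqref{e: delta function} identifies the constant as $1$.

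\textbf{Main obstacle.} The delicate point is the legitimacy of the integration by parts: one must rule out any flux escaping into the singular set or off to infinity, and control the error term uniformly in time. Both are consequences of the polynomial decay $uR^m\le C_m$ of Theorem \ref{e: uR^m less than C_m} — it makes the relevant part of each time-slice precompact after truncating high curvature, and makes the cutoff error $O(\sigma^2)$ once $m\ge 2$. The remaining ingredients (bounded geometry of canonical neighborhoods, finiteness of time-slice volumes) are standard.
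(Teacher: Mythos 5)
Your argument is correct, and it takes a genuinely different route from the paper's. The paper discretizes time into intervals of length $\delta^2$ and, at each division time, uses Lemma \ref{l: central sphere decomposition} to carve out a domain $\Omega$ bounded by central spheres of $\delta_{\#}$-necks at scale $\sim\delta$; the mass lost over one step is then a boundary-flux term, bounded by $\mathrm{Area}(\partial\Omega(t))\cdot|\nabla u|\le C\delta^2\cdot\delta^{2m-1}$ (the gradient bound on the central spheres coming from $uR^m\le C_m$ plus interior parabolic estimates), together with the mass already sitting in $\M_t-\Omega$, which is $\le C\delta^{2m}$ by Theorem \ref{e: uR^m less than C_m}; summing over the $\sim\delta^{-2}$ steps gives a total loss $O(\delta^{2m-2})$, and only the lower bound $\int u\ge 1-o(1)$ is proved this way (the upper bound comes from the construction of $u$). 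You instead use a single smooth cutoff $\phi_\sigma(\rho)$ in the curvature scale and differentiate $\int u\,\phi_\sigma$ continuously in $t$, trading the boundary flux for the bulk term $\int u(\partial_t-\Delta)\phi_\sigma$ supported on $\{\sigma\le\rho\le2\sigma\}$; both proofs hinge on the same decay estimate $uR^m\le C_m$, with $m=2$ sufficing for you. What your route buys is cleaner bookkeeping (no time discretization, no central-sphere decomposition, and $|F(t)-F(t_1)|\to0$ gives both inequalities at once); what it costs is that you must control $\Delta\phi_\sigma$ and $\partial_t\phi_\sigma$, i.e.\ you need $|\nabla^2\rho|\le C\rho^{-1}$ and $|\partial_t\rho|\le C\rho^{-1}$ on the transition region. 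These do hold, but the right justification is not ``bounded geometry of necks and caps'' so much as the $\epsilon$-closeness to $\kappa$-solutions (with $[\epsilon^{-1}]$ large enough to control two derivatives of curvature) combined with Perelman's compactness theorem for $\kappa$-solutions — the same mechanism as Lemma \ref{l: derivative}, one derivative higher. You should also make explicit that $\{R\le\sigma^{-2}\}\cap\M_t$ is compact (properness of $R$ on $\M_{[0,T]}$ from \cite[Theorem 1.3]{KL1}) and that points with $\rho\ge\sigma$ survive for a definite forward and backward time via the gradient estimate and $0$-completeness, so that $t\mapsto\int_{\M_t}u\,\phi_\sigma$ is differentiable across singular times; these are the precise places where your ``routine integration by parts'' uses the hypotheses.
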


\begin{proof}
Without loss of generality, we may assume $t=0$.
First, we fix some small $\delta_{\#}>0$ and $\epsilon<\epsilon_{can}(\delta_{\#})$ from Lemma \ref{l: either neck or cap}.
Let $\eta$ be from Lemma \ref{l: derivative}.
Let $m\in\mathbb{N}$ be greater than $1$. We use $C$ to denote all the constants depending on $\delta_{\#},r_0,T$ and $m$.  

Let $\delta>0$, whose value will be determined in the course of the proof. Choose a division of $[0,t_0]$ by $0=t_1<t_2<...<t_N=t_0$, such that $t_{i+1}-t_i\le\delta^2$ for all $i=1,...,N-1$, and $N\le (t_0+1)\delta^{-2}\le (T+1)\delta^{-2}$.
By Lemma \ref{l: central sphere decomposition}, for sufficiently small  $\delta$, there is an open domain $\Omega\subset\M_{t_{2i+1}}$ such that the boundary components of $\Omega$ are finitely many central spheres in some $\delta_{\#}$-neck, whose number is bounded by $C$, and the area of $\partial\Omega$ is less than $C\delta^2$. Moreover, we have $\rho\ge 2\sqrt{\eta}\delta$ on $\Omega$, and $\rho\le C\delta$ on $\M_{t_{2i+1}}-\Omega$.

Then by Lemma \ref{l: derivative} we see that $\sqrt{\eta}\delta\le\rho(x(t))\le C\delta$ for all $x\in\Omega$ and $t\in[t_{2i},t_{2i+1}]$.
So $\Omega$ survives until time $t_{2i}$, and
Area$_t(\partial\Omega(t))\le C\delta^2$. 
Then applying Theorem \ref{e: uR^m less than C_m} at points in $\partial\Omega(t)$ and then using the interior H\"{o}lder estimate, we get $|\nabla u|\le C\delta^{2m-1}$ on $\partial\Omega(t)$.
Let $t\in[t_{2i},t_{2i+1}]$, then
\begin{equation}\label{e: differential u}
    \begin{split}
        \pt\int_{\Omega(t)} u(x)\,d_tx&=\int_{\Omega(t)} -\Delta u(x)\,d_tx
        =\int_{\partial\Omega(t)}\frac{\partial u}{\partial \vec{n}}\,d_tS\le C\cdot\delta^{2m-2},
    \end{split}
\end{equation}
where $\vec{n}$ is the inwards unit normal vector field on $\partial\Omega(t)$.
Integrating this on $[t_{2i},t_{2i+1}]$, we get
\begin{equation}\label{e: previous time}
    \int_{\Omega(t_{2i})} u(x)\,d_{t_{2i}}x
    \ge\int_{\Omega} u(x)\,d_{t_{2i+1}}x-C\delta^{2m}.
\end{equation}

Also, applying Theorem \ref{e: uR^m less than C_m} on $\M_{t_{2i+1}}-\Omega$ and
using $vol(\M_{t_{2i+1}})\le C$, we get
\begin{equation}
   \int_{\M_{t_{2i+1}}-\Omega} u(x)\,d_{t_{2i+1}}x 
   \le C\delta^{2m},
\end{equation}
which combining with \eqref{e: previous time} gives
\begin{equation}\label{e: estimate on singular interval}
    \begin{split}
        \int_{\M_{t_{2i}}} u(x)\,d_{t_{2i}}x
        \ge\int_{\M_{t_{2i+1}}}u(x)\,d_{t_{2i+1}}x-C\delta^{2m}.
    \end{split}
\end{equation}

Note $\lim_{t\nearrow t_0}u(x)\,d_tx=1$, by induction we have
\begin{equation}
    \int_{\M_0} u(x)\,d_0x\ge 1-CN\delta^{2m}\ge 1-C(T+1)\delta^{2m-2}.
\end{equation}
Letting $\delta\rightarrow 0$, the conclusion follows immediately.
\end{proof}

\begin{cor}\label{l: v less than or equal to 0}
  Under the same assumption as in Proposition \ref{l: semilocal}.
  Let $f$ be a smooth function on $\M(x_0)$ such that $u=(4\pi(t_0-t))^{-3/2}e^{-f}$. Then
\begin{equation}\label{e: v}
  v=[(t_0-t)(2\Delta f-|\nabla f|^2+\R)+f-n]u\le 0.
\end{equation}
\end{cor}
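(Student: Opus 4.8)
The plan is to run Perelman's monotonicity-and-maximum-principle argument for the Harnack quantity $v$, using the polynomial decay $uR^m\le C_m$ from Theorem \ref{e: uR^m less than C_m} to absorb every contribution coming from the high-curvature region and the singular times, in the same spirit as the proof of Corollary \ref{l: integral equal to 1}. Since $u$ is smooth and positive on $\M(x_0)$ (Theorem \ref{t: heat kernel}) and $\M(x_0)_t$ is a union of connected components of $\M_t$ (component stability, as used in the proof of Theorem \ref{t: heat kernel}), the function $v$ is smooth on $\M(x_0)_{t<t_0}$, and because $x\log x\to0$ as $x\to0^+$ it extends by $0$ across $\M-\M(x_0)$ to a function on all of $\M_{t<t_0}$. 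Writing $\tau=t_0-t$, the starting point is the classical pointwise identity
\begin{equation*}
(-\pt-\Delta+\R)v=-2\tau\,\Big|\Ric+\nabla^2 f-\tfrac1{2\tau}g\Big|^2 u\ \le\ 0,
\end{equation*}
valid wherever $u$ is smooth and positive (and trivially where $v=0$); it may also be obtained by passing to the limit from the smooth pieces of the Ricci flows with surgery that produced $u$.

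The first genuine task is to promote the decay of $u$ to decay of $v$ and $\nabla v$: for each $m$ one should get $C_m'=C_m'(r_0,T,m)$ with $|v|\,R^m+|\nabla v|\,R^m\le C_m'$ on $\M(x_0)_{t<t_0}-\mathcal{P}_0$. In the region where $\rho=R^{-1/2}$ is small the flow is, after rescaling by $R(x)$, $\epsilon$-close to a piece of a $\kappa$-solution on which the rescaled heat kernel is a positive solution of the conjugate heat equation with a very small $C^0$-bound (from $uR^m\le C_m$); interior parabolic estimates, Lemma \ref{l: derivative}, and a logarithmic-gradient estimate on this $\kappa$-solution model bound the rescaled $f$, $\nabla f$, $\nabla^2 f$, hence the rescaled $v$ and $\nabla v$, by a constant times a power of $R(x)$, and undoing the rescaling gives the estimate. \textbf{I expect this to be the main obstacle}: $v$ involves $f=-\log u+\mathrm{const}$ and its first two derivatives, which a sup bound on $u$ alone does not control, so one must genuinely exploit the $\kappa$-solution model while keeping track of the possibly large factor $\tau$.

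With these estimates available I would finish as in Corollary \ref{l: integral equal to 1}. Fix $(y,s)\in\M$ with $s<t_0$ and let $w=w_{(y,s)}\ge0$ be a forward heat kernel on $\M$ based at $(y,s)$, built just as $u$ in Theorem \ref{t: heat kernel} but running time forward, so $w(\cdot,t)\to\delta_y$ as $t\searrow s$ and $\int_{\M_t}w\,d_tx\le1$. The pointwise identity gives, formally, $\tfrac{d}{dt}\int_{\M_t}wv\,d_tx=\int_{\M_t}2\tau\,w\,|\Ric+\nabla^2 f-\tfrac1{2\tau}g|^2 u\,d_tx\ge0$; to make this rigorous one replaces $\M_t$ by a good domain $\Omega(t)$ with $\rho$ bounded below, exactly as in the proof of Corollary \ref{l: integral equal to 1}, and observes that the correction is the flux of $wv$ through $\partial\Omega(t)$ (negligible, since $v,\nabla v$ are tiny there while $w,\nabla w$ obey the usual local parabolic bounds) plus $|\int_{\M_t-\Omega(t)}wv\,d_tx|$ (negligible, since $|v|\le C_m'R^{-m}$ on the complement while $\int_{\M_t-\Omega(t)}w\,d_tx\le1$); taking $m$ large and the mesh of a fine time partition to $0$ then shows $t\mapsto\int_{\M_t}wv\,d_tx$ is non-decreasing on $[s,t_0)$. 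Letting $t\searrow s$ gives $\int_{\M_t}wv\,d_tx\to v(y,s)$, while letting $t\nearrow t_0$ and using $|\Rm|\le r_0^{-2}$ on $\mathcal{P}_0$ (so that the classical heat-kernel asymptotics near $x_0$ force $v(\cdot,t)\to0$ there), $\int_{\M_t}u\,d_tx=1$ (Corollary \ref{l: integral equal to 1}), and the decay of $v$ off $\mathcal{P}_0$, gives $\lim_{t\nearrow t_0}\int_{\M_t}wv\,d_tx\le0$. Monotonicity then forces $v(y,s)\le0$; as $(y,s)$ is arbitrary, $v\le0$ on $\M(x_0)$, and $v=0$ elsewhere. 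Alternatively one may avoid $w$ and run the maximum principle directly, using that at non-singular times $\M(x_0)_t$ is a disjoint union of closed manifolds and that the singular times, where the curvature blows up, contribute only negligibly by the decay of $v$.
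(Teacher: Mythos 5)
Your proposal is correct and follows essentially the same route as the paper: both pair $v$ with a non-negative forward solution of the heat equation (your heat kernel $w$ at an arbitrary point versus the paper's bump function $h$ chosen by contradiction), derive monotonicity of $\int hv$ via integration by parts on good domains with the polynomial decay from Theorem \ref{e: uR^m less than C_m} absorbing the boundary and complement contributions exactly as in Corollary \ref{l: integral equal to 1}, and close the argument with the limit $\int hv\to 0$ as $t\nearrow t_0$ (which the paper outsources to Ni's computation of the heat-kernel asymptotics, where you argue it directly). The one point you flag as the main obstacle --- upgrading $uR^m\le C_m$ to decay of $v$ and $\nabla v$ via rescaled interior estimates and a logarithmic gradient estimate --- is indeed needed and is handled in the paper only implicitly by the phrase ``in the same way as Corollary \ref{l: integral equal to 1}''; your resolution of it is sound.
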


\begin{proof}
Suppose the conclusion does not hold. Then without loss of generality we may assume that there exists $x_1\in\M_0$ such that $v(x_1)>0$.
Let $h_0\ge 0$ be a smooth function on $\M_0$ which is supported in a neighborhood of $x_1$ in which $v>0$, and $h_0(x_1)>0$.
Then $\int_{\M_0}h_0v\,d_0x>0$.
In the same way we constructed $u$, we can find a smooth and bounded function $h\ge 0$ on $\M$ with $h(x)=h_0(x)$ for all $x\in\M_0$, which solves the heat equation $(\pt-\Delta)h=0$.

Since $(-\pt-\Delta+\R)v\le 0$, see e.g. \cite{KL}, for any open domain $\Omega\subset\M_t$ with smooth boundary, we have
\begin{equation}
    \begin{split}
        \pt\int_{\Omega} -hv\,d_tV&\le\int_{\partial\Omega}\left(\frac{\partial h}{\partial\vec{n}}v-\frac{\partial v}{\partial\vec{n}}h\right)\,d_tS.
    \end{split}
\end{equation}
Applying Theorem \ref{e: uR^m less than C_m} in the same way as Corollary \ref{l: integral equal to 1}, we get
\begin{equation}\label{e: the same argument}
    \int_{\M_t}hv\,d_tx\le\lim_{s\nearrow t_0}\int_{\M_s}hv\,d_sx
\end{equation}
for all $t\in[0,t_0)$.
It was shown in \cite{Ni2005} that $\int_{\M_t}hv\,d_tx$ approaches to zero as $t$ goes up to $t_0$. So \eqref{e: the same argument} implies $\int_{\M_0}hv\,d_0x\le0$, a contradiction. 

\end{proof}

\end{subsection}
\end{section}

\begin{section}{Pseudolocality theorem on singular Ricci flow}
\label{s: pseudolocality}

In this section, we generalize Perelman's pseudolocality theorem for compact Ricci flows to singular Ricci flows.
The main ingredient is the heat kernel in Section \ref{s: heat kernel}, especially Corollary \ref{l: integral equal to 1} and \ref{l: v less than or equal to 0}.

\begin{theorem}(Pseudolocality theorem)\label{t: pseudolocality}
For every $\alpha>0$ there exists $\delta,\epsilon>0$ with the following property. 
Let $(\M,g(t))$ be a singular Ricci flow and $x_0\in\M_{t_0}$ for some $t_0\ge 0$.
Suppose $R\ge-1$ on $B_{t_0}(x_0,2)$, and for any $\Omega\subset B_{t_0}(x_0,2)$ we have $vol(\partial\Omega)^3\ge(1-\delta)c_3vol(\Omega)^{2}$, where $c_3$ is the Euclidean isoperimetric constant at dimension 3. 
Then $\bigcup_{t\in [t_0,t_0+\epsilon^2]}B_t(x_0(t),\epsilon)$ is unscathed, and $|\Rm|(x)<\frac{\alpha}{\mathfrak{t}(x)-t_0}+\epsilon^{-2}$ holds for all $x\in\bigcup_{t\in [t_0,t_0+\epsilon^2]}B_t(x_0(t),\epsilon)$.
\end{theorem}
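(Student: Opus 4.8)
The plan is to run Perelman's pseudolocality argument (\cite[\S10.1]{Pel1}, see also \cite[\S35]{KL}) in the singular setting, with the heat kernel of Section \ref{s: heat kernel} substituting for the ordinary one: its total mass $\int_{\M_t}u\,d_tx=1$ (Corollary \ref{l: integral equal to 1}) and Perelman's Harnack inequality $v\le 0$ (Corollary \ref{l: v less than or equal to 0}) play the role of the corresponding --- trivial --- facts for compact smooth flows, while the polynomial decay $uR^m\le C_m$ (Theorem \ref{e: uR^m less than C_m}) is what makes every integration by parts across the high-curvature part of $\M$ legitimate. Suppose the statement fails for some $\alpha>0$: there are $\delta_k,\epsilon_k\downarrow 0$, singular Ricci flows $\M_k$ (which we may assume to have normalized initial condition), and $x_{0k}\in(\M_k)_{t_{0k}}$ satisfying the hypotheses with $\delta=\delta_k$ but failing the conclusion; time-shift so that $t_{0k}=0$. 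Since $\M_k$ is forward $0$-complete, a point of $B_t(x_{0k}(t),\epsilon_k)$ can fail to survive forward only if $\rho\to 0$, i.e.\ its curvature blows up; hence in both the ``scathed'' and the ``curvature too large'' alternatives one obtains points $x_k\in\bigcup_{t\in[0,\epsilon_k^2]}B_t(x_{0k}(t),\epsilon_k)$ with $|\Rm|(x_k)\ge \alpha/\mathfrak t(x_k)+\epsilon_k^{-2}$.

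First I would carry out Perelman's pseudolocality point-picking, with the selection parameter tending to $\infty$ along the sequence: this produces $(\bar x_k,\bar t_k)$, $\bar t_k\in(0,\epsilon_k^2]$, $d_{\bar t_k}(\bar x_k,x_{0k})<C\epsilon_k$, with $\bar Q_k:=|\Rm|(\bar x_k,\bar t_k)$ satisfying $\bar t_k\bar Q_k>\alpha$ and $|\Rm|\le 4\bar Q_k$ on a backward parabolic neighborhood of $(\bar x_k,\bar t_k)$ that reaches back to time $0$ and whose rescaled spatial radius tends to $\infty$. In the singular setting the only new point is that a region on which $|\Rm|$ is bounded is automatically unscathed (no singularity can form there), so this parabolic neighborhood behaves exactly like a smooth Ricci flow. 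Rescaling $g_k$ by $\bar Q_k$ and shifting $\bar t_k$ to $0$, the Cheeger--Gromov--Hamilton compactness theorem (the required uniform noncollapsing coming from the entropy lower bound of the next step) gives, after passing to a subsequence, a pointed complete smooth Ricci flow $(\M_\infty,g_\infty(t),\bar x_\infty)$ on a backward time interval, with $|\Rm|\le 4$ and $|\Rm|(\bar x_\infty,0)=1$.

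Next let $u_k$ be the heat kernel of $\M_k$ based at $(\bar x_k,\bar t_k)$ from Theorem \ref{t: heat kernel}, write $u_k=(4\pi\tau_k)^{-3/2}e^{-f_k}$ with $\tau_k=\bar t_k-t$, and set $\mathcal W_k(t)=\int_{(\M_k)_t}v_k\,d_tx=\int[\tau_k(|\nabla f_k|^2+R)+f_k-3]u_k\,d_tx$. Corollary \ref{l: v less than or equal to 0} gives $v_k\le 0$, hence $\mathcal W_k\le 0$; arguing exactly as in the proofs of Corollaries \ref{l: integral equal to 1} and \ref{l: v less than or equal to 0} --- cutting $\M_{k,t}$ along central spheres of $\delta_{\#}$-necks (Lemma \ref{l: central sphere decomposition}) so that the boundary has small area and $u_k$, $|\nabla u_k|$ are negligible there by Theorem \ref{e: uR^m less than C_m}, then integrating $(-\pt-\Delta+R)v_k\le 0$ --- one gets that $\mathcal W_k(t)$ is non-decreasing in $t$, and $\mathcal W_k(t)\to 0$ as $t\nearrow\bar t_k$ (the fact of Ni \cite{Ni2005} used in Corollary \ref{l: v less than or equal to 0}). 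Because $\bar t_k\le\epsilon_k^2$ and $\bar x_k$ is $C\epsilon_k$-close to $x_{0k}$, the heat cannot spread farther than $\sim\sqrt{\bar t_k}$ by time $0$; together with $R\ge-1$ on $B_0(x_{0k},2)$, a distance-distortion bound on $[0,\bar t_k]$, $\int u_k\,d_tx=1$ (Corollary \ref{l: integral equal to 1}), and --- once more --- the polynomial decay of $u_k$ ruling out mass over the unbounded-curvature part, $u_k(\cdot,0)$ is concentrated in $B_0(x_{0k},2)$ up to an error $o(1)$. The almost-Euclidean isoperimetric hypothesis on $B_0(x_{0k},2)$ then gives, via the $L^1$- and $L^2$-Sobolev and hence the logarithmic Sobolev inequality with almost-Euclidean constants (and using $\bar t_k\to 0$), that Perelman's $\mu$-invariant $\mu(g_k(0)|_{B_0(x_{0k},2)},\bar t_k)\ge-\delta_k'$ with $\delta_k'\to 0$, whence $\mathcal W_k(0)\ge-\delta_k'-o(1)$.

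Combining, $-\delta_k'-o(1)\le\mathcal W_k(0)\le\mathcal W_k(t)\le 0$ for all $t\in[0,\bar t_k]$. As $\mathcal W$ is scale-invariant, in the rescaled picture the entropy of the limit flow is $\equiv 0$, and the rescaled $u_k$ converge to an honest conjugate heat kernel $u_\infty$ on the complete limit $\M_\infty$ of total mass $1$ ($\int u_k=1$ together with the decay estimate forbids loss of mass into the singular region). The vanishing of $\tfrac{d}{dt}\mathcal W_\infty$ forces $\Ric_{g_\infty}+\nabla^2 f_\infty=\tfrac1{2\tau}g_\infty$ on an interval, so $\M_\infty$ is a complete gradient shrinking soliton with vanishing entropy --- hence flat $\RR^3$ by Perelman's rigidity --- contradicting $|\Rm|(\bar x_\infty,0)=1$. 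Thus the desired curvature bound holds on a slightly larger region, and the unscathedness of $\bigcup_{t\in[t_0,t_0+\epsilon^2]}B_t(x_0(t),\epsilon)$ follows by a continuity-in-$t$ argument (bounded curvature keeps the balls inside the spacetime). \textbf{The main obstacle} I anticipate is the localization step: showing that no mass of $u_k$ escapes --- neither into the unbounded-curvature part of $\M_k$, nor out of $B_0(x_{0k},2)$ --- so that the rescaled heat kernels converge to a genuine mass-$1$ conjugate heat kernel on the complete limit. This is precisely where $uR^m\le C_m$ (Theorem \ref{e: uR^m less than C_m}), which has no analogue in the compact smooth case, is indispensable, and it must be combined carefully with a distance-distortion estimate on $[0,\bar t_k]$ that cannot simply presuppose the curvature bound one is trying to prove.
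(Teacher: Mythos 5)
Your proposal is correct in substance and deploys the paper's new ingredients (the heat kernel of Theorem \ref{t: heat kernel}, mass conservation from Corollary \ref{l: integral equal to 1}, $v\le 0$ from Corollary \ref{l: v less than or equal to 0}, and the polynomial decay $uR^m\le C_m$ of Theorem \ref{e: uR^m less than C_m} to justify every integration by parts across the high-curvature region) in exactly the places where the paper needs them. The contradiction setup and point-picking also match the paper's, including the observation that the scathed case reduces to the large-curvature case via the blow-up Lemma \ref{l: blow-up converges to a k-solution}. Where you genuinely diverge is the endgame. The paper follows Perelman/Kleiner--Lott verbatim: it quotes \cite[Lemma 33.4]{KL} as a black box (a \emph{local} bound $\int_{B_k}v_k\,dV\le-\beta$ at some intermediate time $\tilde t_k$), propagates this to time $0$ through the explicit cutoff $h=\phi\bigl((d+600\sqrt{t})/(10A\epsilon)\bigr)$ to get $\int -hv\,\big|_{t=0}\ge\beta(1-A^{-2})$, and then contradicts the isoperimetric hypothesis via the decomposition \eqref{e: computation} of $\int-hv$ into a localized Nash entropy plus explicitly controlled errors. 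You instead run the contrapositive: lower-bound the entropy at time $0$ by the log-Sobolev inequality, use global monotonicity of $\mathcal W_k$ to squeeze it to zero on the whole interval, and invoke shrinking-soliton rigidity on the blow-up limit. This is logically equivalent but means you must re-derive the content of Lemma 33.4 (the soliton rigidity step) rather than cite it; what it buys is a cleaner conceptual picture at the cost of redoing a delicate localization.

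That localization is the one place your sketch is materially thinner than the paper. You work with the \emph{global} entropy $\mathcal W_k(t)=\int_{\M_{k,t}}v_k$, whereas the paper never does: it only ever manipulates $\int hv$ for the compactly supported cutoff $h$. Two consequences. First, the well-definedness and monotonicity of the global $\int v_k$ over a singular time-slice require controlling $\tau|\nabla f_k|^2u_k=\tau|\nabla u_k|^2/u_k$ and $f_ku_k=-u_k\log u_k+(\mathrm{const})u_k$ out to the ends of $\M_k(\bar x_k)_t$, which is more than $u_kR^m\le C_m$ gives directly (one also needs derivative bounds on $u_k$ from interior estimates, as in the proof of Corollary \ref{l: integral equal to 1}). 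Second, and more importantly, converting ``$\int_{\M_0\setminus B_0(x_{0k},2)}u_k=o(1)$'' into ``$\mathcal W_k(0)\ge(\text{log-Sobolev bound on the ball})-o(1)$'' is not automatic, because the entropy integrand contains $f_ku_k$ and the log-Sobolev inequality applies only to normalized functions supported in the ball; the cutoff needed to fix this reintroduces exactly the error terms $\int u|\nabla h|^2/h$, $-\int uh\log h$, and $\int_{\text{annulus}}u$ that the paper estimates one by one after \eqref{e: computation}. You correctly flag this as the main obstacle, and it is resolvable by the same means the paper uses, but as written your step ``$\mathcal W_k(0)\ge-\delta_k'-o(1)$'' is an assertion, not a proof. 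If you carry out the cutoff computation explicitly you essentially recover the paper's argument; so the honest summary is: same skeleton, same new technical inputs, a reorganized (contrapositive) conclusion that trades the citation of \cite[Lemma 33.4]{KL} for a soliton-rigidity argument you would still need to localize carefully.
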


As the proof has a lot in common with that of Perelman's pseudolocality theorem, we will focus on the differences, especially the places where the generalized heat kernel comes into play, see \cite[Section 30-34]{KL} for details of the parts which we are brief about.

\begin{proof}
Without loss of generality, we assume $t_0=0$, and $\alpha<\frac{1}{300}$.
Suppose the assertion is not true. 
Then there are sequences $\epsilon_k\rightarrow 0$ and $\delta_k\rightarrow 0$, and pointed singular Ricci flows $(\M_k,(x_{0k},0),g_k(\cdot))$ which satisfy the hypotheses of the theorem but for which there is a point $x_k$ in the unscathed set $\bigcup_{t\in[0,\epsilon_k^2]}B_t(x_{0k}(t),\epsilon_k)$ with $|\Rm|(x_k)\ge\alpha t_k^{-1}+\epsilon_k^{-2}$.
By reducing $\epsilon_k$ if needed, we may also assume that
\begin{equation}\label{e: alpha-large}
    |\Rm|(x)<\alpha t_k^{-1}+2\epsilon_k^{-2},
\end{equation}
for all $x\in  \bigcup_{t\in[0,\epsilon_k^2]}B_t(x_{0k}(t),\epsilon_k)$. We abbreviate $d_t(x_{0k}(t),x)$ as $d(x,t)$.

Let $A_k=\frac{1}{300\epsilon_k}$. 
We say a point $y$ is an $\alpha$-large point if
$|\Rm|(y)\ge\frac{\alpha}{\mathfrak{t}(y)}$.
First, suppose
$\mathcal{P}_k:=\bigcup_{t\in[0,\epsilon_k^2]}B_t(x_{0k}(t),(2A_k+1)\epsilon_k)$ is unscathed. Then by a point-picking we can find an $\alpha$-large point $\ox_k\in\mathcal{P}_k$, $\mathfrak{t}(\ox_k)=\overline{t}_k$, such that 
\begin{equation}\label{e: parabolic region_1}
    |\Rm|(y)\le 4|\Rm|(\ox_k):= 4Q_k,
\end{equation}
holds for all $\alpha$-large points $y$, $\mathfrak{t}(y)=s$, with $s\in(0, \ot_k]$ and $d(y,s)\le d(\ox_k,\ot_k)+A_kQ_k^{-1/2}$.
By a distance distortion estimate we can show that \eqref{e: parabolic region_1} also holds on $P(\overline{x}_k,\frac{1}{10}A_kQ^{-1/2}_k,-\frac{1}{2}\alpha Q_k^{-1/2})$.

Next, suppose $\mathcal{P}_k$ is scathed.
They by Lemma \ref{l: blow-up converges to a k-solution}, we can also find an $\alpha$-large point $\ox_k\in\mathcal{P}_k$ so that for large $k$, \eqref{e: parabolic region_1} holds
on $P(\overline{x}_k,\frac{1}{10}A_kQ^{-1/2}_k,-\frac{1}{2}\alpha Q_k^{-1/2})$.

Now let $u_k=(4\pi (\ot_k-t))^{-n/2}e^{-f_k}$ be the heat kernel on $\M_k$ starting from $\ox_k$, and $v_k$ be defined by \eqref{e: v}.
Then $v_k\le0$. The following lemma says that a local integral of $v_k$ has a negative upper bound at some time earlier than $\ot_k$.

\begin{lem}\label{l: beta}(\cite[Lemma 33.4]{KL})
There is some $\beta>0$ so that for all sufficiently large $k$, there is some $\tilde{t}_k\in[\ot_k-\frac{1}{2}\alpha Q_k^{-1},\ot_k)$ with $\int_{B_k} v_k\,dV_k\le -\beta$, where $B_k$ is the time-$\tilde{t}_k$ ball of radius $\sqrt{\ot_k-\tilde{t}_k}$ centered at $\ox_k(\tilde{t}_k)$.
\end{lem}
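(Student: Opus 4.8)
The plan is to run Perelman's original argument for \cite[Lemma 33.4]{KL} on the singular Ricci flow, using the heat-kernel machinery of Section \ref{s: heat kernel} to keep control of the limit.

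Argue by contradiction. If the assertion fails, then after passing to a subsequence there is $\beta_k\searrow 0$ such that $\int_{B_k}v_k\,dV_k>-\beta_k$ for \emph{every} admissible time $\tilde t_k\in[\ot_k-\tfrac12\alpha Q_k^{-1},\ot_k)$, where $B_k$ is the time-$\tilde t_k$ ball of radius $\sqrt{\ot_k-\tilde t_k}$ centered at $\ox_k(\tilde t_k)$. Since $\ox_k$ is $\alpha$-large we have $Q_k=|\Rm|(\ox_k)\ge\alpha/\ot_k\ge\alpha\epsilon_k^{-2}\to\infty$, and $\ot_k-\tfrac12\alpha Q_k^{-1}\ge\tfrac12\ot_k>0$, so these times lie strictly between $0$ and $\ot_k$. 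Rescale the spacetimes by $Q_k$ and shift $\ot_k$ to $0$, producing pointed Ricci flow spacetimes $(\widehat{\M}_k,\hat g_k(s),(\ox_k,0))$ with $|\Rm|(\ox_k,0)=1$ and $|\Rm|\le 4$ on the parabolic neighborhood of $(\ox_k,0)$ established just before the lemma, whose rescaled spatial radius $\to\infty$ and whose rescaled backward time-extent is $\tfrac12\alpha$. Because $Q_k^{-1/2}$ lies below the canonical-neighborhood scale for large $k$, the point $\ox_k$ has a canonical neighborhood, which yields a volume lower bound and hence, via Proposition \ref{p: noncollapsing}, a uniform $\kappa$-non-collapsing at the rescaled unit scale; together with $0$-completeness of the singular Ricci flow this makes the region converge, after passing to a subsequence, smoothly to a complete pointed Ricci flow $(M_\infty,g_\infty(s),x_\infty)$, $s\in[-\tfrac12\alpha,0]$, with $|\Rm|\le 4$ and $|\Rm|(x_\infty,0)=1$.

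The crucial step — the one input beyond Perelman's compact setting — is to pass the (rescaled) heat kernels $\hat u_k$ of Theorem \ref{t: heat kernel} based at $\ox_k$ to a genuine conjugate heat kernel on $M_\infty$ \emph{without loss of mass}. Interior parabolic estimates give local smooth convergence $\hat u_k\to\hat u_\infty$ on $M_\infty\times[-\tfrac12\alpha,0)$, with $\hat u_\infty$ a nonnegative solution of the conjugate heat equation and, by Corollary \ref{l: v less than or equal to 0}, $\hat v_\infty\le 0$. To rule out mass escape: Corollary \ref{l: integral equal to 1} gives $\int\hat u_k\,dV_k=1$ for all $k$; Theorem \ref{e: uR^m less than C_m} forces $\hat u_k$ to be arbitrarily small wherever the unrescaled curvature is large, so no mass concentrates into the high-curvature region; and the Gaussian upper bound on the region of bounded curvature together with the $\delta$-function property at $(\ox_k,0)$ prevents escape to spatial infinity. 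Hence $\int_{M_\infty}\hat u_\infty(\cdot,s)\,dV_s=1$ for every $s<0$, so $\hat u_\infty$ is the conjugate heat kernel of $(M_\infty,g_\infty(s))$ based at $(x_\infty,0)$ and $\hat v_\infty$ is its associated Harnack quantity.

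Finally the contradiction survives the limit. For each fixed $s\in(-\tfrac12\alpha,0)$ the rescaled balls $B_k$ converge to the ball $\hat B_\infty^s$ of radius $\sqrt{-s}$ about $x_\infty(s)$, so $\int_{\hat B_\infty^s}\hat v_\infty\,dV_s\ge 0$; with $\hat v_\infty\le 0$ this forces $\hat v_\infty\equiv 0$ on the open ball $\hat B_\infty^s$, and letting $s$ vary, $\hat v_\infty$ vanishes on an open spacetime region whose closure contains $(x_\infty,0)$. Since $\hat v_\infty$ satisfies $(-\pt-\Delta+\R)\hat v_\infty=-2(-s)\,|\Ric+\nabla^2\hat f_\infty-\tfrac{1}{2(-s)}g_\infty|^2\,\hat u_\infty\le 0$ with $\hat u_\infty>0$, the strong maximum principle propagates the vanishing backward and forces $\Ric+\nabla^2\hat f_\infty=\tfrac{1}{2(-s)}g_\infty$, i.e. $(M_\infty,g_\infty(s))$ is a complete gradient shrinking soliton with bounded curvature whose conjugate heat kernel $\hat u_\infty$ becomes a $\delta$-function at $(x_\infty,0)$; by the equality case in Perelman's monotonicity (cf. \cite[Section 33]{KL}) this is the flat Gaussian soliton on $\mathbb{R}^3$, so $|\Rm|(x_\infty,0)=0$, contradicting $|\Rm|(x_\infty,0)=1$. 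The main obstacle is the no-loss-of-mass step, which is exactly where the polynomial decay estimate (Theorem \ref{e: uR^m less than C_m}) and the total-mass identity (Corollary \ref{l: integral equal to 1}) for the generalized heat kernel are indispensable; everything else is a routine adaptation of \cite[Section 33]{KL}.
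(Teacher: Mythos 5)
The paper does not actually prove this lemma --- it is quoted from \cite[Lemma 33.4]{KL} --- and your blow-up architecture (contradiction, rescale by $Q_k$, smooth pointed limit on $[-\tfrac12\alpha,0]$, $v_k\le 0$, Fatou to force $\hat v_\infty\equiv 0$ on the limit balls, soliton rigidity, flatness versus $|\Rm|(x_\infty,0)=1$) is exactly the intended adaptation. The genuine gap is in the step you single out as crucial. Theorem \ref{e: uR^m less than C_m}, Corollary \ref{l: integral equal to 1} and Proposition \ref{l: semilocal} all require $|\Rm|\le r_0^{-2}$ on $P(x_0,r_0,-r_0^2)$ for the \emph{basepoint of the heat kernel} and $t_0\in(r_0^2,T)$, with constants depending on $r_0$ and $T$. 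Here the basepoint is $\ox_k$ with $|\Rm|(\ox_k)=Q_k\to\infty$, so you must take $r_{0,k}\lesssim Q_k^{-1/2}\to 0$, and the rescaled basepoint time $Q_k\ot_k$ is only bounded below. The bound $u_kR^m\le C_m(r_{0,k},T,m)$ therefore degenerates: after rescaling it reads $\hat u_k\hat R^m\le Q_k^{-3/2-m}C_m(r_{0,k},T,m)$, and nothing in the paper controls the rate at which $C_m$ blows up as $r_0\to 0$ (it comes from a compactness argument relative to the non-scale-invariant normalized initial condition). So the claimed uniform smallness of $\hat u_k$ in high-curvature regions of the \emph{rescaled} flows does not follow; likewise, no Gaussian upper bound for the generalized heat kernel is established anywhere, so escape of mass to (rescaled) spatial infinity is not excluded, and the identity $\int_{M_\infty}\hat u_\infty=1$ is unproven. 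This is load-bearing in your write-up, because it is your only route to $\hat u_\infty>0$; where $\hat u_\infty=0$ the identity $(-\pt-\Delta+R)\hat v_\infty=-2\tau\bigl|\Ric+\nabla^2\hat f_\infty-\tfrac{1}{2\tau}g_\infty\bigr|^2\hat u_\infty$ carries no information and the rigidity step is vacuous.

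The repair is to drop the mass identity, as in \cite[Lemma 33.4]{KL} itself. Corollary \ref{l: v less than or equal to 0} applies with $r_0=r_{0,k}$ because its conclusion $v_k\le 0$ is constant-free, and by Perelman's argument it integrates along $\mathcal{L}$-geodesics to the scale-invariant lower bound $u_k\ge(4\pi\tau)^{-3/2}e^{-\ell_k}$; bounding $\ell_k$ on the unit-size (rescaled) parabolic neighborhood of $\ox_k$, where $|\Rm|\le 4Q_k$, gives $\hat u_\infty>0$, and $\int\hat u_\infty\le 1$ from Fatou is all that is needed. For the endgame, you need not invoke the $\delta$-function normalization: a complete non-flat gradient shrinking soliton reaching its singular time satisfies $\sup_M R(\cdot,s)=|s|^{-1}\sup_M R(\cdot,-1)$, which is incompatible with $|\Rm|\le 4$ on $[-\tfrac12\alpha,0)$, so the limit is flat and $|\Rm|(x_\infty,0)=1$ gives the contradiction. (A minor slip: the strong maximum principle for $\hat v_\infty\le 0$ with $(-\pt-\Delta+R)\hat v_\infty\le 0$ propagates the vanishing \emph{forward} in $t$, toward the singular time, not backward; this is harmless here since your vanishing region already accumulates at $(x_\infty,0)$.)
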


We drop the subscript $k$ for a moment and consider a fixed $\M_k$ for $k$ large.
Let $\phi$ be a smooth non-increasing function on $\mathbb{R}$ such that: $\phi$ is $1$ on $(-\infty,1]$ and $0$ on $[2,\infty)$, and $\phi''\ge -10\phi$ and $(\phi')^2\le 10\phi$. 
Put $h(y)=\phi\left(\frac{d(y,\mathfrak{t}(y))+600\sqrt{\mathfrak{t}(y)}}{10A\epsilon}\right)$ on $\M_{t\le\epsilon^2}$. Then
\begin{equation}
   (\pt-\Delta)h=\frac{1}{10A\epsilon}\left(d_t-\Delta d+\frac{300}{\sqrt{t}}\right)\phi'-\frac{1}{(10A\epsilon)^2}\phi''.
\end{equation}

By \eqref{e: alpha-large} and Lemma \ref{l: distance laplacian}, we get
\begin{equation}\label{e: inequality for distance function}
d_t-\Delta d+\frac{300}{\sqrt{t}}\ge 0,
\end{equation}
for all points $y$, $t=\mathfrak{t}(y)$, such that $d(y,t)>\epsilon\ge\sqrt{t}$. In particular, if $\phi'\left(\frac{d(y,t)+600\sqrt{t}}{10A\epsilon}\right)\neq0$, then $9A\epsilon<d(y,t)<20A\epsilon$, and hence \eqref{e: inequality for distance function} holds at the point.
So we have
\begin{equation}\label{e: evolution of h}
    (\pt-\Delta)h\le \frac{-\phi''}{(10A\epsilon)^2}\le\frac{10\phi}{(10A\epsilon)^2}.
\end{equation}

First, for any open domain $\Omega\subset \M_t$ with smooth boundary, we can compute that
\begin{equation}\label{e: hu_1}
    \begin{split}
        \pt\int_{\Omega} hu\,d_tV&=\int_{\Omega} (\pt h-\Delta h)u\,d_tV+\int_{\partial\Omega}\left(-\frac{\partial h}{\partial\vec{n}}u+\frac{\partial u}{\partial\vec{n}}h\right)\,d_tS,
    \end{split}
\end{equation}
where $\vec{n}$ is the inwards unit normal vector field on $\partial\Omega$.
Applying Theorem \ref{e: uR^m less than C_m} as in Corollary \ref{l: integral equal to 1}, and using \eqref{e: evolution of h}, $h\le 1$, and $|\nabla h|\le\frac{\phi'}{10A\epsilon}$, we get
\begin{equation}\label{e: int hu ge}
    \left.\int hu\,d_tV\right|_{t=0}\ge\left.\int hu\,d_tV\right|_{t=\ot}-\frac{\ot}{(A\epsilon)^2}\ge 1-A^{-2}.
\end{equation}
Similarly, we can show
\begin{equation}\label{e: hv inequality}
\begin{split}
    \left.\int -hv\,d_tV\right|_{t=0}
    &\ge  \textnormal{exp}\left(\frac{-\tilde{t}}{10(A\epsilon)^2}\right)\left.\int -hv\,d_{t}V\right|_{t=\tilde{t}}
    \ge(1-A^{-2})\left.\int -hv\,d_{t}V\right|_{t=\tilde{t}}.
    \end{split}
\end{equation}
Also, replacing the function $h$ by $\overline{h}(y)=\phi\left(\frac{d(y,\mathfrak{t}(y))+600\sqrt{\mathfrak{t}(y)}}{5A\epsilon}\right)$, we can show for some constant $C>0$,
\begin{equation}\label{3}
    \int_{B_0(x_0,10A\epsilon)} u\,dV\ge1-CA^{-2}.
\end{equation}


By some distance distortion estimates using Lemma \ref{l: distance laplacian}, Lemma \ref{l: expanding lemma} and \eqref{e: alpha-large} we can establish the following inclusion
\begin{equation}\label{e: inclusion}
    B_{\tilde{t}}(\ox(\tilde{t}),\sqrt{\ot-\tilde{t}})\subset B_{\tilde{t}}(x_0(\tilde{t}),9A\epsilon).
\end{equation}
Since $h(\cdot,\tilde{t})=1$ on $B_{\tilde{t}}(x_0(\tilde{t}),9A\epsilon)$ and $v\le 0$, Lemma \ref{l: beta} implies $\left.\int -hv \,d_tV\right|_{t=\tilde{t}}\ge\beta$.
Hence by \eqref{e: hv inequality} we get
\begin{equation}\label{e: initial_nu}
    \left.\int-hv\,d_tV\right|_{t=0}\ge \beta(1-A^{-2}).
\end{equation}

Let $\tilde{u}(x)=h(x)u(x)$ for all $x\in\M_0$, and define  $\tilde{f}(x)$ by $\tilde{u}=(2\pi)^{-\frac{n}{2}}e^{-\tilde{f}}$. Then a direct computation shows
\begin{equation}\label{e: computation}
    \int_{\M_0} -hv\,d_0V=\int_{\M_0} (-\ot|\nabla\tilde{f}|^2-\tilde{f}+3)\tilde{u}\,d_0V + \int_{\M_0} \left(\ot\left(\frac{|\nabla h|^2}{h}-Rh\right)-h\log h\right)u\,d_0V.
\end{equation}

By \eqref{3} and $-h\log h\le 1$ when $h\le 1$, we have
\begin{equation}
\begin{split}
    \int_{\M_0}-uh\log h \,d_0V&=\int_{B_0(x_0,20A\epsilon)-B_0(x_0,10A\epsilon)}-uh\log h \,d_0V\le\int_{\M_0-B_0(x_0,10A\epsilon)}u \,d_0V\\
    &\le 1-\int_{B_0(x_0,10A\epsilon)}u \,d_0V\le CA^{-2}.
\end{split}
\end{equation}
Seeing also that $\frac{|\nabla h|^2}{h}\le\frac{10}{(10A\epsilon)^2}$, and $R\ge -1$ on $B_0(x_0,20A\epsilon)$, we can bound the second integral in \eqref{e: computation} above by $(1+C)A^{-2}+\epsilon^2$. This combining with \eqref{e: initial_nu} implies
\begin{equation}\label{4}
    \beta(1-A^{-2})\le\int_{\M_0}(-\ot|\nabla\tilde{f}|^2-\tilde{f}+3)\tilde{u}\,dV+(1+C)A^{-2}+\epsilon^{2}.
\end{equation}

Put $\widehat{g}=\frac{1}{2\ot}g(0)$, $\widehat{u}=(2\ot)^{\frac{n}{2}}\tilde{u}$, and define $\widehat{f}$ by $\widehat{u}=(2\pi)^{-\frac{n}{2}}e^{-\widehat{f}}$. Restoring the subscript $k$, then $\widehat{u}_k$ are supported in $B_0(x_{0k},2)$, and by \eqref{e: int hu ge} we get
\begin{equation}
    \lim_{k\rightarrow\infty}\int_{B_0(x_{0k},2)} \widehat{u}_k\,d\widehat{V}_k=1.
\end{equation}
Moreover, \eqref{4} implies the following for large $k$,
\begin{equation}
    \frac{1}{2}\beta\le\int_{B_0(x_{0k},2)}\left(-\frac{1}{2}|\nabla\widehat{f}_k|^2-\widehat{f}_k+3\right)\widehat{u}_k\,d\widehat{V}_k.
\end{equation}
This contradicts with the isoperimetric inequality in the assumption.

\end{proof}

\end{section}

\begin{section}{generalized singular Ricci flow}
\label{s: construction of non-compact SRF}
\subsection{Generalized singular Ricci flow: the definition and properties}\hfill\\
In this subsection, we give the definition and some properties of the generalized singular Ricci flow.

\begin{defn}
Let $(M,g)$ be a Riemannian manifold. For any $x\in M$, let
\begin{equation*}
    \overline{\rho}(x)=\sup\{r>0: B_g(x,r) \textnormal{ is relatively compact and }|\Rm|\le r^{-2} \textnormal{ on } B_g(x,r)\}.
\end{equation*}
Recall $\rho=R_+^{-1/2}$, it's clear that $\overline{\rho}\le \rho$.

By replacing the curvature scale $\rho$  by $\overline{\rho}$ in Definition \ref{d: complete}, we say a spacetime is weakly 0-complete (resp. forward or backward).
\end{defn}


\begin{defn}\label{d: generalized SRF}
A generalized singular Ricci flow is a Ricci flow spacetime $(\M,g(t))$, which satisfies: 
\begin{enumerate}
\item\label{p': complete} $\M_0=M$ is a complete orientable manifold.
\item\label{p': HI} $g(t)$ satisfies the Hamilton-Ivey pinching condition \eqref{e: Hamilton-Ivey} with $\varphi=\infty$.
\item\label{p': semi-backward complete} $\M$ is forward 0-complete, and weakly backward  0-complete.
\item\label{p': steps} For any $x_0\in \M$, there exist $N\in\mathbb{N}$ and a sequence of points $\{x_j\}_{j=0}^{N}$ with $\mathfrak{t}(x_j)=t_j$, such that  $t_0\ge t_1\ge\cdots\ge t_N=0$, $x_j$ survives until $t_{j+1}$, and $x_j(t_{j+1})),x_{j+1}$ are in the same connected component in $\M_{t_{j+1}}$.
\item\label{p': CNA} For any $x_0\in \M$ surviving on $[t_1,t_0]$ for some $t_1<t_0$, and any $A,\epsilon_{can}>0$, there is $r>0$ such that the $\epsilon_{can}$-canonical neighborhood assumption holds at scales less than $r$ on $B_{t}(x_0(t),A)$ for all $t\in[t_1,t_0]$.

\end{enumerate}
\end{defn}

\begin{defn}\label{d: semi-generalized SRF}
A semi-generalized singular Ricci flow is a Ricci flow spacetime $(\M,g(t),x_0)$ with $\mathfrak{t}(\M)=[0,t_0)$ for some $t_0\in[0,\infty]$, and $x_0\in\M_0$, which satisfies the following properties: 
\begin{enumerate}
\item\label{p: x_0,t_0,r_0} $x_0$ survives until $t$ for all $t\in[0,t_0)$, .
\item\label{p: HI} $g(t)$ satisfies the Hamilton-Ivey pinching condition \eqref{e: Hamilton-Ivey} with $\varphi=\infty$.
\item\label{p: semi-backward 0-complete} $\M$ is weakly backward 0-complete.
\item\label{p: connected} $\M_t$ is connected for each $t\in[0,t_0)$. 
\item\label{p: CNA and NC} 
For any $t_1\in[0,t_0)$, 
and any $A,\epsilon_{can}>0$ there is $r>0$ such that the $\epsilon_{can}$-canonical neighborhood assumption holds at scales less than $r$ on $B_{t}(x_0(t),A)$ for all $t\in[0,t_1]$.
\end{enumerate}
\end{defn}

\begin{remark}
Note that a singular Ricci flow satisfies \eqref{p': complete}\eqref{p': HI}\eqref{p': semi-backward complete}\eqref{p': steps}\eqref{p': CNA}, so it is a generalized singular Ricci flow.
Moreover, let $\M$ be a singular Ricci flow, $x_0\in\M_0$. Suppose $x_0$ survives on $[0,t_0)$ for some $t_0\in[0,\infty]$, and let $\M_{x_0}=\bigcup_{[0,t_0)}\bigcup_{A>0}B_t(x_0(t),A)$.
By the component stability \cite[Prop 5.17]{KL1}, the connected components are preserved when going backwards in time, it is clear that $\M_{x_0}$ is a semi-generalized singular Ricci flow.
\end{remark}

The following properties can be derived directly from the definition of the semi-generalized singular Ricci flow.

\begin{lem}\label{l: properties}
Let $(\M,g(t),x_0)$ be a semi-generalized singular Ricci flow on $[0,t_0)$. Let $t\in(0,t_0)$, then
\begin{enumerate}
    \item[\textnormal{(i)}]\label{p: scalar curvature is proper} For any $A>0$, the scalar curvature is proper on $\overline{B_t(x_0(t),A)}$.
    \item[\textnormal{(ii)}] \label{p: existence of uniform bw pb nbhd}  For any $A>0$, there exist $\overline{Q}, C>0$ such that for any $x\in B_t(x_0(t),A)$, letting $Q=\max\{\overline{Q},R(x)\}$, then
    $R\le CQ$ in $P(x,(CQ)^{-1/2},-(CQ)^{-1})$, which is contained in $\bigcup_{s\in[0,t]}B_s(x_0(s),2A)$.
\end{enumerate}
\end{lem}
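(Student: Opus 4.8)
Both parts are local consequences of the canonical neighborhood assumption \eqref{p: CNA and NC} together with the Hamilton--Ivey pinching \eqref{p: HI}, and they mirror the arguments of Lemmas \ref{l: volume comparison}--\ref{l: abundance of neck points}. The recurring mechanism is the following elementary consequence of \eqref{e: Hamilton-Ivey} with $\varphi=\infty$: at a point $x$ with $\mathfrak t(x)=s>0$ and $R(x)\le K$, the full curvature tensor is bounded, $|\Rm|(x)\le C(K,s)$. Indeed, writing $X$ for the absolute value of the most negative eigenvalue of $\Rm(x)$ when it is negative, the second inequality in \eqref{e: Hamilton-Ivey} forces $X\le C(K,s)$, and in dimension $3$ the bound $R\le K$ then bounds all sectional curvatures. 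Hence on any region where $R$ is bounded, the scales $\rho$, $\overline\rho$ and $|\Rm|^{-1/2}$ are mutually comparable up to constants depending on that bound and on the time, and --- by \eqref{p: CNA and NC} and Lemma \ref{l: derivative} --- wherever $\rho$ lies below the canonical scale $r=r(A,\epsilon_{can})$ we have $|\nabla\rho|\le\eta$ and $|\partial_t\rho^2|\le\eta$.

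For (i) it suffices to show that for each $K>0$ the set $S_K=\{x\in\overline{B_t(x_0(t),A)}:R(x)\le K\}$ is compact. Using the spatial gradient estimate one checks (as in the proof of Lemma \ref{l: abundance of neck points}) that there is $\epsilon'=\epsilon'(K,r)>0$ with $R\le 4K$, and hence $|\Rm|\le C(K,t)$, on the $\epsilon'$-neighbourhood of $S_K$ inside the closed ball, and that along any path of length $<\epsilon'$ emanating from $S_K$ the function $\overline\rho$ stays bounded below; weak backward $0$-completeness \eqref{p: semi-backward 0-complete} then guarantees this neighbourhood is complete, since a hole at finite distance would produce a finite-length approaching curve with $\inf\overline\rho>0$, contradicting \eqref{p: semi-backward 0-complete}. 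That the region is totally bounded follows exactly as in Lemma \ref{l: volume comparison}: the complementary high-curvature part $\{R>K\}\cap B_t(x_0(t),A)$ is covered by finitely many $\delta$-tubes and capped $\delta$-tubes (Lemma \ref{l: central sphere decomposition}, whose hypotheses hold by \eqref{p: CNA and NC} and the comparison above), while the part where $|\Rm|$ is bounded is totally bounded by Bishop--Gromov. A complete, totally bounded metric space is compact, and $S_K$ is closed inside it.

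For (ii), fix $x\in B_t(x_0(t),A)$ and set $r=r(3A,\epsilon_{can})$. If $\rho(x)<\tfrac12 r$, then for $C\ge(10\eta)^2$ and $\overline Q\ge 1$ the gradient estimates keep $\rho$ below $r$ and comparable to $\rho(x)$ throughout $P(x,(CQ)^{-1/2},-(CQ)^{-1})$, giving $R\le CQ$ there after enlarging $C$. If $\rho(x)\ge\tfrac12 r$, then $R(x)\le 4r^{-2}$, so $Q=\overline Q$; choosing $\overline Q$ large, the same gradient estimates (applied wherever $\rho$ dips below $r$, together with an intermediate-value argument) show $R\le 2r^{-2}$ on the --- now very small --- parabolic neighbourhood $P(x,(CQ)^{-1/2},-(CQ)^{-1})$, hence again $R\le CQ$ there. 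In either case $|\Rm|\le C'Q$ on this parabolic neighbourhood by \eqref{p: HI}, so $\Ric$ is bounded by $C'Q$ there; since its radius $(CQ)^{-1/2}$ and time-extent $(CQ)^{-1}$ are both small for $\overline Q$ large, the distance-distortion estimates of Lemmas \ref{l: distance laplacian} and \ref{l: expanding lemma} --- applied to a minimizing geodesic from $x_0(t)$ to $x$, decomposed into its bounded-curvature subsegments (handled by Lemma \ref{l: expanding lemma}) and its $\delta$-neck/$\delta$-cap subsegments (where one compares with the nearly cylindrical model flow coming from the canonical neighborhood) --- keep $d_s(y(s),x_0(s))<2A$ for every $y$ in the neighbourhood and every $s\in[t-(CQ)^{-1},t]$; thus $P(x,(CQ)^{-1/2},-(CQ)^{-1})\subset\bigcup_{s\in[0,t]}B_s(x_0(s),2A)$.

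\textbf{Main obstacle.} The subtle point in (i) is upgrading the weak backward $0$-completeness of the definition to genuine compactness of the sublevel sets of $R$: one must exclude ``holes'' at finite distance where $R$, equivalently $\rho$, stays bounded below while $\overline\rho\to0$, which is exactly the ball-chaining argument driven by the gradient estimate and the comparison of scales from \eqref{p: HI}. In (ii) the fiddly part is the containment of the parabolic neighbourhood in $\bigcup_sB_s(x_0(s),2A)$: the distance to $x_0(s)$ must be controlled as one moves backward through regions where $R$ may be large near $x_0$, which is where the neck/cap decomposition and the short-time comparison with $\kappa$-solutions (Lemma \ref{l: close to k-solution}) enter.
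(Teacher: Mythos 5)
Your overall architecture matches the paper's (terse) proof: for (i), total boundedness of $\overline{B_t(x_0(t),A)}$ via Lemma \ref{l: volume comparison} together with completeness of the sublevel sets of $R$ obtained from the gradient estimate and weak backward $0$-completeness; for (ii), propagation of the curvature bound into the backward parabolic neighborhood via Lemma \ref{l: derivative} and a distance-distortion estimate. Part (i) and the first half of part (ii) are fine.

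There is, however, one step in (ii) whose proposed mechanism does not work as described: the containment $P(x,(CQ)^{-1/2},-(CQ)^{-1})\subset\bigcup_{s\in[0,t]}B_s(x_0(s),2A)$. You propose to decompose the minimizing geodesic from $x_0(t)$ to $x$ into bounded-curvature subsegments and $\delta$-neck/$\delta$-cap subsegments and to control each piece by comparison with its model. But $Q$ depends only on $\overline{Q}$ and $R(x)$, not on the curvature along the geodesic, which may pass through many necks of scale $\rho_0\ll(CQ)^{-1/2}$; a geodesic of length $\le A$ can cross on the order of $A\delta/\rho_0$ disjoint necks of scale $\rho_0$, and the per-neck error terms (whether you bound $\int_\gamma\Ric(\gamma',\gamma')$ piecewise, of size roughly $\rho_0^{-1}$ per neck, or flow each subsegment backward in time) sum to a quantity that blows up as $\rho_0\to 0$. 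The correct tool is the endpoint-localized estimate, Lemma \ref{l: distance laplacian}(2): it requires $\Ric\le(n-1)K$ only on $r_0$-balls around the \emph{two endpoints}, precisely so that the interior necks are irrelevant. Taking $K\sim CQ$ and $r_0=K^{-1/2}$ (the bound near $y(s)$ is the first half of (ii); the bound near $x_0(s)$ follows from continuity of $s\mapsto R(x_0(s))$ on the compact interval $[t-(C\overline{Q})^{-1},t]$, the gradient estimate, and enlarging $\overline{Q}$) gives $\partial_s d_s(x_0(s),y(s))\ge -C'\sqrt{CQ}$, hence an additive backward distortion of at most $C'(CQ)^{-1/2}\le C'(C\overline{Q})^{-1/2}$, which is as small as desired for $\overline{Q}$ large. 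With this replacement the argument closes, and this is evidently what the paper means by ``another distance distortion estimate.''
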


\begin{proof}

For any $C>0$, consider the subset $K:=\overline{B_t(x_0(t),A)}\cap\{y\in\M: R(y)\le C\}$, equipped with the metric induced by the length metric on $\overline{B_t(x_0(t),A)}$. On the one hand, Lemma \ref{l: volume comparison} implies that $B_t(x_0(t),A)$ is totally bounded. So $K$ is totally bounded.
On the other hand, by the gradient estimate there exists $c>0$ such that for any $x\in K$, the ball $B_t(x,c)$ is unscathed and $R\le 2C$ in $B_t(x,c)$. From this it is easy see that $K$ is complete as a metric space. So $K$ is compact, which established (i).

For any $A>0$, by the gradient estimate, and the distance distortion estimate, and seeing that $\M$ is weakly backward 0-complete, we can find $\overline{Q},C>0$ such that $(C\overline{Q})^{-1}<t/2$, and the following holds: For any $x\in B_t(x_0(t),A)$,   $Q=\max\{\overline{Q},R(x)\}$, $x$ survives on $[t-(CQ)^{-1},t]$, and $R\le CQ$ in $B_{s}(x(s),(CQ)^{-1/2})$, which is contained in $B_{s}(x_0(s),2A)$.
By another distance distortion estimate this implies assertion (ii).

\end{proof}

The next proposition says that the connected components of a Ricci flow spacetime are preserved when going backwards in time, assuming the spacetime is weakly backward 0-complete and satisfies a distance-dependent canonical neighborhood assumption.
In particular, this component stability holds for generalized singular Ricci flows.

\begin{prop}(Component stability when going backwards in time)\label{l: component stability}
Let $\M$ be a Ricci flow spacetime, $x_0,x_1\in \M_{t_1}$ for some $t_1>0$. 
Suppose that $\M$ is weakly backward 0-complete. 
Suppose both $x_0,x_1$ survive until some $t_2<t_1$. 
Suppose also for any $A,\epsilon_{can}>0$ there exists $r>0$ such that the $\epsilon_{can}$-canonical neighborhood assumption holds in $B_t(x_0(t),A)$ at scales less than $r$ for all $t\in[t_2,t_1]$.

Suppose $x_0,x_1$ are in the same connected component of $\M_{t_1}$. Then $x_0(t_2),x_1(t_2)$ are in the same connected component of $\M_{t_2}$. 
\end{prop}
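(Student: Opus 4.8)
The plan is to show that
\[T:=\{\,t\in[t_2,t_1]\;:\;x_0(t)\text{ and }x_1(t)\text{ lie in the same connected component of }\M_t\,\}\]
equals all of $[t_2,t_1]$; since $t_1\in T$ by hypothesis, this gives $t_2\in T$, which is the assertion. I would set $\tau:=\inf\{\,t\in[t_2,t_1]:[t,t_1]\subseteq T\,\}$ (a non-empty set, since $\{t_1\}\subseteq T$), so that $(\tau,t_1]\subseteq T$ by construction, and then prove two claims: (A) $[\tau,t_1]\subseteq T$, i.e.\ $\tau\in T$; and (B) if $\tau\in T$ and $\tau>t_2$, then $[\tau-\delta,t_1]\subseteq T$ for some $\delta>0$. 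These finish the proof: by (A) we have $\tau\in T$, so if $\tau>t_2$ then (B) contradicts the minimality of $\tau$; hence $\tau=t_2$, and (A) gives $t_2\in T$. For the bookkeeping used in both claims, observe that since $x_0,x_1$ survive until $t_2$ the worldlines $\{x_i(t):t\in[t_2,t_1]\}$, $i=0,1$, are compact, and since $\overline{\rho}$ is positive and lower semicontinuous it is bounded below by some $r_0>0$ along them; together with the gradient estimate (Lemma \ref{l: derivative}) and weak backward $0$-completeness, this is what makes Lemma \ref{l: properties} available, supplying for every $A>0$ a uniform backward parabolic neighborhood of each point of $B_t(x_0(t),A)$ whose radius is controlled by the scalar curvature at that point.

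Claim (B) is the routine half. Choosing a path $\gamma\subset\M_\tau$ from $x_0(\tau)$ to $x_1(\tau)$, its image is compact, so $\rho\ge r'>0$ on it and $\gamma\subset B_\tau(x_0(\tau),A)$ for some $A$. By Lemma \ref{l: properties}(ii) every point of $\gamma$ then survives backward for a definite time $\delta=\delta(r',A)>0$ while remaining inside $\bigcup_s B_s(x_0(s),2A)$; after shrinking $\delta$ so that $\tau-\delta\ge t_2$, the flowed curve $\gamma(t)$ joins $x_0(t)$ to $x_1(t)$ for all $t\in[\tau-\delta,\tau]$, so $[\tau-\delta,t_1]\subseteq T$.

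Claim (A) is where the work is, and I expect it to be the main obstacle. If $\tau=t_1$ it is trivial, so take $t_n\downarrow\tau$ with $t_n\in(\tau,t_1]\subseteq T$, and let $\gamma_n$ be a minimizing geodesic from $x_0(t_n)$ to $x_1(t_n)$, which exists by Lemma \ref{l: existence of minimizing geodesic}. Lemma \ref{l: distance laplacian}(2) applied near the worldlines (where $\Ric$ is uniformly bounded, by the choice of $r_0$) gives $d_{t_n}(x_0(t_n),x_1(t_n))\le D$ with $D$ independent of $n$, so $\gamma_n\subset\overline{B_{t_n}(x_0(t_n),D)}$; by Lemma \ref{l: volume comparison} these balls have volume bounded uniformly in $n$, and hence by Lemma \ref{l: central sphere decomposition} (applied after a fixed rescaling so that $\overline{\rho}(x_i(t_n))\sim 1$, as the hypothesis of that lemma requires) the subset of $\overline{B_{t_n}(x_0(t_n),D)}$ where $\rho$ is small is covered by a uniformly bounded number of disjoint $\delta$-tubes and capped $\delta$-tubes. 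Since $\gamma_n$ is minimizing it never detours into the interior of a $\delta$-cap --- a cap meets the rest of the manifold along a single $2$-sphere, and by the inequality $d(y_1,x)+d(y_2,x)\ge d(y_1,y_2)+100\rho(x)$ of Lemma \ref{l: either neck or cap}(3) such a detour is strictly longer --- so $\gamma_n$ meets the high-curvature region only inside the finitely many $\delta$-tubes along its way.

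I would then homotope $\gamma_n$, within the time-slice $\M_{t_n}$ and keeping its endpoints fixed, across each of these $\delta$-tubes so that it crosses each one transversely through a central $2$-sphere of some fixed scale $r_0'>0$ (rather than the tiny scale of the smallest necks it originally passed through); this is precisely the device used in Kleiner--Lott's component stability argument \cite[Prop.\ 5.17]{KL1}, together with the identification of tubes via \cite[Prop.\ 19.21]{MT}. The resulting connecting curve $\gamma_n'$ then satisfies $\rho\ge r_0'$ uniformly, so by Lemma \ref{l: properties}(ii) it survives backward for a definite time $\delta_0=\delta_0(r_0',D)>0$; choosing $n$ large enough that $t_n-\tau<\delta_0$, the flowed curve $\gamma_n'(\tau)$ joins $x_0(\tau)$ to $x_1(\tau)$, so $\tau\in T$. (Alternatively, one may use the uniform bound on the number of tubes to extract a limit of the $\gamma_n$ away from the tube regions and read off a connecting path at time $\tau$ directly.) The genuinely new feature compared with \cite{KL1} is that this neck--cap decomposition and its volume bound must be invoked in their distance-localized forms (Lemmas \ref{l: volume comparison} and \ref{l: central sphere decomposition}), which is exactly what the distance-dependent canonical neighborhood hypothesis makes available; carrying out the homotopies across the high-curvature tubes with this localized uniformity, while keeping everything inside a controlled ball, is the step that will take the most care.
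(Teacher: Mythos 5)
Your overall scheme (a connectedness argument in $t$, with an ``openness'' step obtained by flowing a fixed connecting path backward for a definite time, and a ``closedness'' step at the infimum time handled via minimizing geodesics, the distance--localized volume bound of Lemma \ref{l: volume comparison}, and the tube decomposition of Lemma \ref{l: central sphere decomposition}) matches the skeleton of the paper's proof, which likewise reduces to the case where the components agree on $(t_2,t_1]$, takes a minimizing geodesic at the top time, splits it into low--curvature segments and segments lying in $\delta$-tubes with both endpoints at a fixed neck scale, and flows everything backward while controlling distances to $x_0(t)$.

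The gap is in your Claim (A), at the step where you ``homotope $\gamma_n$ across each $\delta$-tube so that it crosses through a central $2$-sphere of some fixed scale $r_0'$.'' The central spheres of a $\delta$-tube are isotopic to the $S^2$ factor and hence \emph{separate} the two ends of the tube; if $\gamma_n$ genuinely traverses a tube (enters one boundary sphere and exits the other, as it must when the tube is a thin neck joining two low-curvature regions), then every curve in $\M_{t_n}$ with the same endpoints must cross \emph{every} central sphere of that tube, including those of arbitrarily small scale. No endpoint-fixing homotopy within the time slice can raise the curvature scale along the crossing, so the modified curve $\gamma_n'$ need not satisfy $\rho\ge r_0'$, and Lemma \ref{l: properties}(ii) does not give it a uniform backward survival time. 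The same obstruction defeats the parenthetical alternative (taking a limit of the $\gamma_n$ ``away from the tube regions'' leaves the arcs disconnected exactly across the tubes). The missing idea, which is how the paper handles this step, is a \emph{time-monotonicity} rather than a homotopy: inside a $(2\delta)$-neck the canonical neighborhood structure forces $\partial_t R>0$, so along the backward worldlines of the points of a high-curvature segment the curvature \emph{decreases}; hence $\overline{\rho}$ stays bounded away from zero and weak backward $0$-completeness lets the entire segment (small necks included) survive back to $t_2$. One also needs the observation that, by minimality, both endpoints of each high-curvature segment are neck centers, so the segment lies in a genuine $2\delta$-tube (no caps) where this monotonicity applies, together with the intrinsic distance estimate \eqref{e: neck diameter shrinking} to keep the flowed segment inside $B_t(x_0(t),6A)$. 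Without this mechanism your closedness step does not go through.
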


\begin{proof}
Without loss of generality we may assume that $x_0(t),x_1(t)$ are in the same connected component of $\M_t$ for all $t\in(t_2,t_1]$. 
Put
\begin{equation}
    \rho_0=\min\{\inf_{[t_2,t_1]}\overline{\rho}(x_0(t)),\inf_{[t_2,t_1]}\overline{\rho}(x_1(t)),1\}>0.
\end{equation}
So by the distance distortion estimate we can find $A>0$ such that $d_t(x_0(t),x_1(t))\le A$ for all $t\in(t_2,t_1]$. 
Fix a small $\delta>0$ and let $C_0(\delta)>0$ and $\epsilon_{can}(\delta)>0$ be from Lemma \ref{l: either neck or cap}. Choose some $\overline{r}\in(0,C_0^{-1}\rho_0)$ such that the $\epsilon_{can}$-canonical neighborhood assumption holds in $B_t(x_0(t),6A)$ at scales less than $4\overline{r}$ for all $t\in[t_2,t_1]$.
We may also assume $t_1-t_2<c$ for some $c(\overline{r},A)>0$ whose value will be determined in the course of the proof.

By Lemma \ref{l: existence of minimizing geodesic}, there exists a minimizing geodesic $\sigma:[0,1]\rightarrow \M_{t_1}$ between $x_0$ and $x_1$.
Choose a division of $[0,1]$ by $0=\alpha_0\le \alpha_1\le...\le\alpha_m=1$ such that one of the following two cases holds for each $i=0,1,..., m-1$:
\begin{enumerate}
    \item $\rho(x)\ge \overline{r}$ for all  $x\in\sigma_i:=\sigma([\alpha_i,\alpha_{i+1}])$;
    \item $\rho(x)\le 2\overline{r}$ for all $x\in\sigma([\alpha_i,\alpha_{i+1}])$, and $\rho(\sigma(\alpha_i))=\rho(\sigma(\alpha_{i+1}))=2\overline{r}$.
\end{enumerate}

Next, suppose by induction that for $\sigma_{i-1}$, $i\ge 1$, the following assumptions hold:
\begin{enumerate}
    \item[(a)] $\sigma_{i-1}$ survives backwards until $t_2$.
    \item[(b)]\label{e: stay within A}$\sigma_{i-1}(t)\subset  B_{t}(x_0(t),6A)$ for all $t\in[t_2,t_1]$.
\end{enumerate}
Suppose $\sigma_i$ satisfies case (1) and assume $c$ sufficiently small. Then by the gradient estimate, the distance distortion estimate, and the weakly backward 0-completeness of $\M$, we get that (a)(b) hold for $\sigma_i$. In particular, (a)(b) hold for $\sigma_0$.
So we can assume $\sigma_i$ satisfies case (2), $i\ge 1$. Let $t_3$ be the infimum of all times $t\in[t_2,t_1]$ such that $\sigma_i$ survives until $t$, and $\sigma_i(s)\subset B_s(x_0(s),6A)$ for all $s\in[t,t_1]$.

First, since $\overline{r}<C_0^{-1}\rho_0$ and $\sigma$ is a minimizing geodesic, it follows from Lemma \ref{l: either neck or cap} that $y_1:=\sigma(\alpha_i)$ and $y_2:=\sigma(\alpha_{i+1})$ are both centers of $\delta$-necks. 
Taking $c$ small, then by the gradient estimate we have for all $t\in(t_3,t_1]$ that $\rho(y_1(t)),\rho(y_2(t))\in[\overline{r},4\overline{r}]$, and $\rho(x(t))\le 4\overline{r}$ for all $x\in \sigma_i$.
Since the $\epsilon_{can}$-canonical neighborhood assumption holds in $B_t(x_0(t),6A)$ at scales less than $4\overline{r}$, by Lemma \ref{l: close to k-solution} that $y_1(t),y_2(t)$ are centers of $2\delta$-necks when $c$ is taken sufficiently small.
Moreover, by \cite[Proposition 19.21]{MT} we know that $\sigma_i(t)$ is contained in a $2\delta$-tube or a capped $2\delta$-tube.
Since $y_1(t),y_2(t)$ are the centers of $2\delta$-necks, it is easy to see that $\sigma_i(t)$ is contained in a $2\delta$-tube.
Then the evolution equation of scalar curvature implies $\partial_t R(x(t))>0$ for all $x\in \sigma_i$ and $t\in(t_3,t_1]$.
Therefore, $\sigma_i$ survives until $t_3$.

Next, for any $t\in[t_3,t_1]$, let $\mathcal{T}_t\subset\M_{t}$ be a $2\delta$-tube that contains the $100\overline{r}$-neighborhood of $\sigma_i(t)$, and let $\widehat{d}_t$ denote the length metric induced by $g(t)$ in $\mathcal{T}_t$.
Then for any $z_1, z_2\in \sigma_i(t)$,
$\widehat{d}_t(z_1,z_2)$ is realized by a smooth geodesic in $\mathcal{T}_t$. 
Let $\gamma_t$ be such a minimizing geodesic connecting $y_1(t)$ and $y_2(t)$. 
Then all the second variations along $\gamma_t$ are non-negative since it has the minimal length among all smooth curves in a neighborhood around it. 
So a distance distortion estimate as Lemma \ref{l: distance laplacian} shows
\begin{equation}\label{e: neck diameter shrinking}
    \partial_t \widehat{d}_t(y_1(t),y_2(t))\ge -C\overline{r}^{-1},
\end{equation}
for some universal constant $C>0$. 
Integrating this and taking $c$ sufficiently small, we get
\begin{equation}\label{e: prep}
    \widehat{d}_t(y_1(t),y_2(t))\le \widehat{d}_{t_1}(y_1,y_2)+ C\overline{r}^{-1}(t_1-t)\le 2A.
\end{equation}
Moreover, since $\rho(x)\le 4\overline{r}$ for all $x\in \sigma_i(t)$, by the triangle inequality we get
\begin{equation}
    \widehat{d}_t(y_1(t),x)\le \widehat{d}_t(y_1(t),y_2(t))+10\cdot 2\cdot 4\overline{r}\le 3A.
\end{equation}
By the distance distortion estimate we get $d_t(x_0(t),y_1(t))\le 2A$ and
\begin{equation}
    \begin{split}
        d_t(x,x_0)&\le d_t(x_0(t),y_1(t))+d_t(y_1(t),x)
        \le 2A+\widehat{d}_t(y_1(t),x)\le 5A.
    \end{split}
\end{equation} 
So $\sigma_i(t)\subset B_t(x_0(t),6A)$ for all $t\in[t_3,t_1]$.
By the infimum assumption of $t_3$, we get $t_3=t_2$. Hence (a)(b) hold for $\sigma_i$. 
So by induction the entire $\sigma$ survives backwards until $t_2$. It follows that $x_0(t_2),x_1(t_2)$ are in the same connected component of $\M_{t_2}$.
\end{proof}

The following corollary of Proposition \ref{l: component stability} gives the relation between a semi-generalized singular Ricci flow and a generalized singular Ricci flow.

\begin{cor}\label{r: semi}
A Ricci flow spacetime $(\M,g(t))$ is a generalized singular Ricci flow if and only if it satisfies property \eqref{p': complete}\eqref{p': HI}\eqref{p': semi-backward complete}\eqref{p': steps} in Definition \ref{d: generalized SRF}, and for any $x_0\in\M$, which survives on $[t_1,t_0)$, let $\M_{x_0}=\bigcup_{t\in[t_1,t_0)}\bigcup_{A>0}B_t(x_0(t),A)$, then $(\M_{x_0},x_0)$ is a semi-generalized singular Ricci flow.
\end{cor}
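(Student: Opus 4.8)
This amounts to unwinding Definitions \ref{d: generalized SRF} and \ref{d: semi-generalized SRF}, the one non-formal ingredient being Proposition \ref{l: component stability}. I prove the two implications separately.

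\textbf{($\Rightarrow$)} Assume $(\M,g(t))$ is a generalized singular Ricci flow. Properties \eqref{p': complete}--\eqref{p': steps} of Definition \ref{d: generalized SRF} then hold by hypothesis, so it suffices to show that $\M_{x_0}$ is a semi-generalized singular Ricci flow for every $x_0$ surviving on an interval $[t_1,t_0)$; here $\M_{x_0}$ carries the spacetime structure induced from $\M$, with time relabelled so that the slice over $t_1$ becomes the initial one. The key observation is that for each $t\in[t_1,t_0)$ the slice $C_t:=\M_{x_0}\cap\M_t=\bigcup_{A>0}B_t(x_0(t),A)$ is exactly the connected component of $\M_t$ containing $x_0(t)$: a point at finite $g(t)$-distance from $x_0(t)$ lies in that component, and conversely a point of the component is joined to $x_0(t)$ by a path of finite length, since a connected $3$-manifold is path-connected. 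In particular $C_t$ is connected, which is \eqref{p: connected}, and a routine flow-out argument (flow a neighborhood of a compact path in $C_t$ by $\partial_{\mathfrak{t}}$) shows $\M_{x_0}$ is open in $\M$, hence a Ricci flow spacetime. Property \eqref{p: x_0,t_0,r_0} is the survival hypothesis on $x_0$, \eqref{p: HI} is \eqref{p': HI}, and \eqref{p: CNA and NC} follows from \eqref{p': CNA} applied on closed subintervals: the $\epsilon_{can}$-canonical neighborhood condition at a point depends only on a metric ball about that point, which is automatically contained in $C_t$, hence is unchanged on passing to $\M_{x_0}$. For \eqref{p: semi-backward 0-complete}, let $\gamma\colon[0,s_0)\to\M_{x_0}$ be a curve as in Definition \ref{d: complete} (with $\overline{\rho}$ in place of $\rho$). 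It is such a curve in $\M$, so $p:=\lim_{s\to s_0}\gamma(s)$ exists in $\M$ by \eqref{p': semi-backward complete}, and $\mathfrak{t}(p)\in[t_1,t_0)$. If $\gamma$ lies in a single time-slice then $p\in C_{\mathfrak{t}(p)}$ since $C_{\mathfrak{t}(p)}$ is closed in $\M_{\mathfrak{t}(p)}$. If $\gamma$ is a backward integral curve of $\partial_{\mathfrak{t}}$, then for $s<s_0$ the point $\gamma(s)$ survives back to $\mathfrak{t}(p)$ with $\gamma(s)(\mathfrak{t}(p))=p$, and $\gamma(s)$ lies in the same component of $\M_{\mathfrak{t}(\gamma(s))}$ as $x_0(\mathfrak{t}(\gamma(s)))$; since $\M$ is weakly backward $0$-complete and satisfies \eqref{p': CNA}, Proposition \ref{l: component stability} gives that $p$ and $x_0(\mathfrak{t}(p))$ lie in the same component of $\M_{\mathfrak{t}(p)}$, i.e.\ $p\in C_{\mathfrak{t}(p)}\subset\M_{x_0}$. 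Hence $\M_{x_0}$ is a semi-generalized singular Ricci flow.

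\textbf{($\Leftarrow$)} Assume $\M$ satisfies \eqref{p': complete}--\eqref{p': steps} and that every $\M_{x_0}$ is a semi-generalized singular Ricci flow; only \eqref{p': CNA} remains to be established. Let $x_0$ survive on a closed interval $[t_1,t_0]$ with $t_1<t_0$, and fix $A,\epsilon_{can}>0$. Since $x_0(t_0)$ is a point of $\M$ and $\mathfrak{t}<T$ everywhere, the integral curve of $\partial_{\mathfrak{t}}$ through $x_0$ extends slightly beyond $t_0$, so $x_0$ survives on some $[t_1,t_0')$ with $t_0'>t_0$. By hypothesis $\M_{x_0}$ is then a semi-generalized singular Ricci flow on $[t_1,t_0')$, and applying its property \eqref{p: CNA and NC} with parameter $t_0\in[t_1,t_0')$ produces $r>0$ such that the $\epsilon_{can}$-canonical neighborhood assumption holds at scales less than $r$ on $B_t(x_0(t),A)$ for all $t\in[t_1,t_0]$. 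As above these balls lie in $\M_{x_0}$ and the condition is intrinsic, so the same holds in $\M$; this is \eqref{p': CNA}.

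The only step that is more than bookkeeping is the backward $0$-completeness of $\M_{x_0}$ in the forward implication: one must rule out that the limit of a backward curve in $\M_{x_0}$ leaves the connected component of $x_0$, and this is exactly what Proposition \ref{l: component stability}, the main technical input of this section, provides.
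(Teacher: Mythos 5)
Your proof is correct and follows essentially the same route as the paper: the only non-formal point in either direction is the weak backward $0$-completeness of $\M_{x_0}$, which you, like the paper, obtain by taking the limit point in $\M$ (using its weak backward $0$-completeness) and then invoking Proposition \ref{l: component stability} to place that limit back in the component of $x_0$. The remaining verifications you spell out (connectedness of the slices, the closed-versus-half-open interval bookkeeping for \eqref{p': CNA}) are exactly the parts the paper dismisses as immediate from the definitions.
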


\begin{proof}
The `if' part is obvious by the definitions. For the `only if' part, we need to show $\M_{x_0}$ is a semi-generalized singular Ricci flow. It suffices to show $\M_{x_0}$ is weakly backward 0-complete.
Let $x\in\M_{x_0}$ be an arbitrary point. Suppose $\mathfrak{t}(x)=t_1$, $x$ survives on $(t_2,t_1]$ in $\M_{x_0}$ for some $t_2<t_1$, and $\inf_{t\in(t_2,t_1]}\overline{\rho}(x(t))>0$.
Since $\M$ is weakly backward 0-complete,  $x(t_2)=\lim_{t\searrow t_2}x(t)$ exists. By Proposition \ref{l: component stability}, $x(t_2)\in\M_{x_0}$. So $\M_{x_0}$ is weakly backward 0-complete.
\end{proof}



\subsection{Compactness and existence theorems}\hfill\\
First, we show a compactness theorem which gives a criterion for a sequence of singular Ricci flows to have a subsequence that converges to a semi-generalized singular Ricci flow.
Then we apply the compactness theorem to show the existence of generalized singular Ricci flows.
\begin{defn}(Partial convergence)\label{d: partial convergence}
We say a sequence of Ricci flow spacetimes $(\M_i,G_i)$ partially converges to a spacetime $(\M,G)$ if the following holds:
There is a sequence of diffeomorphisms $\phi_i:U_i\rightarrow V_i\subset\M_i$, where $U_i$ and $V_i$ are open domains in $\M$ and $\M_i$ respectively, such that given any compact subset $K\subset\M$, $k\in\mathbb{N}$ and $\epsilon>0$, we have $K\subset U_i$ for all large $i$, and $\|\phi_i^*G_i-G\|_{C^k(K,G)}\le\epsilon$.

Let $x_0\in\M$ and $x_{0i}\in\M_i$. We say the sequence of pointed spacetimes $(\M_i,x_{0i})$ partially converges to $(\M,x_0)$, if $x_0\in U_i$, $x_{0i}\in V_i$, and $\phi_i(x_{0})=x_{0i}$.
\end{defn}

\begin{theorem}[Compactness theorem]\label{t: compactness}
Let $\{(\M_i,x_{0i})\}_{i=1}^{\infty}$ be a sequence of singular Ricci flows, $x_{0i}\in\M_i$, $\mathfrak{t}(x_{0i})=t_{0i}$. Suppose 
\begin{enumerate}
    \item[(a)] \label{con: r_0} For some $r_0>0$, $x_{0i}$ survives until $t_{0i}+r_0^2$, and $|\Rm|\le r_0^{-2}$ on $P(x_{0i},r_0,r_0^2)$.
    \item[(b)] \label{con: CNA and NC} For any $\epsilon_{can}>0$ and $A>0$, there exist $r(A,\epsilon_{can})$ and $\kappa(A)$ such that the $\epsilon_{can}$-canonical neighborhood assumption and the $\kappa(A)$-non-collapsedness hold at scales less than $r(A,\epsilon_{can})$ in $B_t(x_{0i}(t),A)$ for all $t\in[t_{0i},t_{0i}+r_0^2]$.
\end{enumerate}

Then there exists a semi-generalized singular Ricci flow $(\M,x_0)$ on $[0,r_0^2)$, $x_0\in\M_0$, such that a subsequence of $(\M_{i,t\ge t_{0i}},x_{0i})$ partially converges to $(\M,x_0)$. 
\end{theorem}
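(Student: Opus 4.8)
The plan is to construct $\M$ by a pointed Cheeger--Gromov--Hamilton compactness argument carried out over an exhaustion of the part of each $\M_i$ lying at bounded distance from the base point and of bounded curvature, as indicated in the introduction. After a time shift we may assume $t_{0i}=0$, so each $\M_{i,\,t\ge 0}$ has time function valued in $[0,r_0^2)$ (at least), $x_{0i}\in\M_{i,0}$ survives until $r_0^2$, and $|\Rm|\le r_0^{-2}$ on $P(x_{0i},r_0,r_0^2)$ by (a). The first task is to extract from (a) and (b) control uniform in $i$. Arguing exactly as in the proof of Lemma \ref{l: properties}(ii) --- using the gradient estimates of Lemma \ref{l: derivative} at the scales below $r(A,\epsilon_{can})$ where the canonical neighborhood assumption of (b) is available to keep $\overline{\rho}$ from collapsing on a definite neighborhood, so that this neighborhood survives backwards; using the $\kappa(A)$-non-collapsing of (b) together with the curvature bound for an injectivity radius lower bound; and using Shi's local derivative estimates --- one obtains, for every $A,C>0$, constants $c=c(A,C)>0$ and $C_k=C_k(A,C)$ such that whenever $y\in\M_i$, $\mathfrak{t}(y)\in[0,r_0^2)$, $d_{\mathfrak{t}(y)}(x_{0i}(\mathfrak{t}(y)),y)\le A$ and $|\Rm|(y)\le C$, the parabolic neighborhood $P(y,c,-c^2)$ is unscathed with $|\Rm|\le 4C$, $|\nabla^k\Rm|\le C_k$ there and $\mathrm{inj}(y)\ge c$. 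Along the way Lemma \ref{l: volume comparison} gives that $\overline{B_t(x_{0i}(t),A)}$ is uniformly totally bounded and Lemma \ref{l: existence of minimizing geodesic} that these balls are geodesically accessible from $x_{0i}(t)$.

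Next I would fix $A_j\uparrow\infty$, $C_j\uparrow\infty$, $\tau_j\uparrow r_0^2$ with $\tau_j<r_0^2$, set $\Omega_{i,j}:=\{y\in\M_i:\mathfrak{t}(y)\in[0,\tau_j],\ d_{\mathfrak{t}(y)}(x_{0i}(\mathfrak{t}(y)),y)<A_j,\ |\Rm|(y)<C_j\}$, and apply the spacetime compactness machinery of Kleiner--Lott (\cite[Theorem 2.20]{KL1}; equivalently a pointed $C^\infty$ Cheeger--Gromov--Hamilton argument performed ball-by-ball using the total boundedness), which is legitimate since by the first step the $\Omega_{i,j}$ have $i$-uniformly bounded geometry around each of their points. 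Passing to a subsequence and diagonalizing in $j$, one gets a Ricci flow spacetime $(\M,G)$ with $\mathfrak{t}(\M)=[0,r_0^2)$, a point $x_0\in\M_0$, and diffeomorphisms $\phi_i:U_i\to V_i\subset\M_i$ with $\phi_i(x_0)=x_{0i}$ and $\phi_i^*G_i\to G$ in $C^\infty_{loc}$, which after undoing the time shift exhibit $(\M_{i,\,t\ge t_{0i}},x_{0i})$ as partially converging to $(\M,x_0)$ in the sense of Definition \ref{d: partial convergence}. By construction $\M=\bigcup_j\Omega_j^\infty$, with $\Omega_j^\infty$ the smooth limit of the $\Omega_{i,j}$, and $\M_t=\bigcup_j\lim_i B_t(x_{0i}(t),A_j)$; thus every point of $\M$ arises as a limit of points of the $\M_i$ of uniformly bounded curvature, while the points where curvature blew up are sent to infinity and do not belong to $\M$.

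It then remains to verify the axioms of Definition \ref{d: semi-generalized SRF} for $(\M,g(t),x_0)$. Property \eqref{p: x_0,t_0,r_0} holds since $x_{0i}$ survives on $[0,r_0^2)$ with $|\Rm|\le r_0^{-2}$ near it by (a). For \eqref{p: HI}, each $\M_i$ satisfies the Hamilton--Ivey pinching \eqref{e: Hamilton-Ivey} with $\varphi=\infty$ (\cite[Theorem 1.3]{KL1}), and these inequalities pass to the $C^\infty_{loc}$ limit. Property \eqref{p: connected} holds because each $B_t(x_{0i}(t),A_j)$ is a connected metric ball through $x_{0i}(t)$, so the nested union $\M_t$ is connected. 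For \eqref{p: CNA and NC}, given $t_1<r_0^2$ and $A,\epsilon_{can}$, hypothesis (b) gives, for $\epsilon_{can}'$ slightly smaller than $\epsilon_{can}$, the $\epsilon_{can}'$-canonical neighborhood assumption on $B_t(x_{0i}(t),A+1)$ at scales $<r(A+1,\epsilon_{can}')$ for all $t\in[0,t_1]$, uniformly in $i$; since this assumption is stable under $C^\infty_{loc}$ limits up to an arbitrarily small loss in the parameter, it is inherited by $\M$ at the same scale. Finally, for \eqref{p: semi-backward 0-complete}: a curve $\gamma$ in a time-slice $\M_t$ of finite length with $\inf_s\overline{\rho}(\gamma(s))\ge\rho_0>0$ has $\lim_{s\to s_0}\gamma(s)$ existing in $\M_t$ directly from the definition of $\overline{\rho}$, as the tail of $\gamma$ lies in the relatively compact ball $B(\gamma(s),\rho_0)$ and $\{\gamma(s)\}$ is Cauchy; a backward integral curve $\gamma$ of $\partial_t$ with $\inf_s\overline{\rho}(\gamma(s))\ge\rho_0>0$ can be extended past $s_0$ using the definite-size unscathed backward parabolic neighborhoods of controlled geometry from the first step, so $\lim_{s\to s_0}\gamma(s)$ again exists; in both cases the limit point has $\overline{\rho}\ge\rho_0$, so it is a genuine point of $\M$.

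The main obstacle is the uniform local geometry step together with the requirement that $\M$ be exactly the bounded-curvature part of the Gromov--Hausdorff limit of the $\M_i$ --- that the patched Cheeger--Gromov limits assemble into one Ricci flow spacetime in which the high-curvature points are correctly absent, yet no point reachable along a curve of bounded $\overline{\rho}$ is lost. Producing, at a bounded-curvature point at bounded distance, a definite \emph{unscathed} backward parabolic neighborhood of controlled geometry --- on which both the compactness argument and weak backward $0$-completeness rest --- is the delicate point, and it is precisely here that the gradient estimates and the distance-dependent canonical neighborhood assumption of (b) are needed.
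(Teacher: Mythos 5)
Your overall strategy coincides with the paper's: exhaust each $\M_i$ by regions of bounded distance to the basepoint, bounded curvature and truncated time, establish $i$-uniform local geometry there, extract a (Gromov--Hausdorff, then smooth) limit, and verify the axioms of Definition \ref{d: semi-generalized SRF}, with Lemma \ref{l: volume comparison} supplying uniform total boundedness. However, there is a genuine gap in the uniform local geometry step: you only produce \emph{backward} parabolic neighborhoods $P(y,c,-c^2)$ of controlled geometry at bounded-curvature points. Since $\partial\M=\M_0$, every point of the limit at time $t>0$ must be an interior point of the $4$-manifold, hence must admit a two-sided spacetime neighborhood on which the convergence is smooth; a priori, in a singular Ricci flow the curvature near $y$ could blow up immediately \emph{forward} in time, so forward control does not come for free. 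This is precisely where the paper invokes the pseudolocality theorem for singular Ricci flows (Theorem \ref{t: pseudolocality}), using the $\kappa(A)$-non-collapsedness from hypothesis (b), to get an unscathed $P(y,c,c^2)$ with $|\Rm|\le c^{-2}$; this is one of the main technical inputs of the whole paper (built on the heat kernel of Sections \ref{s: heat kernel}--\ref{s: pseudolocality}) and your proposal never engages with it. One could instead try the gradient estimate of Lemma \ref{l: derivative} to control $\rho$ forward for a definite time, but that only bounds $R_+$, and converting this to a bound on $|\Rm|$ requires a uniform pinching that the theorem's hypotheses do not provide.

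A second, smaller gap is in your verification of weak backward $0$-completeness of $\M$. Your argument extends a backward integral curve with $\inf\overline{\rho}\ge\rho_0$ using ``the definite-size unscathed backward parabolic neighborhoods from the first step,'' but those neighborhoods are only available at points within distance $A$ of the basepoint trajectory, and you have not shown that the curve stays at bounded distance from $x_0(t)$ (equivalently, that its limit lands in the connected component of $x_0(t)$ rather than escaping to infinity in the ambient limit $X_0$) as $t$ decreases. The paper handles exactly this with Proposition \ref{l: component stability}, whose proof is a nontrivial argument with minimizing geodesics and $\delta$-tubes; applied to the full limit $X_0$, it shows the backward limit point remains in $\bigcup_{A>0}B_t(x_0(t),A)=\M_t$. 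Without some version of this step your extension argument is circular. The remaining verifications (survival of $x_0$, Hamilton--Ivey in the limit, connectedness of time-slices, inheritance of the canonical neighborhood assumption) are at the same level of detail as the paper and are fine.
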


\begin{proof}[Proof to Theorem \ref{t: compactness}]
We may assume $t_{0i}=0$ and $r_0\le1$ without loss of generality.
Let $d_{G_i}$ be the length metric on $\M_i$ induced by the spacetime meric $G_i$. 
For any $A>0$, restrict the metric $d_{G_i}$ on the subset
\begin{equation}
    \mathcal{P}_i(A)=\bigcup_{t\in[0,r_0^2)}B_t(x_{0i}(t),A)\cap\{x\in\M_i: |\Rm|(x)\le A^2\}.
\end{equation}
Then the diameter of every $(\mathcal{P}_i(A),d_{G_i})$ is bounded above by $2(A+r_0^2)$. Moreover, the following lemma shows that they are uniformly totally bounded.
\begin{lem}\label{l: N}
For any $A,\epsilon>0$, there exists $N=N(A,\epsilon)\in\mathbb{N}$ such that for all $i$, any $\epsilon$-separating subset in  $(\mathcal{P}_i(A),d_{G_i})$ has at most $N$ elements. 
\end{lem}

\begin{proof}
On the one hand, by a combination of assumption (b), the gradient estimate and the distance distortion estimate, 
we may assume that $\epsilon$ is sufficiently small (depending on $A$) so that the following holds: First, for any $x\in \mathcal{P}_i(A)$, $\mathfrak{t}(x)\ge\epsilon/8$, the backward parabolic neighborhood $P(x,\epsilon/8,-(\epsilon/8)^2)$ is unscathed and contained in $\mathcal{P}_i(2A)$.
Second, for any $x\in \mathcal{P}_i(A)$ with $\mathfrak{t}(x)\le\epsilon/8$, $x$ survives until $0$, and $x(0)\in\mathcal{P}_i(2A)$.
Furthermore, there exists $c(A,\epsilon)>0$ such that 
\begin{equation}\label{e: volume lower bound}
    vol_{G_i}\left(P\left(x,\epsilon/8,-\left(\epsilon/8\right)^2\right)\right)\ge c(A,\epsilon).
\end{equation}

On the other hand, assumption (b) allows us to apply Lemma \ref{l: volume comparison} on each time-slice $\M_{i,t}$, and deduce that $B_t(x_{0i}(t),2A)$ is uniformly totally bounded, and there is a constant $v(A)>0$ such that
\begin{equation}
   vol_{g_i(t)}B_{t}(x_{0i}(t),2A)\le v(A)
\end{equation}
for all $i$ and $t\in[0,r_0^2)$. 
Integrating this we get
\begin{equation}\label{e: volume upper bound}
    vol_{G_i}(\mathcal{P}_i(2A))\le\int_{0}^{r_0^2}vol_{g_i(t)}B_{t}(x_{0i}(t),2A)\,dt\le \,v(A).
\end{equation}

Now suppose $\{x_k\}_{k=1}^{N_i}$ is an $\epsilon$-separating subset of $(\mathcal{P}_i(A),d_{G_i})$, and $t_k=\mathfrak{t}(x_k)$.
Let $\{x_{k_j}\}_{j=1}^{J_i}$ be all $x_k$ with $t_k< \epsilon/8$, then each $x_{k_j}$ survives backwards until $0$ and $x_{k_j}(0)\in B_0(x_{0i},2A)$.
Since $d_{G_i}(x_{k_j},x_{k_l})>\epsilon$ for any $j\neq l$, by the triangle inequality, $\{x_{k_j}(0)\}_{j=1}^{J_i}$ is an $3\epsilon/4$-separating subset of $B_0(x_{0i},2A)$.
Since $B_0(x_{0i},2A)$ is uniformly totally bounded, there is $C(A,\epsilon)>0$ such that $J_i\le C$.
Therefore, in order to bound $N_i$ we may assume that $t_k\ge \epsilon/8$ for all $k$.

Then each $P(x_k,\epsilon/8,-\left(\epsilon/8\right)^2)$ is unscathed, and $d_{G_i}(x_k,y)\le \epsilon/8+(\epsilon/8)^2<\epsilon/4$ for all $y\in P(x_k,\epsilon/8,-(\epsilon/8)^2)$.
Since $d_{G_i}(x_k,x_j)>\epsilon$ for any $k\neq j$, by the triangle inequality, we see that $P(x_k,\epsilon/8,-(\epsilon/8)^2)$, $k=1,2,...,N_i$, are pairwise disjoint. 
Therefore, combining \eqref{e: volume lower bound} and \eqref{e: volume upper bound}, we conclude that there is $N(A,\epsilon)>0$ such that $N_i\le N(A,\epsilon)$ for all $i$.

\end{proof}

Now since $(\mathcal{P}_i(A),d_{G_i})$ have uniformly bounded diameter and are uniformly totally bounded for all $i$, by Gromov's compactness theorem \cite[Proposition 44]{petersen}, we may assume $(\mathcal{P}_i(A),d_{G_i},x_{0i})$ converges to a metric space $(X(A),d_A,x_0)$ in the pointed Gromov-Hausdorff sense.
Since for any $A_1\ge A_2$, $(\mathcal{P}_i(A_2),d_{G_i},x_{0i})$ isometrically embeds into $(\mathcal{P}_i(A_1),d_{G_i},x_{0i})$, we get $(X(A_2),d_{A_2},x_0)$ isometrically embeds into $(X(A_1),d_{A_1},x_0)$. 
Let $(X,d)=\bigcup_{A>0}(X(A),d_A)$, and $\mathcal{N}_i=\bigcup_{A>0}\mathcal{P}_i(A)\subset\M_i$, then $(\mathcal{N}_i,d_{G_i},x_{0i})$ converges to $(X,d,x_0)$ in the pointed Gromov-Hausdorff sense as $i\rightarrow\infty$.

Let $x\in X$, and suppose $x\in X(A)$ for some $A>0$. We say $x$ is a smooth point if there are a $\delta>0$, and a sequence of points $x_{i_{k}}\in\mathcal{P}_{i_k}(A)$ with $|\Rm|(x_{i_{k}})\le\delta^{-2}$ converging to $x$ (modulo the Gromov-Hausdorff approximations).
By the canonical neigborhood assumption in $\mathcal{P}_{i_k}(A)$ and the gradient estimate, we can find $\overline{\delta}=\overline{\delta}(\delta,A)$ such that $|\Rm|\le\overline{\delta}^{-2}$ in  $P(x_{i_k},\overline{\delta},-\overline{\delta}^2)$. 
Moreover, by the non-collapsing assumption in $\mathcal{P}_{i_k}(A)$, and the pseudolocality theorem, we get $|\Rm|\le\overline{\delta}^{-2}$ in  $P(x_{i_k},\overline{\delta},\overline{\delta}^2)$ with a possibly smaller $\overline{\delta}$.  Let $U(x_{i_k},\overline{\delta})=P(x_{i_k},\overline{\delta},-\overline{\delta}^2)\cup P(x_{i_k},\overline{\delta},\overline{\delta}^2)$, then by Shi's derivative estimate, the derivatives of $\Rm$ is also uniformly bounded in $U(x_{i_k},\overline{\delta})$.
So we obtain a smooth limit of $U(x_{i_k},\overline{\delta})$ in the Cheeger-Gromov sense by passing to a subsequence.
This defines a Ricci flow spacetime metric in a neighborhood of $x$ in $X$, which is isometric to that on $X$ by the uniqueness of the Gromov-Hausdorff limit.

Let $X_0\subset X$ be the set of all smooth points. 
Then we obtain a global Ricci flow spacetime metric on $X_0$, denoted by $G_{\infty}=dt^2+g(t)$.
In particular, $P(x_{0i},r_0,r_0^2)$ converges smoothly to $P(x_{0},r_0,r_0^2)\subset X_0$.
Moreover, by a standard gluing argument (see e.g. \cite[Theorem 72]{petersen}) we get a sequence of diffeomorphisms under which a subsequence of $\M_i$ partially converges to $X_0$, in the sense of Definition \ref{d: partial convergence}.

It implies that for any $\epsilon_{can},A>0$, there exists $r(A,\epsilon_{can})>0$ such that the $\epsilon_{can}$-canonical neighborhood assumption holds at scales less than $r$ in $X(A)\cap X_0$.
Furthermore, for any $A,C>0$, there exists $c=c(A,C)>0$ such that for all $x\in B_t(x_{0i}(t),A)\subset\mathcal{N}_i$ with $|\Rm|(x)\le C$, the backward parabolic neighborhood $P(x,c,-c^2)$ in $\mathcal{N}_i$ is unscathed.
So for all $x\in X(A)\cap X_0$ with $|\Rm|(x)\le C$, the region $P(x,c,-c^2)$ in $X_0$ is unscathed. 
From this it is easy to see that $X_0$ is weakly backward 0-complete.


Let $\M=\bigcup_{t\in[0,r_0^2)}\bigcup_{A>0}B_t(x_0(t),A)$ be a subset in $X_0$.
Then $\M$ is a smooth Ricci flow spacetime with connected time-slices, and a subsequence of $\M_i$ partially converge to $\M$.
It is clear that $\M$ satisfies property \eqref{p: x_0,t_0,r_0}\eqref{p: HI}\eqref{p: connected}\eqref{p: CNA and NC} in Definition \ref{d: semi-generalized SRF}. 
Moreover, applying Proposition \ref{l: component stability} to $X_0$, we see that $\M$ is also weakly backward 0-complete and hence satisfies property \eqref{p: semi-backward 0-complete}.
This proved Theorem \ref{t: compactness}. 
\end{proof}

The next lemma shows that the convergence of the initial manifolds implies the convergence of the singular Ricci flows to some semi-generalized singular Ricci flow.

\begin{lem}\label{t: construction}
Let $(\M_i,g_i(t))$ be a sequence of singular Ricci flows. Suppose for some $t_1\ge0$ and  $x_{0i}\in\M_{i,t_1}$, the sequence of time-slices $(\M_{i,t_1},g_i(t_1),x_{0i})$ smoothly converges to a 0-complete manifold $(M,g,x_0)$. Then there exists
a semi-generalized singular Ricci flow $(\M,g(t),x_0)$ on
$[0,t_0)$ for some $t_0>0$, such that $\M_0=M$ and a subsequence of $(\M_{i,t\ge t_1},x_{0i})$ partially converges to $(\M,x_0)$. Moreover, $t_0$ can be chosen such that $\inf_{[0,t_0)}\rho(x_0(t))=0$, i.e. $\limsup_{t\nearrow t_0}|\Rm|(x_0(t))=\infty$.
\end{lem}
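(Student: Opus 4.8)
The plan is to apply the Compactness Theorem \ref{t: compactness} to the sequence $(\M_{i,t\ge t_1},x_{0i})$ after a time-shift sending $t_1$ to $0$. First I would verify hypothesis (a): since $(\M_{i,t_1},g_i(t_1),x_{0i})$ converges smoothly to the $0$-complete manifold $(M,g,x_0)$, there is an $r_1>0$ with $|\Rm|\le (2r_1)^{-2}$ on $B_{g}(x_0,2r_1)\subset\subset M$, hence $|\Rm|\le r_1^{-2}$ on $B_{t_1}(x_{0i}(t_1),r_1)$ for all large $i$; a short-time bounded-curvature argument on this ball (using e.g. the distance distortion estimates in Lemma \ref{l: expanding lemma} together with the canonical neighborhood gradient estimate for the ambient singular Ricci flows) propagates this to $|\Rm|\le r_0^{-2}$ on $P(x_{0i},r_0,r_0^2)$ for some uniform $r_0\in(0,r_1]$ and all large $i$. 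For hypothesis (b), the distance-dependent canonical neighborhood assumption and $\kappa$-non-collapsing follow from Proposition \ref{p: noncollapsing}: the curvature and volume of a small parabolic ball around $x_{0i}(t_1)$ are uniformly controlled (volume non-collapsing comes from the smooth convergence to $M$ together with the $1$-non-collapsing of normalized singular Ricci flows, or directly from the bounded geometry of $M$ near $x_0$), so Proposition \ref{p: noncollapsing}(a),(b) applied with the base point $x_{0i}(t_1)$ yields, for every $A,\epsilon_{can}$, uniform $\kappa(A)$ and $r(A,\epsilon_{can})$ valid on $B_t(x_{0i}(t),A)$ for $t\in[t_1,t_1+r_0^2]$.

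With (a),(b) in hand, Theorem \ref{t: compactness} produces a semi-generalized singular Ricci flow $(\M,g(t),x_0)$ on $[0,r_0^2)$ (in the shifted time) with $\M_0 = M$ — here one checks $\M_0=M$ because $P(x_{0i},r_0,r_0^2)$ converges smoothly, so the time-$0$ slice of the limit is exactly the smooth limit $(M,g,x_0)$ of the initial time-slices — and a subsequence of $(\M_{i,t\ge t_1},x_{0i})$ partially converges to $\M$. This already gives everything except the assertion on the maximal existence time.

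For the last assertion, define $t_0\in(0,\infty]$ to be the supremum of all $\tau$ such that $\sup_{t\in[0,\tau]}|\Rm|(x_0(t))<\infty$ — equivalently the supremum of times $\tau$ for which $P(x_0,r_0,\tau)$-type curvature control persists; I would enlarge the semi-generalized singular Ricci flow up to this $t_0$ by re-applying the Compactness Theorem on successively larger time intervals (each time the base point still has bounded curvature up to the current final time, hypotheses (a),(b) are satisfied with a new $r_0$, and the limits glue by uniqueness of the Gromov–Hausdorff/Cheeger–Gromov limit). By construction $x_0(t)$ survives on $[0,t_0)$, and if $\limsup_{t\nearrow t_0}|\Rm|(x_0(t))<\infty$ were finite then $x_0$ would survive slightly past $t_0$ with bounded curvature in a parabolic neighborhood, contradicting maximality; hence $\inf_{[0,t_0)}\rho(x_0(t))=0$.

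The main obstacle I anticipate is the bookkeeping in the extension step: one must ensure the uniform constants $r_0,\kappa(A),r(A,\epsilon_{can})$ can be re-derived at each stage depending only on the geometry along $x_0(t)$ up to the current time (not degenerating), and that the partial-convergence diffeomorphisms from successive applications of Theorem \ref{t: compactness} are compatible so the enlarged spacetime is well-defined and still weakly backward $0$-complete — the latter via Proposition \ref{l: component stability} exactly as in the proof of Theorem \ref{t: compactness}. Verifying that the short-time curvature bound in hypothesis (a) is genuinely uniform in $i$ (rather than just holding for each fixed $i$) is the one technical point that needs care, but it follows from the smooth convergence $\M_{i,t_1}\to M$ combined with the a priori estimates available in singular Ricci flows.
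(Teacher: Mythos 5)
The overall architecture of your plan matches the paper's proof (verify the hypotheses of Theorem \ref{t: compactness}, apply it, then extend until the curvature blows up along $x_0(t)$), but your verification of hypothesis (a) has a genuine gap. You propose to propagate the initial bound $|\Rm|\le r_1^{-2}$ on $B_{t_1}(x_{0i},r_1)$ forward in time using distance distortion together with the gradient estimate of Lemma \ref{l: derivative}. That estimate only applies at points where the $\epsilon$-canonical neighborhood assumption holds, i.e.\ at scales below the canonical neighborhood scale $r_i(t)$ of the ambient flow $\M_i$ --- and that scale is \emph{not} uniform in $i$: the initial slices $\M_{i,0}$ converge to a complete manifold with possibly unbounded curvature, so the normalizations of the $\M_i$ degenerate and $r_i(t_1)\to 0$ is possible. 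Above the scale $r_i(t_1)$ nothing constrains how fast $\rho(x_{0i}(t))$ can decrease going forward in time, so the survival time with bounded curvature produced by this argument degenerates with $i$. This is exactly the step for which the paper develops the pseudolocality theorem for singular Ricci flows (Theorem \ref{t: pseudolocality}, resting on the heat kernel estimates of Section \ref{s: heat kernel}): its hypotheses (almost-Euclidean isoperimetric inequality and $R\ge -1$ on a ball at time $t_1$) are purely local and are uniformly inherited for large $i$ from the smooth convergence $\M_{i,t_1}\to M$ near $x_0$, and its conclusion is precisely the uniform unscathed forward parabolic neighborhood $P(x_{0i},r_0,t_0)$ with $|\Rm|\le r_0^{-2}$ that hypothesis (a) requires.

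A second, related omission: to conclude $\M_0=M$ (the entire complete manifold, not just a ball about $x_0$) and to verify hypothesis (b) of Theorem \ref{t: compactness} at times $t$ too small for Proposition \ref{p: noncollapsing} (which needs $2r_0^2<t$), the paper also establishes, for every $A>0$, a uniform curvature bound on $P(x_{0i},2A,t_A)$ for some $t_A>0$; this again comes from pseudolocality applied at every point of $B_{t_1}(x_{0i},2A)$, with constants depending on $A$ through the local geometry of $M$, whereas your argument only controls a fixed small ball around $x_0$. The remaining steps of your proposal --- Proposition \ref{p: noncollapsing} for hypothesis (b), and the extension to a maximal $t_0$ with $\inf_{[0,t_0)}\rho(x_0(t))=0$ --- are in line with the paper, except that for the extension step the upgrade from a pointwise curvature bound along $x_0(t)$ near $t_0$ to a backward parabolic curvature bound (needed to restart the argument) is supplied by Lemma \ref{c: curvature blow up}, which you should cite explicitly rather than leave implicit.
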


\begin{proof}
Without loss of generality we may assume $t_1=0$.
On the one hand, by the pseudolocality theorem for singular Ricci flow, Theorem \ref{t: pseudolocality}, there exist $r_0,t_0>0$ such that for all large $i$ the domain $P(x_{0i},r_0,t_0)\subset\M_i$ is unscathed and $|\Rm|\le r_0^{-2}$ holds there.
Moreover, for any fixed $A>0$, there exists $t_A\in(0,t_0)$ such that the geometry $P(x_{0i},2A,t_A)\subset \M_i$ is uniformly bounded for all large $i$.
By a distance distortion estimate this implies the uniformly bounded geometry on $\bigcup_{t\in[0,t_A]}B_t(x_{0i}(t),A)\subset\M_i$ for a possibly smaller $t_A$.

On the other hand, for any $t\in(t_A,t_0)$, by Proposition \ref{p: noncollapsing} there are constants $r,\kappa>0$, such that the $\epsilon_{can}$-canonical neighborhood assumption and the $\kappa$-non-collapsing assumption hold at scales less than $r$ in $B_t(x_{0i}(t),A)$.
So by Theorem \ref{t: compactness} there is a subsequence of $(\M_i,x_{0i})$ which partially converges to $(\M,x_0)$.

If $\inf_{[0,t_0)}\rho(x_0(t))>0$, then by Lemma \ref{c: curvature blow up} we have $\inf_{[0,t_0)}\overline{\rho}(x_0(t))>0$. So there exist $\kappa',r'>0$ such that $\M$ is $\kappa'$-non-collapsed at $x_0(t)$ at scales less than $r'$ for all $t\in[0,t_0)$.
Repeating the above argument at $t$ sufficiently close to $t_0$, we can extend $\mathfrak{t}(\M)$ to $[0,t_1)$ with $t_1>t_0$.
So we may assume $\inf_{[0,t_0)}\rho(x_0(t))=0$.
\end{proof}

\begin{theorem}(Existence of a semi-generalized singular Ricci flow)\label{t: Z_2}
Let $(M,g)$ be a 3d orientable complete Riemannian manifold, $x_0\in M$.
Then there exists
a semi-generalized singular Ricci flow $(\M,g(t),x_0)$ on
$[0,t_0)$ for some $t_0>0$ with $\M_0=M$.

Moreover, if $M$ is the double cover of a non-orientable manifold, and $\sigma:M\rightarrow M$ is the corresponding deck transformation which acts as an isometry. Then there is a semi-generalized singular Ricci flow $\M$ with $\M_0=M$ such that $\sigma$ extends to an isometry on $\M$, which acts free on the open domain $\{x\in\M: x \textit{ survives until } t=0\}$.
\end{theorem}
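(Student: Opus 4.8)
The plan is to reduce both assertions to the compact existence theorem of Kleiner--Lott \cite[Theorem 1.3]{KL1} by an exhaustion argument and then invoke Lemma \ref{t: construction}. For the first assertion I would construct a sequence of closed connected orientable $3$-manifolds $(N_i,g_i)$ whose pointed initial slices converge smoothly, $(N_i,g_i,x_0)\to(M,g,x_0)$; then the singular Ricci flows $\M_i$ starting from $N_i$ exist by \cite[Theorem 1.3]{KL1}, and Lemma \ref{t: construction}---applicable because $M$, being complete, is $0$-complete---produces a semi-generalized singular Ricci flow $(\M,g(t),x_0)$ on some $[0,t_0)$ with $\M_0=M$, completing the first assertion.

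To build the $N_i$, I would fix a smooth proper exhaustion function $f\colon M\to[0,\infty)$, use Sard's theorem to pick regular values $R_i\nearrow\infty$ with $f^{-1}([0,R_i])\supset\overline{B_g(x_0,i+1)}$ (which is compact since $M$ is complete), and let $\Omega_i$ be the component of $f^{-1}([0,R_i])$ containing $x_0$---a compact connected $3$-manifold with smooth boundary containing $B_g(x_0,i+1)$. In a collar of $\partial\Omega_i$, disjoint from $\overline{B_g(x_0,i)}$, I would interpolate $g$ to a metric $g_i$ equal to $g$ outside the collar and to a Riemannian product $dr^2+g|_{\partial\Omega_i}$ near $\partial\Omega_i$, and let $N_i$ be the double of $(\Omega_i,g_i)$ along $\partial\Omega_i$. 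Then $N_i$ is closed, connected, and orientable (a double of an orientable manifold with boundary), carries a smooth metric still denoted $g_i$ with $g_i=g$ on $B_g(x_0,i)$, and any path from $x_0$ leaving $B_{g_i}(x_0,i)$ inside $N_i$ must cross $\partial\Omega_i$, which lies at distance $>i$ from $x_0$; hence $B_{g_i}(x_0,i)=B_g(x_0,i)$ isometrically and $(N_i,g_i,x_0)\to(M,g,x_0)$ smoothly.

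For the second assertion I would run the same construction $\sigma$-equivariantly. Since $\sigma$ is a deck transformation it acts freely on $M$; averaging over $\langle\sigma\rangle$ I may take $f$ to be $\sigma$-invariant, so for $i+1>d_g(x_0,\sigma(x_0))$ the domain $\Omega_i$ is $\sigma$-invariant, $\sigma$ restricts to a free isometric involution of $\partial\Omega_i$, and, performing the collar interpolation with $\sigma$-invariant cut-offs built from $f$ and product metric $g|_{\partial\Omega_i}$, $\sigma$ stays an isometry of $(\Omega_i,g_i)$. Then the double $N_i$ carries a free isometric involution $\sigma_i$, the double of $\sigma|_{\Omega_i}$, restricting to $\sigma$ on $\Omega_i$. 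By the uniqueness of singular Ricci flows \cite{BK}, the initial isometry $\sigma_i$ extends to a spacetime isometry $\hat\sigma_i\colon\M_i\to\M_i$ commuting with $\mathfrak t$ and $\partial_{\mathfrak t}$, with $\hat\sigma_i|_{\M_{i,0}}=\sigma_i$ and (again by uniqueness) $\hat\sigma_i^2=\mathrm{id}$; since $\hat\sigma_i$ commutes with worldlines, a fixed point $x$ surviving to time $0$ would force $\sigma_i(x(0))=x(0)$, contradicting freeness of $\sigma_i$, so $\hat\sigma_i$ acts freely on the set of points surviving until time $0$.

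Finally, I would pass to the limit: each $\hat\sigma_i$ is a $G_i$-isometry preserving $|\Rm|$ and survival and moves $x_0$ a bounded spacetime distance ($\le d_g(x_0,\sigma(x_0))$ for $i$ large), so it maps the distinguished exhausting regions near $x_{0i}$ of Theorem \ref{t: compactness} into comparable ones; conjugating by the convergence diffeomorphisms and passing to a further subsequence, the $\hat\sigma_i$ converge to an isometry $\sigma$ of $\M$ preserving $\mathfrak t$, $\partial_{\mathfrak t}$, $g$, with $\sigma^2=\mathrm{id}$, restricting to the original $\sigma$ on $\M_0=M$, and free on $\{x\in\M:x\text{ survives until }t=0\}$ (a fixed point would give $\sigma(x(0))=x(0)$ in $M$). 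Here $\sigma$ need not fix $x_0$, but $\sigma(x_0)$ is at finite distance, its worldline lies in $\M$, and since the time-slices of $\M$ are connected, $\sigma(\M)=\M$. I expect the main obstacle to be bookkeeping rather than any new estimate: one must verify that the doubling of the exhausting domains really yields smooth pointed convergence and can be made $\sigma$-equivariant, and---the subtler point---that the lifted involutions survive the passage to the partial (Gromov--Hausdorff) limit as a free isometry of the limiting spacetime.
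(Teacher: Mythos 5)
Your proposal is correct and follows essentially the same route as the paper's proof: exhaust by compact domains arranged to have a product metric in a boundary collar, double to obtain closed connected orientable approximants $(N_i,g_i,x_0)\to(M,g,x_0)$, apply Lemma \ref{t: construction}, and for the $\mathbb{Z}_2$-statement carry out the construction $\sigma$-equivariantly, extend each $\sigma_i$ to a spacetime isometry of $\M_i$ via the Bamler--Kleiner uniqueness theorem, and pass these involutions to the partial limit. The only differences are cosmetic --- the paper exhausts the quotient $N$ and pulls back and averages $\tfrac12(g_i+\sigma^*g_i)$, while you use a $\sigma$-invariant exhaustion function upstairs with equivariant cutoffs --- and you in fact make explicit a couple of points the paper leaves implicit, such as why the limiting involution preserves $\M$ even though it moves $x_0$.
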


\begin{proof}
The first assertion follows directly from Lemma \ref{t: construction}.
It only remains to establish the assertion about the $\mathbb{Z}_2$-symmetry. 
For this we assume $(M,g,x_0)$ is the double cover of a non-orientable manifold $(N,\overline{g})$, and $\sigma:M\rightarrow M$ is the non-trivial deck transformation in $\mathbb{Z}_2$, which acts as an isometry.
Let $N_i\subset N$ be a compact 3 dimensional submanifold with smooth boundary that contains $B_{\overline{g}}(\pi(x_0),i)$. 
Take $i>d_g(x_0,\sigma(x_0))$, then $\pi^{-1}(N_i)$ is a compact connected orientable manifold which has smooth orientable boundary $\pi^{-1}(\partial N_i)$, and $B_g(x_0,i)\cup B_g(\sigma(x_0),i)\subset \pi^{-1}(N_i)$.
First, we extend $\pi^{-1}(N_i)$ and the metric past a collar of its boundary, and assume the new metric $g_i$ is isometric to the product of a metric on $\pi^{-1}(\partial N_i)$ with an interval.
Next, since $\pi^{-1}(\partial N_i)$ is $\sigma$-invariant, we can extend the action of $\sigma$ to the collar neighborhood $\pi^{-1}(\partial N_i)\times[0,1]$ such that $\sigma(x,s)=\sigma(x,0)$ for all $x\in\pi^{-1}(\partial N_i)$ and $s\in[0,1]$.
Then by replacing $g_i$ with $\frac{1}{2}(g_i+\sigma^*g_i)$, we may assume $g_i$ is $\sigma$-invariant, and it is still a product metric near the new boundary. 
Therefore, by doubling the extended manifold, we get a closed, connected and orientable manifold $(M_i,g_i,x_0)$ with a deck transformation $\sigma_i$ which is an isometry, and $g_i=g$, $\sigma_i=\sigma$ on $B_{g_i}(x_0,i)$.

Let $(\M_i,g_i(t),x_0)$ be a sequence of singular Ricci flows starting from $(M_i,g_i,x_0)$. 
Then by Lemma \ref{t: construction} there is $t_0>0$ such that $\{(\M_i,g_i(t),x_0)\}$ converges to a semi-generalized singular Ricci flow $(\M,x_0)$ on $[0,t_0)$.
Moreover, by the uniqueness of singular Ricci flow in \cite{BK}, each $\sigma_i:M_i\rightarrow M_i$ can be uniquely extended to an isometry $\sigma_i:\M_i\rightarrow\M_i$.
So for any $x_1,x_2\in M_i$, if $\sigma_i(x_1)=x_2$ and $x_1$ survives until $t>0$, then $x_2$ also survives until $t$ and $\sigma_i(x_1(t))=\sigma_i(x_2(t))$.
Therefore, $\sigma_i$ converges to an isometry $\sigma:\M\rightarrow\M$, which acts free on
$\{x\in\M:x \textnormal{ survives back to } \M_{0}=M \}$
\end{proof}

The next lemma shows
that for two spacetimes $(\mathcal{N}_j,x_j)$, $j=1,2$, which have connected time-slices, suppose they are limits of a same sequence of Ricci flow spacetimes $\M_i$ under the partial convergence. Then they are isometric if the preimages of $x_1,x_2$ under the diffeomorphisms are contained in a parabolic region in $\M_i$.

\begin{lem}\label{r: uniqueness}
Let $\M_i$ be a sequence of Ricci flow spacetimes, $x_{1,i},x_{2,i}\in\M_{i,0}$. 
Suppose $(\M_i,x_{j,i})$ partially converges to a Ricci flow spacetime $(\mathcal{N}_j,x_j)$ on $[0,T)$, for some $T>0$, $x_j\in\mathcal{N}_{j,0}$, and each time-slice $\mathcal{N}_{j,t}$ is connected, $j=1,2$.
Suppose also there is $D>0$ such that  $x_{2,i}\in B_0(x_{1,i},D)$ for all $i$, and $P(x_{1},D,T-\delta)\subset \mathcal{N}_1$ is unscathed for any $\delta>0$.
Then $\mathcal{N}_{1}$ is isometric to $\mathcal{N}_{2}$.
\end{lem}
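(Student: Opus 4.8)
The plan is to obtain the isometry $\mathcal{N}_1\to\mathcal{N}_2$ as a subsequential limit of the transition maps $\Psi_i:=\phi_{2,i}^{-1}\circ\phi_{1,i}$, where $\phi_{j,i}\colon U_{j,i}\to V_{j,i}\subset\M_i$ ($j=1,2$) are the diffeomorphisms witnessing the two partial convergences, so $\phi_{j,i}(x_j)=x_{j,i}$ and $\phi_{j,i}^{*}G_i\to G_j$ in $C^\infty_{loc}$; note that the $\phi_{j,i}$ are automatically asymptotically time-preserving with vanishing time shift, since $\phi_{j,i}^{*}(dt^2)\to dt^2$ and $x_{j,i}\in\M_{i,0}$, $x_j\in\mathcal{N}_{j,0}$.

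First I would locate $x_{2,i}$ and its trajectory inside $\mathcal{N}_1$. Because $P(x_1,D,T-\delta)\subset\mathcal{N}_1$ is unscathed for every $\delta>0$, the ball $B_0(x_1,D)$ is relatively compact in $\mathcal{N}_{1,0}$ and $P(x_1,D,T-\delta)$ has uniformly bounded geometry; together with $d_0(x_{1,i},x_{2,i})<D$ this forces, for large $i$, that $x_{2,i}$ together with its forward trajectory up to time $T-\delta$ lies in $V_{1,i}$, and that $\tilde x_2:=\lim_i\phi_{1,i}^{-1}(x_{2,i})$ exists in $\mathcal{N}_{1,0}$, satisfies $d_0(x_1,\tilde x_2)\le D$, and survives on $[0,T)$. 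Composing $\phi_{1,i}$ with diffeomorphisms of $\mathcal{N}_1$ tending to the identity we may assume $\phi_{1,i}(\tilde x_2)=x_{2,i}$, so that $(\M_i,x_{2,i})$ also partially converges to $(\mathcal{N}_1,\tilde x_2)$; thus $\mathcal{N}_1$ and $\mathcal{N}_2$ are both partial limits of the pointed sequence $(\M_i,x_{2,i})$.

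The core step is to show that the $\Psi_i$ are eventually defined on every compact $K\subset\mathcal{N}_1$ with image in a fixed compact subset of $\mathcal{N}_2$, and symmetrically for $\Psi_i^{-1}=\phi_{1,i}^{-1}\circ\phi_{2,i}$. For this one uses that each point $y\in\mathcal{N}_j$ is a limit of points $\phi_{j,i}(y)\in\M_i$ whose curvature is bounded in terms of $y$ and whose $d_{G_i}$-distance to $x_{j,i}$ is $O(d_{G_j}(y,x_j))$; since $d_{G_i}(x_{1,i},x_{2,i})\le D$ is uniformly bounded and the unscathedness of $P(x_1,D,T-\delta)$ keeps the relevant part of $\mathcal{N}_1$ (in particular the trajectory of $\tilde x_2$) relatively compact and surviving, such points are uniformly bounded-curvature bounded-distance limit points for both base sequences, which yields $\phi_{1,i}(K)\subset V_{2,i}$ (and the symmetric inclusion) for all large $i$. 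Granting this, on a compact $K\subset\mathcal{N}_1$ with $\Psi_i(K)$ inside a fixed compact $K'\subset\mathcal{N}_2$ we have $\Psi_i^{*}(\phi_{2,i}^{*}G_i)=\phi_{1,i}^{*}G_i$, and the smooth convergences $\phi_{1,i}^{*}G_i\to G_1$ on $K$ and $\phi_{2,i}^{*}G_i\to G_2$ on $K'$ make the $\Psi_i$ $C^{k}$-close to isometric embeddings with uniformly bounded derivatives of every order; by the Arzela-Ascoli Lemma and a diagonal argument over an exhaustion of $\mathcal{N}_1$, a subsequence of $\Psi_i$ converges in $C^\infty_{loc}$ to a time-preserving map $\Phi\colon\mathcal{N}_1\to\mathcal{N}_2$ with $\Phi^{*}G_2=G_1$. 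In particular $\Phi$ is a local isometry of the Riemannian $4$-manifolds $(\mathcal{N}_1,G_1)$ and $(\mathcal{N}_2,G_2)$, hence a local diffeomorphism intertwining the time functions and the vector fields $\partial_{\mathfrak{t}}$.

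To finish, run the same argument along a common subsequence with the roles of $\mathcal{N}_1$ and $\mathcal{N}_2$ exchanged, using $x_{1,i}\in B_0(x_{2,i},D)$, to get $\Phi'\colon\mathcal{N}_2\to\mathcal{N}_1$ with $(\Phi')^{*}G_1=G_2$; since $\Psi_i^{-1}\circ\Psi_i=\mathrm{id}$ wherever both are defined, $\Phi'\circ\Phi=\mathrm{id}$ on a nonempty open subset of $\mathcal{N}_1$, and because $(\mathcal{N}_1,G_1)$ is a connected Riemannian manifold ($\mathfrak{t}$ is a submersion onto an interval with connected fibers), a local isometry of it agreeing with the identity on an open set must be the identity, so $\Phi'\circ\Phi=\mathrm{id}_{\mathcal{N}_1}$; symmetrically $\Phi\circ\Phi'=\mathrm{id}_{\mathcal{N}_2}$. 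Hence $\Phi$ is a diffeomorphism and a time-preserving $G$-isometry commuting with $\partial_{\mathfrak{t}}$, i.e. an isometry of Ricci flow spacetimes. The main obstacle is the exhaustion claim of the third paragraph --- that the two partial limits have the same ``size'' --- which is exactly where the bounded-distance hypothesis $x_{2,i}\in B_0(x_{1,i},D)$ and the unscathedness of $P(x_1,D,T-\delta)$ for all $\delta>0$ are essential; once it is in hand, the rest is the standard Cheeger-Gromov/Arzela-Ascoli compactness machinery.
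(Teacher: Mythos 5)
Your proposal is correct and follows essentially the same route as the paper: both obtain the isometry as a limit of the transition maps $\phi_{2,i}^{-1}\circ\phi_{1,i}$, using the bounded-distance hypothesis and the unscathedness of $P(x_1,D,T-\delta)$ to show these maps are defined on an exhaustion of $\mathcal{N}_1$ with images exhausting $\mathcal{N}_2$, and that they are asymptotically isometric. Your write-up is merely more explicit about the Arzela--Ascoli extraction and the two-sided inverse argument, which the paper leaves implicit.
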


\begin{proof}
Let $H_i$ be the spacetime metric of $\M_i$, and $G_j$ the spacetime metric of $\mathcal{N}_j$, $j=1,2$. Let $\phi_{j,i}:\mathcal{N}_j\supset U_{j,i}\rightarrow V_{j,i}\subset\M_i$ be the two corresponding diffeomorphism sequences such that $\cup_{i=1}^{\infty}U_{j,i}=\mathcal{N}_j$ and $\|\phi_{j,i}^*H_i-G_j\|\le\epsilon_i\rightarrow0$.
Let $P_{j,k}=\bigcup_{t\in[0,T-k^{-1}]}\overline{B_t(x_j,k)}\cap\{x: \rho(x)\ge k^{-1}\}$,
then $\cup_{k=1}^{\infty} P_{j,k}=\mathcal{N}_{j}$.
By the assumption of $x_{1,i}$ and $x_{2,i}$, for a given $k$ there exists $\ell(k,D)\in\mathbb{N}$ such that for all large $i$, we have $\phi_{1,i}(P_{1,k})\subset\phi_{2,i}(P_{2,\ell})$.
So the maps $\phi_{2,i}^{-1}\circ\phi_{1,i}:\mathcal{N}_1\supset P_{1,k}\rightarrow\mathcal{N}_2$ are well-defined, and $\|(\phi_{2,i}^{-1}\circ\phi_{1,i})^*G_2-G_1\|\le\delta_i\rightarrow0$.
Moreover, $\phi_{2,i}^{-1}\circ\phi_{1,i}(P_{1,k})$ form an exhaustion of $\mathcal{N}_{2}$ as $i,k\rightarrow\infty$.
So $\mathcal{N}_{1}$ is isometric to $\mathcal{N}_{2}$.
\end{proof}

\begin{theorem}\label{t: max}(Theorem \ref{t: construction1} and \ref{t: convergence}, Existence of generalized singular Ricci flow)
Let $(M,g)$ be a 3d orientable complete Riemannian manifold, $x_0\in M$.
Let $(\M_i,g_i(t),x_{0i})$ be a sequence of singular Ricci flows with $(\M_{i,0},g_i(0),x_{0i})$ smoothly converging to $(M,g,x_0)$.
Then there exists a generalized singular Ricci flow $\M$ with $\M_0=M$, such that $(\M_i,x_{0i})$ partially converges to $(\M,x_0)$.

Moreover, if $M$ is the double cover of a non-orientable manifold, then the same conclusion as Theorem \ref{t: Z_2} holds.
\end{theorem}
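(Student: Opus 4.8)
\emph{Proof proposal.} We prove the convergence statement; Theorem~\ref{t: construction1} then follows by first exhausting $(M,g)$ by compact manifolds and running singular Ricci flows from them, exactly as in the proof of Theorem~\ref{t: Z_2}. The plan is to iterate Lemma~\ref{t: construction} and glue the semi-generalized singular Ricci flows it produces. Since $(\M_{i,0},g_i(0),x_{0i})\to(M,g,x_0)$ smoothly, Lemma~\ref{t: construction} gives a semi-generalized singular Ricci flow $\mathcal N^1$ on a maximal interval $[0,T_1)$, with $\mathcal N^1_0=M$, base point $x_0$ satisfying $\limsup_{t\nearrow T_1}|\Rm|(x_0(t))=\infty$, and a subsequence (relabelled) along which $(\M_i,x_{0i})$ partially converges to $(\mathcal N^1,x_0)$. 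Since this only covers $[0,T_1)$, whereas a generalized singular Ricci flow must reach past the blow-up time of any single worldline, we iterate. Fix a countable set dense in $\mathcal N^1$ for the spacetime metric; for a member $q$ at time $s$, the time-slices $(\M_{i,s},g_i(s),\phi_i(q))$ converge smoothly to $(\mathcal N^1_s,g(s),q)$, which is $0$-complete --- for $s>0$ the Hamilton--Ivey inequality \eqref{e: Hamilton-Ivey} with $\varphi=\infty$ turns a lower bound on $\rho$ into a genuine bound on $|\Rm|$, so weak $0$-completeness of the time-slice upgrades to $0$-completeness. Applying Lemma~\ref{t: construction} again and passing to a further subsequence, we obtain a semi-generalized singular Ricci flow based at $q$ whose time-$0$ slice is $\mathcal N^1_s$.

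Next we glue. Shifting each such piece forward in time by the time $s$ at which it was restarted, it overlaps $\mathcal N^1$ on a neighborhood of the slice $\mathcal N^1_s$; there both spacetimes are smooth Cheeger--Gromov limits, along the same subsequence, of the same flows $\M_i$, and over the locus where the curvature of $\M_i$ stays bounded such limits are canonically isometric, so the identification is forced (Lemma~\ref{r: uniqueness} makes this precise wherever an unscathed parabolic region is present, the rest being local). Where a piece extends past $T_1$ it contributes new spacetime. Gluing all the pieces onto $\mathcal N^1$ yields $\M^2\supset\mathcal N^1$; repeating the construction with a countable dense subset of $\M^2$ gives $\M^3\supset\M^2$, and so on. A diagonal argument over the nested subsequences produces a single subsequence along which $(\M_i,x_{0i})$ partially converges to $\M:=\bigcup_k\M^k$.

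It remains to verify, via Corollary~\ref{r: semi}, that $\M$ is a generalized singular Ricci flow, and to read off the partial convergence. Property \eqref{p': complete} is the hypothesis; \eqref{p': HI} passes to the limit from the $\M_i$ and survives the gluing; for \eqref{p': semi-backward complete}, forward $0$-completeness comes from the pieces and weak backward $0$-completeness from the pieces together with Proposition~\ref{l: component stability}. Property \eqref{p': steps} is obtained by concatenating chains: inside any semi-generalized piece every point joins the piece's time-$0$ slice through a short chain --- using that the slices are connected and the piece's base point survives throughout --- and that base point lies in an earlier piece, so one proceeds by induction on the level. Finally one checks that $\M_{x_0}=\bigcup_{t,A}B_t(x_0(t),A)$ is a semi-generalized singular Ricci flow for \emph{every} $x_0\in\M$: the distance-dependent canonical neighborhood and non-collapsing assumptions along a worldline follow from Proposition~\ref{p: noncollapsing} applied to a bounded-curvature backward parabolic neighborhood, which exists along the worldline by the construction and Lemma~\ref{l: properties}, while weak backward $0$-completeness of $\M_{x_0}$ follows again from Proposition~\ref{l: component stability}. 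The partial convergence $(\M_i,x_{0i})\to(\M,x_0)$ is the output of the first step along the diagonal subsequence.

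For the $\mathbb Z_2$-equivariant statement one runs the same construction with Theorem~\ref{t: Z_2} replacing Lemma~\ref{t: construction} at each restart: the deck transformation $\sigma$ extends, by the construction in Theorem~\ref{t: Z_2} and the uniqueness of singular Ricci flows \cite{BK}, to an isometry of each piece commuting with the flow, and because every piece is restarted from a $\sigma$-invariant slice these isometries are compatible under the gluing (by the limit uniqueness of Lemma~\ref{r: uniqueness}), so they assemble to an isometry of $\M$ acting freely on the points that survive back to $M$. The main obstacle is the gluing: organizing the countably many restarts and the diagonal subsequence so that the pieces fit together into a single Hausdorff $4$-manifold carrying a well-defined Ricci flow spacetime structure and agreeing on all overlaps, and then confirming that the assembled $\M$ satisfies the semi-generalized singular Ricci flow condition at \emph{every} base point, which is what propagates the distance-dependent canonical neighborhood control and weak backward completeness off the restart worldlines onto all of $\M$.
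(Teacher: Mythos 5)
Your proposal is correct and follows essentially the same route as the paper: iterate Lemma \ref{t: construction} at a countable dense set of points, glue the resulting semi-generalized singular Ricci flows by identifying common limits (Lemma \ref{r: uniqueness}), diagonalize, and verify the axioms via Proposition \ref{l: component stability} and Corollary \ref{r: semi}. The ``main obstacle'' you flag at the end is resolved in the paper by precisely the density-plus-Lemma \ref{r: uniqueness} argument you sketch, organized as an induction on nested spacetimes $\M^k$ carrying four bookkeeping properties that propagate survival-until-blow-up and the semi-generalized structure from the dense set to every point.
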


\begin{proof}
Let $x_0\in M$, by Lemma \ref{t: construction} there exist $t_0>0$ and a semi-generalized singular Ricci flow $(\M^1,x_0)$ on $[0,t_0)$ such that $(\M_i,G_i,x_0)$ partially converges to $(\M^1,x_0)$ and $\inf_{[0,t_0)}\rho(x_0(t))=0$.

Suppose by induction that there is a 
sequence of Ricci flow spacetimes $\{\M^{j}\}_{j=1}^{k-1}$ such that $\M^{j-1}\subset\M^j$ and the followings hold for all $j=2,...,k-1$:
\begin{enumerate}
    \item A subsequence of $\M_i$ partially converges to $\M^{j}$.
    \item $\M^j$ is weakly backward 0-complete.
    \item For any $x\in\M^{j-1}$, let $a$ be the supremum of all times $t$ until which $x$ survives until in $\M^j$. Then $\inf_{[\mathfrak{t}(x),a)}\rho(x(t))=0$.
    \item For any $x\in\M^{j-1}$, suppose $x$ survives until some $t>\mathfrak{t}(x)$. Let $\M_x\subset\M^j$ be the subset $\bigcup_{s\in[\mathfrak{t}(x),t)}\bigcup_{A>0}B_s(x(s),A)$, then $(\M_x,x)$ is a semi-generalized singular Ricci flow on $[\mathfrak{t}(x),t)$.
\end{enumerate}

Let $\{x_j\}_{j=1}^{\infty}$ be a dense subset in $\M^{k-1}$.
For each $x_j$, by Lemma \ref{t: construction} there is a subsequence of $\{(\M_i,x_j)\}_{i=1}^{\infty}$ that partially converges to a semi-generalized singular Ricci flow $(\mathcal{N}_j,x_j)$, such that $x_j$ survives in $\mathcal{N}_j$ until $R(x_j(t))$ goes unbounded.
So by a diagonal argument we may assume that $\{(\M_i,x_j)\}_{i=1}^{\infty}$ converges to $(\mathcal{N}_j,x_j)$ for all $x_j$.

For any $y_1,y_2\in\M^{k-1}\sqcup\coprod_{j=1}^{\infty}\mathcal{N}_j$, we say $y_1\sim y_2$ if there is a sequence of points $w_i\in\M_i$ such that modulo the diffeomorphism maps we have $w_i\rightarrow y_1$ and $w_i\rightarrow y_2$ as $i\rightarrow\infty$.
This defines an equivalent relation in $\M^{k-1}\sqcup\coprod_{j=1}^{\infty}\mathcal{N}_j$.
If $y_1\sim y_2$, then by the uniqueness of the smooth limit, there is $\delta>0$ such that the neighborhoods of $P(y_i,\delta,\delta^2)\cup P(y_i,\delta,-\delta^2)$, $i=1,2$, are unscathed and the spacetime metrics on them are isometric.
So there is a well-defined smooth Ricci flow spacetime metric on the quotient space $\M^k:=\left(\M^{k-1}\sqcup\coprod_{j=1}^{\infty}\mathcal{N}_j\right)/\sim$.
So (1) holds for $j=k$.

Since each connected component of $\M^k_t$ is isometric to either $\M^{k-1}_t$ or some $\mathcal{N}_{j,t}$, we get that $\M^k_t$ is 0-complete. 
For any $x\in\M^k$, $\mathfrak{t}(x)=t_0$,
suppose $x$ survives on $(t_1,t_0]$ and $\lim_{t\rightarrow t_1}\overline{\rho}(x(t))>0$.
Assume $x\in \M^{k-1}$, then since $\M^{k-1}$ is weakly backward 0-complete, it follows that $x(t)\in\M^{k-1}$ and $\lim_{t\rightarrow t_1}x(t)$ exists.
Otherwise, assume $x\in\mathcal{N}_j$ for some $j\in\mathbb{N}$, and let $t_2\in(t_1,t_0]$ be the infimum of time $t$ such that $x(t)\in\mathcal{N}_j$.
Then $x(t_2)=\lim_{t\rightarrow t_2}x(t)$ exists because $\mathcal{N}_j$ is weakly backward 0-complete. 
If $t_2>t_1$, then we have $x(t_2)\in\mathcal{N}_j\cap\M^{k-1}$, and the existence of $\lim_{t\rightarrow t_1}x(t)$ exists by the weakly backward 0-completeness of $\M^{k-1}$.
So $\M^k$ is weakly backward 0-complete, and hence (2) holds.

It is clear that (3)(4) hold for each $x_j$.
We claim that (3)(4) hold for every point in $\M^{k-1}$.
To verify (3), let $x\in \M^{k-1}$ be an arbitrary point, $\mathfrak{t}(x)=t_1$.
Let $t_2>t_1$ be the supremum of all times until which $x$ survives in $\M^{k}$.
Suppose by contradiction that $\inf_{[t_1,t_2)}\rho(x(t))>0$. 
Then by Lemma \ref{t: construction} there is $\delta>0$ such that by passing to a subsequence, $(\M_i,x)$ partially converges to a semi-generalized singular Ricci flow $\mathcal{N}$ on $[t_1,t_2+\delta^2)$, and  $P(x,\delta,t_2-t_1+\delta^2)$ is unscathed. 
By the density of $\{x_j\}_{j=1}^{\infty}$, there exists $x_j\in P(x,\delta,\delta^2)\subset\M^{k-1}$.
Then $x_j$ survives on $[\mathfrak{t}(x_j),t_2+\delta^2)$ in $\mathcal{N}_j$.
So it follows from Lemma \ref{r: uniqueness} that $\mathcal{N}_j$ is isometric to $\mathcal{N}$ on $[t_1+\delta^2,t_2+\delta^2)$. 
In particular, $x\in\mathcal{N}_j\subset\M^k$ survives until $t_2+\delta^2/2$, contradicting with the supremum assumption of $t_2$.
This verifies (3).

To verify (4), let $x\in \M^{k-1}$ be an arbitrary point, $\mathfrak{t}(x)=t_1$, and assume $x$ survives until some $t_2>t_1$, and $\M_{x}$ is defined as in (4).
Choose $\delta>0$ such that $P(x,\delta,t_2-t_1)$ is unscathed, and pick some $x_j\in P(x,\delta,\delta^2)$ by the density of $\{x_j\}_{j=1}^{\infty}$. Then by Lemma \ref{r: uniqueness}, $\M_x$ is isometric to $\mathcal{N}_j$ on $[t_1+\delta^2,t_2)$, and hence $(\M_x,x)$ is a semi-generalized singular Ricci flow on $[t_1+\delta^2,t_2)$. 
Letting $\delta\rightarrow 0$, it implies that $\M_x$ is a semi-generalized singular Ricci flow on $[t_1,t_2)$.
This verifies (4).    

So by induction we obtain an infinite sequence of spacetimes $\{\M^k\}_{k=1}^{\infty}$ with $\M^{k-1}\subset\M^k$, which satisfies all inductive assumptions.
Let $\M=\bigcup_{k=1}^{\infty}\M^k$, then 
by passing to a subsequence $\M_i$ partially converges to $\M$.
By the `if' part of Corollary \ref{r: semi}, it is clear that $\M$ is a generalized singular Ricci flow. The assertion about the $\mathbb{Z}_2$-symmetry follows in the same way as Theorem \ref{t: Z_2}.

\end{proof}

\end{section}

\begin{section}{Ricci flows with non-negative Ricci curvature}
\label{s: it is actually smooth}
In this section, we prove Theorem \ref{t: existence}. First, by adapting the maximum principle argument in \cite{Chen} and \cite{localpinching} to a generalized singular Ricci flow, we show in Lemma \ref{l: R} and \ref{l: Ricci} that it preserves the non-negativity of scalar curvature and Ricci curvature.

Then we prove Lemma \ref{l: s_0}, which is the last ingredient needed to prove Theorem \ref{t: existence}. It says that in a 3-dimensional manifold with $\Ric\ge 0$, no singularity can form within finite distance along a minimizing geodesic covered by final time-slices of strong $\delta$-necks.



\begin{lem}\label{l: R}
Let $(M,g)$ be a 3 dimensional complete Riemannian manifold with $R\ge0$. Let $(\M,g(t))$ be a generalized singular Ricci flow starting from $(M,g)$. Then $R\ge0$ on $\M$.
\end{lem}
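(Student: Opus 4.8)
The plan is to localize the maximum principle, adapting Bing-Long Chen's argument for complete Ricci flows with unbounded curvature \cite{Chen} (see also \cite{localpinching}). The underlying identity is the evolution equation $\pt R=\Delta R+2|\Ric|^2\ge\Delta R+\tfrac23R^2$ in dimension $3$, so that $R$ is a supersolution of the heat equation and $u:=\max\{-R,0\}$ is a subsolution (in the barrier sense) of $\pt u\le\Delta u-\tfrac23u^2$ with $u\equiv0$ on $\M_0=M$; on a compact flow this would force $u\equiv0$. The two features of a generalized singular Ricci flow that must be dealt with are that the time-slices $\M_t$ are non-compact with possibly unbounded curvature, and that $\M$ is only forward $0$-complete.

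The first step is to use the Hamilton-Ivey pinching \eqref{p': HI} (with $\varphi=\infty$) to confine both of these to the region where $R$ is already large and positive. Since $R(x)\ge X(\log X+\log\mathfrak t(x)-3)$ for some $X\ge-\Rm(x)$, for every $\tau>0$ there is $\Lambda(\tau)$ with $|\Rm|\le\Lambda(\tau)$ on $\{R\le1\}\cap\mathfrak t^{-1}([\tau,\infty))$; hence any point at a positive time with $R$ near $0$ sits in a region of bounded geometry, in particular survives backward for a definite time, so the forward incompleteness of $\M$ never interferes with the argument. Moreover, on the high-curvature part the flow is modelled on a $\kappa$-solution by the distance-dependent canonical neighbourhood assumption \eqref{p': CNA} and the gradient estimate (Lemma \ref{l: derivative}), hence has non-negative sectional --- so Ricci --- curvature up to a small error; combined with $\Rm\ge-\Lambda(\tau)$ on the bounded part this gives a uniform lower Ricci bound on the portion of $\M$ relevant below, while Lemma \ref{l: volume comparison} controls the volume growth of metric balls about a fixed point.

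Next, set $T^\ast=\sup\{T:\ R\ge0\textnormal{ on }\mathfrak t^{-1}([0,T])\}$; then $T^\ast\ge0$ and, by continuity, $R\ge0$ on $\mathfrak t^{-1}([0,T^\ast])$. Assuming $T^\ast<\sup\mathfrak t(\M)$, I would produce $\delta>0$ with $R\ge0$ on $\mathfrak t^{-1}([T^\ast,T^\ast+\delta])$, contradicting the definition of $T^\ast$. By property \eqref{p': steps} and component stability, every point of $\mathfrak t^{-1}([T^\ast,T^\ast+\delta])$ is joined to $\M_{T^\ast}$ through finitely many forward-flow and same-connected-component steps, and since $R\ge0$ on $\M_{T^\ast}$ it suffices to prove: for each $x_0\in\M_{T^\ast}$ surviving on $[T^\ast,T^\ast+\delta]$ and each $A>0$, $R\ge0$ on $\Omega_A:=\bigcup_{t\in[T^\ast,T^\ast+\delta]}B_t(x_0(t),A)$. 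On $\Omega_A$ I would run Chen's cutoff argument: take the standard cutoff $\eta$ built from $d_t(x_0(t),\cdot)$ and a function of $t$, with $(\pt-\Delta)\eta$ and $|\nabla\eta|^2/\eta$ controlled on the transition annulus by Lemmas \ref{l: distance laplacian} and \ref{l: expanding lemma} (using the Ricci lower bound just recorded when $T^\ast>0$, and, when $T^\ast=0$, the local Ricci control coming from $R\ge0$ at $t=0$, exactly as in \cite{Chen}); then apply the maximum principle to $\psi=R\eta+(1-\eta)$, which is $\ge0$ at time $T^\ast$ and off the support of $\eta$ and satisfies $\pt\psi\ge\Delta\psi+\langle X,\nabla\psi\rangle-C\psi$ on $\{\psi<0\}\subset\{R<0\}\cap B_t(x_0(t),2A)$ with $|X|$ and $C$ locally bounded, the cutoff errors being absorbed as in \cite{Chen} (and, when $T^\ast>0$, more simply, since then $\{R<0\}$ has bounded geometry by the first step); conclude $\psi\ge0$ on $\Omega_A$, hence $R\ge0$, and let $A\to\infty$.

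The main obstacle is exactly this cutoff step --- running a genuine maximum principle on a non-compact Ricci flow spacetime that has no global curvature bound and whose topology changes with time. Chen's technique resolves it, and the structural facts that make it applicable here are the Hamilton-Ivey pinching (which pushes all of the unbounded curvature and forward incompleteness into $\{R\gg0\}$), the distance-dependent canonical neighbourhood assumption (which supplies $\kappa$-solution models, with non-negative Ricci curvature hence Laplacian comparison, on the high-curvature part), and the distance-distortion and total-boundedness estimates of Section \ref{s: Preparatory results}.
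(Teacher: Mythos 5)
Your proposal is correct and follows essentially the same route as the paper: a Chen-style localized maximum principle with a cutoff built from $d_t(x_0(t),\cdot)+Ct$, where the non-compactness and incompleteness of the time-slices are neutralized by observing that the region $\{R<0\}$ lies in the bounded-geometry part of the flow (so negative minima of the localized quantity are attained and points there survive backward a definite time), followed by letting the cutoff radius tend to infinity. The only cosmetic differences are that the paper gets this compactness from the properness of $R$ on closed balls (Lemma \ref{l: properties}) rather than from Hamilton--Ivey pinching, and organizes the reduction through semi-generalized singular Ricci flows (Corollary \ref{r: semi}) rather than a first-bad-time argument.
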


\begin{proof}
By property \eqref{p': steps} in Definition \ref{d: generalized SRF} and Corollary \ref{r: semi}, it suffices to prove the lemma for a semi-generalized singular Ricci flow $(\M,g(t),x_0)$ on $[0,t_0)$, $x_0\in\M_0=M$. 
We may assume that there is $r_0>0$ such that $\bigcup_{t\in[0,t_0)}B_t(x_0(t),r_0)$ is unscathed and $\Ric(x)\le r_0^{-2}$ there. Then by Lemma \ref{l: distance laplacian}, we have
\begin{equation}\label{e: distance distortion}
    (\partial_t-\Delta)d_t(x_0(t),x)\ge-\frac{10}{3}r_0^{-1},
\end{equation}
for all $x\in\M_t$ with $d_t(x,x_0(t))>r_0$.

Let $A\ge \frac{80}{3}r_0^{-2}t_0+2 $ and define the following function on $\M$
\begin{equation}
    u(x)=\varphi\left(\frac{d_t(x_0(t),x)+\frac{10}{3}r_0^{-1}t}{Ar_0}\right)R(x),
\end{equation}
for all $x\in\M_t$, $t\in[0, t_0)$, where we choose $\varphi$ to be a smooth non-negative non-increasing function such that $\varphi=1$ on $(-\infty,\frac{7}{8}]$, $\varphi=0$ on $[1,\infty)$ and $\left|\frac{2\varphi'^2}{\varphi}-\varphi''\right|\le C\sqrt{\varphi}$. 
Then with the choice of $A$, we have $u(x)=R(x)$ for all $x\in B_t(x_0(t),\frac{3}{4}Ar_0)$, and $u(x)=0$ for all $x\in\M_t\setminus B_t(x_0(t),Ar_0)$.

Let $u_{\min}(t):=\min\{\inf_{\M_t} u(\cdot),0\}$, $t\in[0,t_0)$. 
If $u_{\min}(t)<0$, we claim that $\inf_{\M_t} u(\cdot)$ can be achieved. Suppose not, then there exists a sequence of points $x_i\in B_t(x_0(t),Ar_0)$ such that $u(x_i)\rightarrow u_{\min}(t)$ as $i\rightarrow\infty$. 
By Lemma \ref{p: existence of uniform bw pb nbhd}, the properness of scalar curvature, we may assume that $R(x_i)\rightarrow\infty$. So $u(x_i)\ge 0$ for all large $i$, a contradiction.

Then we claim the following holds for all $t\in(0,t_0)$:
\begin{equation}\label{e: component stability}
  u_{\min}(t)\le\lim\inf_{s\searrow t} u_{\min}(s).
\end{equation}

Suppose this is not true at some $t\in(0,t_0)$.
Then there exist some $\epsilon>0$ and a sequence of times $s_i>t$ which converges to $t$ as $i\rightarrow\infty$ such that
\begin{equation}
    u_{\min}(t)>u_{\min}(s_i)+\epsilon,
\end{equation}
for all $i$. Let $x_i\in B_{s_i}(x_0(s_i),Ar_0)$ be a point such that
\begin{equation}\label{e: contradiction}
    u(x_i)\le u_{\min}(s_i)+\frac{\epsilon}{2}<u_{\min}(t)- \frac{\epsilon}{2}.
\end{equation}

If $\R(x_i)$ is not uniformly bounded, then $u(x_i)\ge 0$ for large $i$, which implies $u_{\min}(t)\ge\epsilon>0$, a contradiction. So we may assume $R(x_i)$ is uniformly bounded, and hence by Lemma \ref{p: existence of uniform bw pb nbhd} there is a $\delta>0$ such that $R\le\delta^{-2}$ in $P(x_i,\delta,-\delta^2)\subset\subset\bigcup_{t\in[0,t_0)}B_t(x_0(t),2Ar_0)$. So $u$ is uniformly continuous on $\bigcup_i P(x_i,\delta,-\delta^2)$. Since $s_i-t\rightarrow0$, this implies $u(x_i(t))\le u(x_i)+\frac{\epsilon}{2}$ for all large $i$. So
\begin{equation}
    u_{\min}(t)\le u(x_i(t))\le u(x_i)+\frac{\epsilon}{2},
\end{equation}
which contradicts with \eqref{e: contradiction}. So claim \eqref{e: component stability} is true.

Now we argue by maximum principle that the following holds for all times:
\begin{equation}\label{e: this}
    u_{\min}(t)\ge -\frac{2C_0}{(Ar_0)^2},
\end{equation}
where $C_0>0$ will be specified below. Suppose not and let $T$ be the supremum of all times $t$ such that \eqref{e: this} is true on $[0,t]$. Then $T>0$ and there exists a sequence $t_i>T$ converging to $T$ as $i\rightarrow\infty$ such that $u_{\min}(t_i)<-\frac{2C_0}{(Ar_0)^2}$. Using inequality \eqref{e: component stability} at $T$, we have that $u_{\min}(T)\le -\frac{2C_0}{(Ar_0)^2}<0$.

Since $u_{\min}(T)<0$, there exists $x_T\in\M_T$ such that $u_{\min}(T)=u(x_T)$. Then by the choice of $T$ it is easy to see the followings hold at $x_T$:
$\nabla u=0$, $\Delta u\ge 0$,  and $\pt u\le 0$. By a direct computation we get the following at $x_T$,
\begin{equation}\label{e: nabla_u=0}
    \begin{split}
    \quad \R\nabla\varphi+\varphi\nabla\scalar&=0,\\
    2\nabla\varphi\cdot\nabla\scalar&=-2\frac{|\nabla\varphi|^2}{\varphi}\scalar=-2\frac{\varphi'^2}{\varphi}\frac{1}{(Ar_0)^2}\scalar.
    \end{split}
\end{equation}
By the evolution equation $(\pt-\Delta)R=2|\Ric|^2$, we get
\begin{equation}
\begin{split}
    (\pt-\Delta)u=&\varphi'\R\frac{1}{Ar_0}[(\pt-\Delta)d_t(x_0(t),x)+\frac{10}{3}r_0^{-1}]\\
    &-\varphi''\frac{1}{(Ar_0)^2}\R+2\varphi|\Ric|^2-2\nabla\varphi\nabla\R,
\end{split}
\end{equation}
restricting which at $x_T$ and using \eqref{e: distance distortion}, \eqref{e: nabla_u=0} and $3|\Ric|^2\ge\R^2$, we obtain the following
\begin{equation}\label{e: u_{min}}
    \begin{split}
       0\ge (\pt-\Delta) u&\ge \frac{2}{3}\varphi\scalar^2 -\varphi''\frac{1}{(Ar_0)^2}\R+2\frac{\varphi'^2}{\varphi}\frac{1}{(Ar_0)^2}\R\\
        &\ge \frac{2}{3}\varphi\scalar^2-\frac{C}{(Ar_0)^2} \sqrt{\varphi}\R\\
        &\ge\frac{1}{3}(u_{\min}^2(T)-\frac{C_0^2}{(Ar_0)^4}),
    \end{split}
\end{equation}
where $C_0=\frac{3C}{2}$, and we have used $|\frac{2\varphi'^2}{\varphi}-\varphi''|\le C\sqrt{\varphi}$, and Cauchy inequality $\frac{C}{(Ar_0)^2}\sqrt{\varphi}\R\le\frac{1}{3}\varphi\R^2+\frac{C_0^2}{3(Ar_0)^4}$. Since $u_{\min}(T)<0$, \eqref{e: u_{min}} implies $u_{\min}(T)\ge\frac{-C_0}{(Ar_0)^2}$, a contradiction. So $u_{\min}(t)\ge\frac{-2C_0}{(Ar_0)^2}$ for all $t\in[0,t_0)$, and in particular it implies
\begin{equation}
    \R(x)\ge-\frac{2C}{(Ar_0)^2},
\end{equation}
for all $x\in B_t(x_0(t),\frac{3}{4}Ar_0)$, $t\in[0,t_0)$.
Letting $A$ go to infinity, we get
    $\R(x)\ge0$,
for all $x\in\M$.
\end{proof}

\begin{lem}\label{l: Ricci}
Let $(M,g)$ be a 3 dimensional complete Riemannian manifold with $\Ric\ge0$. Let $(\M,g(t))$ be a generalized singular Ricci flow starting from $(M,g)$. Then $\Ric\ge0$ on $\M$.
\end{lem}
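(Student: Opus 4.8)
The plan is to run the localized maximum principle of Lemma \ref{l: R} one level up, replacing the scalar curvature by the lowest eigenvalue of the Ricci tensor and using Hamilton's tensor maximum principle in place of the scalar one. By property \eqref{p': steps} of Definition \ref{d: generalized SRF} and Corollary \ref{r: semi}, it suffices to treat a semi-generalized singular Ricci flow $(\M,g(t),x_0)$ on $[0,t_0)$ with $\M_0=M$, and as in Lemma \ref{l: R} we may assume $\bigcup_{t\in[0,t_0)}B_t(x_0(t),r_0)$ is unscathed with $\Ric\le r_0^{-2}$ there, so that $d_t(x_0(t),\cdot)$ satisfies the distortion estimate \eqref{e: distance distortion}. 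For large $A$, form the same cutoff $\varphi\big((d_t(x_0(t),x)+\tfrac{10}{3}r_0^{-1}t)/(Ar_0)\big)$ as in that proof, let $\nu(x)$ be the smallest eigenvalue of $\Ric(x)$, and put $u(x)=\varphi(\cdots)\,\nu(x)$, so that $u=\nu$ on $B_t(x_0(t),\tfrac34 Ar_0)$ and $u\equiv 0$ outside $B_t(x_0(t),Ar_0)$. The aim is to show $u_{\min}(t):=\min\{\inf_{\M_t}u,\,0\}\ge -C_0/(Ar_0)^2$ for all $t\in[0,t_0)$ and then send $A\to\infty$.

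The soft steps are copied verbatim from Lemma \ref{l: R}. When $u_{\min}(t)<0$, the infimum of $u$ over $\M_t$ is attained: along a minimizing sequence $x_i$, if $R(x_i)\to\infty$ then by the canonical neighborhood assumption and Lemma \ref{l: blow-up converges to a k-solution} the points $x_i$ lie in regions that, after rescaling, converge to $\kappa$-solutions, which have $\sec\ge 0$ and hence $\Ric\ge 0$; so the negative minimum cannot persist there, and together with Lemma \ref{l: properties}(i) this forces $R$ to stay bounded along $\{x_i\}$, so the minimum is realized at some $x_T$ with $R(x_T)$ finite. The same dichotomy together with Lemma \ref{l: properties}(ii) yields the time semicontinuity $u_{\min}(t)\le\liminf_{s\searrow t}u_{\min}(s)$. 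One then sets $T=\sup\{t:\ u_{\min}\ge -2C_0/(Ar_0)^2 \text{ on } [0,t]\}$ and obtains $x_T\in\M_T$ at which $u$ attains a negative minimum, so that $\nabla u=0$, $\Delta u\ge 0$, $\partial_t u\le 0$ there.

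The genuinely new ingredient is the evolution of $\nu$ at $x_T$. Since $\nu$ is only Lipschitz one argues in the barrier sense, or applies Hamilton's tensor maximum principle to $\Ric$ along a unit null direction extended by parallel transport: in an orthonormal frame diagonalizing the curvature operator, with eigenvalues $\lambda_1\ge\lambda_2\ge\lambda_3$ (so $\nu=\lambda_2+\lambda_3$), the reaction term for $\lambda_2+\lambda_3$ equals $\lambda_2^2+\lambda_3^2+\lambda_1(\lambda_2+\lambda_3)$, which is $\ge 0$ exactly when $\Ric\ge 0$ --- the statement that the cone $\{\Ric\ge 0\}$ is preserved by the $3$-dimensional Ricci-flow ODE (Hamilton). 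When $u(x_T)<0$ one has $\nu(x_T)<0$, and to keep this reaction term from being too negative one uses $R\ge 0$ from Lemma \ref{l: R} (which forces $\lambda_1\ge 0$), the Hamilton--Ivey pinching \eqref{p': HI} with $\varphi=\infty$ (which controls $-\lambda_3$ against $R$ and $\mathfrak{t}(x_T)$), and, in the region where $R(x_T)$ is large, the canonical neighborhood assumption, which places the flow near a $\kappa$-solution where $\sec\ge 0$. Feeding the resulting lower bound for the reaction term into the computation of $(\partial_t-\Delta)u$ at $x_T$, and handling the gradient-of-cutoff terms exactly as in \eqref{e: nabla_u=0}--\eqref{e: u_{min}} of Lemma \ref{l: R}, gives $0\ge (\partial_t-\Delta)u|_{x_T}\ge c\big(u_{\min}^2(T)-C_0^2/(Ar_0)^4\big)$ for a suitable $C_0$, hence $u_{\min}(T)\ge -C_0/(Ar_0)^2$, contradicting the definition of $T$. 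Letting $A\to\infty$ gives $\Ric\ge 0$ on $\M$.

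The main obstacle, and the real difference from the classical localized maximum principle of \cite{Chen}, is that the scalar curvature is \emph{unbounded} on the support of the distance cutoff, so the reaction term $\lambda_2^2+\lambda_3^2+\lambda_1(\lambda_2+\lambda_3)$ cannot be absorbed into the cutoff error when $\lambda_2+\lambda_3<0$ while $\lambda_1$ is large. Controlling this forces one to combine the $3$-dimensional algebra of the Ricci-flow ODE with both the Hamilton--Ivey pinching and the $\kappa$-solution structure of canonical neighborhoods (as in the local pinching arguments of \cite{localpinching}), and to verify carefully that the negative minimum of $u$ can neither be attained in, nor escape into, the high-curvature region. Once that is in place, the remainder is a routine transcription of the scalar argument in Lemma \ref{l: R}.
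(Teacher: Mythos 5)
You correctly identify the main obstacle --- at a negative minimum of $\varphi\cdot(\mu+\nu)$ the reaction term $\mu^2+\nu^2+\lambda(\mu+\nu)$ contains $\lambda(\mu+\nu)$ with $\lambda$ unbounded on the support of the cutoff --- but your proposal does not actually overcome it, and the tools you invoke cannot. Hamilton--Ivey pinching with $\varphi=\infty$ only gives $|\nu|\lesssim R/\log R$, and $\epsilon$-closeness to a $\kappa$-solution only gives $\mu+\nu\ge-\epsilon R$; in either case one can have $\lambda\sim R$ while $|\mu+\nu|\sim\epsilon R$, so $\lambda|\mu+\nu|\sim\epsilon R^2$ dominates $(\mu+\nu)^2\sim\epsilon^2R^2$, and the maximum-principle inequality at $x_T$ degenerates to $|u_{\min}|\,\lambda\gtrsim u_{\min}^2$, i.e.\ $|u_{\min}|\lesssim\lambda$, which is vacuous. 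The essential difficulty is that at a negative minimum of $\varphi\cdot(\mu+\nu)$ there is simply no bound on $\lambda$ in terms of $|\mu+\nu|$, so ``feeding the resulting lower bound for the reaction term'' into the computation cannot produce the quadratic inequality you need.

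The paper supplies the missing mechanism by running the maximum principle on a different quantity. It proves $R+a(\mu+\nu)\ge0$ for every $a>0$ (which yields $\mu+\nu\ge0$ upon multiplying by $1/a$ and letting $a\to\infty$), arguing by contradiction with a threshold pair $a<a'<a+\tfrac{1}{100}$ such that the inequality holds for $a$ but fails for $a'$, and then localizing $u=\varphi\cdot\bigl(R+a'(\mu+\nu)\bigr)=\varphi\cdot\bigl(\lambda+(a'+1)(\mu+\nu)\bigr)$. This has two decisive payoffs: (i) the validity at level $a$, together with $\lambda\ge R/3\ge0$ from Lemma \ref{l: R}, rewrites the dangerous part of the reaction term as $\lambda\bigl[\lambda+(a+1)(\mu+\nu)\bigr]+(a'-a)\lambda(\mu+\nu)$ with the first summand nonnegative; and (ii) at a negative minimum of this particular $u$ one automatically has $\lambda<(a'+1)|\mu+\nu|$, so the remaining bad term is at most $(a'-a)(a'+1)(\mu+\nu)^2\le\tfrac{a'+1}{100}(\mu+\nu)^2$ and is absorbed by the good term $(a'+1)(\mu^2+\nu^2)$. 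Neither (i) nor (ii) is available for your choice $u=\varphi\cdot(\mu+\nu)$. The rest of your outline (reduction via Corollary \ref{r: semi}, attainment of the negative infimum, time semicontinuity of $u_{\min}$, the barrier built from parallel-transported eigenframes) does match the paper, but without this two-parameter threshold trick the core estimate does not close, so the proof as proposed has a genuine gap.
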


\begin{proof}
For the same reason as in Lemma \ref{l: R}, it suffices to prove the lemma for a semi-generalized singular Ricci flow $(\M,g(t),x_0)$ on $[0,t_0)$, $x_0\in M$. We may assume that there is $r_0>0$ such that $\bigcup_{t\in[0,t_0)}B_t(x_0(t),r_0)$ is unscathed and $\Ric(x)\le r_0^{-2}$ there. 

Let $\lambda\ge\mu\ge\nu$ be the three eigenvalues of the curvature operator. Then it suffices to show that the following inequality holds on $\M$ for any $a>0$,
\begin{equation}\label{e: a}
    \R+a(\mu+\nu)\ge 0
\end{equation}
In fact, if this is true, then we have $\R+\epsilon^{-1}(\mu+\nu)\ge 0$ for any $\epsilon>0$. Multiplying both sides by $\epsilon$ and letting $\epsilon$ go to zero we get $\mu+\nu\ge 0$, i.e. $\Ric\ge 0$.

Now suppose by contradiction that \eqref{e: a} does not hold for all positive real numbers, then we can find $a,a'>0$ with $a<a'<a+\frac{1}{100}$ such that \eqref{e: a} holds for $a$ but not for $a'$.

By Lemma \ref{l: distance laplacian} we have
\begin{equation}\label{e: perelman's function}
    (\pt-\Delta)d_t(x_0(t),x)\ge-\frac{10}{3}r_0^{-1},
\end{equation}
whenever $d_t(x_0(t),x)>r_0$. Choose $\varphi:\mathbb{R}\rightarrow\mathbb{R}$ to be a smooth non-negative non-increasing function such that $\varphi=1$ on $(-\infty,\frac{7}{8}]$, $\varphi=0$ on $[1,\infty)$ and $\frac{2|\varphi'|^2}{\varphi}+|\varphi''|\le C_0$. 



Let $u:\M\rightarrow\mathbb{R}$ be defined by
\begin{equation}
    u(x)=\varphi\left(\frac{d_t(x_0(t),x)+\frac{10}{3}r_0^{-1}t}{Ar_0}\right)(\R+a'(\mu+\nu)),
\end{equation}
and $u_{\min}(t)=\min\{\inf_{\M_t} u(\cdot),0\}$. 

By the same reasoning as Lemma \ref{l: R} we can show the following inequality for all $t\in(0,t_0)$:
\begin{equation}\label{e: second component stability}
    u_{\min}(t)\le\lim\inf_{s\searrow t} u_{\min}(t).
\end{equation}

Let $T$ be the supremum of all $t$ such that $u(s)\ge -\frac{2C_0}{(Ar_0)^2}$ for all $s\in[0,t]$, where $C_0$ will be specified later. Then $T>0$ and by \eqref{e: second component stability} we get
\begin{equation}\label{e: u(T) less than}
u_{\min}(T)\le-\frac{2C_0}{(Ar_0)^2}.
\end{equation}

Since $u_{\min}(T)<0$, the minimum of $u$ is obtained at some point $x_T\in B_T(x_0(T),Ar_0)$. Let $\mathbb{V}_1,\mathbb{V}_2,\mathbb{V}_3$ be the orthonormal eigenvectors of $\Rm$ corresponding to eigenvalues $\lambda\ge\mu\ge\nu$ at the tangent space of $x_T$. We extend them smoothly to a neighborhood $\mathcal{P}$ around $x_T$ in the following way: first extend them to a neighborhood of $x_T$ in $\M_T$ by parallel translation along radial geodesic emanating from $x_T$ using $\nabla^{g(T)}$, and then extend them in time to make them constant in time in the sense that $\nabla_t\mathbb{V}_i=0$, $i=1,2,3$, where $\nabla_t$ is the natural space-time extension of $\nabla^{g(t)}$ such that it is compatible with the metric, i.e. $\pt\langle X,X \rangle_{g(t)}= 2\langle \nabla_tX,X \rangle_{g(t)}$. Then $\mathbb{V}_1,\mathbb{V}_2,\mathbb{V}_3$ is an orthonormal basis on $\mathcal{P}$, and $\Delta \mathbb{V}_i=0$ at $x_T$, $i=1,2,3$. 

Let
$\tilde{u}(x)=[\Rm(\mathbb{V}_1,\mathbb{V}_1)+\Rm(\mathbb{V}_2,\mathbb{V}_2)+\Rm(\mathbb{V}_3,\mathbb{V}_3)+a'(\Rm(\mathbb{V}_2,\mathbb{V}_2)+\Rm(\mathbb{V}_3,\mathbb{V}_3))]\cdot\varphi\left(\frac{d_t(x_0(t),x)+\frac{10}{3}r_0^{-1}t}{Ar_0}\right)$ for all $x\in\mathcal{P}$.
Then it is easy to see that $\tilde{u}(x)\ge u(x)$ in $\mathcal{P}$, and the equality is achieved at $x_T$. 

We can compute that
\begin{equation}\label{e: bb}
\begin{split}
    (\pt-\Delta)\tilde{u} &=-2\nabla\varphi\nabla(\frac{\tilde{u}}{\varphi})+\varphi\cdot(\pt-\Delta)[\Rm(\mathbb{V}_1,\mathbb{V}_1)+(a'+1)(\Rm(\mathbb{V}_2,\mathbb{V}_2)+\Rm(\mathbb{V}_3,\mathbb{V}_3))]\\
    &+[\Rm(\mathbb{V}_1,\mathbb{V}_1)+(a'+1)(\Rm(\mathbb{V}_2,\mathbb{V}_2)+\Rm(\mathbb{V}_3,\mathbb{V}_3))]\cdot(\pt-\Delta)\varphi\\
    &=-2\nabla\varphi\nabla(\frac{\tilde{u}}{\varphi})+\varphi\cdot \mathcal{I}+\mathcal{J}\cdot(\pt-\Delta)\varphi.
    \end{split}
\end{equation}

We estimate each term in \eqref{e: bb} at $x_T$. 
First, 
recall that $\Rm$ evolves by $ (\nabla_t-\Delta)\Rm=\Rm^2+\Rm^{\#}$ under Ricci flow, see \cite[Proposition 3.19]{MT}, where
\begin{equation}\label{e: evolution_2}
M^2+M^{\#}=
  \begin{bmatrix}
    \lambda^2+\mu\nu & 0 & 0  \\
    0 & \mu^2+\lambda\nu & 0 \\
    0 & 0 & \nu^2+\lambda\mu  
  \end{bmatrix},\quad \textnormal{for any matrix}\quad
M=
  \begin{bmatrix}
    \lambda & 0 & 0  \\
    0 & \mu & 0 \\
    0 & 0 & \nu  
  \end{bmatrix}.
\end{equation}
So by $\nabla\mathbb{V}_i=\Delta\mathbb{V}_i=\pt\mathbb{V}_i=0$ at $x_T$ we get
\begin{equation}
    (\pt-\Delta)(\Rm(\mathbb{V}_i,\mathbb{V}_i))=((\nabla_t-\Delta)\Rm)(\mathbb{V}_i,\mathbb{V}_i)=(\Rm^2+\Rm^{\#})(\mathbb{V}_i,\mathbb{V}_i)
\end{equation}
at $x_T$, $i=1,2,3$, and hence 
\begin{equation}\label{e: I}
\begin{split}
    \mathcal{I}(x_T)&=(\lambda^2+\mu\nu)+(a'+1)(\mu^2+\lambda\nu+\nu^2+\lambda\mu)\\
    &\ge\lambda[\lambda+(a+1)(\mu+\nu)]+(a'+1)(\mu^2+\nu^2)+(a'-a)\lambda(\mu+\nu)\\
    &\ge(a'+1)(\mu^2+\nu^2)+(a'-a)\lambda(\mu+\nu),
    \end{split}
\end{equation}
where we used $\lambda+(a+1)(\mu+\nu)\ge 0$.
Since $u(x_T)<0$, we have $\lambda<(a'+1)|\mu+\nu|$ at $x_T$ and hence
\begin{equation}
    \begin{split}
        |(a'-a)\lambda(\mu+\nu)|\le(a'-a)(a'+1)(\mu+\nu)^2
        \le\frac{a'+1}{100}(\mu+\nu)^2
        \le \frac{a'+1}{50}(\mu^2+\nu^2).
    \end{split}
\end{equation}
Substituting this into \eqref{e: I} and using $\lambda<(a'+1)|\mu+\nu|$ at $x_T$ again we get
\begin{equation}\label{e: I_1}
    \begin{split}
        \mathcal{I}(x_T)&\ge\frac{49}{50}(a'+1)(\mu^2+\nu^2)\ge\frac{49}{100}(a'+1)(\mu+\nu)^2\\
        &\ge\frac{49}{200(a'+1)}\{[(a'+1)(\mu+\nu)]^2+\lambda^2\}\\
        &\ge\frac{49}{400(a'+1)}[(a'+1)(\mu+\nu)+\lambda]^2\\
        &=\frac{49}{400(a'+1)\varphi^2}u_{\min}^2(T).
    \end{split}
\end{equation}
Then we estimate $\mathcal{J}\cdot(\pt-\Delta)\varphi$ at $x_T$ by using \eqref{e: perelman's function}, $\varphi<0$ and $u_{\min}(T)<0$ as below
\begin{equation}\label{e: J_1}
    \begin{split}
        (\mathcal{J}\cdot(\pt-\Delta)\varphi)(x_T)&=[\lambda+(a'+1)(\mu+\nu)](\pt-\Delta)\varphi\\
        &=\frac{1}{Ar_0}\left[\varphi'(\pt-\Delta)d_t(x_0(t),x)+\frac{10}{3}r_0^{-1}-\varphi''\frac{1}{Ar_0}\right]u_{\min}(T)\\
        &\ge \frac{1}{(Ar_0)^2\varphi}|\varphi''|u_{\min}(T).
    \end{split}
\end{equation}
Next, since $\tilde{u}$ obtains its minimum on $\mathcal{P}$ at $x_T$ and $\tilde{u}(x_T)=u(x_T)=u_{\min}(T)$, we get
\begin{equation}\label{e: nabla_1}
    \left(-2\nabla\varphi\nabla(\frac{\tilde{u}}{\varphi})\right)(x_T)=2\frac{|\nabla\varphi|^2}{\varphi^2}u_{\min}(T)=2\frac{|\varphi'|^2}{\varphi^2}\frac{1}{(Ar_0)^2}u_{\min}(T).
\end{equation}

Now applying the maximum principle at $x_T$ and using \eqref{e: I_1}, \eqref{e: J_1} and \eqref{e: nabla_1}, we get
\begin{equation}\label{e: maximum principle_2}
    \begin{split}
        0\ge (\pt-\Delta)\tilde{u}(x_T)&\ge\frac{49}{400(a'+1)\varphi}u_{\min}^2(T)+\frac{1}{(Ar_0)^2\varphi}|\varphi''|u_{\min}(T)+2\frac{|\varphi'|^2}{(Ar_0)^2\varphi^2}u_{\min}(T)\\
        &\ge \frac{49}{400(a'+1)\varphi}\left[u^2_{\min}(T)+\frac{1}{(Ar_0)^2}\left(\frac{2|\varphi'|^2}{\varphi}+|\varphi''|\right)u_{\min}(T)\right]\\
        &\ge \frac{49}{400(a'+1)\varphi}\left[u^2_{\min}(T)+\frac{C_0}{(Ar_0)^2}u_{\min}(T)\right],
    \end{split}
\end{equation}
where we have used $\frac{2|\varphi'|^2}{\varphi}+|\varphi''|\le C_0$.  Since $u_{\min}(T)<0$, \eqref{e: maximum principle_2} implies immediately $u_{\min}(T)\ge-\frac{C_0}{(Ar_0)^2}$, which contradicts with \eqref{e: u(T) less than}. 
So $u_{\min}(t)\ge-\frac{2C_0}{(Ar_0)^2}$ for all $t\in[0,t_0)$. Letting $A\rightarrow\infty$ we get $R+a'(\mu+\nu)\ge 0$ on $\M$, which
contradicts the assumption of $a'$. 
So \eqref{e: a} holds for all $a>0$, and hence by the argument at beginning the conclusion of the Lemma follows.

\end{proof}

The next lemma says that in a 3-dimensional manifold with $\Ric\ge 0$, no singularity can form within finite distance along a minimizing geodesic covered by final time-slices of strong $\delta$-necks.
We prove it by a contradiction argument, suppose the assertion does not hold, then
by the condition of $\Ric\ge 0$, we can show that the blow-up limit of the `singularity' is a smooth cone, and there is a Ricci flow whose final time-slice is in the smooth part of the cone, which is impossible.

\begin{lem}\label{l: s_0}
For any sufficiently small $\delta>0$ the following holds:
Let $(M,g)$ be a 3 dimensional Riemannian manifold with $\Ric\ge0$. 
Let $\gamma:[0,s_0)\rightarrow M$ (where $s_0\in \mathbb{R}_+\cup\{\infty\}$) be a unit speed minimizing geodesic such that $R(\gamma(s))$ does not stay bounded for $s\rightarrow s_0$, and assume there are constants $c,\varphi>0$ such that all points on $\gamma$ are centers of strong $\delta$-necks on the time interval $[-c,0]$, and the strong $\delta$-necks have $\varphi$-positive curvature.

Then $s_0=\infty$.
\end{lem}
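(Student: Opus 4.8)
The plan is to argue by contradiction: assuming $s_0<\infty$, I would produce a smooth Ricci flow whose time-$0$ slice contains an open piece of a non-flat metric cone over a round $2$-sphere, which is impossible.

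First I would record the geometry along $\gamma$ near $s_0$. Since $R(\gamma(s))$ is unbounded and every point of $\gamma$ is a neck center, $\rho(\gamma(s_k))\to 0$ along some $s_k\to s_0$; combined with the gradient estimate $|\nabla\rho|\le\eta$ (Lemma \ref{l: derivative}) this forces $\rho(\gamma(s))\le\eta(s_0-s)$ for all $s<s_0$, so in fact $\rho(\gamma(s))\to 0$ as $s\to s_0$ and the missing endpoint $x_\infty:=\lim_{s\to s_0}\gamma(s)$ lies in the metric completion but not in $M$. The union of the $\delta$-necks centered along $\gamma$ is a $\delta$-tube (using \cite[Proposition 19.21]{MT}), so, writing $\tau=s_0-s$, a neighborhood of $\gamma([s_1,s_0))$ is $\delta$-close in $C^{[\delta^{-1}]}$ to a rotationally symmetric warped product $d\tau^2+\phi(\tau)^2\,g_{S^2}$ on $(0,\tau_1)\times S^2$ with $\phi(\tau)\to 0$ as $\tau\to 0$ and $|\phi'|\le C\eta$.

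Next I would take a tangent-cone type blow-up at $x_\infty$: rescale $(M,g)$ at $\gamma(s_i)$ by $(s_0-s_i)^{-2}$ with $s_i\to s_0$. Necks are uniformly non-collapsed and, by the canonical neighborhood assumption together with Shi's estimates, have locally bounded curvature away from $x_\infty$, so Hamilton's compactness theorem gives a subsequential limit on the smooth locus whose time-$0$ slice carries $\Ric\ge 0$. Passing if necessary to a further tangent cone at the bad end and invoking the cone structure theory for spaces with nonnegative Ricci curvature (Cheeger--Colding), this limit is a genuine metric cone $C(\Sigma)$ over a $2$-sphere $\Sigma$ that is $\delta$-close to round. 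Here $\Ric\ge 0$ is used twice: the radial Ricci inequality gives (to leading order) concavity of $\phi$, hence the existence of $a:=\lim_{\tau\to 0^+}\phi(\tau)/\tau\in(0,\infty)$, so the blow-up neither collapses nor degenerates and is an honest cone; and $\Ric\ge 0$ on $C(\Sigma)$ forces the Gauss curvature of $\Sigma$ to be $\ge 1$, with equality if and only if $C(\Sigma)$ is flat $\mathbb{R}^3$. Since $R$ blows up at $x_\infty$, $C(\Sigma)$ cannot be $\mathbb{R}^3$, so $C(\Sigma)$ is non-flat. Because the $\delta$-necks along $\gamma$ are \emph{strong} on $[-c,0]$, the same blow-up converges in the time direction on a backward interval of length bounded below (of order $c\,a^2$), so the limit is a smooth Ricci flow on $(-c',0]$ whose time-$0$ slice is an open piece of the non-flat metric cone $C(\Sigma)$. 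This is the contradiction; it can be closed either directly (a Ricci flow cannot have a piece of a non-flat metric cone as a time-slice) or by extending the limit backwards along the neck structure to an ancient, $\kappa$-noncollapsed solution with $\textnormal{sec}\ge 0$ — a $\kappa$-solution — and invoking Perelman's result, see \cite{KL}\cite{classification}, that a $\kappa$-solution is never a metric cone. Hence $s_0=\infty$.

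The main obstacle, as in related arguments, is making the ``asymptotically conical'' step rigorous: one must show $\rho(\gamma(s))$ decays exactly linearly in $s_0-s$ rather than, say, like $\sqrt{s_0-s}$ (this is precisely where concavity of $\phi$ from $\Ric\ge 0$ enters), and one must carefully propagate the $\delta$-errors through the possibly iterated blow-up so that the final limit is an honest metric cone on which $\Ric\ge 0$ holds exactly; the concluding rigidity statement about cone time-slices is the second delicate point.
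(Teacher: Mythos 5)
Your overall strategy coincides with the paper's: assume $s_0<\infty$, complete the tube by a single point $p$, blow up at $p$, show the limit is a smooth non-flat metric cone, and contradict the fact that an open piece of a non-flat cone with non-negative curvature cannot be a final time-slice of a Ricci flow (\cite[Prop 4.22]{MT}). You also correctly identify the crux: one must show the curvature scale decays \emph{exactly} linearly, $\rho(x)\asymp d(p,x)$, so that the blow-up is an honest non-degenerate cone. The upper bound $\rho(x)\le\eta\,d(p,x)$ is indeed immediate from the gradient estimate, and it is what makes the limit non-flat.

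The lower bound $\rho(x)\ge C^{-1}d(p,x)$ is where your proposal has a genuine gap. You propose to get it from concavity of the warping function $\phi$ of a rotationally symmetric model, but the metric is only $\delta$-close to such a warped product, so the radial Ricci inequality gives at best an approximate differential inequality for $\phi$ whose error terms are of the same order as the curvature itself; you acknowledge this but do not close it, and the appeal to Cheeger--Colding tangent-cone theory does not by itself rule out that $\rho(\gamma(s))$ decays like $(s_0-s)^{1+\alpha}$ (in which case the rescaled limits would collapse at the vertex rather than form a cone). The paper avoids the pointwise ODE entirely and uses the \emph{exact} hypothesis $\Ric\ge0$ through an integrated monotone quantity: $r^{-2}\mathrm{vol}(\partial B(p,r))$ is non-increasing, hence bounded below by $v_0>0$ on $(0,s_0)$, so the annulus $B(p,d_k+10R_k^{-1/2})\setminus B(p,d_k-10R_k^{-1/2})$, which lies inside the $\delta$-neck $U_k$ around $x_k$, has volume at least $c\,v_0\,d_k^2R_k^{-1/2}$; on the other hand the cylinder structure bounds $\mathrm{vol}(U_k)$ above by $C R_k^{-3/2}$. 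These are compatible only if $R_k^{-1/2}\ge C^{-1}d_k$. With the two-sided bound in hand, the convergence is smooth away from the vertex, the volume ratio is exactly constant in the limit, and the rigidity case of Bishop--Gromov (linear Jacobi fields) produces the smooth cone directly, with no need for an iterated tangent-cone argument. Your final contradiction step (using the strong $\delta$-necks and $\varphi$-pinching to get a backward flow with $\sec\ge0$ ending on the cone) matches the paper's.
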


\begin{proof}
Suppose by contradiction that $s_0<\infty$. Let $\eta$ be from Lemma \ref{l: derivative}.

Since every point on $\gamma$ is the center of some strong $\delta$-neck, we get that $\gamma$ lies inside some open subset $N\subset M$ that is diffeomorphic to $S^2\times(0,1)$ and which is covered by final time-slices of strong $\delta$-necks. Consider the length metric induced by the Riemannian metric on $N$, and then $N'$ be the completion of $N$. Then $N'$ is a disjoint union of $N$ and a single point $p$.


Consider the rescalings $iN'$ for all $i\in\mathbb{N}$.
Then by the Bishop-Gromov volume comparison we can deduce that for any $d>0$, the $d$-balls $B^{i N'}(p,d)$ in $i N'$ are uniformly totally bounded. 
Therefore, by Gromov's compactness theorem, we have the following Gromov-Hausdorff convergence by passing to a subsequence $\{i_k N'\}$:
\begin{equation}
    (i_k N',p)\xrightarrow{k\rightarrow\infty} (X,p_{\infty})
\end{equation}
We shall show that $X$ is a smooth metric cone with cone point $p_{\infty}$, and the convergence is actually smooth on $X_0=X-\{p_{\infty}\}$.

Let $x\in B(p,s_0)-\{p\}\subset N'$, then by Lemma \ref{l: derivative} we get
\begin{equation}\label{e: lower bound}
    R^{-1/2}(x)\le \eta\, d(p,x)\qquad\textnormal{on}\qquad B(p,s_0)-\{p\}.
\end{equation}

We claim that there exists $C>0$ such that
\begin{equation}\label{e: upper bound}
    R^{-1/2}(x)\ge C^{-1}d(p,x)\qquad\textnormal{on}\qquad B(p,s_0)-\{p\}.
\end{equation}
Suppose not, then there exists a sequence $\{x_k\}\subset B(p,s_0)-\{p\}$ such that
\begin{equation}\label{e: C_k}
    R^{-1/2}(x_k)\le C_k^{-1}d(p,x_k),
\end{equation}
where $C_k\rightarrow\infty$ as $k\rightarrow\infty$. We abbreviate $d(p,x_k)$ as $d_k$ and $R(x_k)$ as $R_k$. 

Since $x_k$ is the center of a  $\delta$-neck, there is a diffeomorphism onto its image $\phi_k: (-\delta^{-1},\delta^{-1})\times S^2\rightarrow N'$ under which $(N',x_k)$ is $\delta$-close to $(-\delta^{-1},\delta^{-1})\times S^2$ at scale $R^{-1/2}_k$. 
Let $U_k=\phi_k((-100,100)\times S^2)$, then $U_k$ separates $N'$ into two components.

Suppose $x\in B(p,s_0)-\{p\}$ is not in $U_k$, then it is easy to see either
\begin{equation}
    d(x,p)>d_k+10R^{-1/2}_k,\qquad\textnormal{or}\qquad d(x,p)<d_k-10R^{-1/2}_k.
\end{equation}
In other words, we have
\begin{equation}\label{e: inclusion in neck}
    B(p,d_k+10R^{-1/2}_k)-B(p,d_k-10R^{-1/2}_k)\subset U_k.
\end{equation}

Applying the Bishop-Gromov volume comparison on $N'$, we have $r^{-2}vol(\partial B(p,r))$ is non-increasing for all $r\in(0,s_0)$. In particular,
let $v_0=s_0^{-2}vol(\partial B(p,s_0))$, then $r^{-2}vol(\partial B(p,r))\ge v_0$ for all $0<r<s_0$. So by \eqref{e: inclusion in neck} we can estimate the volume of $U_k$ from below:
\begin{equation}\label{e: vol_1}
    \begin{split}
        vol(U_k)&\ge\int^{d_k+10R^{-1/2}_k}_{d_k-10R^{-1/2}_k} vol(\partial B(p,r))\,dr
        \ge\int^{d_k+10R^{-1/2}_k}_{d_k-10R^{-1/2}_k} v_0r^2\,dr\\
        &\ge\frac{9}{16}\int^{d_k+10R^{-1/2}_k}_{d_k-10R^{-1/2}_k} v_0d_k^2\,dr
        =\frac{45}{4}v_0d_k^2R^{-1/2}_k,
    \end{split}
\end{equation}
where in the third inequality we used \eqref{e: C_k}, which implies $d_k-R^{-1/2}_k\ge\frac{3}{4}d_k$ for large $k$.

By the closeness of the metric on $U_k$ with the standard cylindrical metric at scale $R^{-1/2}_k$, we get an upper bound on the volume of $U_k$:
\begin{equation}\label{e: vol_2}
    \begin{split}
        vol(U_k)\le 2\cdot R^{-3/2}_k\cdot200\cdot8\pi=3200\pi R^{-3/2}_k<3200\pi C_k^{-2}d_k^2R^{-1/2}_k,
    \end{split}
\end{equation}
where we used \eqref{e: C_k} in the last inequality. 
Combining \eqref{e: vol_1} with \eqref{e: vol_2} we get $C_k^2\le \frac{12800\pi}{45 v_0}$, which is impossible for large $k$. Thus there exists $C>0$ such that \eqref{e: upper bound} holds.

Therefore, by \eqref{e: upper bound} we see that the convergence on $X_0$ is smooth.
So there is $v_1>0$ such that $d^{-2}vol(\partial B^X(p_{\infty},d))= v_1$ for all $d\in(0,\infty)$.
So by the rigidity of volume comparison, we see that any Jacobi field along any geodesic emanating from $p_{\infty}\in X$ has linear growth, which implies that $X$ is a smooth metric cone. By \eqref{e: lower bound}, $X_0$ is nowhere flat. 

Since all points in a neighborhood of $p$ are centers of strong $\delta$-necks on $[-c,0]$, which has $\varphi$-positive curvature. So under the blow-up rescalings this implies that any point $x\in X_0$ is the center of a strong $2\delta$-neck on $[-\frac{1}{2}c,0]$, which has non-negative sectional curvature.
This contradicts the fact that open pieces in non-flat cones cannot arise as the result of Ricci flow with non-negative curvature \cite[Prop 4.22]{MT}.

\end{proof}

\begin{theorem}(Theorem \ref{t: existence})
Let $(M,g)$ be a 3d complete Riemannian manifold with $\Ric\ge 0$. There exist $T>0$ and a smooth Ricci flow $(M,g(t))$ with $g(0)=g$ defined on $[0,T)$. 
Moreover, if $T<\infty$, then $\limsup_{t\nearrow T}|\Rm|(x,t)=\infty$ for all $x\in M$.
\end{theorem}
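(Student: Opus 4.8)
The plan is to build the flow as (the restriction to $M$ of) a generalized singular Ricci flow and to exploit $\Ric\ge0$ to show that this spacetime is non-singular for a while. First I would invoke Theorem~\ref{t: construction1} to obtain a generalized singular Ricci flow $\M$ starting from $(M,g)$; by Lemma~\ref{l: R} and Lemma~\ref{l: Ricci} we have $R\ge0$ and $\Ric\ge0$ everywhere on $\M$. We may assume $M$ is connected. Fix $x_0\in M=\M_0$; since $\M$ is a limit of singular Ricci flows, the pseudolocality Theorem~\ref{t: pseudolocality} (cf.\ the proof of Lemma~\ref{t: construction}) shows that a short forward parabolic neighborhood of $x_0$ is unscathed with bounded curvature, so $M$ survives with locally bounded curvature for a short time. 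Let $T\in(0,\infty]$ be the supremum of all $t_1>0$ such that $M$ survives on $[0,t_1)$ and $|\Rm|$ is bounded on $\bigcup_{s<t_1}B_s(x_0(s),A)$ for every $A>0$; because $M$ is connected and distances are non-increasing (so $d_s(x_0(s),\cdot)\le d_0(x_0,\cdot)$), $T$ is positive and independent of $x_0$. Restricting $\M$ to the forward evolution of $M$ over $[0,T)$ and using the distance-dependent canonical neighborhood assumption of $\M$ together with the gradient estimate Lemma~\ref{l: derivative} and Shi's estimates, one gets a smooth Ricci flow $(M,g(t))$ on $[0,T)$ with $g(0)=g$ and $\Ric(g(t))\ge0$ --- \emph{provided} there is no curvature blow-up at finite distance, which is the content of the next step.

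The key assertion is that in $\M$ the scalar curvature cannot become unbounded on a ball of finite radius while the curvature scale at its center stays bounded below. Suppose it does, say along some point surviving on $[0,t_1)$ with $\inf_{[0,t_1)}\rho(\cdot)>0$ but with $\sup|\Rm|=\infty$ on $\bigcup_{s<t_1}B_s(\cdot,A)$ for some finite $A$. Using $\Ric\ge0$ --- which makes distances non-increasing, lets us apply Lemma~\ref{l: expanding lemma}, and supplies forward $0$-completeness control --- I would pass to a limiting time-slice on which the scalar curvature is unbounded on a metric ball of finite radius centered at a point of bounded curvature scale. That slice has $\Ric\ge0$ (Lemma~\ref{l: Ricci}), is $0$-complete, and satisfies the distance-dependent canonical neighborhood and non-collapsing hypotheses, so Lemma~\ref{l: abundance of neck points} (with Lemma~\ref{l: central sphere decomposition}) produces a unit-speed \emph{minimizing geodesic} $\gamma\colon[0,s_0)\to\M$ of finite length $s_0<\infty$ along which $R(\gamma(s))\to\infty$ and, for $s$ near $s_0$, every $\gamma(s)$ is the center of a $\delta$-neck. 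By Lemma~\ref{l: blow-up converges to a k-solution}, Lemma~\ref{l: close to k-solution} and the gradient estimate, these $\delta$-necks are in fact centers of \emph{strong} $\delta$-necks on a time interval $[-c,0]$ with $c>0$ uniform along $\gamma$, and by $\Ric\ge0$ their curvature is non-negative, hence $\varphi$-positive for a suitable $\varphi$. But then Lemma~\ref{l: s_0} forces $s_0=\infty$, a contradiction. Hence no finite-distance blow-up occurs, and the smooth Ricci flow $(M,g(t))$ of the first paragraph is genuinely defined on all of $[0,T)$.

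Finally I would settle the dichotomy when $T<\infty$. Suppose $T<\infty$ but $\limsup_{t\nearrow T}|\Rm|(x_1,t)<\infty$ for some $x_1\in M$. Then $\rho(x_1(t))$ is bounded below on $[0,T)$, so $x_1$ survives to $T$ by forward $0$-completeness; by the previous step $|\Rm|$ is then bounded on every finite ball around $x_1$ over $[0,T)$, and by Shi's derivative estimates and Arzel\`a--Ascoli the time-$T$ slice $\M_T$ is a smooth manifold with $\Ric\ge0$ near $x_1(T)$, into which $\M$ embeds as a spacetime. Evolving this region forward inside $\M$, the distance-dependent canonical neighborhood assumption keeps the flow smooth for a definite further time near $x_1(T)$, and the no-finite-distance-blow-up property of the second step, together with connectedness of $M$, propagates this to all of $M$: the smooth Ricci flow on $M$ extends past $T$, contradicting the maximality of $T$. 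Therefore $\limsup_{t\nearrow T}|\Rm|(x,t)=\infty$ for every $x\in M$ (each such $x$ survives on $[0,T)$ by definition of $T$), which, with $g(0)=g$ and $\Ric(g(t))\ge0$ as already noted, completes the proof.

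I expect the main obstacle to be the reduction in the second paragraph: since in general the curvature only blows up in the limit $t\nearrow t_1$, passing to a genuine time-slice on which Lemma~\ref{l: abundance of neck points} applies requires care --- one must verify the $0$-completeness of that limiting slice, the uniformity (as $\rho\to0$ along $\gamma$) of the scale-invariant strong-neck parameter $c$, and that the $\delta$-necks covering $\gamma$ genuinely inherit non-negative sectional curvature, so that the hypotheses of Lemma~\ref{l: s_0} hold verbatim.
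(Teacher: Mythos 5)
Your proposal follows essentially the same route as the paper: produce a generalized singular Ricci flow from $(M,g)$, use Lemmas \ref{l: R} and \ref{l: Ricci} to preserve $R\ge0$ and $\Ric\ge0$, rule out curvature blow-up at finite distance by combining Lemma \ref{l: abundance of neck points} and Lemma \ref{l: close to k-solution} to produce a finite minimizing geodesic covered by strong $\delta$-necks and then contradicting Lemma \ref{l: s_0}, and finally use $\Ric\ge0$ (distance non-increase) plus forward $0$-completeness to get the survival of all of $M$ up to a common time $T$ and the blow-up dichotomy. The complication you flag at the end is largely self-inflicted: the paper runs the blow-up argument on an \emph{actual} time-slice $\M_t$ with $t\in(0,t_0]$ (the claim being that the component of $\M_t$ containing $x_0(t)$ is complete for every such $t$), so no limiting time-slice and no extra $0$-completeness verification is needed; completeness of each slice together with $\Ric\ge0$ then makes $P(x_0,A,t_0)$ relatively compact, which is how the paper gets every point of $M$ to survive.

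Two genuine issues remain. First, you never address the non-orientable case: a generalized singular Ricci flow has orientable initial slice by Definition \ref{d: generalized SRF}, and the existence result (Theorem \ref{t: max}) is stated for orientable $M$; the paper handles general $M$ by passing to the orientation double cover and using the $\mathbb{Z}_2$-equivariant construction of Theorem \ref{t: Z_2}/\ref{t: max}, then taking the quotient by the free isometric involution. Second, your justification that the strong $\delta$-necks have $\varphi$-positive curvature ``by $\Ric\ge0$ their curvature is non-negative'' is wrong as stated: in dimension $3$, $\Ric\ge0$ does not imply non-negative sectional curvature. The hypothesis of Lemma \ref{l: s_0} is nevertheless satisfied, but for a different reason, namely the Hamilton--Ivey pinching that is part of the definition of a generalized singular Ricci flow (property \eqref{p': HI}); this is what the paper invokes, and it is also what makes the blow-up limit in the proof of Lemma \ref{l: s_0} have non-negative sectional curvature.
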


\begin{proof}
First we assume $M$ is orientable. By Theorem \ref{t: max} there is a generalized singular Ricci flow  $(\M,g(t))$ starting from $(M,g)$, and by Lemma \ref{l: Ricci}, $\M$ has non-negative Ricci curvature. 

Let $x_0\in M$. Suppose $x_0$ survives until $t_0>0$ in $\M$.
We claim that the component of $\M_t$ that contains $x_0(t)$ is complete for all $t\in(0,t_0]$. Suppose not, then $\sup_{B_{t}(x_0(t),A)}R=\infty$ for some $A>0$ and $t\in(0,t_0]$.
By Lemma \ref{l: abundance of neck points} and \ref{l: close to k-solution}, we can find a minimizing geodesic
$\gamma:[0,1)\rightarrow\M_t$ such that $\lim_{s\rightarrow1}R(\gamma(s))=\infty$, and there exist $c,\varphi>0$ such that for all $s$ close to $1$, $\gamma(s)$ are centers of strong $\delta$-necks on $[-c,0]$, which have $\varphi$-positive curvature.
This contradicts Lemma \ref{l: s_0}.

Since $\Ric\ge0$, for any $A>0$, the parabolic neighborhood $P(x_0,A,t_0)$ is contained in $\bigcup_{t\in [0,t_0]}B_t(x_0(t),A)$, which is relatively compact. 
So every point in $M$ survives until $t_0$.
Let $T\in(0,\infty]$ be the supremum of all times until which $x_0$ survives. Then $T$ is also the supremum of the survival times of points in $M$. Suppose $T<\infty$, since $\M$ is forward 0-complete, we have $\limsup_{t\nearrow T}|\Rm|(x(t))=\infty$ for all $x\in M$.
So the spacetime restricted on the subset $\bigcup_{t\in[0,T)}M(t)$ is the desired smooth Ricci flow.

Now suppose $M$ is not orientable.
Let $\widehat{M}\rightarrow M$ be the 2-fold orientation covering.
By Theorem \ref{t: max}, there are a generalized singular Ricci flow $(\widehat{\M},g(t))$ starting from $\widehat{M}$, and an isometry $\sigma:\widehat{\M}\rightarrow\widehat{\M}$ that acts free on the subset of points that can survive back to $\widehat{M}$.
As before, there exists $T\in(0,\infty]$ such that $\widehat{M}$ survives on $[0,T)$, and $\limsup_{t\nearrow T}|\Rm|(x(t))=\infty$ for all $x\in \widehat{M}$ if $T<\infty$.
The smooth Ricci flow claimed in the theorem is the quotient of $\bigcup_{t\in[0,T)} \widehat{M}(t)$ by the free action of $\sigma$.

\end{proof}


\end{section}

\bibliography{Collection}
\bibliographystyle{alpha}

\end{document}